\documentclass[10pt,a4paper]{article}
\usepackage[T1]{fontenc}
\usepackage[english]{babel}
\usepackage[a4paper, margin=1in]{geometry}
\usepackage{graphicx}
\usepackage{mathtools}
\usepackage{amssymb}
\usepackage{amsthm}
\usepackage{thmtools}
\usepackage{xcolor}
\usepackage{nameref}
\usepackage{babel}
\usepackage{enumerate}
\usepackage{hyperref}
\usepackage{lipsum}
\usepackage{amsmath}
\usepackage{xfrac}
\usepackage{faktor}
\usepackage{mathrsfs}
\usepackage{tkz-graph}
\usepackage{latexsym}
\usetikzlibrary{arrows}
\usepackage{tikz}
\usepackage{adjustbox}
\usepackage{comment}
\usepackage{multirow}
\usepackage{subcaption} 
\usepackage{dsfont}
\usepackage{pagecolor}
\usepackage{cleveref} 
\usepackage{titlesec}
\titleformat{\section}
[hang] 
{\normalfont\Large\bfseries} 
{\thesection.} 
{0.4em} 
{} 

\titleformat{\subsection}
[hang] 
{\normalfont\large\bfseries} 
{\thesubsection.} 
{0.3em} 
{} 

\definecolor{myred}{RGB}{255,0,0}

\crefname{equation}{(equation)}{(equations)}
\Crefname{equation}{(Equation)}{(Equations)}

\crefformat{(equation)}{\textcolor{myred}{(#1)}}
\hypersetup{
	colorlinks=true,
	linkcolor=myred,
	citecolor=myred,
	urlcolor=myred
}

\allowdisplaybreaks

\theoremstyle{plain}
\newtheorem{thm}{Theorem}[section]

\newtheorem{lem}{Lemma}[section]

\newtheorem{prop}[thm]{Proposition}

\newtheorem{defi}{Definition}[section]

\usepackage[framemethod=TikZ]{mdframed}

\renewcommand{\leq}{\leqslant}
\renewcommand{\geq}{\geqslant}

\newcommand{\grad}{\ensuremath{\nabla}}

\numberwithin{equation}{section}

\newmdenv[linewidth=2pt,
linecolor=black,
innertopmargin=0.2cm,
innerbottommargin=0.5cm,
innerleftmargin=0.7cm,
innerrightmargin=0.7cm,
tikzsetting={fill=blue!30!white}] {squarebox}
\title{\textbf{Averaging principle for a slow-fast stochastic nonlinear fractional Schr\"odinger equation}}
\author{ Manil T. Mohan\textsuperscript{\tiny{+}}, Debopriya Mukherjee\footnote{Department of Mathematics, Indian Institute of Technology Indore, Indore 453552, India\\ \textsuperscript{\, \quad\tiny{+}}Department of Mathematics, Indian Institute of Technology Roorkee, Roorkee 247667, India}\textsuperscript{\,\ ,}\footnote{Corresponding author.\newline
E-mail addresses: phd2301141004@iiti.ac.in (S. Roy),
debopriya@iiti.ac.in (D. Mukherjee),
maniltmohan@ma.iitr.ac.in (M. T. Mohan)} ,
Sandip Roy\footnotemark[1]
}
\begin{document}
\maketitle
\begin{abstract}
	We establish an averaging principle for a structural multiscale stochastic nonlinear fractional Schr\"odinger system on the one-dimensional torus driven by a multiplicative Wiener noise. The slow component is governed by a fractional Schr\"odinger operator with a general polynomial nonlinearity, while the fast component evolves on a shorter time scale and exhibits dissipative diffusion, nonlinear interactions, and stochastic forcing. 	
	 Under suitable dissipative assumptions, we have shown that, as the scale separation parameter tends to zero, the slow component converges strongly to an effective stochastic fractional Schr\"odinger equation. The effective drift is obtained by averaging the coupling term with respect to the unique invariant measure of the frozen fast dynamics.  The proof relies on uniform a priori estimates, ergodicity of the fast equation, H\"older time regularity of the slow component obtained via a vanishing viscosity method, and a Khasminskii-type time discretization argument adapted to fractional dispersive operators. The analysis is technically challenging due to limited smoothing of the fractional Schr\"odinger semigroup and the presence of general polynomial nonlinearities, which are handled through refined estimates and viscosity approximation.

\end{abstract}

\section{Introduction}

In the present paper, we aim to study the averaging principle for a multiscale stochastic nonlinear fractional Schr\"odinger equation. The equation of our interest is,
\begin{equation}\label{multiscaleeq}
	\left\{
	\begin{aligned}
		&\mathrm{d} {\mathfrak{u}}^{\varepsilon} = \left[-i(-\Delta)^{\alpha} {\mathfrak{u}}^{\varepsilon} + \mathcal{N}({\mathfrak{u}}^{\varepsilon}) + F({\mathfrak{u}}^{\varepsilon}, {\mathfrak{v}}^{\varepsilon}) \right]\mathrm{d}t + \Sigma_1({\mathfrak{u}}^{\varepsilon}) \, \mathrm{d}\mathcal{W}_1 && \text{in } \mathbb{T} \times [0,T],\\
		&\mathrm{d}{\mathfrak{v}}^{\varepsilon} = \frac{1}{\varepsilon} \left[(1 + i)(-\Delta)^{\rho} {\mathfrak{v}}^{\varepsilon} + \mathcal{N}({\mathfrak{v}}^{\varepsilon}) - \lambda {\mathfrak{v}}^{\varepsilon} + G({\mathfrak{u}}^{\varepsilon}, {\mathfrak{v}}^{\varepsilon}) \right]\mathrm{d}t + \frac{1}{\sqrt{\varepsilon}} \Sigma_2({\mathfrak{u}}^{\varepsilon}, {\mathfrak{v}}^{\varepsilon}) \, \mathrm{d}\mathcal{W}_2 && \text{in } \mathbb{T} \times [0,T],\\
		&{\mathfrak{u}}^{\varepsilon}(x,0) = {\mathfrak{u}}_0(x) && \text{in } \mathbb{T}, \\
		&{\mathfrak{v}}^{\varepsilon}(x,0) = {\mathfrak{v}}_0(x) && \text{ in } \mathbb{T},
	\end{aligned}
	\right.
\end{equation}
where $\mathcal{N}({\mathfrak{u}})=-(1+i \gamma) |{\mathfrak{u}}|^{\beta-1}{\mathfrak{u}}, \, \beta\in (1,\infty)$, $\mathcal{W}_1$ and $\mathcal{W}_2$ are two real-valued mutually independent Wiener processes defined over a filtered probability space $\mathfrak{U}$ and $\mathbb{T}=\mathbb{R}/2\pi \mathbb{Z}$ is the one-dimensional torus. Here $(-\Delta)^{\alpha},(-\Delta)^{\rho}$ is $2\alpha, 2\rho$-order fractional Laplacian operator with $\alpha, \rho \in (\tfrac{1}{2},1)$, see \Cref{deffraclap}.

The Schr\"odinger equation is the fundamental equation of quantum mechanics. It describes how the quantum state of a physical system evolves over time through a complex-valued wave function. The nonlinear Schr\"odinger equation arises when interactions within the medium or among particles cannot be neglected. In contrast to the linear case, the presence of nonlinear terms allows for rich dynamical behaviors such as soliton formation, wave collapse, modulation instability, and pattern formation. It serves as an effective model in diverse contexts where dispersion and nonlinearity act simultaneously, providing a unified description of wave propagation in nonlinear optical fibers, fluid surfaces, plasma environments, and condensed matter systems, see \cite{agrawal2000nonlinear,chen1984introduction,shankar2012principles,sulem2007nonlinear}. In the context of mathematics, its structural properties make it a influential model for analyzing questions of well-posedness, stability, and the long-term behavior of nonlinear dispersive wave equations.

The fractional Schr\"odinger equation extends the classical model by replacing the standard Laplacian with a fractional-order operator, typically defined via nonlocal or spectral formulations. This modification captures anomalous dispersion and long-range interactions that cannot be represented by local differential operators. As a result, the fractional model is particularly suitable for describing systems exhibiting L\'evy-type dynamics, nonlocal transport, or memory effects, see \cite{MR1755089, MR1948569, MR3059423, MR3692358, MR3636299}. From an analytical perspective, the fractional framework introduces additional challenges due to its nonlocal nature, leading to new phenomena in regularity theory, scattering behavior, and the qualitative properties of solutions.

Many dynamical systems arising in mathematics, physics, and engineering exhibit the presence of multiple time scales, see \cite{MR524817, MR1374108, MR5031242, MR5026697}. In such systems, some components evolve slowly and capture the macroscopic behavior, while others fluctuate rapidly and represent microscopic or highly oscillatory effects. Direct analysis or simulation of the full system is often difficult due to the interaction between these disparate scales. In many realistic situations, the interaction between multiple time scales is further influenced by random perturbations arising from environmental fluctuations, measurement uncertainty, or unresolved microscopic effects. In such stochastic multiscale systems, noise is not merely a secondary effect but can fundamentally alter the qualitative and long-time behavior of solutions, see \cite{MR3668579, MR4925941, MR4953418, MR4904518, MR5024936}. It may induce transitions between states, modify stability properties, and affect invariant measures and statistical equilibria. In the context of our multiscale stochastic fractional Schr\"odinger model, the presence of multiplicative noise in both the slow and fast components reflects intrinsic randomness in the medium and external forcing. Understanding how these stochastic perturbations interact with the separation of time scales is therefore essential for capturing the effective macroscopic dynamics through an appropriate averaging principle.

The concept of averaging originated in classical mechanics as a technique for studying systems with small perturbations and multiple time scales, see \cite{MR547943, MR1956518, MR189312}. It has since evolved into a fundamental analytical tool for ordinary and partial differential equations, as well as stochastic dynamical systems, see \cite{MR686231, MR2316999, MR524817}. In modern terms, the averaging principle asserts that for systems with coupled fast and slow components depending on a small parameter, the slow variable converges-over finite time intervals-to the solution of an effective equation as the parameter tends to zero. This effective equation is obtained by averaging the interaction terms with respect to the long-term statistical behavior of the fast dynamics. Consequently, the detailed fast fluctuations need not be tracked, their cumulative influence is captured by a single averaged term, which governs the effective slow evolution under suitable conditions.

In stochastic and infinite-dimensional settings, the principle becomes more delicate but remains fundamental for model reduction. When the fast subsystem is ergodic and admits a unique invariant measure, its rapid fluctuations influence the slow component only through averaged statistical effects. Consequently, one obtains an effective evolution equation that reflects the dominant large-scale behavior while filtering out microscopic oscillations. Establishing such results rigorously not only provides a justification for reduced models but also deepens the understanding of how multiscale randomness shapes the observable dynamics of complex systems.

\subsection{Literature review}
The stochastic nonlinear Schr\"odinger equation has been studied from various kinds of aspects, see \cite{MR1706888, MR1696311, MR1954077, MR3215081, MR3232027, MR3980316, MR3859442}. Compare to that, the stochastic nonlinear fractional Schr\"odinger equation remains relatively less developed . In particular, the wellposedness, blow-up ciretria and existence of invariant measure have been studied for various forms of of fractional Schr\"odinger equation in \cite{MR3372864, MR4709547, MR3016651, MR4950332, MR4899400, MR4811663, MR2763340, zhang2023stochastic}.

The averaging principle for stochastic partial differential equations has been extensively studied in the literature. For the deterministic systems, the averaging principle is first studied by Bogoliubov and Mitropolsky in \cite{MR141845}. The averaging principle in stochastic setting is introduced by Khasminskii in \cite{MR260052}. Some of early results include \cite{MR2537194, MR2480788}, where an averaging principle is established for a broad class of stochastic reaction–diffusion systems. Fractional stochastic parabolic equations with two time scales on unbounded domains are treated in \cite{MR4426159}. The averaging principle for one-dimensional stochastic Burgers equations with slow–fast time scales is proved in \cite{MR3848236}, while slow–fast stochastic differential equations with time-dependent locally Lipschitz coefficients are considered in \cite{MR4047972}. More recently, averaging results for slow–fast stochastic PDEs with rough coefficients are obtained in \cite{MR4877362}. In \cite{MR4064032}, the averaging principle is studied for the stochastic real Ginzburg–Landau equation driven by an $\alpha$-stable process, and the stochastic Kuramoto–Sivashinsky equation with slow and fast time scales is investigated in \cite{MR3917783}. Some more works in this direction can be found in \cite{MR3556788, MR2831776, MR4997740, mohan2020averaging, MR4998963, MR4895740, MR4597485, MR4656981, MR2744917, MR3679916}.

Compared with these predominantly parabolic settings, averaging principles for stochastic Schr\"odinger-type equations, especially those involving fractional dispersion and multiscale structures, are considerably more delicate due to the lack of strong smoothing properties, which motivates further investigation in this direction.

In the context of the nonlinear Schr\"odinger equation, relatively less work has been done on the averaging principle. In \cite{MR3664698}, the authors have investigated an averaging principle for cubic nonlinear Schr\"odinger equations subject to rapidly oscillating potentials and external forces. The article  \cite{MR4087366} is devoted to the study of an averaging principle for a multiscale stochastic fractional Schr\"odinger equation and \cite{MR3800899} for a higher-order nonlinear Schr\"odinger equation with random fast oscillations. More recently, the averaging principle for a multiscale stochastic fractional Schr\"odinger–Korteweg–de Vries coupled system is obtained in \cite{MR4179788}. The equation of interest in \cite{MR4087366} has the following form:
\begin{align}\label{multiscalegao}
	\left\{
	\begin{aligned}
		&\mathrm{d} {u}^{\varepsilon} = \left[-i(-\Delta)^{\alpha} {u}^{\varepsilon} + F({u}^{\varepsilon}) + f({u}^{\varepsilon}, {v}^{\varepsilon}) \right]\mathrm{d}t + \sigma_1({u}^{\varepsilon}) \, \mathrm{d}B_1 && \text{in } \mathbb{T} \times [0,T],\\
		&\mathrm{d}{v}^{\varepsilon} = \frac{1}{\varepsilon} \left[(1 + i)\Delta {v}^{\varepsilon} + F({v}^{\varepsilon}) - \lambda {v}^{\varepsilon} + g({u}^{\varepsilon}, {v}^{\varepsilon}) \right]\mathrm{d}t + \frac{1}{\sqrt{\varepsilon}} \sigma_2({u}^{\varepsilon}, {v}^{\varepsilon}) \, \mathrm{d}B_2 && \text{in } \mathbb{T} \times [0,T],\\
		&{u}^{\varepsilon}(x,0) = {u}_0(x) && \text{in } \mathbb{T}, \\
		&{v}^{\varepsilon}(x,0) = {v}_0(x) && \text{ in } \mathbb{T},
	\end{aligned}
	\right.
\end{align}
where $F({\mathfrak{u}})=-(1+i \gamma) |{u}|^{2}{u}$ is the cubic polynomial nonlinearity and $(-\Delta)^{\alpha}$ is $2\alpha$-order fractional Laplacian operator with $\alpha \in (\tfrac{1}{2},1)$. 

In our case, the equation of interest is \eqref{multiscaleeq}, which in comparison to \eqref{multiscalegao} allows a general polynomial nonlinearity and consider the fractional operator in fast motion equation as well. The presence of the fractional operator weakens the smoothing and regularizing properties typically available for the parabolic operator in the fast motion equation, making it more difficult to establish uniform estimates and to analyze the ergodicity of the frozen fast subsystem.
\subsection{Novelties and difficulties} 
%
%
%
\indent In \cite{MR4087366}, the averaging principle is established for a multiscale stochastic fractional Schr\"odinger equation with cubic nonlinearity in which the slow component is governed by a fractional dispersive operator and the fast component is parabolic. The analysis crucially exploits the dissipative and smoothing properties of the fast stochastic heat equation and relies on the specific structure of the cubic term and the corresponding energy estimates.

In the present work, we consider a more general and challenging setting in which both the slow and the fast components are driven by fractional operators and involve general polynomial nonlinearities. The fractional nature of the fast dynamics substantially weakens the smoothing and regularizing properties typically available in the classical setting, thereby making it more difficult to establish uniform estimates and to analyze the ergodic behavior of the frozen fast subsystem.

Furthermore, the inclusion of general polynomial nonlinearities introduces additional analytical difficulties. In particular, it complicates the derivation of uniform a priori bounds and requires careful control of higher order growth terms arising from the coupling between the slow and fast variables. All technical lemmas required to address these issues are proved in \Cref{techlemmas}. 

A further difficulty stems from the fact that, unlike stochastic heat equations, fractional Schr\"odinger semigroups do not provide sufficient regularization to directly establish the time continuity required for the averaging procedure. To overcome this issue, we introduce a vanishing viscosity approximation for both components, which yields enhanced time regularity and uniform bounds independent of the viscosity parameter. These uniform estimates play a crucial role in implementing a Khasminskii-type time discretization scheme and in proving the strong convergence of the slow component toward the averaged equation. 
\subsection{Organization of the paper}
The paper is organized as follows. In \Cref{secass}, we present the assumptions and mathematical framework. First we introduce the notations in Subsection \ref{subsecmodelsetup}, then we state the necessary assumptions in Subsection \ref{main assumptions} and some preliminaries in Subsection \ref{subsecpreli}. \Cref{secmainresultandkey} is devoted to the main result (Subsection \ref{subsecmainresult}), the key strategies of the proof (Subsection \ref{keystrategy}) and some technical lemmas (Subsection \ref{techlemmas}). 

\Cref{secuniformes} focuses on establishing the uniform estimates for the approximated solutions. First, we established the uniform estimates for viscous approximated solution in \Cref{uniformestimates} and for viscous averaging solution in \Cref{viscousaveragingsolution}. Then, we estimate the error of viscous approximated solution from the original solution and the error between viscous averaging solution and averaging solution. 

In \Cref{secstepthree}, we prove the averaging proinciple for the viscous system \eqref{viscous}. First, we establish the H\"older continuity in time variable for the viscous approximated solution in Subsection \ref{subsecholdercts}. Next, we introduce the auxiliary processes in Subsection \ref{subsecdefofaux} and establish the error estmates of the auxiliary processes from viscous approximated solution in Subsection \ref{subsecerroraux}. Then, we estimate the error between the auxiliary processes and viscous averaging solution in Subsection \ref{subsecerrorviscous}.

Finally, in \Cref{secpfofmainresult}, we combine all the established uniform estimates, the error bounds and the averaging proinciple for the viscous system to prove our main result.
\section{Assumptions and mathematical framework}\label{secass}
\subsection{Model set up}\label{subsecmodelsetup}
We consider a filtered probability space $\mathfrak{U}=(\Omega, \mathcal{F}, \mathbb{F}, \mathbb{P})$, where \(\mathbb{F} = \{ \mathcal{F}_t \}_{t \geq 0}\), satisfying the usual conditions:
\begin{enumerate}
	\item $\mathbb{P}$ is complete on $(\Omega, \mathcal{F})$, 
	
	\item for each $t \ge 0$, $\mathcal{F}_t$ contains all $(\mathcal{F}, \mathbb{P})$-null sets,
	
	\item the filtration $\mathbb{F}$ is right continuous, that is, 
	\begin{align*}
		\mathcal{F}_t = \bigcap_{s > t} \mathcal{F}_s,
		\qquad \text{for all } t \ge 0.
	\end{align*}
\end{enumerate}


\noindent
We denote the space of all Lebesgue measurable square integrable functions on $\mathbb{T}$ by $L^2(\mathbb{T})$. We denote the inner product on $L^2(\mathbb{T})$ as $({\mathfrak{u}},{\mathfrak{v}})=\operatorname{Re}\int_{\mathbb{T}} {\mathfrak{u}} \bar{{\mathfrak{v}}} dx$ for any ${\mathfrak{u}}, {\mathfrak{v}} \in L^2(\mathbb{T})$. The norm in $ L^2(\mathbb{T})$ is defined as $\|{\mathfrak{u}}\|^2=({\mathfrak{u}},{\mathfrak{u}})$, for any ${\mathfrak{u}}\in L^2(\mathbb{T})$.
We denote by $H^1(\mathbb{T})$ the Sobolev space
\begin{align*}
	H^1(\mathbb{T}) 
	:= \left\{ {\mathfrak{u}} \in L^2(\mathbb{T}) : \partial_x {\mathfrak{u}} \in L^2(\mathbb{T}) \right\},
\end{align*}
endowed with the inner product
\begin{align*}
	({\mathfrak{u}}, {\mathfrak{v}})_{H^1}
	= \operatorname{Re} \int_{\mathbb{T}} 
	\left( {\mathfrak{u}} \overline{{\mathfrak{v}}} 
	+ \partial_x {\mathfrak{u}} \, \overline{\partial_x {\mathfrak{v}}} \right) dx.
\end{align*}
The space of all continuous functions ${\mathfrak{u}} : [0,T] \to H^1(\mathbb{T})$ is denoted by $C([0,T]; H^1(\mathbb{T}))$ and the norm is defined as
\begin{align*}
	\|{\mathfrak{u}}\|_{C([0,T];H^1)}
	:= \sup_{t \in [0,T]} \|{\mathfrak{u}}(t)\|_{H^1}.
\end{align*}
The space $L^2(\Omega, C([0,T]; H^1(\mathbb{T})))$ denotes the collection of all $H^1$-valued stochastic processes, namely
\begin{align*}
	L^2(\Omega, C([0,T]; H^1(\mathbb{T})))
	:= \Big\{ {\mathfrak{u}} : \Omega \to C([0,T]; H^1(\mathbb{T})) 
	\, \Big| \ 
	\mathbb{E} \Big[ \sup_{t \in [0,T]} 
	\|{\mathfrak{u}}(t)\|_{H^1}^2 \Big] < \infty \Big\}.
\end{align*}
\subsection{Standing hypothesis}\label{main assumptions}
To establish the strong convergence rate of the averaging principle for the multiscale SNLSE \eqref{multiscaleeq}, we consider the following assumptions on the operator $F,G$ and the noise coefficients $\Sigma_1, \Sigma_2$:
\vspace{0.1cm} 

\noindent \textbf{Assumption 1:} The maps $F,G: L^2(\mathbb{T}) \times L^2(\mathbb{T}) \to \mathbb{R}$ (they might be nonlinear) are globally Lipschitz continuous with linear growth rate. More precisely, for any ${\mathfrak{u}}, {\mathfrak{v}}, {\mathfrak{u}}_1, {\mathfrak{v}}_1, {\mathfrak{u}}_2, {\mathfrak{v}}_2 \in L^2(\mathbb{T})$, there exist positive numbers $L_F, L_G$ such that 
\begin{align*}
	&\|F({\mathfrak{u}},{\mathfrak{v}})\| \leq L_F(1 + \|{\mathfrak{u}}\| + \|{\mathfrak{v}}\|), \\
	&\|G({\mathfrak{u}},{\mathfrak{v}})\| \leq L_G(1 + \|{\mathfrak{u}}\| + \|{\mathfrak{v}}\|), \\
	&	\|F({\mathfrak{u}}_1, {\mathfrak{v}}_1) - F({\mathfrak{u}}_2, {\mathfrak{v}}_2)\| \leq L_F(\|{\mathfrak{u}}_1 - {\mathfrak{u}}_2\| + \|{\mathfrak{v}}_1 - {\mathfrak{v}}_2\|), \\
	&\|G({\mathfrak{u}}_1, {\mathfrak{v}}_1) - G({\mathfrak{u}}_2, {\mathfrak{v}}_2)\| \leq L_G(\|{\mathfrak{u}}_1 - {\mathfrak{u}}_2\| + \|{\mathfrak{v}}_1 - {\mathfrak{v}}_2\|).
\end{align*}
For any ${\mathfrak{u}} \in H^1(\mathbb{T})$, ${\mathfrak{v}} \in L^2(\mathbb{T})$,
\begin{align*}
	\|F({\mathfrak{u}},{\mathfrak{v}})\|_{H^1} &\leq L_F(1 + \|{\mathfrak{u}}\|_{H^1} + \|{\mathfrak{v}}\|).
\end{align*}
\textbf{Assumption 2:}
The noise coefficient $\Sigma_1: L^2(\mathbb{T})\to \mathbb{R}$ is globally Lipschitz with linear growth rate that is, for any ${\mathfrak{u}}, {\mathfrak{u}}_1, {\mathfrak{v}}_1, {\mathfrak{u}}_2, {\mathfrak{v}}_2 \in L^2(\mathbb{T})$ there exists a positive number $L_{\Sigma_1}$ such that 
\begin{align*}
	&	\|\Sigma_1({\mathfrak{u}})\| \leq L_{\Sigma_1}(1 + \|{\mathfrak{u}}\|), \\
	&	\|\Sigma_1({\mathfrak{u}}_1) - \Sigma_1({\mathfrak{u}}_2)\| \leq L_{\Sigma_1}(\|{\mathfrak{u}}_1 - {\mathfrak{u}}_2\| + \|{\mathfrak{v}}_1 - {\mathfrak{v}}_2\|), \\
\end{align*}
and for any ${\mathfrak{u}} \in H^1(\mathbb{T})$, ${\mathfrak{v}} \in L^2(\mathbb{T})$,
\begin{align*}
	\|\Sigma_1({\mathfrak{u}})\|_{H^1} &\leq L_{\Sigma_1}(1 + \|{\mathfrak{u}}\|_{H^1}).
\end{align*}
The noise coefficient $\Sigma_2: L^2(\mathbb{T})\times L^2(\mathbb{T}) \to \mathbb{R}$ is bounded and globally Lipschitz on the first variable, that is, for any ${\mathfrak{u}}, {\mathfrak{v}}, {\mathfrak{u}}_1, {\mathfrak{v}}_1, {\mathfrak{u}}_2, {\mathfrak{v}}_2 \in L^2(\mathbb{T})$, there exist positive constants $ L_{\Sigma_2}, M$ such that
\begin{align*}
	&	\|\Sigma_2({\mathfrak{u}},{\mathfrak{v}})\| \leq M, \\
	&	\|\Sigma_2({\mathfrak{u}}_1, {\mathfrak{v}}_1) - \Sigma_2({\mathfrak{u}}_2, {\mathfrak{v}}_2)\| \leq L_{\Sigma_2}\|{\mathfrak{u}}_1 - {\mathfrak{u}}_2\|.
\end{align*}
For technical evaluation in proving \Cref{diffnonrenega} and \Cref{propinnernegative}, which are crucial for the subsequent analysis, we impose the following assumption:\\
\textbf{Assumption 3:} We also assume $\alpha, \rho \in \left(\tfrac{1}{2}, 1\right)$, $|\gamma| \leq \frac{2\sqrt{\beta}}{\beta-1}$ and $\lambda > 3L_G + 2L_{\Sigma_2}^2$, where $\alpha, \rho,\beta, \gamma, \lambda$ are appearing in \eqref{multiscaleeq}.

\subsection{Preliminaries}\label{subsecpreli}
In this subsection, we gather the basic definitions and preliminary results that will be used throughout the sequel.
%
\begin{lem}[The Young inequality]\label{Youngs inequality}
	Let $x,y \in [0,\infty)$ and $\eta>0$. Then 
	\begin{align*}
		xy\leq \eta x^p+ \frac{1}{(\eta p)^{\frac{q}{p}}q}y^q,
	\end{align*}
	where $1<p<\infty, \frac{1}{p}+\frac{1}{q}+1.$
\end{lem}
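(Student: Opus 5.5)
The statement has a typo: the condition should read $\frac{1}{p}+\frac{1}{q}=1$. I will prove it with that reading. The plan is to reduce it to the classical Young inequality $ab\leq \frac{a^p}{p}+\frac{b^q}{q}$ for $a,b\geq 0$. I take that inequality as known; if a proof is wanted, it follows from concavity of $\log$ on $(0,\infty)$ applied to $\log\bigl(\tfrac1p a^p+\tfrac1q b^q\bigr)\geq \tfrac1p\log a^p+\tfrac1q\log b^q$, with the cases $a=0$ or $b=0$ trivial.

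Given $\eta>0$, I rescale with the factor $(\eta p)^{1/p}$. Set
\begin{align*}
a=(\eta p)^{\frac1p}x,\qquad b=(\eta p)^{-\frac1p}y .
\end{align*}
Then $ab=xy$, and the classical inequality gives
\begin{align*}
xy\leq \frac{(\eta p)\,x^p}{p}+\frac{(\eta p)^{-\frac qp}y^q}{q}=\eta x^p+\frac{1}{(\eta p)^{\frac qp}\,q}\,y^q,
\end{align*}
which is exactly the claimed bound.

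No step is a real obstacle. The only point needing care is choosing the exponent $-q/p$ for the factor on $y$ so that the constant comes out as $(\eta p)^{-q/p}q^{-1}$. One checks this from $\bigl((\eta p)^{-1/p}\bigr)^q=(\eta p)^{-q/p}$.
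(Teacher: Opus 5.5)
Your proof is correct, and you read the typo correctly as $\frac1p+\frac1q=1$: substituting $a=(\eta p)^{1/p}x$ and $b=(\eta p)^{-1/p}y$ into the classical inequality $ab\le \frac{a^p}{p}+\frac{b^q}{q}$ gives exactly the stated constants $\eta$ and $(\eta p)^{-q/p}q^{-1}$. The paper gives no proof of this lemma and quotes it as a standard preliminary; your rescaling argument is the usual derivation of this $\eta$-weighted form.
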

\begin{lem}[Burkholder-Davis-Gundy inequality, {\cite[Theorem 1]{MR3463679}}]\label{BurkholderDavisGundy inequality}
	Let $\{W(t)\}_{t\geq0}$ be a standard Brownian motion and $F$ be an $L^2$-valued continuous $\mathbb{F}$-local martingale. Then, for any $p>0$, there exists a constant $C(p)>0$ such that for any stopping time $\tau$,
	\begin{align*}
		\mathbb{E}\Big[\sup_{0\le t\le \tau}\Big|\int_0^t F(s)\,dW\Big|^p\Big]
		\le C(p)\,\mathbb{E}\left(\int_0^\tau F^2(s)\,ds\right)^{\frac{p}{2}}.
	\end{align*}
\end{lem}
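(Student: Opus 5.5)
This is the Burkholder–Davis–Gundy inequality, quoted from \cite[Theorem 1]{MR3463679}. The plan is the classical route for a real-valued continuous local martingale $M_t=\int_0^{t\wedge\tau}F(s)\,dW$, whose quadratic variation is $\langle M\rangle_t=\int_0^{t\wedge\tau}F^2(s)\,ds$. The argument is carried out first for $M$ and its running maximum $M^*_t=\sup_{s\le t}|M_s|$, and the statement follows at $t=\tau$. Two reductions come first. By localization with the stopping times $\tau_n=\inf\{t:|M_t|+\langle M\rangle_t\ge n\}\wedge\tau$, it suffices to treat bounded martingales, since monotone convergence passes to the limit on both sides. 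Stopping at $\tau$ is harmless, because $M^\tau$ is again a continuous local martingale with $\langle M^\tau\rangle=\langle M\rangle^\tau$.

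For $p\ge2$ I would apply It\^o's formula to $|M_t|^p$. The martingale part has zero expectation, which leaves
\begin{align*}
\mathbb{E}|M_\tau|^p=\tfrac{p(p-1)}{2}\mathbb{E}\int_0^\tau |M_s|^{p-2}\,d\langle M\rangle_s\le \tfrac{p(p-1)}{2}\mathbb{E}\big[(M^*_\tau)^{p-2}\langle M\rangle_\tau\big].
\end{align*}
Doob's maximal inequality gives $\mathbb{E}(M^*_\tau)^p\le (p/(p-1))^p\,\mathbb{E}|M_\tau|^p$. H\"older's inequality with exponents $p/(p-2)$ and $p/2$ then yields $\mathbb{E}(M^*)^p\le C\,(\mathbb{E}(M^*)^p)^{(p-2)/p}(\mathbb{E}\langle M\rangle_\tau^{p/2})^{2/p}$. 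Dividing through, which is legitimate because localization makes the left side finite, gives the bound. The case $p=2$ is Doob's inequality combined with the It\^o isometry.

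The case $0<p<2$ is the main obstacle, since Doob's inequality is not available for $p\le1$. Here I would use Lenglart's domination inequality. The process $(M^*)^2$ is dominated by $\langle M\rangle$ in Lenglart's sense, because $\mathbb{E}(M^*_\sigma)^2\le 4\,\mathbb{E}\langle M\rangle_\sigma$ for every bounded stopping time $\sigma$ by the $p=2$ case. Lenglart's inequality then gives $\mathbb{E}(M^*_\tau)^{2q}\le \frac{2-q}{1-q}\mathbb{E}\langle M\rangle_\tau^{q}$ for $q=p/2\in(0,1)$. If one wants to avoid citing Lenglart, its proof is a good-$\lambda$ argument: $\mathbb{P}(M^{*2}_\tau>x)\le \frac1x\mathbb{E}[\langle M\rangle_\tau\wedge y]+\mathbb{P}(\langle M\rangle_\tau>y)$, followed by integration against $q x^{q-1}dx$.

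Combining the two ranges gives the inequality for all $p>0$, with a constant depending only on $p$. The $L^2$-valued formulation in the statement reduces to the scalar one, because the noise is real-valued: with the inner product $(\cdot,\cdot)$ of Subsection \ref{subsecmodelsetup}, one applies the same It\^o computation to the real process $\|M_t\|^2$, whose quadratic variation involves $\|F\|^2$ in place of $F^2$. One could also simply cite the Hilbert-space version in \cite{MR3463679}.
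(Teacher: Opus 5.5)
The paper gives no proof of this lemma. It imports it from \cite[Theorem 1]{MR3463679} and uses it as a black box. You instead reconstruct the standard self-contained argument, and it is correct: localization; It\^o's formula for $|M|^p$ with Doob's $L^p$ inequality and H\"older for $p\ge 2$; Lenglart's domination inequality for $0<p<2$.

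\textbf{What your route buys.} It makes the hypotheses and scope explicit.
\begin{itemize}
\item What is actually needed is a progressively measurable integrand $F$ with $\int_0^\tau F^2\,ds<\infty$ a.s.; the statement's description of $F$ itself as a local martingale is beside the point.
\item The inequality controls only the running maximum of the local martingale $t\mapsto\int_0^{t\wedge\tau}F\,dW$. It therefore does not by itself cover $\sup_t\|\int_0^tS_\nu(t-s)\Phi(s)\,d\mathcal{W}_1(s)\|$. That is a stochastic convolution, not a martingale in $t$, yet the lemma is later invoked this way for $J_{31}$ and $J_{32}$; a maximal inequality for stochastic convolutions with contraction semigroups is what is needed there.
\end{itemize}

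\textbf{What the citation buys.} Brevity, and the Hilbert-space version at once. Your reduction of the $L^2(\mathbb{T})$-valued case nonetheless works verbatim:
\begin{itemize}
\item It\^o's formula for $\|M\|^p$ has drift $\tfrac{p}{2}\|M\|^{p-2}\|F\|^2+\tfrac{p(p-2)}{2}\|M\|^{p-4}(M,F)^2\le\tfrac{p(p-1)}{2}\|M\|^{p-2}\|F\|^2$.
\item $\|M\|$ is a real submartingale, so Doob applies.
\item $\|M\|^2-\int_0^{\cdot}\|F\|^2\,ds$ is a local martingale, which is what Lenglart requires.
\end{itemize}

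\textbf{Two small slips, neither affecting the existence of $C(p)$.}
\begin{itemize}
\item With $X=(M^*)^2$ dominated by $4\langle M\rangle$, Lenglart gives the constant $4^q\tfrac{2-q}{1-q}$, not $\tfrac{2-q}{1-q}$. To get the latter, take $X=M^2$, which is dominated by $\langle M\rangle$ with constant $1$ and satisfies $X^*=(M^*)^2$.
\item The tail bound you display is Lenglart's inequality itself, proved by stopping when $\langle M\rangle$ first exceeds $y$. It is not a good-$\lambda$ argument.
\end{itemize}
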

	\begin{lem}[The Gr\"onwall inequality, {\cite[P-625]{MR2597943}}]\label{Gronwalls inequality}
		Let $v(t)$ be a nonnegative integrable function on $(0,T)$ that satisfies 
		\begin{align*}
			v(t)\le C_1\int_0^t v(s)\,ds + C_2,
		\end{align*}
		for a.e. $t$ and $C_1,C_2\geq0$. Then we have
		\begin{align*}
			v(t)\le C_2\bigl(1 + C_1 t e^{C_1 t}\bigr),
		\end{align*}
		for a.e. $t\in(0,T)$.
	\end{lem}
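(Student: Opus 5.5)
The plan is to argue by iteration: feed the hypothesis into itself and sum the resulting exponential series. For a.e. $t\in(0,T)$ write $v(t)\le C_1\int_0^t v(s)\,ds + C_2$. If $C_1=0$ the claim is immediate, so assume $C_1>0$. The standard device is to work with the integrated quantity
\begin{align*}
	w(t):=\int_0^t v(s)\,ds .
\end{align*}
Since $v$ is nonnegative and integrable, $w$ is absolutely continuous, nondecreasing, and satisfies $w'(t)=v(t)$ for a.e. $t$. The hypothesis then reads $w'(t)\le C_1 w(t)+C_2$ a.e. Multiplying by the integrating factor $e^{-C_1 t}$ gives $\frac{d}{dt}\bigl(e^{-C_1 t}w(t)\bigr)\le C_2e^{-C_1 t}$ a.e. Because $e^{-C_1 t}w(t)$ is absolutely continuous, this inequality can be integrated from $0$ to $t$, and using $w(0)=0$ we get
\begin{align*}
	w(t)\le \frac{C_2}{C_1}\bigl(e^{C_1 t}-1\bigr).
\end{align*}

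Substituting back into the hypothesis yields, for a.e. $t$,
\begin{align*}
	v(t)\le C_1 w(t)+C_2\le C_2 e^{C_1 t}.
\end{align*}
It remains to compare this with the stated form $C_2(1+C_1te^{C_1t})$. By the mean value theorem, $e^{C_1 t}-1=C_1 t e^{C_1\xi}$ for some $\xi\in(0,t)$, hence $e^{C_1t}-1\le C_1te^{C_1t}$. Therefore $C_2e^{C_1t}\le C_2\bigl(1+C_1te^{C_1t}\bigr)$, which is the claimed bound.

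The only step requiring care is justifying the integrating-factor argument when $v$ is merely integrable and the inequality holds only almost everywhere. This is handled by the absolute continuity of $w$ (Lebesgue differentiation theorem), which makes the product with the smooth factor $e^{-C_1t}$ absolutely continuous, so the a.e. derivative inequality integrates legitimately. If one prefers to avoid differentiation altogether, the alternative is to iterate the inequality $n$ times to obtain
\begin{align*}
	v(t)\le C_2\sum_{k=0}^{n-1}\frac{(C_1t)^k}{k!}+C_1^n\int_0^t\frac{(t-s)^{n-1}}{(n-1)!}v(s)\,ds .
\end{align*}
The remainder tends to zero because $v\in L^1$, and the same exponential bound follows.
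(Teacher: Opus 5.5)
Your proof is correct and follows essentially the argument of the cited source (Evans, Appendix B), which the paper does not reproduce. That argument passes to $w(t)=\int_0^t v(s)\,ds$, applies the differential Gr\"onwall inequality to $w'\le C_1w+C_2$, and substitutes the result back into the hypothesis. The only difference is that Evans bounds $w(t)\le C_2te^{C_1t}$, which yields the stated form directly. You instead solve the linear inequality exactly, obtaining the slightly sharper $v(t)\le C_2e^{C_1t}$, and then weaken it via $e^{C_1t}-1\le C_1te^{C_1t}$.
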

	\begin{defi}[Fractional Laplacian]\label{deffraclap}
		If $v$ can be expressed by the Fourier series $v = \sum_{k \in \mathbb{Z}} v_k e^{ikx},$ the fractional laplacian operator $(-\Delta)^{\alpha}$ is defined as
		\begin{align*}
			(-\Delta)^{\alpha} v := \sum_{k \in \mathbb{Z}} |k|^{2\alpha} v_k e^{ikx}.
		\end{align*}
	\end{defi}
	\begin{lem}\label{fractionallemma}
		If $h,k \in H^{2\alpha}(\mathbb{T})$, then
		\begin{align*}
			\int_{\mathbb{T}} (-\Delta)^{\alpha} h \cdot k \, dx
			= \int_{\mathbb{T}} (-\Delta)^{\alpha_1} h \cdot (-\Delta)^{\alpha_2} k \, dx,
		\end{align*}
		where $\alpha_1,\alpha_2$ are nonnegative constants satisfying $\alpha_1 + \alpha_2 = \alpha.$
	\end{lem}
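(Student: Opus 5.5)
The plan is to pass to Fourier coefficients and use that $(-\Delta)^{\alpha}$ is a real diagonal multiplier. For $h,k\in H^{2\alpha}(\mathbb{T})$ write
\[
h=\sum_{m\in\mathbb{Z}} h_m e^{imx}, \qquad k=\sum_{m\in\mathbb{Z}} k_m e^{imx}.
\]
By \Cref{deffraclap}, $(-\Delta)^{\alpha}h=\sum_m |m|^{2\alpha}h_m e^{imx}$. Since $h\in H^{2\alpha}$, the sequence $(|m|^{2\alpha}h_m)$ is square summable, so this series converges in $L^2(\mathbb{T})$.

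The integral $\int_{\mathbb{T}} f\cdot g\,dx$ should be read through the paper's pairing, that is, as $\int f\bar g$ or its real part. By Parseval's identity,
\[
\int_{\mathbb{T}} (-\Delta)^{\alpha}h\,\overline{k}\,dx = 2\pi\sum_{m}|m|^{2\alpha}h_m\overline{k_m}.
\]
Parseval applies because both factors lie in $L^2$: $(-\Delta)^\alpha h\in L^2$ since $h\in H^{2\alpha}$, and $k\in L^2$.

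Next I split the multiplier. For $\alpha_1+\alpha_2=\alpha$ with $\alpha_1,\alpha_2\ge 0$, we have $|m|^{2\alpha}=|m|^{2\alpha_1}|m|^{2\alpha_2}$. Moreover $(-\Delta)^{\alpha_2}k$ has coefficients $|m|^{2\alpha_2}k_m$, and these are real multiples of $k_m$, so conjugation commutes with the multiplier. Applying Parseval once more gives
\[
2\pi\sum_m |m|^{2\alpha_1}h_m\,\overline{|m|^{2\alpha_2}k_m}=\int_{\mathbb{T}}(-\Delta)^{\alpha_1}h\,\overline{(-\Delta)^{\alpha_2}k}\,dx.
\]
Here $(-\Delta)^{\alpha_i}$ maps $H^{2\alpha}$ into $H^{2\alpha-2\alpha_i}\subset L^2$, so both factors are again in $L^2$. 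Taking real parts, or not conjugating if the pairing is bilinear, gives the identity. In the bilinear case without conjugation, Parseval pairs $h_m$ with $k_{-m}$; since $|{-m}|=|m|$, the same argument goes through.

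I expect no real obstacle. The only care needed is to justify absolute convergence of the coefficient series, which follows from Cauchy–Schwarz using $\sum (1+|m|^2)^{2\alpha}|h_m|^2<\infty$ and the same bound for $k$.
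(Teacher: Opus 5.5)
Your proof is correct and complete. You diagonalize $(-\Delta)^{\alpha}$ in the Fourier basis, split the real multiplier as $|m|^{2\alpha}=|m|^{2\alpha_1}|m|^{2\alpha_2}$, and apply Parseval to pairs of $L^2$ functions, which gives the identity for either reading of the pairing. The paper states this lemma without proof, treating it as standard, so your computation supplies the missing justification; the one convention worth stating explicitly is $|0|^{0}=1$, so that $(-\Delta)^{0}=I$ when $\alpha_1$ or $\alpha_2$ vanishes.
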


\section{Statement of the main result and key strategies of the proof}\label{secmainresultandkey}
In this section, we state the main result and give a brief sketch of the proof. Then we establish some technical lemmas, also which will be used later in the consequent analysis.
\subsection{Main result}\label{subsecmainresult}
\begin{thm}\label{mainresult}
	Under the assumptions in \Cref{main assumptions}, let $({\mathfrak{u}}_0, {\mathfrak{v}}_0) \in H^1(\mathbb{T}) \times H^1(\mathbb{T})$. Let $({\mathfrak{u}}^\varepsilon, {\mathfrak{v}}^\varepsilon)$ denote the solution of \eqref{multiscaleeq} and $\bar{{\mathfrak{u}}}$ be the solution of the corresponding effective dynamics equation
	\begin{equation}\label{averagedequation}
		\left\{
		\begin{aligned}
			&\mathrm{d}\bar{{\mathfrak{u}}} = [-i(-\Delta)^{\alpha} \bar{{\mathfrak{u}}} +\mathcal{N}(\bar{{\mathfrak{u}}})+ \bar{F}(\bar{{\mathfrak{u}}})] \mathrm{d}t + \Sigma_1(\bar{{\mathfrak{u}}}) \, \mathrm{d}\mathcal{W}_1 \quad && \text{in } \mathbb{T} \times [0,T], \\
			&\bar{{\mathfrak{u}}}(x,0) = {\mathfrak{u}}_0(x) \quad &&\text{in } \mathbb{T}.
		\end{aligned}
		\right. 
	\end{equation}
	Then, for any $T > 0$, we have the following convergence:
	\begin{equation*}
		\lim_{\varepsilon \to 0} \sup_{0 \leq t \leq T} \mathbb{E} \big[ \|{\mathfrak{u}}^\varepsilon(t) - \bar{{\mathfrak{u}}}(t)\|^2\big] = 0,
	\end{equation*}
	where the average drift is defined by 
	\begin{equation}\label{fbar}
		\bar{F}({\mathfrak{u}}) := \int_{L^2(\mathbb{T})} F({\mathfrak{u}},{\mathfrak{v}}) \, \mu^{\mathfrak{u}}(\mathrm{d}{\mathfrak{v}}).
	\end{equation}
	Here $\mu^{\mathfrak{u}}$ is the unique invariant measure associated with the fast dynamics corresponding to the frozen slow variable
	\begin{equation}
		\left\{
		\begin{aligned}
			&\mathrm{d}{\mathfrak{v}} = [(1+i)(-\Delta)^{\rho} {\mathfrak{v}} + \mathcal{N}({\mathfrak{v}}) - \lambda {\mathfrak{v}} + G({\mathfrak{u}},{\mathfrak{v}})] \mathrm{d}t + \Sigma_2({\mathfrak{u}},{\mathfrak{v}}) \, \mathrm{d}\mathcal{W}_2 \quad &&\text{in } \mathbb{T} \times [0,T], \\
			&{\mathfrak{v}}(x,0) = {\mathfrak{v}}_0(x) \quad &&\text{in } \mathbb{T},
		\end{aligned}
		\right.
	\end{equation}
	where ${\mathfrak{u}} \in L^2(\mathbb{T})$.
\end{thm}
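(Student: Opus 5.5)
The plan follows the strategy announced in the introduction: a vanishing viscosity regularization combined with a Khasminskii time discretization. For $\nu>0$ we introduce the viscous system obtained from \eqref{multiscaleeq} by adding $\nu\Delta\mathfrak{u}^{\varepsilon,\nu}$ to the slow drift (and, if needed, to the fast drift). We also introduce the viscous averaged equation obtained from \eqref{averagedequation} by adding $\nu\Delta\bar{\mathfrak{u}}^{\nu}$. The error then splits as
\begin{align*}
\sup_{t\le T}\mathbb{E}\|\mathfrak{u}^\varepsilon(t)-\bar{\mathfrak{u}}(t)\|^2\le 3\sup_{t\le T}\mathbb{E}\big[\|\mathfrak{u}^\varepsilon-\mathfrak{u}^{\varepsilon,\nu}\|^2+\|\mathfrak{u}^{\varepsilon,\nu}-\bar{\mathfrak{u}}^{\nu}\|^2+\|\bar{\mathfrak{u}}^{\nu}-\bar{\mathfrak{u}}\|^2\big](t).
\end{align*}
We will show that the first and third terms are bounded by $C\nu^{\theta}$ uniformly in $\varepsilon$. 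We will show that the middle term is bounded by $C_\nu\,\omega(\varepsilon)$ with $\omega(\varepsilon)\to0$. Letting first $\varepsilon\to0$ and then $\nu\to0$ proves the theorem.

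\textbf{Step 1: uniform a priori estimates.} By Itô's formula for $\|\cdot\|^2$ and $\|\cdot\|_{H^1}^2$, and using \Cref{fractionallemma}, the skew term $-i(-\Delta)^\alpha$ drops out of the real inner product. The nonlinearity $\mathcal N$ is dissipative: $(\mathcal N(u),u)=-\|u\|_{L^{\beta+1}}^{\beta+1}$. For the $H^1$ estimate we need $\operatorname{Re}\int\mathcal N(u)_x\bar u_x\le0$, and this is exactly where the condition $|\gamma|\le 2\sqrt\beta/(\beta-1)$ from Assumption 3 enters. Together with Assumptions 1--2, \Cref{BurkholderDavisGundy inequality} and \Cref{Gronwalls inequality}, this yields
\begin{align*}
\sup_{\varepsilon,\nu}\mathbb{E}\Big[\sup_{t\le T}\|\mathfrak{u}^{\varepsilon,\nu}(t)\|_{H^1}^2\Big]+\sup_{\varepsilon,\nu}\sup_{t\le T}\mathbb{E}\|\mathfrak{v}^{\varepsilon,\nu}(t)\|^2\le C.
\end{align*}
For the fast variable, the factor $1/\varepsilon$ is absorbed because $\lambda>3L_G+2L_{\Sigma_2}^2$ gives an exponential decay rate of order $\varepsilon^{-1}$. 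The same estimates hold for $\bar{\mathfrak u}^\nu$.

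For the frozen equation we use the monotonicity of $\mathcal N$ (the same $\gamma$-condition gives $\operatorname{Re}(\mathcal N(v_1)-\mathcal N(v_2),v_1-v_2)\le0$) together with $\lambda>3L_G+2L_{\Sigma_2}^2$. This yields exponential contraction $\mathbb E\|v^{u,v_1}(t)-v^{u,v_2}(t)\|^2\le e^{-\eta t}\|v_1-v_2\|^2$. Existence, uniqueness of $\mu^u$ and the mixing bound
\begin{align*}
\|\mathbb E F(u,v^{u,v_0}(t))-\bar F(u)\|\le Ce^{-\eta t/2}(1+\|u\|+\|v_0\|)
\end{align*}
follow from this contraction. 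We also get Lipschitz continuity of $\bar F$, using that $G$ and $\Sigma_2$ are Lipschitz in $u$. Hence \eqref{averagedequation} is well posed.

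\textbf{Step 2: viscosity errors.} For $\mathfrak u^\varepsilon-\mathfrak u^{\varepsilon,\nu}$ we apply Itô to the $L^2$ norm of the difference. The extra term is $\nu(\Delta\mathfrak u^{\varepsilon,\nu},\mathfrak u^{\varepsilon}-\mathfrak u^{\varepsilon,\nu})$, which is bounded by $\nu\|\mathfrak u^{\varepsilon,\nu}\|_{H^1}\|\mathfrak u^\varepsilon\|_{H^1}$ plus a nonpositive term; its expectation is $O(\nu)$ by Step 1. The nonlinear difference is nonpositive by monotonicity. The fast difference must be controlled simultaneously, which uses the contraction in $\lambda$ weighted by $1/\varepsilon$. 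Gronwall then gives $O(\nu)$. The same argument handles $\bar{\mathfrak u}^\nu-\bar{\mathfrak u}$.

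\textbf{Step 3: Khasminskii scheme for the viscous system.} First, the viscous term gives Hölder regularity
\begin{align*}
\mathbb E\|\mathfrak u^{\varepsilon,\nu}(t+h)-\mathfrak u^{\varepsilon,\nu}(t)\|^2\le C_\nu h^{1/2},
\end{align*}
obtained from the mild form with the analytic semigroup generated by $\nu\Delta-i(-\Delta)^\alpha$ and the $H^1$ bounds. Next, partition $[0,T]$ into intervals of length $\delta$ and define the auxiliary fast process $\hat{\mathfrak v}^\varepsilon$ on each interval with slow variable frozen at $\mathfrak u^{\varepsilon,\nu}(k\delta)$, and the auxiliary slow process $\hat{\mathfrak u}^\varepsilon$ driven by $F(\mathfrak u^{\varepsilon,\nu}(k\delta),\hat{\mathfrak v}^\varepsilon)$. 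Using the Hölder bound and the exponential stability of the fast flow, $\mathbb E\|\mathfrak v^{\varepsilon,\nu}-\hat{\mathfrak v}^\varepsilon\|^2\le C_\nu\delta^{1/2}$ and $\mathbb E\|\mathfrak u^{\varepsilon,\nu}-\hat{\mathfrak u}^\varepsilon\|^2\le C_\nu\delta^{1/2}$.

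The key term is
\begin{align*}
\mathbb E\Big\|\int_0^t S_\nu(t-s)\big[F(\mathfrak u^{\varepsilon,\nu}(s_\delta),\hat{\mathfrak v}^\varepsilon(s))-\bar F(\mathfrak u^{\varepsilon,\nu}(s_\delta))\big]ds\Big\|^2.
\end{align*}
Expanding on each block and rescaling time by $\varepsilon$, each block identifies with the frozen equation on $[0,\delta/\varepsilon]$. The mixing bound from Step 1, together with the Markov property for the cross terms, gives a bound $C_\nu(\varepsilon/\delta+\delta^{1/2})$. Choosing $\delta=\varepsilon^{1/2}$ (any $\delta\to0$ with $\delta/\varepsilon\to\infty$ works), Gronwall for $\hat{\mathfrak u}^\varepsilon-\bar{\mathfrak u}^\nu$ closes the middle term with rate $C_\nu\varepsilon^{1/4}$.

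\textbf{Main obstacle.} The hardest part is the correlation estimate in this last step. It requires a semigroup with enough regularity to commute the block sums past $S_\nu(t-s)$, and time regularity of $\mathfrak u^{\varepsilon,\nu}$. Both fail for the pure dispersive group, which is why the constant $C_\nu$ may blow up as $\nu\to0$. The order of limits ($\varepsilon\to0$ first, then $\nu\to0$) is essential, and Step 2 must be genuinely uniform in $\varepsilon$.
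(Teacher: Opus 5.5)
Your architecture matches the paper's: viscous regularization of the slow equation, the three-term split with viscosity errors uniform in $\varepsilon$, Khasminskii blocks, and the Markov/mixing estimate for the fluctuation term. One of your steps is actually sharper. You use $\lambda>3L_G+2L_{\Sigma_2}^2$ in the equation for $\mathfrak v^{\varepsilon,\nu}-\hat{\mathfrak v}^{\varepsilon}$, which gives a bound uniform in $\delta/\varepsilon$. That removes the factor $e^{C\delta/\varepsilon}$ in \Cref{uuhatvvhaterror} and legitimately allows $\delta=\varepsilon^{1/2}$.

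The gap is the last sentence of Step 3, ``Gronwall for $\hat{\mathfrak u}^\varepsilon-\bar{\mathfrak u}^\nu$ closes.'' You estimate the key term in mild form, so the difference also contains $\int_0^t S_\nu(t-s)[\mathcal N(\hat{\mathfrak u}^\varepsilon(s))-\mathcal N(\bar{\mathfrak u}^\nu(s))]\,ds$. Since $\mathcal N$ is only locally Lipschitz, the best available bound is
\[
\|\mathcal N(\hat{\mathfrak u}^\varepsilon)-\mathcal N(\bar{\mathfrak u}^\nu)\|\le C\big(\|\hat{\mathfrak u}^\varepsilon\|_{H^1}+\|\bar{\mathfrak u}^\nu\|_{H^1}\big)^{\beta-1}\|\hat{\mathfrak u}^\varepsilon-\bar{\mathfrak u}^\nu\|,
\]
whose coefficient is random and unbounded. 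Taking expectations with H\"older pushes the error into ever higher moments, so no closed Gronwall inequality for a fixed moment results. A pathwise Gronwall is also blocked: the stochastic convolution depends on the unknown difference, and only polynomial moments of the $H^1$ norms are available. The monotonicity of \Cref{diffnonrenega}, which you used in Step 2, does not help here. In the mild formulation the nonlinear difference is never paired with $\hat{\mathfrak u}^\varepsilon-\bar{\mathfrak u}^\nu$. As written, the step does not go through, and the rate $C_\nu\varepsilon^{1/4}$ is not justified.

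The paper repairs this by localization.
\begin{itemize}
\item It stops at $\tau_n$, the first time $\|\hat{\mathfrak u}\|_{H^1}+\|\bar{\mathfrak u}^\nu\|_{H^1}$ exceeds $n$.
\item It runs Gronwall on $[0,T\wedge\tau_n]$ with constant $Cn^{2p(\beta-1)}$.
\item It controls the complement by Cauchy--Schwarz and $\mathbb P(\tau_n<T)\le C/n$. This needs higher moment bounds than the second moments you state in Step 1.
\item It then lets $n=n(\varepsilon)\to\infty$ slowly, jointly with $\delta(\varepsilon)$.
\end{itemize}
Combined with your improved $\hat{\mathfrak v}$ bound, this still gives $\omega(\varepsilon)\to0$, but only at a logarithmic rate.

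Alternatively, apply It\^o's formula to $\|\hat{\mathfrak u}^\varepsilon-\bar{\mathfrak u}^\nu\|^2$, with $\mathcal N(\hat{\mathfrak u}^\varepsilon)$ in the auxiliary drift, so the nonlinear difference drops out by monotonicity. The fluctuation then enters as $\int_0^t\big(F(\mathfrak u^{\varepsilon,\nu}(s_\delta),\hat{\mathfrak v}^\varepsilon(s))-\bar F(\mathfrak u^{\varepsilon,\nu}(s_\delta)),\hat{\mathfrak u}^\varepsilon(s)-\bar{\mathfrak u}^\nu(s)\big)\,ds$. Your semigroup-based correlation estimate must then be replaced: freeze the second factor at $s_\delta$, using time regularity of both $\hat{\mathfrak u}^\varepsilon$ and $\bar{\mathfrak u}^\nu$, and condition on $\mathcal F_{k\delta}$ to apply the mixing bound blockwise.
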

\begin{prop}
	For any fixed $\varepsilon\in (0,1)$ and initial data $({\mathfrak{u}}_0,{\mathfrak{v}}_0) \in H^1(\mathbb{T})\times H^1(\mathbb{T})$, \eqref{multiscaleeq} has a unique solution $({\mathfrak{u}}^{\varepsilon},{\mathfrak{v}}^{\varepsilon})\in L^2(\Omega, C([0,T];L^2(\mathbb{T})))\times L^2(\Omega, C([0,T];L^2(\mathbb{T}))) $.
\end{prop}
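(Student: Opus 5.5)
We prove well-posedness for fixed $\varepsilon$ by a truncation plus a priori estimate argument: monotonicity of $\mathcal{N}$ handles uniqueness, and the Lipschitz assumptions handle the remaining coefficients. Throughout, $\varepsilon\in(0,1)$ is fixed, so the factors $1/\varepsilon$ and $1/\sqrt{\varepsilon}$ only affect constants.

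First we construct local mild solutions with a cut-off. For $R>0$, let $\theta_R$ be a smooth cut-off equal to $1$ on $[0,R]$ and $0$ on $[R+1,\infty)$. Replace $\mathcal{N}(\mathfrak{u})$ by $\theta_R(\|\mathfrak{u}\|_{H^1})\mathcal{N}(\mathfrak{u})$, and do the same for $\mathfrak{v}$. Since $H^1(\mathbb{T})\hookrightarrow L^\infty(\mathbb{T})$ in one dimension, the truncated nonlinearity is globally Lipschitz on $H^1$. The linear part generates a contraction semigroup: $S_1(t)=e^{-it(-\Delta)^\alpha}$ is unitary on every $H^s$, and $S_2(t)=e^{t((1+i)(-\Delta)^\rho-\lambda)/\varepsilon}$ is understood with the dissipative sign convention, so it contracts. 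Assumptions 1--2 give Lipschitz drift and noise on $H^1\times H^1$, with $\Sigma_2$ bounded. The only gap is that we lack an $H^1$ bound for $\Sigma_2$ and for $G$; if needed, we work with the $\mathfrak{v}$-equation in $L^2$ and use the smoothing of $S_2$ to recover $H^1$ regularity. A Banach fixed point in $L^2(\Omega;C([0,T_0];H^1\times H^1))$ on a short interval, iterated, gives a unique global truncated solution $(\mathfrak{u}_R,\mathfrak{v}_R)$. Here we use the factorization method or the stochastic convolution estimate for the continuity of the stochastic integrals. Define $\tau_R=\inf\{t:\|\mathfrak{u}_R\|_{H^1}+\|\mathfrak{v}_R\|_{H^1}\geq R\}$. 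The solutions are consistent for $t<\tau_R$, so they define a local solution up to $\tau_\infty=\lim_R\tau_R$.

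Next we prove $\tau_\infty=T$ almost surely, which needs a priori estimates. Apply Itô's formula to $\|\mathfrak{u}\|^2$ and $\|\mathfrak{v}\|^2$. The key sign property is $(\mathcal{N}(\mathfrak{u}),\mathfrak{u})=-\|\mathfrak{u}\|_{L^{\beta+1}}^{\beta+1}\le0$. The dispersive term satisfies $(-i(-\Delta)^\alpha\mathfrak{u},\mathfrak{u})=0$, and the fast linear part is dissipative (including $-\lambda$). Combined with linear growth, BDG (Lemma \ref{BurkholderDavisGundy inequality}) and Gr\"onwall (Lemma \ref{Gronwalls inequality}), this gives $\mathbb{E}\sup_{t\le T\wedge\tau_R}(\|\mathfrak{u}\|^2+\|\mathfrak{v}\|^2)\le C$.

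For the $H^1$ level, apply Itô to $\|\partial_x\mathfrak{u}\|^2$. The term $(\partial_x\mathcal{N}(\mathfrak{u}),\partial_x\mathfrak{u})$ must be nonpositive up to controllable terms. Expanding $\partial_x(|\mathfrak{u}|^{\beta-1}\mathfrak{u})$ and taking real parts, the condition $|\gamma|\le 2\sqrt{\beta}/(\beta-1)$ in Assumption 3 is exactly what makes the resulting quadratic form in $(\partial_x\mathfrak{u},\partial_x\bar{\mathfrak{u}})$ nonnegative. This is the main obstacle, and we expect it is where the paper's technical lemmas (\Cref{diffnonrenega}) enter. For the fast variable we additionally use $\lambda>3L_G+2L_{\Sigma_2}^2$ to absorb the coupling. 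These bounds give $\mathbb{E}\sup_{t\le T\wedge\tau_R}\|(\mathfrak{u},\mathfrak{v})\|_{H^1}^2\le C$ independent of $R$. Then $\mathbb{P}(\tau_R<T)\le C/R^2\to0$, so $\tau_\infty\ge T$ almost surely.

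Finally, uniqueness in $C([0,T];L^2)$. Take two solutions and apply Itô to $\|\mathfrak{u}_1-\mathfrak{u}_2\|^2+\|\mathfrak{v}_1-\mathfrak{v}_2\|^2$. The nonlinear difference satisfies the one-sided estimate $(\mathcal{N}(a)-\mathcal{N}(b),a-b)\le0$, which again relies on the $\gamma$-restriction, as in \Cref{propinnernegative}. The remaining terms are Lipschitz by Assumptions 1--2. Localizing with $\tau_R$ and applying Gr\"onwall gives zero difference up to $\tau_R$, and letting $R\to\infty$ gives pathwise uniqueness.
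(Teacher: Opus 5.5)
Your uniqueness argument is fine: it only needs $L^2$-Lipschitz bounds and the monotonicity of $\mathcal{N}$. Note that this monotonicity is \Cref{diffnonrenega}, while the lemma behind the $H^1$ estimate for $\mathfrak{u}$ is \Cref{propinnernegative}; you have the two swapped. Your reading of $(1+i)(-\Delta)^\rho$ with the dissipative sign is the right one.

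The existence part, however, has a genuine gap. The whole truncation/stopping-time scheme runs in $H^1\times H^1$, and nothing in the hypotheses gives $H^1$ control of the fast variable:
\begin{itemize}
\item $G$ is only Lipschitz on $L^2$ and $\Sigma_2$ is only bounded in $L^2$. So in It\^o's formula for $\|\partial_x\mathfrak{v}\|^2$ the correction term $\varepsilon^{-1}\|\partial_x\Sigma_2(\mathfrak{u},\mathfrak{v})\|^2$ need not be finite.
\item The coupling term $(\partial_x\mathfrak{v},\partial_x G)=(-\Delta\mathfrak{v},G)$ cannot be absorbed by the dissipation $\|(-\Delta)^{(1+\rho)/2}\mathfrak{v}\|^2$, since $\rho<1$.
\item The smoothing of $S_2$ does not rescue this. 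One has $\|S_2(r)\|_{L^2\to H^1}\sim (r/\varepsilon)^{-1/(2\rho)}$ with $1/\rho>1$, which is not square integrable near $r=0$. Concretely, for a fixed $g\in L^2$, $\mathbb{E}\|\varepsilon^{-1/2}\int_0^t S_2(t-s)g\,d\mathcal{W}_2\|_{H^1}^2$ is comparable to $\|g\|_{H^{1-\rho}}^2$, which is infinite for general $g$.
\end{itemize}
Hence the stopping times $\tau_R$ built from $\|\mathfrak{v}_R\|_{H^1}$, the bound $\mathbb{E}\sup_t\|\mathfrak{v}\|_{H^1}^2\le C$, and the conclusion $\tau_\infty=T$ are all unavailable. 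Note that \Cref{uniformestimates} never bounds $\mathfrak{v}$ beyond $L^2$.

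The local fixed-point step is also unsupported for $\mathfrak{u}$:
\begin{itemize}
\item Assumptions 1--2 make $F$ and $\Sigma_1$ Lipschitz only on $L^2$, with mere linear growth in $H^1$, not Lipschitz on $H^1$.
\item The map $z\mapsto |z|^{\beta-1}z$ is not Lipschitz on $H^1$-balls when $1<\beta<2$.
\end{itemize}
So a Banach fixed point in $C([0,T_0];H^1)$ does not close. The usual cure, contracting in the $L^2$ metric on an $H^1$-ball, is incompatible with a cut-off $\theta_R(\|\mathfrak{u}\|_{H^1})$, because that cut-off is not Lipschitz with respect to the $L^2$ distance.

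The missing idea is to keep $\mathfrak{v}$ at the $L^2$ level only, and to handle $\mathcal{N}(\mathfrak{v})$ through its monotone structure rather than by truncation to an $L^\infty$-Lipschitz map. This is what the Galerkin route the paper points to provides. On Fourier projections you get:
\begin{itemize}
\item uniform $L^2$ bounds for both components, with $\mathfrak{v}\in L^2(0,T;H^\rho)\cap L^{\beta+1}(\Omega\times[0,T]\times\mathbb{T})$ coming from the dissipation and the term $-\|\mathfrak{v}\|_{L^{\beta+1}}^{\beta+1}$;
\item $H^1$ bounds for $\mathfrak{u}$ alone, via \Cref{propinnernegative} and the fact that $\|F(\mathfrak{u},\mathfrak{v})\|_{H^1}$ involves only $\|\mathfrak{v}\|$.
\end{itemize}
The limit is then identified by monotonicity (\Cref{diffnonrenega}) for $\mathcal{N}(\mathfrak{v})$, and by \Cref{liptypeofnonlinear} together with $H^1\subset L^\infty$ for $\mathcal{N}(\mathfrak{u})$.
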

\begin{proof}
	For a detailed proof, one can use the Galerkin method. The idea is similar to \cite{MR3483877, MR3359820}.
\end{proof}
\subsection{Key strategies of the proof:}\label{keystrategy}
To study the averaging principle, H\"older continuity in time variable for ${\mathfrak{u}}^{\varepsilon}$ plays a crucial role and the smoothness of the semigroup $\{S_0(t)\}_{t\geq 0}$ is not enough to get it. In particular, the smoothness property \eqref{smoothness} is important to get H\"older continuity in time variable, but $\{S_0(t)\}_{t\geq 0}$ does not provide that. To fix this complexity, we add a viscous term $\nu \Delta {\mathfrak{u}} $ ($\nu >0$ is the viscosity). With the help of smoothing effect of the semigroup $\{S_{\nu}(t)\}_{t\geq 0}$ associated to $A_\nu:=-i(-\Delta)^{\alpha}{\mathfrak{u}}+\nu \Delta {\mathfrak{u}}$, we can establish the H\"older continuity of time variable for ${\mathfrak{u}}^{\varepsilon,\nu}$. This method is known as vanishing viscosity.\\
\textit{\textbf{Step I.}} 
Let us consider the viscous system, 
\begin{equation}\label{viscous}
	\left\{
	\begin{aligned}
		&\mathrm{d}{\mathfrak{u}}^{\varepsilon, \nu} = \left[A_\nu {\mathfrak{u}}^{\varepsilon, \nu} + \mathcal{N}({\mathfrak{u}}^{\varepsilon, \nu}) + F({\mathfrak{u}}^{\varepsilon, \nu}, {\mathfrak{v}}^{\varepsilon, \nu}) \right]\mathrm{d}t + \Sigma_1({\mathfrak{u}}^{\varepsilon, \nu}) \, \mathrm{d}\mathcal{W}_1 && \text{in } T \times [0,T],\\
		&\mathrm{d}{\mathfrak{v}}^{\varepsilon, \nu} = \frac{1}{\varepsilon} \left[(1 + i)(-\Delta)^{\rho} {\mathfrak{v}}^{\varepsilon, \nu} + \mathcal{N}({\mathfrak{v}}^{\varepsilon, \nu}) - \lambda {\mathfrak{v}}^{\varepsilon, \nu} + G({\mathfrak{u}}^{\varepsilon, \nu}, {\mathfrak{v}}^{\varepsilon, \nu}) \right]\mathrm{d}t + \frac{1}{\sqrt{\varepsilon}} \Sigma_2({\mathfrak{u}}^{\varepsilon, \nu}, {\mathfrak{v}}^{\varepsilon, \nu}) \, \mathrm{d}\mathcal{W}_2 && \text{in } T \times [0,T],\\
		&{\mathfrak{u}}^{\varepsilon, \nu}(x,0) = {\mathfrak{u}}_0(x) && \text{in } T, \\
		&{\mathfrak{v}}^{\varepsilon, \nu}(x,0) = {\mathfrak{v}}_0(x) && \text{ in } T,
	\end{aligned}
	\right.
\end{equation}
 and the viscous averaged equation is, 
\begin{equation}\label{viscousavg}
	\left\{
	\begin{aligned}
		&\mathrm{d}\bar{{\mathfrak{u}}}^{\nu} = [A_\nu \bar{{\mathfrak{u}}}^{\nu}+\mathcal{N}(\bar{{\mathfrak{u}}}^{\nu}) + \bar{F}(\bar{{\mathfrak{u}}}^{\nu})] \mathrm{d}t + \Sigma_1(\bar{{\mathfrak{u}}}^{\nu}) \, \mathrm{d}\mathcal{W}_1 \quad &&\text{in } \mathbb{T} \times [0,T], \\
		&\bar{{\mathfrak{u}}}^{\nu}(x,0) = {\mathfrak{u}}_0(x) \quad &&\text{in } \mathbb{T}.
	\end{aligned}
	\right. 
\end{equation}
\textit{\textbf{Step II.}} 
Our first aim is to prove,
\begin{equation}
	\lim_{\nu \to 0} \sup_{\varepsilon \in (0,1)} \sup_{0 \leq t \leq T} \mathbb{E} \big[\|{\mathfrak{u}}^{\varepsilon, \nu}(t) - {\mathfrak{u}}^{\varepsilon}(t)\|^2\big] = 0,
	\quad 
	\lim_{\nu \to 0} \sup_{0 \leq t \leq T} \mathbb{E} \big[\|\bar{{\mathfrak{u}}}^{\nu}(t) - \bar{{\mathfrak{u}}}(t)\|^2 \big]= 0.
\end{equation}
using apriori estimates of ${\mathfrak{u}}^{\varepsilon, \nu}$ and ${\mathfrak{v}}^{\varepsilon, \nu}$ which are uniform in $\varepsilon, \nu$.\\

\noindent\textit{\textbf{Step III.}} Exploiting the smoothing properties of the semigroup $\{S_{\nu}(t)\}_{t\geq 0}$ together with the apriori estimates, we establish averaging principle for \eqref{viscousavg}, i.e. for any fixed $\nu \in (0,1)$,
\begin{equation}
	\lim_{\varepsilon \to 0} 
	\mathbb{E} \left[ \sup_{0 \leq t \leq T} \|{\mathfrak{u}}^{\varepsilon, \nu}(t) - \bar{{\mathfrak{u}}}^{\nu}(t)\|^2 \right] = 0.
\end{equation}
\textit{\textbf{Step IV.}} Then by limiting argument, as the viscosity coefficient $\nu$ goes to zero, we conclude \autoref{mainresult}. 
	\subsection{Some technical lemmas}\label{techlemmas}
	In this subsection, we establish several technical lemmas that will be used repeatedly in proving the main result.
\begin{lem}\label{liptypeofnonlinear}
	For any ${\mathfrak{u}}_1, {\mathfrak{u}}_2 \in \mathbb{C}$, the following holds:
	\begin{equation*}
		\big|\mathcal{N}({\mathfrak{u}}_1)- \mathcal{N}({\mathfrak{u}}_2)\big| \leq C(\big|{\mathfrak{u}}_1\big|^{\beta-1}+\big|{\mathfrak{u}}_2\big|^{\beta-1})\big|{\mathfrak{u}}_1- {\mathfrak{u}}_2\big|.
	\end{equation*}
\end{lem}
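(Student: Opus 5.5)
Since $\mathcal{N}(\mathfrak{u})=-(1+i\gamma)|\mathfrak{u}|^{\beta-1}\mathfrak{u}$, the constant factor $|1+i\gamma|=\sqrt{1+\gamma^2}$ can be absorbed into $C$. It therefore suffices to prove the pointwise estimate
\begin{equation*}
\big||z_1|^{\beta-1}z_1-|z_2|^{\beta-1}z_2\big|\leq C_\beta\big(|z_1|^{\beta-1}+|z_2|^{\beta-1}\big)|z_1-z_2|,\qquad z_1,z_2\in\mathbb{C}.
\end{equation*}
I would prove this by the fundamental theorem of calculus along the segment joining $z_1$ and $z_2$. First dispose of the trivial cases. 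If $z_1=z_2$, both sides vanish. If, say, $z_2=0$, the left side is $|z_1|^\beta=|z_1|^{\beta-1}|z_1-z_2|$, so the inequality holds with $C=1$.

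For the general case, set $\phi(z)=|z|^{\beta-1}z$, viewed as a map $\mathbb{R}^2\to\mathbb{R}^2$. Since $\beta>1$, $\phi$ is $C^1$ on $\mathbb{R}^2\setminus\{0\}$, and it extends as $C^1$ to the whole plane with $D\phi(0)=0$. Away from the origin, the real Jacobian is
\begin{equation*}
D\phi(z)h=|z|^{\beta-1}h+(\beta-1)|z|^{\beta-3}\operatorname{Re}(z\bar h)\,z,
\end{equation*}
which gives $\|D\phi(z)\|\leq\beta|z|^{\beta-1}$. Put $z_\theta=z_2+\theta(z_1-z_2)$ for $\theta\in[0,1]$. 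The segment meets the origin in at most one point, and $\theta\mapsto\phi(z_\theta)$ is $C^1$ because $D\phi$ is continuous, including at $0$. Hence
\begin{equation*}
|\phi(z_1)-\phi(z_2)|\leq\beta\int_0^1|z_\theta|^{\beta-1}\,d\theta\,|z_1-z_2|.
\end{equation*}

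It remains to bound $|z_\theta|^{\beta-1}$. By the triangle inequality, $|z_\theta|\leq(1-\theta)|z_2|+\theta|z_1|\leq\max(|z_1|,|z_2|)$. Raising to the power $\beta-1>0$ gives $|z_\theta|^{\beta-1}\leq\max(|z_1|,|z_2|)^{\beta-1}\leq|z_1|^{\beta-1}+|z_2|^{\beta-1}$. This yields the claim with $C=\beta\sqrt{1+\gamma^2}$.

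The only delicate point is regularity at the origin when $1<\beta<2$. There $|z|^{\beta-3}$ is singular, but the full term $|z|^{\beta-3}\operatorname{Re}(z\bar h)z$ is still $O(|z|^{\beta-1})$, so $D\phi$ is continuous at $0$. If one prefers to avoid this, there is a direct argument instead. Assume without loss of generality $|z_1|\geq|z_2|>0$ and write
\begin{equation*}
\phi(z_1)-\phi(z_2)=|z_1|^{\beta-1}(z_1-z_2)+\big(|z_1|^{\beta-1}-|z_2|^{\beta-1}\big)z_2 .
\end{equation*}
The first term is already of the required form. For the second, I would use the real mean value inequality $a^{\beta-1}-b^{\beta-1}\leq(\beta-1)b^{\beta-2}(a-b)$ when $\beta\le2$, and $\leq(\beta-1)a^{\beta-2}(a-b)$ when $\beta\ge2$, for $a\ge b>0$. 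Combined with $\big||z_1|-|z_2|\big|\leq|z_1-z_2|$ and $|z_2|\leq|z_1|$, this bounds the second term by $(\beta-1)|z_1|^{\beta-1}|z_1-z_2|$ in the case $\beta\ge2$, and by $(\beta-1)|z_2|^{\beta-1}|z_1-z_2|$ in the case $\beta\le2$.
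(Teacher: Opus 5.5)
Your proof is correct and follows the same route as the paper: you apply the fundamental theorem of calculus along the segment $\theta z_1+(1-\theta)z_2$ and use the derivative bound $|D\phi(z)h|\le\beta|z|^{\beta-1}|h|$. Your final step $|z_\theta|\le\max(|z_1|,|z_2|)$ is cleaner than the paper's $(|\mathfrak{u}_1|+|\mathfrak{u}_2|)^{\beta-1}\le|\mathfrak{u}_1|^{\beta-1}+|\mathfrak{u}_2|^{\beta-1}$, which holds only for $\beta\le 2$ (for $\beta>2$ a factor $2^{\beta-2}$ is needed), and you also justify the $C^1$ regularity at the origin, which the paper passes over.
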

\begin{proof}
	We have given,
	\begin{align*}
		\mathcal{N}({\mathfrak{u}})= -(1+i \gamma) |{\mathfrak{u}}|^{\beta-1} {\mathfrak{u}}.
	\end{align*}
	Therefore,
	\begin{align*}
		\big|\mathcal{N}({\mathfrak{u}}_1) - \mathcal{N}({\mathfrak{u}}_2)\big| &=\big|-(1+i \gamma) \{|{\mathfrak{u}}_1|^{\beta-1} {\mathfrak{u}}_1-|{\mathfrak{u}}_2|^{\beta-1} {\mathfrak{u}}_2\}\big|\\
		&\leq C \big||{\mathfrak{u}}_1|^{\beta-1} {\mathfrak{u}}_1-|{\mathfrak{u}}_2|^{\beta-1} {\mathfrak{u}}_2\big|.
	\end{align*}
	Consider 
	\begin{align*}
		N_1({\mathfrak{u}})= |{\mathfrak{u}}|^{\beta-1} {\mathfrak{u}},
	\end{align*}
	then
	\begin{align*}
		N_1'({\mathfrak{u}})(h)=|{\mathfrak{u}}|^{\beta-1}h+(\beta-1){\mathfrak{u}}|{\mathfrak{u}}|^{\beta-3} \operatorname{Re}({\mathfrak{u}}\bar{h}).
	\end{align*}
	Employing the fundamental theorem of calculas and Lagrange's mean value theorem, we infer
	\begin{align*}
		&\big|N_1({\mathfrak{u}}_1) - N_1({\mathfrak{u}}_2)\big| =\Big|\int_{0}^{1}\frac{d}{d\theta}N_1(\theta {\mathfrak{u}}_1+(1-\theta){\mathfrak{u}}_2)d\theta\Big|=\Big|\int_{0}^{1} N_1'(\theta {\mathfrak{u}}_1+(1-\theta){\mathfrak{u}}_2) ({\mathfrak{u}}_1-{\mathfrak{u}}_2) d\theta\Big|\\
		&\leq \int_{0}^{1} \big|N_1'(\theta {\mathfrak{u}}_1+(1-\theta){\mathfrak{u}}_2) ({\mathfrak{u}}_1-{\mathfrak{u}}_2)\big| d\theta \leq\beta \int_{0}^{1} \big|\theta {\mathfrak{u}}_1+(1-\theta){\mathfrak{u}}_2\big|^{\beta-1} \big|{\mathfrak{u}}_1-{\mathfrak{u}}_2\big| d\theta\\
		&\leq\beta \int_{0}^{1} (|{\mathfrak{u}}_1|+|{\mathfrak{u}}_2|)^{\beta-1} |{\mathfrak{u}}_1-{\mathfrak{u}}_2| d\theta \leq\beta (|{\mathfrak{u}}_1|^{\beta-1}+|{\mathfrak{u}}_2|^{\beta-1}) |{\mathfrak{u}}_1-{\mathfrak{u}}_2|.
	\end{align*}
	Therefore, we conclude 
	\begin{equation*}
		\big|\mathcal{N}({\mathfrak{u}}_1)- \mathcal{N}({\mathfrak{u}}_2)\big| \leq C(\big|{\mathfrak{u}}_1\big|^{\beta-1}+\big|{\mathfrak{u}}_2\big|^{\beta-1})\big|{\mathfrak{u}}_1- {\mathfrak{u}}_2\big|.
	\end{equation*}
\end{proof}
\begin{lem}(\cite[Lemma 2.2]{MR1224619})\label{imlemma}
	For any $0 < \beta < \infty$ and $x,y \in \mathbb{C}$, the following inequality holds true:
	\begin{equation}
		\bigl|\operatorname{Im}(x-y)\bigl(\bar x|x|^{\beta -1}-\bar y|y|^{\beta -1}\bigr)\bigr|
		\le
		\frac{|\beta -1|}{2\sqrt{\beta}}\,
		\operatorname{Re}(x-y)\bigl(\bar x|x|^{\beta -1}-\bar y|y|^{\beta -1}\bigr).
	\end{equation}
\end{lem}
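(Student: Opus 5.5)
We need to prove Lemma \ref{imlemma}: for $x,y\in\mathbb{C}$,
\[
\bigl|\operatorname{Im}(x-y)\bigl(\bar x|x|^{\beta-1}-\bar y|y|^{\beta-1}\bigr)\bigr|\le \frac{|\beta-1|}{2\sqrt\beta}\operatorname{Re}(x-y)\bigl(\bar x|x|^{\beta-1}-\bar y|y|^{\beta-1}\bigr).
\]

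\textbf{Reduction to a quadratic form.} The idea is to write the expression as an integral of a quadratic form along the segment joining $y$ to $x$. Set $g(z)=|z|^{\beta-1}z$ and $w=x-y$. Then
\[
(x-y)\bigl(\bar x|x|^{\beta-1}-\bar y|y|^{\beta-1}\bigr)=w\,\overline{g(x)-g(y)}.
\]
If the segment $z_\theta=y+\theta w$, $\theta\in[0,1]$, avoids the origin, the fundamental theorem of calculus together with the derivative computed in the proof of Lemma \ref{liptypeofnonlinear} gives
\[
g(x)-g(y)=\int_0^1\Bigl(|z_\theta|^{\beta-1}w+(\beta-1)z_\theta|z_\theta|^{\beta-3}\operatorname{Re}(z_\theta\bar w)\Bigr)d\theta .
\]
Multiplying by $\bar w$ and conjugating, the integrand becomes
\[
|z|^{\beta-1}\Bigl(|w|^2+(\beta-1)|z|^{-2}\,\bar z w\,\operatorname{Re}(z\bar w)\Bigr).
\]

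\textbf{Pointwise inequality.} Write $\bar z w=|z||w|e^{i\phi}$, so that $\operatorname{Re}(z\bar w)=|z||w|\cos\phi$. The integrand then equals
\[
|z|^{\beta-1}|w|^2\bigl(1+(\beta-1)\cos\phi\, e^{i\phi}\bigr).
\]
Its real part is $|z|^{\beta-1}|w|^2 R$ and its imaginary part is $|z|^{\beta-1}|w|^2 I$, where
\[
R=1+(\beta-1)\cos^2\phi,\qquad I=(\beta-1)\cos\phi\sin\phi .
\]
The sign of the imaginary part flips under conjugation, but we only need its absolute value. It therefore suffices to show $|I|\le \frac{|\beta-1|}{2\sqrt\beta}R$ for every angle $\phi$. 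Put $c=\cos\phi$ and $s=\sin\phi$. Using $1=c^2+s^2$, we get $R=\beta c^2+s^2$. By AM--GM,
\[
\beta c^2+s^2\ge 2\sqrt\beta\,|c||s|,
\]
so
\[
|I|=|\beta-1||c||s|\le\frac{|\beta-1|}{2\sqrt\beta}R .
\]
Since $\beta>0$, we have $R>0$. Integrating in $\theta$ preserves the inequality, because $|\int\operatorname{Im}|\le\int|\operatorname{Im}|\le K\int\operatorname{Re}$ with $K=\frac{|\beta-1|}{2\sqrt\beta}$.

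\textbf{Degenerate segments (main obstacle).} The delicate point is when the segment passes through $0$, where $g$ fails to be $C^1$ if $\beta<1$, or when $x$ or $y$ is zero. I would handle this by approximation. Perturb $y$ to $y+\delta$ with $\delta\in\mathbb{C}$ small and chosen so that the segment from $y+\delta$ to $x+\delta$ misses $0$ and its endpoints are nonzero. Alternatively, parametrize by $\theta$ and note the singularity at the single crossing point is integrable, since $|z_\theta|^{\beta-1}$ is integrable in $\theta$ for $\beta>0$. In either approach, $g$ is continuous on $\mathbb{C}$, so both sides of the inequality are continuous in $(x,y)$, and the inequality passes to the limit as $\delta\to0$. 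The case $\beta=1$ is trivial, since then both $I$ and the constant vanish.
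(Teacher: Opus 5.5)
Your proof is correct. The paper gives no proof of this lemma; it imports it verbatim from the cited reference, so your proposal is a self-contained derivation rather than an alternative to an argument in the text.

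Your route goes as follows:
\begin{itemize}
\item write $w\,\overline{g(x)-g(y)}$, with $g(z)=|z|^{\beta-1}z$ and $w=x-y$, as $\int_0^1 w\,\overline{dg(z_\theta)w}\,d\theta$;
\item reduce the integrand to $|z|^{\beta-1}|w|^2\bigl(1+(\beta-1)\cos\phi\,e^{i\phi}\bigr)$;
\item bound $|\beta-1|\,|\cos\phi\sin\phi|$ by $\frac{|\beta-1|}{2\sqrt\beta}(\beta\cos^2\phi+\sin^2\phi)$ using AM--GM.
\end{itemize}
This makes transparent where the constant $\frac{|\beta-1|}{2\sqrt\beta}$ comes from. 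As a check, for $\beta=3$, $x=2$, $y=i$, your integrand integrates to exactly $17-6i$, which is the value of the product. The same representation also gives $\operatorname{Re}\bigl[w\,\overline{g(x)-g(y)}\bigr]=\int_0^1|z_\theta|^{\beta-1}|w|^2(\beta c^2+s^2)\,d\theta\ge 0$ directly. That is the monotonicity of $N_1$ the paper obtains separately by a polar-coordinate computation in Lemma~\ref{nonlinearitynegative}, and it is what makes the application in Lemma~\ref{diffnonrenega} work.

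Your approximation argument for the degenerate case is valid: translate both endpoints, not just $y$, by a small $\delta\notin -[y,x]$, then use continuity of $g$. However, that case needs no approximation at all. If the segment $[y,x]$ contains $0$, then $x=au$ and $y=-bu$ with $a,b\ge 0$ and $|u|=1$. The product is then $(x-y)\bigl(\bar x|x|^{\beta-1}-\bar y|y|^{\beta-1}\bigr)=(a+b)(a^\beta+b^\beta)$, which is real and nonnegative. So the left-hand side vanishes and the inequality is trivial. This also covers $x=0$ or $y=0$. In every remaining case the segment avoids the origin, where $g$ is smooth as a real map, so your fundamental-theorem-of-calculus step applies verbatim for every $\beta>0$.
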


\begin{lem}\label{imlemmafornon}
	For any $1< \beta < \infty$ and ${\mathfrak{u}}_1, {\mathfrak{u}}_2 \in \mathbb{C}$, we have
	\begin{equation}
		\bigl|\operatorname{Im}\big( ({N_1}({\mathfrak{u}}_1) - {N_1}({\mathfrak{u}}_2))\, \overline{({\mathfrak{u}}_1 - {\mathfrak{u}}_2)}\, \big)\bigr|
		\le
		\frac{|\beta -1|}{2\sqrt{\beta}}\,
		\operatorname{Re} \big[({N_1}({\mathfrak{u}}_1) - {N_1}({\mathfrak{u}}_2))\, \overline{({\mathfrak{u}}_1 - {\mathfrak{u}}_2)}\, \big].
	\end{equation}
\end{lem}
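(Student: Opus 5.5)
The plan is to reduce \Cref{imlemmafornon} to \Cref{imlemma} by conjugating the complex number involved. Put $x={\mathfrak{u}}_1$ and $y={\mathfrak{u}}_2$, and set
\begin{align*}
Z := \big(N_1({\mathfrak{u}}_1)-N_1({\mathfrak{u}}_2)\big)\,\overline{({\mathfrak{u}}_1-{\mathfrak{u}}_2)} = \big(x|x|^{\beta-1}-y|y|^{\beta-1}\big)\big(\bar x-\bar y\big).
\end{align*}
Since $|x|^{\beta-1}$ and $|y|^{\beta-1}$ are real, the conjugate of $Z$ is
\begin{align*}
\bar Z = (x-y)\big(\bar x|x|^{\beta-1}-\bar y|y|^{\beta-1}\big).
\end{align*}
This is exactly the expression appearing in \Cref{imlemma}.

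Conjugation preserves the real part and reverses the sign of the imaginary part. Hence $\operatorname{Re} Z=\operatorname{Re}\bar Z$ and $|\operatorname{Im} Z|=|\operatorname{Im}\bar Z|$. Applying \Cref{imlemma} to $\bar Z$, which is allowed because $1<\beta<\infty$ lies in its range $0<\beta<\infty$, gives
\begin{align*}
|\operatorname{Im} Z| = |\operatorname{Im}\bar Z| \le \frac{|\beta-1|}{2\sqrt{\beta}}\operatorname{Re}\bar Z = \frac{|\beta-1|}{2\sqrt{\beta}}\operatorname{Re} Z,
\end{align*}
which is the claim.

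There is no genuine obstacle; the only care needed is in interpreting the cited inequality. It must be read as $\operatorname{Im}$ and $\operatorname{Re}$ applied to the full product $(x-y)(\cdots)$. Under that reading, the only thing to check is that taking conjugates leaves $|\operatorname{Im}|$ and $\operatorname{Re}$ unchanged.

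If one preferred not to rely on the citation, I would instead prove the inequality directly, as follows.
\begin{itemize}
\item Reduce by rotation and scaling to $y=1$ and $x=re^{i\theta}$.
\item Compute $\operatorname{Re} Z=r^{\beta+1}+1-(r^\beta+r)\cos\theta$ and $|\operatorname{Im} Z|=|r^\beta-r|\,|\sin\theta|$.
\item Optimize over $\theta$; this is the technical step.
\end{itemize}
For fixed $r$ the optimization reduces to a quadratic-in-$\cos\theta$ discriminant condition. That condition is in turn equivalent to the AM–GM-type bound $(r^\beta-r)^2\le \frac{(\beta-1)^2}{4\beta}\big((r^{\beta+1}+1)^2-(r^\beta+r)^2\big)$. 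Here the invocation of \Cref{imlemma} suffices.
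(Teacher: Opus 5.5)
Your proposal is correct and is essentially the paper's proof. The paper also notes that the conjugate of $(N_1(\mathfrak{u}_1)-N_1(\mathfrak{u}_2))\overline{(\mathfrak{u}_1-\mathfrak{u}_2)}$ is exactly $(\mathfrak{u}_1-\mathfrak{u}_2)(\bar{\mathfrak{u}}_1|\mathfrak{u}_1|^{\beta-1}-\bar{\mathfrak{u}}_2|\mathfrak{u}_2|^{\beta-1})$, uses that conjugation preserves the real part and the modulus of the imaginary part, and applies the cited inequality, so the direct computational fallback you sketch is not needed.
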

\begin{proof}
	Using the above lemma (\Cref{imlemma}), we get
	\begin{align*}
		&\bigl|\operatorname{Im}\big( ({N_1}({\mathfrak{u}}_1) - {N_1}({\mathfrak{u}}_2))\, \overline{({\mathfrak{u}}_1 - {\mathfrak{u}}_2)}\, \big)\bigr|=\bigl|-\operatorname{Im}\overline{\big( ({N_1}({\mathfrak{u}}_1) - {N_1}({\mathfrak{u}}_2))\, \overline{({\mathfrak{u}}_1 - {\mathfrak{u}}_2)}\, \big)}\bigr|\\
		&=\bigl|-\operatorname{Im}\overline{\big( ({N_1}({\mathfrak{u}}_1) - {N_1}({\mathfrak{u}}_2))}\,({\mathfrak{u}}_1 - {\mathfrak{u}}_2)\, \big)\bigr|=\bigl|\operatorname{Im}\overline{\big( ({N_1}({\mathfrak{u}}_1) - {N_1}({\mathfrak{u}}_2))}\,({\mathfrak{u}}_1 - {\mathfrak{u}}_2)\, \big)\bigr|\\
		&=\bigl|\operatorname{Im}\big( (|{\mathfrak{u}}_1|^{\beta-1} \bar{\mathfrak{u}}_1 - |{\mathfrak{u}}_2|^{\beta-1} \bar{\mathfrak{u}}_2)\,({\mathfrak{u}}_1 - {\mathfrak{u}}_2)\, \big)\bigr|\le
		\frac{|\beta -1|}{2\sqrt{\beta}}\,
		\operatorname{Re}\big[ (|{\mathfrak{u}}_1|^{\beta-1} \bar{\mathfrak{u}}_1 - |{\mathfrak{u}}_2|^{\beta-1} \bar{\mathfrak{u}}_2)\,({\mathfrak{u}}_1 - {\mathfrak{u}}_2)\, \big]\\
		&\le
		\frac{|\beta -1|}{2\sqrt{\beta}}\,
		\operatorname{Re}\big[\overline{ ({N_1}({\mathfrak{u}}_1) - {N_1}({\mathfrak{u}}_2))}\,({\mathfrak{u}}_1 - {\mathfrak{u}}_2)\big]=
		\frac{|\beta -1|}{2\sqrt{\beta}}\,
		\operatorname{Re}\big[ ({N_1}({\mathfrak{u}}_1) - {N_1}({\mathfrak{u}}_2))\,\overline{({\mathfrak{u}}_1 - {\mathfrak{u}}_2)}\big].
	\end{align*}
\end{proof}
\begin{lem}\label{nonlinearitynegative}
	For any $1 < \beta < \infty$ and ${\mathfrak{u}}_1, {\mathfrak{u}}_2 \in \mathbb{C}$, we have
	\begin{equation*}
		\operatorname{Re} \big[ ({N_1}({\mathfrak{u}}_1) - {N_1}({\mathfrak{u}}_2))\, \overline{({\mathfrak{u}}_1 - {\mathfrak{u}}_2)}\, \big] \leq 0.
	\end{equation*}
\end{lem}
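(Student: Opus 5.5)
The approach is to compute the real part $D:=\operatorname{Re}\big[(N_1({\mathfrak{u}}_1)-N_1({\mathfrak{u}}_2))\overline{({\mathfrak{u}}_1-{\mathfrak{u}}_2)}\big]$ explicitly and then transfer its sign to $\mathcal{N}$.

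\textbf{A sign issue first.} As written with $N_1({\mathfrak{u}})=|{\mathfrak{u}}|^{\beta-1}{\mathfrak{u}}$, the map $N_1$ is monotone, so one expects $D\ge 0$. Indeed, ${\mathfrak{u}}_1=1$, ${\mathfrak{u}}_2=0$ gives $D=1>0$, so the inequality cannot hold literally for $N_1$. The sign $\le 0$ is what holds for $\mathcal{N}=-(1+i\gamma)N_1$ under Assumption 3. My plan is therefore to prove $D\ge 0$ for $N_1$, and then deduce the stated inequality for $\mathcal{N}$, which is the version used later in \Cref{diffnonrenega} and \Cref{propinnernegative}.

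\textbf{Step 1 (monotonicity of $N_1$).} Write $x={\mathfrak{u}}_1$, $y={\mathfrak{u}}_2$ and expand:
\begin{align*}
D=|x|^{\beta+1}+|y|^{\beta+1}-\big(|x|^{\beta-1}+|y|^{\beta-1}\big)\operatorname{Re}(x\bar y).
\end{align*}
Since $\operatorname{Re}(x\bar y)\le |x||y|$ and the coefficient $|x|^{\beta-1}+|y|^{\beta-1}$ is nonnegative, we get
\begin{align*}
D\ge |x|^{\beta+1}+|y|^{\beta+1}-|x|^{\beta}|y|-|x||y|^{\beta}=\big(|x|^{\beta}-|y|^{\beta}\big)\big(|x|-|y|\big)\ge 0,
\end{align*}
because $t\mapsto t^{\beta}$ is increasing on $[0,\infty)$.

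\textbf{Step 2 (transfer to $\mathcal{N}$).} Let $Z:=(N_1(x)-N_1(y))\overline{(x-y)}$, so that $D=\operatorname{Re}Z$. Then
\begin{align*}
\operatorname{Re}\big[(\mathcal{N}(x)-\mathcal{N}(y))\overline{(x-y)}\big]=-\operatorname{Re}\big[(1+i\gamma)Z\big]=-\operatorname{Re}Z+\gamma\operatorname{Im}Z.
\end{align*}
By \Cref{imlemmafornon}, $|\operatorname{Im}Z|\le \frac{\beta-1}{2\sqrt\beta}\operatorname{Re}Z$. By Assumption 3, $|\gamma|\le \frac{2\sqrt\beta}{\beta-1}$. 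Hence $\gamma\operatorname{Im}Z\le |\gamma||\operatorname{Im}Z|\le \operatorname{Re}Z$, and the whole expression is $\le 0$. Step 1 guarantees $\operatorname{Re}Z\ge 0$, so this chain of bounds is consistent.

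\textbf{Main obstacle.} Neither step requires any real work. The only point needing care is interpreting the statement with the correct sign, namely applying it to $\mathcal{N}$ rather than to $N_1$, and recording that the nonnegativity in Step 1 is what makes the bound from \Cref{imlemmafornon} meaningful.
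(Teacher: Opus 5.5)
Your Step 1 is the paper's argument: the paper writes ${\mathfrak{u}}_1,{\mathfrak{u}}_2$ in polar form and uses $\cos(\alpha_1-\alpha_2)\le 1$, which is your bound $\operatorname{Re}(x\bar y)\le |x||y|$. Its proof also ends with $(r_1^{\beta}-r_2^{\beta})(r_1-r_2)\ge 0$, so the ``$\le 0$'' in the statement is a sign typo, and you were right to prove $\operatorname{Re}Z\ge 0$ instead, which is what \Cref{diffnonrenega} actually uses. Your Step 2 reproduces \Cref{diffnonrenega}, and your use of $|\gamma|$ there is more careful than the paper, which bounds $\gamma\operatorname{Im}Z$ by $\frac{\gamma(\beta-1)}{2\sqrt{\beta}}\operatorname{Re}Z$, a bound valid only for $\gamma\ge 0$.
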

\begin{proof}
	We have, 
	\begin{align*}
		\operatorname{Re} \big[ ({N_1}({\mathfrak{u}}_1) - {N_1}({\mathfrak{u}}_2))\, \overline{({\mathfrak{u}}_1 - {\mathfrak{u}}_2)}\, \big]= \operatorname{Re} \big[ (|{\mathfrak{u}}_1|^{\beta-1} {\mathfrak{u}}_1 - |{\mathfrak{u}}_2|^{\beta-1} {\mathfrak{u}}_2)\, \overline{({\mathfrak{u}}_1 - {\mathfrak{u}}_2)}\, \big].
	\end{align*}
	Let
	\begin{align*}
		{\mathfrak{u}_1} = r_1(\cos\alpha_1 + i\sin\alpha_1), 
		\qquad
		{\mathfrak{u}_2} = r_2(\cos\alpha_2 + i\sin\alpha_2),
	\end{align*}
	where $r_1,r_2 \ge 0$ and $\alpha_1,\alpha_2 \in [0,2\pi)$.
	Now,
	\begin{align*}
		\operatorname{Re}\Big( |{\mathfrak{u}_1}|^{\beta-1}{\mathfrak{u}_1}(\overline{{\mathfrak{u}_1}}-\overline{{\mathfrak{u}_2}}) \Big)
		= r_1^{\beta+1}
		- r_1^\beta r_2 \cos(\alpha_1-\alpha_2),	
	\end{align*}
	and
	\begin{align*}
		\operatorname{Re}\Big( |{\mathfrak{u}_2}|^{\beta-1}{\mathfrak{u}_2}(\overline{{\mathfrak{u}_1}}-\overline{{\mathfrak{u}_2}}) \Big)
		= r_1 r_2^\beta \cos(\alpha_1-\alpha_2)
		- r_2^{\beta+1}.
	\end{align*}
	Combining,
	\begin{align*}
		\operatorname{Re}\Big( (|{\mathfrak{u}_1}|^{\beta-1}{\mathfrak{u}_1} - |{\mathfrak{u}_2}|^{\beta-1}{\mathfrak{u}_2})
		(\overline{{\mathfrak{u}_1}}-\overline{{\mathfrak{u}_2}}) \Big)
		= r_1^{\beta+1} + r_2^{\beta+1}
		- (r_1^\beta r_2 + r_1 r_2^\beta)\cos(\alpha_1-\alpha_2).
	\end{align*}
	Since $\cos(\alpha_1-\alpha_2) \le 1$ and $(r_1^{\beta} - r_2^{\beta}), \,(r_1 - r_2)$ have same sign, we obtain
	\begin{align*}
		\operatorname{Re}\Big( (|{\mathfrak{u}_1}|^{\beta-1}{\mathfrak{u}_1} - |{\mathfrak{u}_2}|^{\beta-1}{\mathfrak{u}_2})
		(\overline{{\mathfrak{u}_1}}-\overline{{\mathfrak{u}_2}}) \Big)
		&\geq r_1^{\beta+1} + r_2^{\beta+1}
		- (r_1^\beta r_2 + r_1 r_2^\beta)\\
		&=(r_1^{\beta} - r_2^{\beta})(r_1 - r_2)\geq0.
	\end{align*}
	This completes the proof.
\end{proof}
\begin{lem}\label{diffnonrenega}
	For any ${\mathfrak{u}}_1, {\mathfrak{u}}_2 \in \mathbb{C}$, and $|\gamma|\leq \frac{2\sqrt{\beta}}{\beta-1}$, we have
	\begin{equation*}
		\operatorname{Re} \big[ (\mathcal{N}({\mathfrak{u}}_1) - \mathcal{N}({\mathfrak{u}}_2))\, \overline{({\mathfrak{u}}_1 - {\mathfrak{u}}_2)}\, \big] \leq 0.
	\end{equation*}
	Moreover, if $|\gamma| \leq \frac{2\sqrt{\beta}}{\beta-1}$, for any ${\mathfrak{u}}_1, {\mathfrak{u}}_2 \in H^1(\mathbb{T})$,
	\begin{equation*}
		\big( \mathcal{N}({\mathfrak{u}}_1) - \mathcal{N}({\mathfrak{u}}_2), {\mathfrak{u}}_1 - {\mathfrak{u}}_2 \big) \leq 0.
	\end{equation*}
\end{lem}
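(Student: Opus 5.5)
Since $\mathcal{N}({\mathfrak{u}})=-(1+i\gamma)N_1({\mathfrak{u}})$, the plan is to expand the real part of a complex product and then use \Cref{nonlinearitynegative} and \Cref{imlemmafornon}. Write
\begin{align*}
Z:=\big(N_1({\mathfrak{u}}_1)-N_1({\mathfrak{u}}_2)\big)\overline{({\mathfrak{u}}_1-{\mathfrak{u}}_2)}.
\end{align*}
Then
\begin{align*}
\operatorname{Re}\big[(\mathcal{N}({\mathfrak{u}}_1)-\mathcal{N}({\mathfrak{u}}_2))\overline{({\mathfrak{u}}_1-{\mathfrak{u}}_2)}\big]
=-\operatorname{Re}\big[(1+i\gamma)Z\big]
=-\operatorname{Re}Z+\gamma\operatorname{Im}Z,
\end{align*}
which uses $\operatorname{Re}(iZ)=-\operatorname{Im}Z$.

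By \Cref{nonlinearitynegative} (its proof actually shows $\operatorname{Re}Z\ge 0$, which is the sign we need, despite the inequality direction in its statement), we have $\operatorname{Re}Z\geq 0$. By \Cref{imlemmafornon}, $|\operatorname{Im}Z|\le \frac{\beta-1}{2\sqrt{\beta}}\operatorname{Re}Z$, since $\beta>1$. Combining these,
\begin{align*}
-\operatorname{Re}Z+\gamma\operatorname{Im}Z\le -\operatorname{Re}Z+|\gamma|\,\frac{\beta-1}{2\sqrt{\beta}}\operatorname{Re}Z
=\Big(|\gamma|\frac{\beta-1}{2\sqrt{\beta}}-1\Big)\operatorname{Re}Z\le 0,
\end{align*}
because the hypothesis $|\gamma|\le \frac{2\sqrt\beta}{\beta-1}$ makes the bracket nonpositive and $\operatorname{Re}Z\ge0$. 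This proves the pointwise statement.

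For the second statement, for ${\mathfrak{u}}_1,{\mathfrak{u}}_2\in H^1(\mathbb{T})$ we have ${\mathfrak{u}}_j\in L^\infty(\mathbb{T})$ by the one-dimensional Sobolev embedding. Hence $\mathcal{N}({\mathfrak{u}}_j)\in L^2$, and the integrand $(\mathcal{N}({\mathfrak{u}}_1)-\mathcal{N}({\mathfrak{u}}_2))\overline{({\mathfrak{u}}_1-{\mathfrak{u}}_2)}$ is integrable. By the definition of the inner product,
\begin{align*}
\big(\mathcal{N}({\mathfrak{u}}_1)-\mathcal{N}({\mathfrak{u}}_2),{\mathfrak{u}}_1-{\mathfrak{u}}_2\big)=\int_{\mathbb{T}}\operatorname{Re}\big[(\mathcal{N}({\mathfrak{u}}_1(x))-\mathcal{N}({\mathfrak{u}}_2(x)))\overline{({\mathfrak{u}}_1(x)-{\mathfrak{u}}_2(x))}\big]\,dx.
\end{align*}
Applying the pointwise inequality for a.e. $x$ and integrating gives the claim.

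The main point requiring care is the sign bookkeeping: the identity $\operatorname{Re}(iZ)=-\operatorname{Im}Z$ and the correct sign of $\operatorname{Re}Z$. Everything else is direct.
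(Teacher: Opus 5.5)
Your proof is correct and follows the paper's argument: write the quantity as $-\operatorname{Re}Z+\gamma\operatorname{Im}Z$, bound $\operatorname{Im}Z$ with \Cref{imlemmafornon}, and use $\operatorname{Re}Z\ge 0$. Your version is slightly cleaner than the paper's in three respects. Bounding $\gamma\operatorname{Im}Z$ by $|\gamma|\,|\operatorname{Im}Z|$ also covers negative $\gamma$, whereas the paper writes $\gamma$ where $|\gamma|$ is needed. You use the sign $\operatorname{Re}Z\ge 0$ that the proof of \Cref{nonlinearitynegative} actually establishes, not the reversed sign in its statement. And you supply the Sobolev-embedding and integration step for the $H^1(\mathbb{T})$ statement, which the paper leaves implicit.
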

\begin{proof}
	We observe that
	\begin{align*}
		&\operatorname{Re} \big[(\mathcal{N}({\mathfrak{u}}_1) - \mathcal{N}({\mathfrak{u}}_2))\, \overline{({\mathfrak{u}}_1 - {\mathfrak{u}}_2)}\, \big]=\operatorname{Re} \big[(-1-i\gamma)({N_1}({\mathfrak{u}}_1) - {N_1}({\mathfrak{u}}_2))\, \overline{({\mathfrak{u}}_1 - {\mathfrak{u}}_2)}\, \big]\\
		&=-\operatorname{Re} \big[({N_1}({\mathfrak{u}}_1) - {N_1}({\mathfrak{u}}_2))\, \overline{({\mathfrak{u}}_1 - {\mathfrak{u}}_2)}\, \big]-\gamma\operatorname{Re} \big[i({N_1}({\mathfrak{u}}_1) - {N_1}({\mathfrak{u}}_2))\, \overline{({\mathfrak{u}}_1 - {\mathfrak{u}}_2)}\, \big] \\
		&=-\operatorname{Re} \big[({N_1}({\mathfrak{u}}_1) - {N_1}({\mathfrak{u}}_2))\, \overline{({\mathfrak{u}}_1 - {\mathfrak{u}}_2)}\, \big]+\gamma\operatorname{Im} \big[({N_1}({\mathfrak{u}}_1) - {N_1}({\mathfrak{u}}_2))\, \overline{({\mathfrak{u}}_1 - {\mathfrak{u}}_2)}\, \big].
	\end{align*}
	Since $|\gamma|\leq \frac{2\sqrt{\beta}}{\beta-1}$, using \Cref{imlemmafornon} and \ref{nonlinearitynegative}, we get
	\begin{align*}
		&\operatorname{Re} \big[(\mathcal{N}({\mathfrak{u}}_1) - \mathcal{N}({\mathfrak{u}}_2))\, \overline{({\mathfrak{u}}_1 - {\mathfrak{u}}_2)}\, \big]\\
		&\leq-\operatorname{Re} \big[({N_1}({\mathfrak{u}}_1) - {N_1}({\mathfrak{u}}_2))\, \overline{({\mathfrak{u}}_1 - {\mathfrak{u}}_2)}\, \big]+\frac{\gamma(\beta -1)}{2\sqrt{\beta}}\,
		\operatorname{Re} \big[({N_1}({\mathfrak{u}}_1) - {N_1}({\mathfrak{u}}_2))\, \overline{({\mathfrak{u}}_1 - {\mathfrak{u}}_2)}\, \big]\\
		&\leq \left(\frac{\gamma(\beta -1)}{2\sqrt{\beta}}-1\right)\,
		\operatorname{Re} \big[({N_1}({\mathfrak{u}}_1) - {N_1}({\mathfrak{u}}_2))\, \overline{({\mathfrak{u}}_1 - {\mathfrak{u}}_2)}\, \big]\\	
		&\leq 0.				
	\end{align*}
	This completes the proof.
\end{proof}
\begin{lem}\label{propinnernegative}
	For any ${\mathfrak{u}} \in H^1(\mathbb{T})$, and $|\gamma|\leq \frac{2\sqrt{\beta}}{\beta-1}$, 
	\begin{equation*}
		\operatorname{Re} \big[\big( \mathcal{N}({\mathfrak{u}}) \big)_x\bar{\mathfrak{u}}_x \big] \leq 0.
	\end{equation*}
\end{lem}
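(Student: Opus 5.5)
The plan is a direct pointwise computation of $(\mathcal{N}({\mathfrak{u}}))_x\bar{\mathfrak{u}}_x$ via the chain rule. This reduces the claim to the nonnegativity of a real quadratic form in $\operatorname{Re}({\mathfrak{u}}\bar{\mathfrak{u}}_x)$ and $\operatorname{Im}({\mathfrak{u}}\bar{\mathfrak{u}}_x)$. The condition $|\gamma|\le \frac{2\sqrt\beta}{\beta-1}$ will turn out to be exactly its discriminant condition.

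\emph{Step 1 (chain rule).} Since ${\mathfrak{u}}\in H^1(\mathbb{T})$ and we are in one dimension, ${\mathfrak{u}}$ is absolutely continuous. For $\beta>1$ the map $N_1(z)=|z|^{\beta-1}z$ is $C^1$ on $\mathbb{C}\cong\mathbb{R}^2$, with the derivative computed in the proof of \Cref{liptypeofnonlinear}. Hence, for a.e. $x$,
\begin{align*}
(N_1({\mathfrak{u}}))_x=|{\mathfrak{u}}|^{\beta-1}{\mathfrak{u}}_x+(\beta-1)|{\mathfrak{u}}|^{\beta-3}{\mathfrak{u}}\operatorname{Re}({\mathfrak{u}}\bar{\mathfrak{u}}_x),
\end{align*}
where the second term is interpreted as $0$ wherever ${\mathfrak{u}}=0$. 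This is consistent because $N_1'(0)=0$ when $\beta>1$. The bound $|N_1'(z)|\le\beta|z|^{\beta-1}$ together with ${\mathfrak{u}}\in L^\infty$ gives $(N_1({\mathfrak{u}}))_x\in L^2$. I expect this regularity issue to be the only delicate point. The factor $|{\mathfrak{u}}|^{\beta-3}$ is singular at zeros of ${\mathfrak{u}}$ when $\beta<3$, but it only ever appears multiplied by quantities of size $|{\mathfrak{u}}|^2|{\mathfrak{u}}_x|^2$. I will therefore treat the set $\{{\mathfrak{u}}=0\}$ separately: there the whole expression vanishes.

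\emph{Step 2 (algebra on $\{{\mathfrak{u}}\neq0\}$).} Set $z={\mathfrak{u}}\bar{\mathfrak{u}}_x$, $a=\operatorname{Re}z$, $b=\operatorname{Im}z$, and note that $|{\mathfrak{u}}_x|^2=|z|^2/|{\mathfrak{u}}|^2$. Multiplying the formula from Step 1 by $\bar{\mathfrak{u}}_x$ gives
\begin{align*}
(N_1({\mathfrak{u}}))_x\bar{\mathfrak{u}}_x=|{\mathfrak{u}}|^{\beta-3}\big[a^2+b^2+(\beta-1)a(a+ib)\big].
\end{align*}
Consequently $\operatorname{Re}=|{\mathfrak{u}}|^{\beta-3}(\beta a^2+b^2)$ and $\operatorname{Im}=|{\mathfrak{u}}|^{\beta-3}(\beta-1)ab$. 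As in the proof of \Cref{diffnonrenega}, $\mathcal{N}=-(1+i\gamma)N_1$ yields $\operatorname{Re}[(\mathcal{N}({\mathfrak{u}}))_x\bar{\mathfrak{u}}_x]=-\operatorname{Re}[\cdot]+\gamma\operatorname{Im}[\cdot]$. Therefore
\begin{align*}
\operatorname{Re}\big[(\mathcal{N}({\mathfrak{u}}))_x\bar{\mathfrak{u}}_x\big]=-|{\mathfrak{u}}|^{\beta-3}\big[\beta a^2-\gamma(\beta-1)ab+b^2\big].
\end{align*}

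\emph{Step 3 (discriminant).} The quadratic form $\beta a^2-\gamma(\beta-1)ab+b^2$ is positive semidefinite if and only if $\gamma^2(\beta-1)^2\le4\beta$, that is, $|\gamma|\le\frac{2\sqrt\beta}{\beta-1}$. This is precisely the hypothesis, so the right-hand side is $\le0$ a.e., which proves the claim.

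As a cross-check, one could instead derive the same inequality from \Cref{imlemmafornon} and \Cref{nonlinearitynegative}. Take ${\mathfrak{u}}_1={\mathfrak{u}}(x+h)$, ${\mathfrak{u}}_2={\mathfrak{u}}(x)$, divide by $h^2$, and let $h\to0$ at points of differentiability. The direct computation above is cleaner and also shows the condition on $\gamma$ is sharp, so I will present that one.
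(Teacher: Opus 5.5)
Your proof is correct. It is more than the paper provides: the paper's proof of this lemma is only a pointer to \cite[Lemma 1]{MR0867341} with $p=\beta-1$, with no argument of its own.

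Your chain-rule computation supplies that argument directly and handles the $H^1$ regularity properly:
\begin{itemize}
\item you use the absolute continuity of ${\mathfrak{u}}$;
\item $N_1\in C^1$ with $N_1'(0)=0$, so the zero set contributes nothing;
\item $(N_1({\mathfrak{u}}))_x\in L^2$.
\end{itemize}
You then reduce the claim to the pointwise identity $\operatorname{Re}[(\mathcal{N}({\mathfrak{u}}))_x\bar{\mathfrak{u}}_x]=-|{\mathfrak{u}}|^{\beta-3}[\beta a^2-\gamma(\beta-1)ab+b^2]$. This shows that $|\gamma|\le \frac{2\sqrt\beta}{\beta-1}$ is exactly the discriminant condition, hence sharp, which the citation hides. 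The a.e.\ pointwise inequality also immediately gives the integrated form $\operatorname{Re}\int_{\mathbb{T}}(\mathcal{N}({\mathfrak{u}}))_x\bar{\mathfrak{u}}_x\,dx\le 0$ actually used in \Cref{uniformestimates}, since the integrand lies in $L^1$.

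Your alternative difference-quotient route is also valid and would tie the lemma to the paper's own \Cref{diffnonrenega}. Apply it to ${\mathfrak{u}}(x+h)$ and ${\mathfrak{u}}(x)$, divide by $h^2$, and let $h\to 0$ at points where ${\mathfrak{u}}$ (hence $\mathcal{N}({\mathfrak{u}})$) is differentiable. If you build it from \Cref{imlemmafornon} and \Cref{nonlinearitynegative} instead, note one thing. The monotonicity you need is $\operatorname{Re}[(N_1({\mathfrak{u}}_1)-N_1({\mathfrak{u}}_2))\overline{({\mathfrak{u}}_1-{\mathfrak{u}}_2)}]\ge 0$. That is what the proof of \Cref{nonlinearitynegative} actually establishes, even though its displayed statement has the inequality reversed.
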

\begin{proof}
	A detailed proof is given in \cite[Lemma 1]{MR0867341}. Our result follows by choosing $p=\beta-1$ in that reference.
\end{proof}
\begin{lem}\label{Itoapplicationlem}
	Let ${\mathfrak{u}}$ be the solution of $
	d{\mathfrak{u}} = (A_\nu {\mathfrak{u}} + F)dt + G\,d\mathcal{W}_1 \quad \text{in } \mathbb{T}$. Then,
	\begin{align}\label{a1}
		d\|{\mathfrak{u}}\|^2 = [-2\nu \|{\mathfrak{u}}_x\|^2 + 2({\mathfrak{u}},F) + \|G\|^2]dt + 2({\mathfrak{u}},G)d\mathcal{W}_1.
	\end{align}
	Therefore, we have
	\begin{align*}
		\|{\mathfrak{u}}(t)\|^2 + 2\nu \int_0^t \|{\mathfrak{u}}_x\|^2 ds
		= \|{\mathfrak{u}}(0)\|^2 + 2\int_0^t ({\mathfrak{u}},F)ds + \int_0^t \|G\|^2 ds + 2\int_0^t ({\mathfrak{u}},G)d\mathcal{W}_1.
	\end{align*}
\end{lem}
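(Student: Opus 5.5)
The plan is to apply the Itô formula in the Hilbert space $L^2(\mathbb{T})$, with real inner product $(\cdot,\cdot)=\operatorname{Re}\int_{\mathbb{T}}\cdot\,\bar{\cdot}\,dx$, to the functional $\Phi({\mathfrak{u}})=\|{\mathfrak{u}}\|^2$. Its Fréchet derivatives are $\Phi'({\mathfrak{u}})h=2({\mathfrak{u}},h)$ and $\Phi''({\mathfrak{u}})(h,h)=2\|h\|^2$. Since $\mathcal{W}_1$ is a real-valued Wiener process and $G$ takes values in $L^2(\mathbb{T})$, the Itô correction is $\tfrac12\Phi''({\mathfrak{u}})(G,G)=\|G\|^2$, and the martingale part is $2({\mathfrak{u}},G)\,d\mathcal{W}_1$. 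This gives
\begin{align*}
d\|{\mathfrak{u}}\|^2=\big[2({\mathfrak{u}},A_\nu{\mathfrak{u}})+2({\mathfrak{u}},F)+\|G\|^2\big]dt+2({\mathfrak{u}},G)\,d\mathcal{W}_1 .
\end{align*}
The only real computation is the drift term $({\mathfrak{u}},A_\nu{\mathfrak{u}})$. Using the Fourier expansion ${\mathfrak{u}}=\sum_k u_k e^{ikx}$ and Parseval, we get $({\mathfrak{u}},-i(-\Delta)^{\alpha}{\mathfrak{u}})=\operatorname{Re}\big(\bar{i}\cdot 2\pi\sum_k|k|^{2\alpha}|u_k|^2\big)$. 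This is the real part of a purely imaginary number, so it equals $0$; equivalently, by \Cref{fractionallemma} with $\alpha_1=\alpha_2=\alpha/2$, one has $\int(-\Delta)^{\alpha}{\mathfrak{u}}\,\bar{\mathfrak{u}}=\|(-\Delta)^{\alpha/2}{\mathfrak{u}}\|^2\in\mathbb{R}$. For the viscous part, integrating by parts on the torus (no boundary terms) gives $({\mathfrak{u}},\nu\Delta{\mathfrak{u}})=-\nu\|{\mathfrak{u}}_x\|^2$. Substituting these yields \eqref{a1}, and integrating from $0$ to $t$ gives the integral identity.

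The main obstacle is justification, because the Itô formula requires ${\mathfrak{u}}\in H^1$ (indeed ${\mathfrak{u}}\in H^{2}$ for $A_\nu{\mathfrak{u}}\in L^2$), while the solution is only known to lie in $L^2$. I would handle this with the Galerkin projections $P_n$ onto the modes $|k|\le n$, which commute with $A_\nu$. The finite-dimensional Itô formula applies to ${\mathfrak{u}}_n=P_n{\mathfrak{u}}$, which satisfies $d{\mathfrak{u}}_n=(A_\nu{\mathfrak{u}}_n+P_nF)dt+P_nG\,d\mathcal{W}_1$, and the identity holds for each $n$ by the computation above. Alternatively, one can invoke the Krylov–Rozovskii Itô formula in the Gelfand triple $H^1\subset L^2\subset H^{-1}$, using that $A_\nu{\mathfrak{u}}\in L^2(0,T;H^{-1})$ when ${\mathfrak{u}}\in L^2(0,T;H^1)$; the latter follows from the viscous energy bound itself.

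Passing to the limit $n\to\infty$ uses monotone convergence for $\int_0^t\|\partial_x P_n{\mathfrak{u}}\|^2ds$ and dominated convergence for $(P_n{\mathfrak{u}},P_nF)$ and $\|P_nG\|^2$, given $F,G\in L^2(0,T;L^2)$ almost surely. For the stochastic integral, the BDG inequality (\Cref{BurkholderDavisGundy inequality}), applied along a localizing sequence of stopping times, gives convergence in probability. Since the fractional term cancels exactly at each Galerkin level, no regularity beyond $H^1$ is needed in the limit.
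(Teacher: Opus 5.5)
Your proposal is correct, and since the paper gives no argument of its own (it only refers to Lemma 3.1 of \cite{MR4087366}), your route is exactly the standard argument behind that citation: It\^o's formula for $\|\mathfrak{u}\|^2$, the vanishing of $(\mathfrak{u},-i(-\Delta)^{\alpha}\mathfrak{u})$ by skew-adjointness, integration by parts giving $(\mathfrak{u},\nu\Delta\mathfrak{u})=-\nu\|\mathfrak{u}_x\|^2$, and rigor via Fourier--Galerkin projections that commute with $A_\nu$. One caveat: the Galerkin route is the self-contained one (monotone convergence even yields $\mathfrak{u}\in L^2(0,T;H^1)$ a.s.\ as a by-product), whereas your Krylov--Rozovskii alternative as phrased is circular, because it takes the $L^2(0,T;H^1)$ bound from the energy identity it is meant to justify, so that bound would have to come from an independent source such as the parabolic smoothing of $S_\nu$.
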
 
\begin{proof}
	For a detail proof, see \cite[Lemma 3.1]{MR4087366}.
\end{proof}

\begin{lem}\label{property of semigroup}
	For $\nu \ge 0$, $A_\nu$ generates a $C_0$-semigroup $S_\nu$ of contractions on $L^2(\mathbb{T})$.  
	Moreover, for $\nu > 0$, there exists a constant $C$ such that for any ${\mathfrak{u}}_0 \in L^2(\mathbb{T})$,
	\begin{align*}
		\|S_\nu(t){\mathfrak{u}}_0\| &\le \|{\mathfrak{u}}_0\|, \quad t \in [0,T], \\
		\|S_\nu(t){\mathfrak{u}}_0\|_{H^1} &\le \frac{C}{\sqrt{t}} \|{\mathfrak{u}}_0\|, \quad t \in (0,T],
	\end{align*}
	where $C = C(\nu, T)$.
\end{lem}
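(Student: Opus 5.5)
The plan is to diagonalize $A_\nu$ in the Fourier basis $\{e^{ikx}\}_{k\in\mathbb{Z}}$ of $L^2(\mathbb{T})$. By \Cref{deffraclap}, $(-\Delta)^\alpha e^{ikx}=|k|^{2\alpha}e^{ikx}$ and $\Delta e^{ikx}=-k^2e^{ikx}$. Hence $A_\nu e^{ikx}=\lambda_k e^{ikx}$ with
\begin{align*}
\lambda_k=-i|k|^{2\alpha}-\nu k^2, \qquad \operatorname{Re}\lambda_k=-\nu k^2\le 0 .
\end{align*}
I will define $S_\nu(t)$ as the Fourier multiplier
\begin{align*}
S_\nu(t)\sum_k v_k e^{ikx}=\sum_k e^{\lambda_k t}v_k e^{ikx},
\end{align*}
acting on the domain $D(A_\nu)=H^{2}(\mathbb{T})$ when $\nu>0$, and $H^{2\alpha}(\mathbb{T})$ when $\nu=0$.

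\textbf{Semigroup property and contraction.} The relation $S_\nu(t+s)=S_\nu(t)S_\nu(s)$ and $S_\nu(0)=I$ follow directly from $e^{\lambda_k(t+s)}=e^{\lambda_k t}e^{\lambda_k s}$. By Parseval, $\|S_\nu(t){\mathfrak{u}}_0\|^2=2\pi\sum_k e^{-2\nu k^2 t}|v_k|^2\le\|{\mathfrak{u}}_0\|^2$, which gives the contraction bound for every $\nu\ge0$; when $\nu=0$ the multiplier is unitary.

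\textbf{Strong continuity and generator.} For strong continuity I will write $\|S_\nu(t){\mathfrak{u}}_0-{\mathfrak{u}}_0\|^2=2\pi\sum_k|e^{\lambda_k t}-1|^2|v_k|^2$. Each term tends to $0$ as $t\to0$ and is bounded by $4|v_k|^2$, so dominated convergence gives the limit $0$. To identify the generator as $A_\nu$ on this domain, I use that $(e^{\lambda_k t}-1)/t\to\lambda_k$ and $|(e^{\lambda_k t}-1)/t|\le|\lambda_k|$, since $\operatorname{Re}\lambda_k\le0$. Dominated convergence then applies on the domain, and conversely, finiteness of the limit forces $\sum|\lambda_k|^2|v_k|^2<\infty$. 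Alternatively, I can invoke Lumer--Phillips: $A_\nu$ is dissipative because $\operatorname{Re}(A_\nu{\mathfrak{u}},{\mathfrak{u}})=-\nu\|{\mathfrak{u}}_x\|^2\le0$ by \Cref{fractionallemma}, and $I-A_\nu$ is surjective since $1-\lambda_k\ne0$.

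\textbf{Smoothing estimate.} This is the only step with real content. With the norm $\|{\mathfrak{u}}\|_{H^1}^2=2\pi\sum(1+k^2)|v_k|^2$, I will bound
\begin{align*}
(1+k^2)e^{-2\nu k^2t}\le e^{-2\nu k^2 t}+\frac{1}{t}\sup_{s\ge0}s\,e^{-2\nu s}= e^{-2\nu k^2 t}+\frac{1}{2e\nu t}.
\end{align*}
For $t\in(0,T]$ the first term is at most $1\le T/t$. Therefore
\begin{align*}
\|S_\nu(t){\mathfrak{u}}_0\|_{H^1}^2\le\Big(T+\frac{1}{2e\nu}\Big)\frac{1}{t}\|{\mathfrak{u}}_0\|^2,
\end{align*}
which yields the claim with $C=(T+1/(2e\nu))^{1/2}$. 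The main obstacle is merely ensuring that the constant depends only on $\nu$ and $T$. Note also that the dispersive factor $e^{-i|k|^{2\alpha}t}$ has modulus one, so it contributes no smoothing, which is why $\nu>0$ is needed for the $H^1$ bound.
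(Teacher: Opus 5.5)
Your proof is correct. The paper gives no argument of its own: its proof is only a pointer to \cite[Proposition 3.1]{MR4087366}, which is reasonable because the slow linear part there is the same as here.

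Your explicit diagonalization makes the lemma self-contained and uses exactly what $\mathbb{T}$ offers. By \Cref{deffraclap}, both $(-\Delta)^{\alpha}$ and $\Delta$ are diagonal in $\{e^{ikx}\}$. Consequently the domain ($H^2$ for $\nu>0$, $H^{2\alpha}$ for $\nu=0$), the contraction (unitarity for $\nu=0$) and the constant $C=(T+1/(2e\nu))^{1/2}$ all come out explicitly. Since the dispersive factor has modulus one, the $H^1$ gain, and the blow-up of $C$ as $\nu\to0$, visibly come only from $e^{-\nu k^2t}$.

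In your Lumer--Phillips variant, $1-\lambda_k\neq0$ alone does not make $(I-A_\nu)^{-1}$ map $L^2$ onto $D(A_\nu)$. What you need is $|1-\lambda_k|\ge\max\{1,|\lambda_k|\}$, which follows from $\operatorname{Re}\lambda_k\le0$. This does not affect your proof, since your direct multiplier argument already identifies the generator.

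The multiplier representation also yields the bound \eqref{smoothness}, which the paper later attributes to this lemma. Indeed, $|e^{\lambda_kh}-1|^2\le\min\{4,|\lambda_k|^2h^2\}\le2|\lambda_k|h\le2(1+\nu)k^2h$, so $\|(S_\nu(h)-I)x\|^2\le2(1+\nu)h\|x_x\|^2$. This estimate is not a formal consequence of the two displayed inequalities. For example, the semigroup generated by $-(-\Delta)^2$ satisfies both but only gives order $h^{1/4}$. So your route establishes strictly more than the abstract statement records.
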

\begin{proof}
	For a detail proof, see \cite[Proposition 3.1]{MR4087366}.
\end{proof}
\begin{lem}\label{difffbarf}
	For the averaged drift $\bar F$ defined in \eqref{fbar}, we have 
	\begin{align}\label{difoffand fbar}
		\|F({\mathfrak{u}},{\mathfrak{v}})- \bar F({\mathfrak{u}})\|^{2p}\leq C(1+\|{\mathfrak{u}}\|^{2p}+\|{\mathfrak{v}}\|^{2p}).
	\end{align}
\end{lem}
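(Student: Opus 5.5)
We prove \eqref{difoffand fbar} with the elementary inequality $(a+b)^{2p}\le 2^{2p-1}(a^{2p}+b^{2p})$. This reduces the claim to bounding $\|F({\mathfrak{u}},{\mathfrak{v}})\|^{2p}$ and $\|\bar F({\mathfrak{u}})\|^{2p}$ separately. By the linear growth in Assumption 1,
\begin{align*}
\|F({\mathfrak{u}},{\mathfrak{v}})\|^{2p}\le L_F^{2p}(1+\|{\mathfrak{u}}\|+\|{\mathfrak{v}}\|)^{2p}\le C(1+\|{\mathfrak{u}}\|^{2p}+\|{\mathfrak{v}}\|^{2p}),
\end{align*}
where $C$ depends only on $p$ and $L_F$.

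For the averaged term, we move the norm inside the integral in \eqref{fbar}, using Minkowski's inequality for Bochner integrals:
\begin{align*}
\|\bar F({\mathfrak{u}})\|\le \int_{L^2(\mathbb{T})}\|F({\mathfrak{u}},{\mathfrak{v}})\|\,\mu^{\mathfrak{u}}(\mathrm{d}{\mathfrak{v}})\le L_F\Big(1+\|{\mathfrak{u}}\|+\int_{L^2(\mathbb{T})}\|{\mathfrak{v}}\|\,\mu^{\mathfrak{u}}(\mathrm{d}{\mathfrak{v}})\Big).
\end{align*}
The whole lemma therefore rests on one fact: the first moment of the invariant measure $\mu^{\mathfrak{u}}$ grows at most linearly in $\|{\mathfrak{u}}\|$,
\begin{align*}
\int_{L^2(\mathbb{T})}\|{\mathfrak{v}}\|^2\,\mu^{\mathfrak{u}}(\mathrm{d}{\mathfrak{v}})\le C(1+\|{\mathfrak{u}}\|^2).
\end{align*}
The first moment then follows by Cauchy--Schwarz. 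Given this, $\|\bar F({\mathfrak{u}})\|\le C(1+\|{\mathfrak{u}}\|)$, and raising to the power $2p$ completes the proof.

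The main obstacle is this moment bound, which we obtain from a uniform-in-time energy estimate for the frozen equation. Let ${\mathfrak{v}}^{{\mathfrak{u}},{\mathfrak{v}}_0}(t)$ be its solution. We apply Itô's formula to $\|{\mathfrak{v}}\|^2$; for the fractional dissipative part we use \Cref{fractionallemma} in place of \Cref{Itoapplicationlem}. The terms are handled as follows.
\begin{itemize}
\item The operator contributes $-2\|(-\Delta)^{\rho/2}{\mathfrak{v}}\|^2\le 0$. We read the sign convention of the fast equation as dissipative, as the paper intends.
\item The nonlinearity satisfies $(\mathcal{N}({\mathfrak{v}}),{\mathfrak{v}})=-\int|{\mathfrak{v}}|^{\beta+1}\le 0$.
\item The damping gives $-2\lambda\|{\mathfrak{v}}\|^2$.
\item The coupling satisfies $2({\mathfrak{v}},G)\le 2L_G(1+\|{\mathfrak{u}}\|+\|{\mathfrak{v}}\|)\|{\mathfrak{v}}\|\le 3L_G\|{\mathfrak{v}}\|^2+C(1+\|{\mathfrak{u}}\|^2)$.
\item The noise is bounded: $\|\Sigma_2\|^2\le M^2$.
\end{itemize}
Taking expectations kills the martingale term, and Assumption 3 ($\lambda>3L_G+2L_{\Sigma_2}^2$) yields
\begin{align*}
\frac{d}{dt}\mathbb{E}\|{\mathfrak{v}}(t)\|^2\le -\eta\,\mathbb{E}\|{\mathfrak{v}}(t)\|^2+C(1+\|{\mathfrak{u}}\|^2)
\end{align*}
for some $\eta>0$. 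Hence $\mathbb{E}\|{\mathfrak{v}}^{{\mathfrak{u}},0}(t)\|^2\le C(1+\|{\mathfrak{u}}\|^2)$ uniformly in $t\ge0$.

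It remains to pass from this uniform bound to the invariant measure. By the ergodicity of the frozen dynamics, the law of ${\mathfrak{v}}^{{\mathfrak{u}},0}(t)$ converges weakly to $\mu^{\mathfrak{u}}$; alternatively, one can use invariance of $\mu^{\mathfrak{u}}$ directly. We apply this convergence to the bounded continuous truncations $\min(\|{\mathfrak{v}}\|^2,R)$, let $t\to\infty$, and then let $R\to\infty$ by monotone convergence. This gives the stated second-moment bound for $\mu^{\mathfrak{u}}$ and completes the proof.
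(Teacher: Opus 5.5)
Your proposal is correct and follows the paper's route. You use the triangle inequality and the linear growth of $F$, then Cauchy--Schwarz to reduce $\|\bar F(\mathfrak{u})\|$ to the second-moment bound $\int\|z\|^2\mu^{\mathfrak{u}}(dz)\le C(1+\|\mathfrak{u}\|^2)$, and finally raise to the power $2p$. The only difference is that the paper cites this moment bound as \eqref{ii} from \Cref{propertiesofivpsolution}, while you re-derive it; your passage to $\mu^{\mathfrak{u}}$ via bounded truncations is slightly more careful than the paper's absorption argument, which tacitly assumes $\int\|z\|^2\mu^{\mathfrak{u}}(dz)<\infty$ in advance.
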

\begin{proof}
	We have 
	\begin{align*}
		&\|F({\mathfrak{u}},{\mathfrak{v}})- \bar F({\mathfrak{u}})\| \leq 	\|F({\mathfrak{u}},{\mathfrak{v}})\|+\| \bar F({\mathfrak{u}})\|\leq L_F \left(1+\|{\mathfrak{u}}\| + \|{\mathfrak{v}}\|\right)+ \int_{L^2(\mathbb{T})} L_F \left(  1+\|{\mathfrak{u}}\| + \|{\mathfrak{v}}\|  \right) \mu(\mathrm{d}{\mathfrak{v}})\\
		&\leq C \left(  1+\|{\mathfrak{u}}\| + \|{\mathfrak{v}}\|  \right) + C\int_{L^2(\mathbb{T})} \|{\mathfrak{v}}\|   \mu(\mathrm{d}{\mathfrak{v}})\leq C \left(  1+\|{\mathfrak{u}}\| + \|{\mathfrak{v}}\|  \right) + C\left(\int_{L^2(\mathbb{T})} \|{\mathfrak{v}}\|^2   \mu(\mathrm{d}{\mathfrak{v}})\right)^{\frac{1}{2}}\\
		&\leq C \left(  1+\|{\mathfrak{u}}\| + \|{\mathfrak{v}}\|  \right) + C\left(1+ \|u\|^2\right)^{\frac{1}{2}}\leq C \left(  1+\|{\mathfrak{u}}\| + \|{\mathfrak{v}}\|  \right).
	\end{align*}
\end{proof}

\noindent In the following proposition, we prove the ergodicity for the fast motion equation. 
\begin{prop}\label{propertiesofivpsolution}
	Let ${\mathfrak{u}},X,Y \in L^{2}(\mathbb{T})$, and let ${\mathfrak{v}}^{{\mathfrak{u}},X}$ be the solution of
	\begin{equation}\label{initial data problem}
		\begin{cases}
			d{\mathfrak{v}} = \big((1+i)(-\Delta)^{\rho} {\mathfrak{v}} + \mathcal{N}({\mathfrak{v}}) - \lambda {\mathfrak{v}} + G({\mathfrak{u}},{\mathfrak{v}})\big)\,dt
			+ \Sigma_{2}({\mathfrak{u}},{\mathfrak{v}})\,d\mathcal{W}_{2}, & \text{in } \mathbb{T}\times(0,\infty),\\[2mm]
			{\mathfrak{v}}(x,0)=X(x), & \text{in } \mathbb{T}.
		\end{cases}
	\end{equation}
	\begin{enumerate}
		\item[(i)] There exists a positive constant $C$ such that ${\mathfrak{v}}^{{\mathfrak{u}},X}$ satisfies
		\begin{align}\label{ia}
			\mathbb{E}\big[\|{\mathfrak{v}}^{{\mathfrak{u}},X}(t)\|^{2}\big]
			\le e^{-\lambda t}\|X\|^{2} + C\big(\|{\mathfrak{u}}\|^{2}+1\big),
		\end{align}
		and
		\begin{align}\label{ib}
			\mathbb{E}\big[\|{\mathfrak{v}}^{{\mathfrak{u}},X}(t)-{\mathfrak{v}}^{{\mathfrak{u}},Y}(t)\|^{2}\big]
			\le \|X-Y\|^{2}e^{-\lambda t},
			\qquad t\ge 0.
		\end{align}
		\item[(ii)] There exists a unique invariant measure $\mu^{{\mathfrak{u}}}$ for the Markov semigroup
		$\{P^{{\mathfrak{u}}}_{t}\}_{t\geq 0}$ associated with system \eqref{initial data problem} in $L^{2}(\mathbb{T})$.
		Moreover, we have
		\begin{align}\label{ii}
			\int_{L^{2}(\mathbb{T})} \|z\|^{2}\,\mu^{{\mathfrak{u}}}(dz)
			\le C\big(1+\|{\mathfrak{u}}\|^{2}\big).
		\end{align}
		\item[(iii)] There exists a positive constant $C$ such that ${\mathfrak{v}}^{{\mathfrak{u}},X}$ satisfies
		\begin{align}\label{iii}
			\big\|\mathbb{E}\big[F({\mathfrak{u}},{\mathfrak{v}}^{{\mathfrak{u}},X}(t))\big] - \bar F({\mathfrak{u}})\big\|^{2}
			\le C\big(1+\|X\|^{2}+\|{\mathfrak{u}}\|^{2}\big)e^{-\lambda t},
			\qquad t\ge 0.
		\end{align}
	\end{enumerate}
\end{prop}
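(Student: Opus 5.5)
The proof runs through It\^o's formula for $\|{\mathfrak{v}}\|^2$ (the analogue of \Cref{Itoapplicationlem}), the dissipativity of $\mathcal{N}$ from \Cref{diffnonrenega}, and the size condition $\lambda>3L_G+2L_{\Sigma_2}^2$ of Assumption 3. One caveat on signs: for the fast equation to be dissipative, the operator $(1+i)(-\Delta)^{\rho}$ has to be read so that its real part contributes $-2\|(-\Delta)^{\rho/2}{\mathfrak{v}}\|^2\le 0$ (as $(1+i)\Delta$ does in \cite{MR4087366}). I adopt this reading throughout. With it, \Cref{fractionallemma} gives $\big((1+i)(-\Delta)^{\rho}{\mathfrak{v}},{\mathfrak{v}}\big)\le 0$, because the imaginary part drops out of $\operatorname{Re}$.

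\textbf{Step 1: moment bound \eqref{ia}.} Apply It\^o to $e^{\lambda t}\|{\mathfrak{v}}^{{\mathfrak{u}},X}(t)\|^2$ and take expectations; the martingale term vanishes after a standard localisation. The nonlinear term satisfies
\begin{align*}
(\mathcal{N}({\mathfrak{v}}),{\mathfrak{v}})=-\int_{\mathbb{T}}|{\mathfrak{v}}|^{\beta+1}\,dx\le 0,
\end{align*}
and the linear part gives $-2\lambda\|{\mathfrak{v}}\|^2$. For the remaining terms:
\begin{align*}
2({\mathfrak{v}},G({\mathfrak{u}},{\mathfrak{v}}))\le 2L_G\|{\mathfrak{v}}\|^2+2L_G\|{\mathfrak{v}}\|(1+\|{\mathfrak{u}}\|)\le 3L_G\|{\mathfrak{v}}\|^2+C(1+\|{\mathfrak{u}}\|^2),\qquad \|\Sigma_2\|^2\le M^2.
\end{align*}
Hence
\begin{align*}
\frac{d}{dt}\mathbb{E}\|{\mathfrak{v}}\|^2\le-(2\lambda-3L_G)\,\mathbb{E}\|{\mathfrak{v}}\|^2+C(1+\|{\mathfrak{u}}\|^2).
\end{align*}
Since $2\lambda-3L_G\ge\lambda$, a comparison argument gives \eqref{ia}.

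\textbf{Step 2: contraction \eqref{ib}.} Set $w={\mathfrak{v}}^{{\mathfrak{u}},X}-{\mathfrak{v}}^{{\mathfrak{u}},Y}$ and apply It\^o to $\|w\|^2$. The linear operator contributes $\le 0$. By \Cref{diffnonrenega}, which uses $|\gamma|\le\frac{2\sqrt\beta}{\beta-1}$,
\begin{align*}
\big(\mathcal{N}({\mathfrak{v}}^{{\mathfrak{u}},X})-\mathcal{N}({\mathfrak{v}}^{{\mathfrak{u}},Y}),w\big)\le 0.
\end{align*}
The Lipschitz bound for $G$ gives $2(G({\mathfrak{u}},{\mathfrak{v}}^{{\mathfrak{u}},X})-G({\mathfrak{u}},{\mathfrak{v}}^{{\mathfrak{u}},Y}),w)\le 2L_G\|w\|^2$.

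The noise difference is the delicate point. As written in Assumption 2, $\Sigma_2$ is Lipschitz only in the first variable, and the two solutions share the same ${\mathfrak{u}}$. I would therefore read the assumption as Lipschitz in ${\mathfrak{v}}$ with constant $L_{\Sigma_2}$, since this is what the condition on $\lambda$ is designed for. That reading gives a contribution $\le L_{\Sigma_2}^2\|w\|^2$. Combining,
\begin{align*}
\frac{d}{dt}\mathbb{E}\|w\|^2\le-(2\lambda-2L_G-L_{\Sigma_2}^2)\,\mathbb{E}\|w\|^2\le-\lambda\,\mathbb{E}\|w\|^2,
\end{align*}
using $\lambda>3L_G+2L_{\Sigma_2}^2$. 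Gr\"onwall then yields \eqref{ib}.

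\textbf{Step 3: invariant measure (ii).} Existence and uniqueness follow the standard route, e.g.\ \cite{MR4087366}. Bound \eqref{ia} makes $\{\frac1T\int_0^T P^{\mathfrak{u}}_t(0,\cdot)\,dt\}_{T>0}$ bounded in second moment. Alternatively, \eqref{ib} gives, for $W_2$ the quadratic Wasserstein distance,
\begin{align*}
W_2\big(P^{\mathfrak{u}*}_{t+s}\delta_0,P^{\mathfrak{u}*}_{t}\delta_0\big)\le e^{-\lambda t/2}\big(\mathbb{E}\|{\mathfrak{v}}^{{\mathfrak{u}},0}(s)\|^2\big)^{1/2}.
\end{align*}
So $P^{\mathfrak{u}*}_t\delta_0$ is Cauchy in $W_2$, and its limit $\mu^{\mathfrak{u}}$ is invariant by the Feller property, which itself follows from \eqref{ib}. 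Uniqueness: if $\mu$ and $\mu'$ are both invariant, then coupling them and applying \eqref{ib} gives $W_2(\mu,\mu')\le e^{-\lambda t/2}W_2(\mu,\mu')\to0$. The moment bound \eqref{ii} follows by integrating \eqref{ia} against $\mu^{\mathfrak{u}}$ with $X=0$ in the limit, using lower semicontinuity of $z\mapsto\|z\|^2$ and truncation $\|z\|^2\wedge N$.

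\textbf{Step 4: mixing estimate (iii).} By invariance, $\bar F({\mathfrak{u}})=\int\mathbb{E}F({\mathfrak{u}},{\mathfrak{v}}^{{\mathfrak{u}},Y}(t))\,\mu^{\mathfrak{u}}(dY)$. Then Jensen and the Lipschitz property of $F$ give
\begin{align*}
\big\|\mathbb{E}F({\mathfrak{u}},{\mathfrak{v}}^{{\mathfrak{u}},X}(t))-\bar F({\mathfrak{u}})\big\|^2\le L_F^2\int\mathbb{E}\|{\mathfrak{v}}^{{\mathfrak{u}},X}(t)-{\mathfrak{v}}^{{\mathfrak{u}},Y}(t)\|^2\,\mu^{\mathfrak{u}}(dY).
\end{align*}
By \eqref{ib} the right-hand side is at most $L_F^2e^{-\lambda t}\int\|X-Y\|^2\,\mu^{\mathfrak{u}}(dY)$, and \eqref{ii} bounds this by $Ce^{-\lambda t}(1+\|X\|^2+\|{\mathfrak{u}}\|^2)$, which is \eqref{iii}.

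The main obstacle is Step 2. It needs two things: that the noise term is controlled in the ${\mathfrak{v}}$-difference, which requires the reading of Assumption 2 above; and that the nonlocal operator and the polynomial nonlinearity are monotone simultaneously, which is exactly where \Cref{diffnonrenega} and the bound on $\gamma$ enter. The rigorous It\^o computation should be justified via Galerkin approximations, since $\mathcal{N}({\mathfrak{v}})\notin L^2$ a priori.
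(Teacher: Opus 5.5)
Your argument is correct and follows the paper's route. You use It\^o's formula for $\|{\mathfrak{v}}\|^2$ and for the difference (with \Cref{diffnonrenega} and Young's inequality), the contraction \eqref{ib} for the invariant measure, and invariance plus the Lipschitz property of $F$, \eqref{ib} and \eqref{ii} for \eqref{iii}. The only real departure is in (ii): you obtain \eqref{ii} from $\mu^{\mathfrak{u}}=\lim_t P^{{\mathfrak{u}}*}_t\delta_0$ by truncation, which avoids the paper's absorption step, since that step tacitly assumes $\int\|z\|^2\mu^{\mathfrak{u}}(dz)<\infty$. The same truncation applied to an arbitrary invariant measure gives the finite second moment that your $W_2$ uniqueness argument needs.

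Your sign caveat is well founded: by \Cref{deffraclap}, $\big((1+i)(-\Delta)^{\rho}{\mathfrak{v}},{\mathfrak{v}}\big)=+\|(-\Delta)^{\rho/2}{\mathfrak{v}}\|^2$, so the paper's integration by parts has the wrong sign. Your reinterpretation of Assumption 2, by contrast, is not needed. The displayed inequality with ${\mathfrak{u}}_1={\mathfrak{u}}_2$ already forces $\Sigma_2({\mathfrak{u}},{\mathfrak{v}}_1)=\Sigma_2({\mathfrak{u}},{\mathfrak{v}}_2)$, so the noise difference in your Step 2 vanishes, and your Lipschitz-in-${\mathfrak{v}}$ reading is a harmless generalization.
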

\begin{proof}
		\textbf{(i)} An application of the infinite-dimensional It\^o formula to the process $\|{\mathfrak{v}}^{{\mathfrak{u}}, X}\|^{2}$ yields
	\begin{align*}
		d \|{\mathfrak{v}}^{{\mathfrak{u}}, X}\|^{2}
		&= 2 \left( (1+i) (-\Delta)^{\rho} {\mathfrak{v}}^{{\mathfrak{u}}, X} + \mathcal{N}({\mathfrak{v}}^{{\mathfrak{u}}, X}) - \lambda {\mathfrak{v}}^{{\mathfrak{u}}, X} + G({\mathfrak{u}}, {\mathfrak{v}}^{{\mathfrak{u}}, X}), {\mathfrak{v}}^{{\mathfrak{u}}, X} \right) dt \notag \\
		&\quad +  \|\Sigma_2({\mathfrak{u}}, {\mathfrak{v}}^{{\mathfrak{u}}, X})\|^2 dt  + 2 \left( {\mathfrak{v}}^{{\mathfrak{u}}, X}, \Sigma_2({\mathfrak{u}}, {\mathfrak{v}}^{{\mathfrak{u}}, X}) \right) d\mathcal{W}_2, \quad \mathbb{P}\text{-a.s.}
	\end{align*}
Integrating both sides over time and taking mathematical expectation, we obtain
	\begin{align*}
		\mathbb{E}\big[ \|{\mathfrak{v}}^{{\mathfrak{u}}, X}(t)\|^{2}\big] &= \|{\mathfrak{v}}_0\|^{2p} 
		+ {2} \mathbb{E} \bigg[\int_0^t \big( (1+i)(-\Delta)^{\rho}{\mathfrak{v}}^{{\mathfrak{u}}, X}(s) + \mathcal{N}({\mathfrak{v}}^{{\mathfrak{u}}, X}(s)) - \lambda {\mathfrak{v}}^{{\mathfrak{u}}, X}(s)\\
		& \quad\quad+ G(u^{\varepsilon,\nu}(s), {\mathfrak{v}}^{{\mathfrak{u}}, X}(s)), {\mathfrak{v}}^{{\mathfrak{u}}, X}(s) \big) ds \bigg] + \mathbb{E} \bigg[\int_0^t 
		\|\Sigma_2(u^{\varepsilon,\nu}(s), {\mathfrak{v}}^{{\mathfrak{u}}, X}(s))\|^2 ds\bigg] \\
		&\quad+ 2\mathbb{E} \bigg[\int_0^t   \left( {\mathfrak{v}}^{{\mathfrak{u}}, X}(s), \Sigma_2({\mathfrak{u}}(s), {\mathfrak{v}}^{{\mathfrak{u}}, X}(s)) \right) d\mathcal{W}_2(s)\bigg],
	\end{align*}
	for all $t\in [0,T]$. Since the martingale term is contributing zero,  differentiating the other terms with respect to time, we arrived at
	\begin{align*}
		\frac{d}{dt} \mathbb{E} \big[\|{\mathfrak{v}}^{{\mathfrak{u}}, X}(t)\|^{2} \big]
		&= 2 \mathbb{E} \big[\left( (1+i)(-\Delta)^{\rho}{\mathfrak{v}}^{{\mathfrak{u}}, X}(t) + \mathcal{N}({\mathfrak{v}}^{{\mathfrak{u}}, X}(t))  - \lambda {\mathfrak{v}}^{{\mathfrak{u}}, X}(t) + G(u^{\varepsilon,\nu}(t), {\mathfrak{v}}^{{\mathfrak{u}}, X}(t)), {\mathfrak{v}}^{{\mathfrak{u}}, X}(t) \right)\big]\\
		&\quad +  \mathbb{E}\big[\|\Sigma_2(u^{\varepsilon,\nu}(t), {\mathfrak{v}}^{{\mathfrak{u}}, X}(t))\|^2\big],
	\end{align*}
	for a.e. $t\in [0,T]$. Here, we apply integration by parts to compute
	\begin{align*}
		\left( (1+i)(-\Delta)^{\rho} {\mathfrak{v}}^{{\mathfrak{u}}, X}, {\mathfrak{v}}^{{\mathfrak{u}}, X} \right)=-\left( (1+i)(-\Delta)^{\frac{\rho}{2}}{\mathfrak{v}}^{{\mathfrak{u}}, X}, (-\Delta)^{\frac{\rho}{2}} {\mathfrak{v}}^{{\mathfrak{u}}, X} \right)=- \|(-\Delta)^{\frac{\rho}{2}} {\mathfrak{v}}^{{\mathfrak{u}}, X}\|^2,
	\end{align*}
	and 
	\begin{align*}
		\left(  \mathcal{N}({\mathfrak{v}}^{{\mathfrak{u}}, X}), {\mathfrak{v}}^{{\mathfrak{u}}, X} \right)= 	\left(  -(1+i \gamma) | {\mathfrak{v}}^{{\mathfrak{u}}, X}|^{\beta-1} {\mathfrak{v}}^{{\mathfrak{u}}, X}, {\mathfrak{v}}^{{\mathfrak{u}}, X} \right)= - \int_{\mathbb{T}} | {\mathfrak{v}}^{{\mathfrak{u}}, X}|^{\beta+1}dx= -\| {\mathfrak{v}}^{{\mathfrak{u}}, X}\|^{\beta+1}_{L^{\beta+1}}.
	\end{align*}
	Thus, we have
	\begin{align*}
		\frac{d}{dt} \mathbb{E} \big[\|{\mathfrak{v}}^{{\mathfrak{u}}, X}(t)\|^{2}\big] &=  2\mathbb{E}  \big[\big(- \lambda {\mathfrak{v}}^{{\mathfrak{u}}, X}(t) + G({\mathfrak{u}}, {\mathfrak{v}}^{{\mathfrak{u}}, X}(t)), {\mathfrak{v}}^{{\mathfrak{u}}, X}(t)\big)\big]+  \mathbb{E} \big[ \|\Sigma_2({\mathfrak{u}}, {\mathfrak{v}}^{{\mathfrak{u}}, X}(t))\|^2\big] \\
		&\quad -2 \mathbb{E} \bigg[\|(-\Delta)^{\frac{\rho}{2}}{\mathfrak{v}}^{\varepsilon,\nu}(t)\|^2+ \int_{\mathbb{T}}|{\mathfrak{v}}^{{\mathfrak{u}}, X}(t)|^{\beta+1} dx\bigg] \\
		&\leq -2\lambda \mathbb{E}\big[ \|{\mathfrak{v}}^{{\mathfrak{u}}, X}(t)\|^{2}\big]+2 \mathbb{E}  \big[\big( G({\mathfrak{u}}, {\mathfrak{v}}^{{\mathfrak{u}}, X}(t)), {\mathfrak{v}}^{{\mathfrak{u}}, X}(t)\big)\big] +\mathbb{E} \big[\|\Sigma_2({\mathfrak{u}}, {\mathfrak{v}}^{{\mathfrak{u}}, X}(t))\|^2\big],
	\end{align*}
	for a.e. $t\in [0,T]$.
	In view of the assumptions on $G$ in \Cref{main assumptions} and the Young inequality, we have
	\begin{align*}
		2( G({\mathfrak{u}}, {\mathfrak{v}}^{{\mathfrak{u}}, X}), {\mathfrak{v}}^{{\mathfrak{u}}, X} )&\leq 2\|{\mathfrak{v}}^{{\mathfrak{u}}, X}\|\|G({\mathfrak{u}}, {\mathfrak{v}}^{{\mathfrak{u}}, X})\|\leq 2L_G\|{\mathfrak{v}}^{{\mathfrak{u}}, X}\|(1+\|{\mathfrak{u}}\|+\|{\mathfrak{v}}^{{\mathfrak{u}}, X}\|) \\
		&= 2L_G\|{\mathfrak{v}}^{{\mathfrak{u}}, X}\|
		+2L_G\|{\mathfrak{v}}^{{\mathfrak{u}}, X}\|\|{\mathfrak{u}}\|
		+2L_G\|{\mathfrak{v}}^{{\mathfrak{u}}, X}\|^{2}\\
		&\leq 2L_G \eta\|{\mathfrak{v}}^{{\mathfrak{u}}, X}\|^{2} + C(\eta,L_G)+2L_G \eta\|{\mathfrak{v}}^{{\mathfrak{u}}, X}\|^{2} + C(\eta,L_G)\|{\mathfrak{u}}\|^{2}+2L_G\|{\mathfrak{v}}^{{\mathfrak{u}}, X}\|^{2}.
	\end{align*}
		Choosing $\eta=\frac{1}{4}$ and using the property of $\Sigma_2$ from \Cref{main assumptions}, we derive
	\begin{align*}
		\frac{d}{dt}\mathbb{E}\big[\|{\mathfrak{v}}^{{\mathfrak{u}}, X}(t)\|^{2}\big]
		&\leq -2\lambda\mathbb{E}\big[\|{\mathfrak{v}}^{{\mathfrak{u}}, X}(t)\|^{2}\big]
		+ 3L_G \mathbb{E}\big[\|{\mathfrak{v}}^{{\mathfrak{u}}, X}(t)\|^{2}\big] + C\mathbb{E}\big[\|{\mathfrak{u}}\|^{2}\big]
		+C \\
		&\leq -\lambda\mathbb{E}\big[\|{\mathfrak{v}}^{{\mathfrak{u}}, X}(t)\|^{2}\big]
		+ C\|{\mathfrak{u}}\|^{2}
		+ C.
	\end{align*}
	Using the variation of constants formula, we deduce
	\begin{align*}
		\mathbb{E}\big[\|{\mathfrak{v}}^{{\mathfrak{u}}, X}(t)\|^{2} \big]
		&\leq \|X\|^{2} e^{-\lambda t}
		+ C(\|{\mathfrak{u}}\|^{2}+1),
	\end{align*}
	for all $t\in [0,T]$. This proves \eqref{ia}.\\
	
	\noindent
	Let ${\mathfrak{v}}^{{\mathfrak{u}}, X}, {\mathfrak{v}}^{{\mathfrak{u}}, Y}$ be the solution corresponding to the initial data $X, Y$ respectively. Then ${\mathfrak{v}}^{{\mathfrak{u}}, X}-{\mathfrak{v}}^{{\mathfrak{u}}, Y}$ satisfies the following equation:
		\begin{equation*}
		\begin{cases}
			&d{({\mathfrak{v}}^{{\mathfrak{u}}, X}-{\mathfrak{v}}^{{\mathfrak{u}}, Y})}= \Big[(1+i)(-\Delta)^{\rho} {({\mathfrak{v}}^{{\mathfrak{u}}, X}-{\mathfrak{v}}^{{\mathfrak{u}}, Y})} + \mathcal{N}({\mathfrak{v}}^{{\mathfrak{u}}, X})-\mathcal{N}({\mathfrak{v}}^{{\mathfrak{u}}, Y}) - \lambda {({\mathfrak{v}}^{{\mathfrak{u}}, X}-{\mathfrak{v}}^{{\mathfrak{u}}, Y})}\\
		    &\qquad\qquad\qquad\qquad+G(\mathfrak{u},{\mathfrak{v}}^{{\mathfrak{u}}, X})-G(\mathfrak{u},{\mathfrak{v}}^{{\mathfrak{u}}, Y})\Big]\,dt+ \Big[\Sigma_{2}({\mathfrak{u}},{\mathfrak{v}}^{{\mathfrak{u}}, X})-\Sigma_{2}({\mathfrak{u}},{\mathfrak{v}}^{{\mathfrak{u}}, Y})\Big]\,d\mathcal{W}_{2} \,\,\,\,\text{in } \mathbb{T}\times(0,\infty),\\
			&{({\mathfrak{v}}^{{\mathfrak{u}}, X}-{\mathfrak{v}}^{{\mathfrak{u}}, Y})}(x,0)=X(x)-Y(x)  \,\,\,\,\,\,\,\,\,\qquad\qquad\qquad\qquad\quad\quad\,\,\,\,\quad\qquad\qquad\qquad\qquad\text{in } \mathbb{T}.
		\end{cases}
	\end{equation*}
Proceeding exactly as in the derivation of \eqref{ia}, we arrive at
	\begin{align*}
		\mathbb{E}\big[\|{\mathfrak{v}}^{{\mathfrak{u}},X}(t)-{\mathfrak{v}}^{{\mathfrak{u}},Y}(t)\|^{2}\big]
		\le \|X-Y\|^{2}e^{-\lambda t},
		\qquad t\ge 0.
	\end{align*}
	\textbf{(ii)} 
	For any $\mathfrak{u}\in L^2(\mathbb{T})$, \eqref{ia}, \eqref{ib} imply an existence of a unique invariant measure $\mu^{\mathfrak{u}}$ for the Markov semigroup $\{P_t^{\mathfrak{u}}\}_{t\geq 0}$ associated with the system \eqref{initial data problem} in $L^2(\mathbb{T})$ such that 
	\begin{align*}
		\int_{L^{2}(\mathbb{T})} P_t^{\mathfrak{u}} \phi\,d\mu^{{\mathfrak{u}}}=\int_{L^{2}(\mathbb{T})} \phi\,d\mu^{{\mathfrak{u}}} \quad t\geq 0,
	\end{align*}
	for any $\phi \in \mathcal{B}_b(L^{2}(\mathbb{T}))$, the space of all bounded Borel measurable functions on $L^{2}(\mathbb{T})$.\\
	\noindent Using \eqref{ia}, for any $t\geq 0$, we have 
	\begin{align*}
		\int_{L^{2}(\mathbb{T})}\|z\|^2 \mu^{{\mathfrak{u}}}(dz)=	\int_{L^{2}(\mathbb{T})}P_t^{\mathfrak{u}}\|z\|^2 \mu^{{\mathfrak{u}}}(dz)=&	\int_{L^{2}(\mathbb{T})}\mathbb{E}\big[\|{\mathfrak{v}}^{{\mathfrak{u}},z}(t)\|^2\big] \mu^{{\mathfrak{u}}}(dz)\\
		&\leq  C\big(\|{\mathfrak{u}}\|^{2}+1\big)+ e^{-\lambda t}\int_{L^{2}(\mathbb{T})}\|z\|^{2} \mu^{{\mathfrak{u}}}(dz).
	\end{align*}
	Therefore, for $\tilde{t}\geq 0$ such that $e^{-\lambda \tilde{t}}\leq\frac{1}{2}$, we have 
	\begin{align*}
		\int_{L^{2}(\mathbb{T})}\|z\|^2 \mu^{{\mathfrak{u}}}(dz)\leq  C\big(\|{\mathfrak{u}}\|^{2}+1\big).
	\end{align*}
	This proves \eqref{ii}.\\
	
	\noindent\textbf{(iii)} According to the invariant property of $\mu^{{\mathfrak{u}}}$, Lipschitz continuity of $F$ in \Cref{main assumptions}, \eqref{ia}, \eqref{ib} and \eqref{averagedequation}, we have 
	\begin{align*}
		\|\mathbb{E}\big[ F({\mathfrak{u}}, {\mathfrak{v}}^{{\mathfrak{u}},X})\big] - \bar{F}({\mathfrak{u}})\|^2
		&= \Big\|\mathbb{E} \big[F({\mathfrak{u}}, {\mathfrak{v}}^{{\mathfrak{u}},X})\big]
		- \int_{L^2(\mathbb{T})} F({\mathfrak{u}}, Y)\,\mu^{\mathfrak{u}}(dY)\Big\|^2 \\
		&= \Big\|\mathbb{E} \big[F({\mathfrak{u}}, {\mathfrak{v}}^{{\mathfrak{u}},X})\big]
		- \mathbb{E}\Big[ \int_{L^2(\mathbb{T})} F({\mathfrak{u}}, {\mathfrak{v}}^{{\mathfrak{u}},Y})\,\mu^{\mathfrak{u}}(dY)\Big]\Big\|^2 \\
		&= \Big\| \int_{L^2(\mathbb{T})}
		\mathbb{E}\big[F({\mathfrak{u}}, {\mathfrak{v}}^{{\mathfrak{u}},X}) - F({\mathfrak{u}}, {\mathfrak{v}}^{{\mathfrak{u}},Y})\big]
		\,\mu^{\mathfrak{u}}(dY)\Big\|^2 \\
		&\le L_F \int_{L^2(\mathbb{T})}
		\mathbb{E}\big[\|{\mathfrak{v}}^{{\mathfrak{u}},X} - {\mathfrak{v}}^{{\mathfrak{u}},Y}\|^2\big]
		\,\mu^{\mathfrak{u}}(dY) \\
		&\le C \int_{L^2(\mathbb{T})}
		\|X - Y\|^2 e^{-\lambda t}
		\,\mu^{\mathfrak{u}}(dY) \\
		&\le C \int_{L^2(\mathbb{T})}
		\big(\|X\|^2 + \|Y\|^2\big) e^{-\lambda t}
		\,\mu^{\mathfrak{u}}(dY) \\
		&\le C\big(1 + \|X\|^2 + \|{\mathfrak{u}}\|^2\big)e^{-\lambda t}.
	\end{align*}
	This completes the proof of \Cref{propertiesofivpsolution}.
\end{proof}
\begin{lem}\label{lemu1u2diffestimate}
	There exists a constant $C>0$ such that for any ${\mathfrak{u}_1}, {\mathfrak{u}_2}, X \in H$, we have 
	\begin{align}\label{u1u2diffestimate}
		\mathbb{E} \big[\|{\mathfrak{v}}^{{\mathfrak{u}_1},X}(t)-{\mathfrak{v}}^{{\mathfrak{u}_2},X}(t)\|^2 \big]\leq {C(L_G,L_{\Sigma_2},\lambda)} \|{\mathfrak{u}_1}-{\mathfrak{u}_2}\|^2.
	\end{align}
\end{lem}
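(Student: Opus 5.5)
We work with the difference $w(t):={\mathfrak{v}}^{{\mathfrak{u}_1},X}(t)-{\mathfrak{v}}^{{\mathfrak{u}_2},X}(t)$. It solves
\begin{align*}
dw=\big[(1+i)(-\Delta)^{\rho}w+\mathcal{N}({\mathfrak{v}}^{{\mathfrak{u}_1},X})-\mathcal{N}({\mathfrak{v}}^{{\mathfrak{u}_2},X})-\lambda w+G({\mathfrak{u}_1},{\mathfrak{v}}^{{\mathfrak{u}_1},X})-G({\mathfrak{u}_2},{\mathfrak{v}}^{{\mathfrak{u}_2},X})\big]dt+\big[\Sigma_2({\mathfrak{u}_1},{\mathfrak{v}}^{{\mathfrak{u}_1},X})-\Sigma_2({\mathfrak{u}_2},{\mathfrak{v}}^{{\mathfrak{u}_2},X})\big]d\mathcal{W}_2,
\end{align*}
with $w(0)=0$. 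We apply the infinite-dimensional It\^o formula to $\|w\|^2$, exactly as in the proof of \eqref{ia} in \Cref{propertiesofivpsolution}. Taking expectations removes the martingale term, and we differentiate in time.

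Next we treat each drift term. The dispersive part contributes $-2\|(-\Delta)^{\rho/2}w\|^2\le 0$, via the same integration by parts as in \Cref{propertiesofivpsolution}; this works because the real part of $(1+i)$ is the only part surviving in $(\cdot,\cdot)=\operatorname{Re}\int$. For the nonlinear part, \Cref{diffnonrenega}, which relies on $|\gamma|\le \frac{2\sqrt\beta}{\beta-1}$ from Assumption 3, gives $(\mathcal{N}({\mathfrak{v}}^{{\mathfrak{u}_1},X})-\mathcal{N}({\mathfrak{v}}^{{\mathfrak{u}_2},X}),w)\le 0$. The damping term gives $-2\lambda\|w\|^2$. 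For the coupling, the Lipschitz property of $G$ and Young's inequality give
\begin{align*}
2(G({\mathfrak{u}_1},{\mathfrak{v}}^{{\mathfrak{u}_1},X})-G({\mathfrak{u}_2},{\mathfrak{v}}^{{\mathfrak{u}_2},X}),w)\le 2L_G\|w\|^2+2L_G\|w\|\|{\mathfrak{u}_1}-{\mathfrak{u}_2}\|\le 3L_G\|w\|^2+L_G\|{\mathfrak{u}_1}-{\mathfrak{u}_2}\|^2.
\end{align*}
For the It\^o correction, Assumption 2 states that $\Sigma_2$ is Lipschitz in the first variable only, so $\|\Sigma_2({\mathfrak{u}_1},\cdot)-\Sigma_2({\mathfrak{u}_2},\cdot)\|^2\le L_{\Sigma_2}^2\|{\mathfrak{u}_1}-{\mathfrak{u}_2}\|^2$. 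If one prefers to read the hypothesis as allowing $v$-dependence, this term is bounded by $2L_{\Sigma_2}^2(\|{\mathfrak{u}_1}-{\mathfrak{u}_2}\|^2+\|w\|^2)$, and the $\|w\|^2$ part is still absorbed.

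Collecting the bounds yields
\begin{align*}
\frac{d}{dt}\mathbb{E}\|w(t)\|^2\le -(2\lambda-3L_G-2L_{\Sigma_2}^2)\,\mathbb{E}\|w(t)\|^2+(L_G+2L_{\Sigma_2}^2)\|{\mathfrak{u}_1}-{\mathfrak{u}_2}\|^2 .
\end{align*}
Assumption 3, $\lambda>3L_G+2L_{\Sigma_2}^2$, makes the coefficient $\kappa:=2\lambda-3L_G-2L_{\Sigma_2}^2$ at least $\lambda>0$. The variation of constants formula with $w(0)=0$ then gives
\begin{align*}
\mathbb{E}\|w(t)\|^2\le \frac{L_G+2L_{\Sigma_2}^2}{\kappa}\|{\mathfrak{u}_1}-{\mathfrak{u}_2}\|^2,
\end{align*}
uniformly in $t\ge 0$. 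This is \eqref{u1u2diffestimate}.

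The main obstacle is justifying It\^o's formula and the dropped terms rigorously: the dispersive and nonlinear contributions require enough regularity, namely $w\in H^{\rho}$ and $\mathcal{N}({\mathfrak{v}})\in L^{(\beta+1)/\beta}$. I would do this at the Galerkin level, or via a stopping-time localisation that keeps the norms bounded, and then pass to the limit. The key structural points are the sign results of \Cref{diffnonrenega} and the spectral gap supplied by $\lambda$; once these are in place, the estimate is a linear differential inequality.
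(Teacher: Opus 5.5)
Your argument is essentially the paper's proof: It\^o's formula for $\|w\|^2$, dropping the dispersive and nonlinear terms (the latter via \Cref{diffnonrenega}), Young's inequality on the $G$-term, the first-variable Lipschitz bound on $\Sigma_2$, and variation of constants with $w(0)=0$, which together give a $t$-uniform constant of order $(L_G+L_{\Sigma_2}^2)/\lambda$. One caveat, which the paper shares: $(-\Delta)^{\rho}$ is the nonnegative multiplier $|k|^{2\rho}$, so the surviving real part of $(1+i)$ actually gives $\bigl((1+i)(-\Delta)^{\rho}w,w\bigr)=+\|(-\Delta)^{\rho/2}w\|^2$, and your contribution $-2\|(-\Delta)^{\rho/2}w\|^2$ (like the paper's ``integration by parts'') is valid only if the fast operator is read as $-(1+i)(-\Delta)^{\rho}$, the fractional analogue of the $(1+i)\Delta$ in \cite{MR4087366}, which is evidently what is intended.
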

\begin{proof}
	Let $z(\cdot)={\mathfrak{v}}^{{\mathfrak{u}_1},X}(\cdot)-{\mathfrak{v}}^{{\mathfrak{u}_2},X}(\cdot)$. Then $z(\cdot)$ satisfies 
	\begin{align*}
		\begin{cases}
			d{z} = \big((1+i)(-\Delta)^{\rho} {z} + \Big[\mathcal{N}({\mathfrak{v}}^{{\mathfrak{u}_1},X})- \mathcal{N}({\mathfrak{v}}^{{\mathfrak{u}_2},X})\Big]- \lambda z + \Big[G({\mathfrak{u}}_1,{\mathfrak{v}}^{{\mathfrak{u}_1},X})- G({\mathfrak{u}}_2,{\mathfrak{v}}^{{\mathfrak{u}_2},X})\Big]\big)\,dt\\
			\qquad\quad+\Big[\Sigma_{2}({\mathfrak{u}}_1,{\mathfrak{v}}^{{\mathfrak{u}_1},X})-\Sigma_{2}({\mathfrak{u}}_2,{\mathfrak{v}}^{{\mathfrak{u}_2},X})\Big]\,d\mathcal{W}_{2}, & \text{in } \mathbb{T}\times(0,\infty),\\[2mm]
			z(x,0)=0, & \text{in } \mathbb{T}.
		\end{cases}
	\end{align*}
Applying the infinite-dimensional infinite-dimensional It\^o formula to the process $\|z(\cdot)\|^2$, we get 
\begin{align*}
	 \frac{1}{2}d\|z\|^2 
	&=\Big\langle (1+i)(-\Delta)^{\rho} z + \mathcal{N}({\mathfrak{v}}^{{\mathfrak{u}_1},X}) - \mathcal{N}({\mathfrak{v}}^{{\mathfrak{u}_2},X}) 
	- \lambda z 
	+ G({\mathfrak{u}}_1, {\mathfrak{v}}^{{\mathfrak{u}_1},X}) - G({\mathfrak{u}}_2, {\mathfrak{v}}^{{\mathfrak{u}_2},X}),
	\, z \Big\rangle dt \\
	&\quad + \frac{1}{2} 
	\|\Sigma_2({\mathfrak{u}}_1, {\mathfrak{v}}^{{\mathfrak{u}_1},X}) 
	- \Sigma_2({\mathfrak{u}}_2, {\mathfrak{v}}^{{\mathfrak{u}_2},X})\|^2 dt  + 
	\Big\langle z, 
	\Sigma_2({\mathfrak{u}}_1, {\mathfrak{v}}^{{\mathfrak{u}_1},X}) 
	- \Sigma_2({\mathfrak{u}}_2, {\mathfrak{v}}^{{\mathfrak{u}_2},X}) \Big\rangle d\mathcal{W}_2, \quad \mathbb{P} \text{-a.s.},
\end{align*}
for a.e. $t\in [0,T]$.
Taking the integration over time and mathematical expectation on both sides of the above equation and using the fact that $ \int_{0}^{t}\langle z, 
\Sigma_2({\mathfrak{u}}_1, {\mathfrak{v}}^{{\mathfrak{u}_1},X}) 
- \Sigma_2({\mathfrak{u}}_2, {\mathfrak{v}}^{{\mathfrak{u}_2},X})\rangle d\mathcal{W}_2(s)$ is a martingale, it follows that
\begin{align*}
	\mathbb{E}\big[\|z(t)\|^2 \big]
	&= 2\mathbb{E} \bigg[
	\int_0^t 
	\Big\langle (1+i)(-\Delta)^{\rho} z(s)
	+ \mathcal{N}({\mathfrak{v}}^{{\mathfrak{u}_1},X}(s)) - \mathcal{N}({\mathfrak{v}}^{{\mathfrak{u}_2},X}(s)) 
	- \lambda z(s) \\
	&\quad \quad+ G({\mathfrak{u}}_1, {\mathfrak{v}}^{{\mathfrak{u}_1},X}(s)) 
	- G({\mathfrak{u}}_2, {\mathfrak{v}}^{{\mathfrak{u}_2},X}(s)),
	z(s) \Big\rangle ds\bigg]\\&\quad 
	+  \mathbb{E} \bigg[
	\int_0^t 
	\|\Sigma_2({\mathfrak{u}}_1, {\mathfrak{v}}^{{\mathfrak{u}_1},X}(s)) 
	- \Sigma_2({\mathfrak{u}}_2, {\mathfrak{v}}^{{\mathfrak{u}_2},X}(s))\|^2 ds\bigg],
\end{align*}
for all $t\in [0,T]$.
Differentiation with respect to time leads to
\begin{align*}
	\frac{d}{dt}\mathbb{E}\big[\|z(t)\|^2 \big]
	&= 2 \mathbb{E} \Big[
	\Big\langle (1+i)(-\Delta)^{\rho} z (t)
	+ \mathcal{N}({\mathfrak{v}}^{{\mathfrak{u}_1},X}(t)) - \mathcal{N}({\mathfrak{v}}^{{\mathfrak{u}_2},X}(t)) 
	- \lambda z(t) \\
	&\quad + G({\mathfrak{u}}_1, {\mathfrak{v}}^{{\mathfrak{u}_1},X}(t)) 
	- G({\mathfrak{u}}_2, {\mathfrak{v}}^{{\mathfrak{u}_2},X}(t)),
	z(t) \Big\rangle \Big]
	+  \mathbb{E} \big[
	\|\Sigma_2({\mathfrak{u}}_1, {\mathfrak{v}}^{{\mathfrak{u}_1},X}(t)) 
	- \Sigma_2({\mathfrak{u}}_2, {\mathfrak{v}}^{{\mathfrak{u}_2},X}(t))\|^2\big],
\end{align*}
for a.e. $t\in [0,T]$.
Here, we apply integration by parts to compute
\begin{align*}
	\Big\langle (1+i)(-\Delta)^{\rho} z , z  \Big\rangle=-\left( (1+i)(-\Delta)^{\frac{\rho}{2}}z , (-\Delta)^{\frac{\rho}{2}} z  \right)=- \|(-\Delta)^{\frac{\rho}{2}} z \|^2\leq 0.
\end{align*}
As a consequence of \Cref{diffnonrenega}, one can easily check
\begin{align*}
	(\mathcal{N}({\mathfrak{v}}^{{\mathfrak{u}_1},X}) - \mathcal{N}({\mathfrak{v}}^{{\mathfrak{u}_2},X}), z) \le 0.
\end{align*}
As a consequence of the \Cref{main assumptions} and the Young inequality, it follows that
\begin{align*}
	\frac{d}{dt} \mathbb{E}\big[\|z(t)\|^2 \big]
	&\le -2\lambda \mathbb{E}\big[\|z(t)\|^2\big] 
	+ 2 \mathbb{E}\big[ \langle G({\mathfrak{u}}_1, {\mathfrak{v}}^{{\mathfrak{u}_1},X}(t)) - G({\mathfrak{u}}_2, {\mathfrak{v}}^{{\mathfrak{u}_2},X}(t)), 
	z(t) \rangle\big]\\
	&\quad +  \mathbb{E}\big[\|\Sigma_2({\mathfrak{u}}_1, {\mathfrak{v}}^{{\mathfrak{u}_1},X}(t)) 
	- \Sigma_2({\mathfrak{u}}_2, {\mathfrak{v}}^{{\mathfrak{u}_2},X}(t))\|^2\big] \\
	&\le -2\lambda \mathbb{E}\big[\|z(t)\|^2 \big]
	+ 2L_G \mathbb{E}\big[\|z(t)\|(\|{\mathfrak{u}_1}-{\mathfrak{u}_2}\| + \|z(t)\|)\big] + L_{\Sigma_2}^2 \mathbb{E}\big[\|{\mathfrak{u}_1}-{\mathfrak{u}_2}\|^2\big]\\
	& \leq  -2\lambda \mathbb{E}\big[\|z(t)\|^2\big] +2 L_G \mathbb{E}\big[\|z(t)\|^2\big] +L_G \mathbb{E}\big[ \|{\mathfrak{u}_1}-{\mathfrak{u}_2}\|^2 + \|z(t)\|^2\big]+ L_{\Sigma_2}^2 \|{\mathfrak{u}_1}-{\mathfrak{u}_2}\|^2\\
	&\leq \big[-2\lambda+3L_G\big] \mathbb{E}\big[\|z(t)\|^2\big]+\Big[L_G+L_{\Sigma_2}^2\Big]\|{\mathfrak{u}_1}-{\mathfrak{u}_2}\|^2\\
	&\leq -\lambda\mathbb{E}\big[\|z(t)\|^2\big]+C(L_G,L_{\Sigma_2})\|{\mathfrak{u}_1}-{\mathfrak{u}_2}\|^2,
\end{align*}
for a.e. $t\in [0,T]$.
Hence, an application of the variation of constants formula yields
\begin{align*}
	\mathbb{E}\big[\|z(t)\|^2 \big]\le {C(L_G,L_{\Sigma_2})}\int_0^t e^{-\lambda(t-s)} \|({\mathfrak{u}_1}-{\mathfrak{u}_2})(s)\|^2 ds
	&\leq  {C(L_G,L_{\Sigma_2})}\|{\mathfrak{u}_1}-{\mathfrak{u}_2}\|^2\int_0^t e^{-\lambda(t-s)}ds\\
	&\leq  {C(L_G,L_{\Sigma_2},\lambda)}\|{\mathfrak{u}_1}-{\mathfrak{u}_2}\|^2,
\end{align*}
for all $t\in [0,T]$. This completes the proof.
\end{proof}
\begin{lem}[Lipschitz continuity of $\bar F$]\label{lipforfbar}
	For the averaged drift $\bar F$ defined in \eqref{fbar}, we have the existence of a constant $C>0$ such that
	\begin{align}\label{lipschitzoffbar}
		\|\bar F({\mathfrak{u}_1})-\bar F({\mathfrak{u}_2})\|\leq {C(L_G,L_{\Sigma_2},\lambda)} \|{\mathfrak{u}_1}-{\mathfrak{u}_2}\|.
	\end{align}
\end{lem}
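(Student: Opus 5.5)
The plan is to represent each value $\bar F(\mathfrak{u}_j)$ as a long-time limit of expectations along the frozen fast dynamics started from a \emph{common} initial datum. We then compare the two expectations through the Lipschitz property of $F$ together with \Cref{lemu1u2diffestimate}. The exponential convergence in \eqref{iii} makes the error from replacing $\bar F$ by a finite-time expectation vanish as $t\to\infty$.

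Fix $\mathfrak{u}_1,\mathfrak{u}_2\in L^2(\mathbb{T})$ and take the initial datum $X=0$. For any $t\ge 0$, the triangle inequality gives
\begin{align*}
\|\bar F(\mathfrak{u}_1)-\bar F(\mathfrak{u}_2)\| &\le \big\|\bar F(\mathfrak{u}_1)-\mathbb{E}\big[F(\mathfrak{u}_1,\mathfrak{v}^{\mathfrak{u}_1,0}(t))\big]\big\| + \big\|\mathbb{E}\big[F(\mathfrak{u}_1,\mathfrak{v}^{\mathfrak{u}_1,0}(t))-F(\mathfrak{u}_2,\mathfrak{v}^{\mathfrak{u}_2,0}(t))\big]\big\| \\
&\quad + \big\|\mathbb{E}\big[F(\mathfrak{u}_2,\mathfrak{v}^{\mathfrak{u}_2,0}(t))\big]-\bar F(\mathfrak{u}_2)\big\|.
\end{align*}
By \eqref{iii} of \Cref{propertiesofivpsolution} with $X=0$, the first and third terms are bounded by $C(1+\|\mathfrak{u}_j\|^2)^{1/2}e^{-\lambda t/2}$ for $j=1,2$.

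For the middle term, we pass the norm inside the expectation (Bochner integral / Jensen), apply Assumption 1, and then use Cauchy–Schwarz:
\begin{align*}
\mathbb{E}\big\|F(\mathfrak{u}_1,\mathfrak{v}^{\mathfrak{u}_1,0}(t))-F(\mathfrak{u}_2,\mathfrak{v}^{\mathfrak{u}_2,0}(t))\big\| \le L_F\|\mathfrak{u}_1-\mathfrak{u}_2\| + L_F\Big(\mathbb{E}\|\mathfrak{v}^{\mathfrak{u}_1,0}(t)-\mathfrak{v}^{\mathfrak{u}_2,0}(t)\|^2\Big)^{1/2}.
\end{align*}
By \eqref{u1u2diffestimate} the last quantity is at most $C(L_G,L_{\Sigma_2},\lambda)^{1/2}\|\mathfrak{u}_1-\mathfrak{u}_2\|$. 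The key point is that this constant is uniform in $t$, because \Cref{lemu1u2diffestimate} came from the variation of constants formula with the integral $\int_0^t e^{-\lambda(t-s)}\,ds\le 1/\lambda$. Hence the middle term is bounded by $C\|\mathfrak{u}_1-\mathfrak{u}_2\|$, with $C$ depending on $L_F,L_G,L_{\Sigma_2},\lambda$.

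Finally, let $t\to\infty$. The exponentially decaying terms vanish for fixed $\mathfrak{u}_1,\mathfrak{u}_2$, which yields \eqref{lipschitzoffbar}.

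The main obstacle is ensuring that the estimates used are genuinely valid for all $t\ge0$, not only on $[0,T]$, since the fast equation is posed on $(0,\infty)$. For \eqref{u1u2diffestimate}, this uniformity rests on the dissipativity condition $\lambda>3L_G+2L_{\Sigma_2}^2$ from Assumption 3. I would re-check that the differential inequality in that proof holds on the whole half-line, which it does because no constants depend on $T$. If needed, I would avoid the limit altogether: first bound the middle term uniformly in $t$, then choose $t$ large enough that the two tail terms are smaller than any prescribed $\delta>0$, and finally let $\delta\to0$.
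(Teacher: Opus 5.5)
Your proposal is correct and follows the same route as the paper. Both arguments run the two frozen fast processes from a common initial datum, apply the triangle inequality against the finite-time expectations, bound the two tail terms with the exponential mixing estimate \eqref{iii}, bound the middle term with the $t$-uniform estimate \eqref{u1u2diffestimate}, and then let $t\to\infty$. Your version is in fact slightly more careful than the paper's: you keep the $L_F\|\mathfrak{u}_1-\mathfrak{u}_2\|$ contribution from the first argument of $F$, and you correctly note that the Lipschitz constant must also depend on $L_F$.
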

\begin{proof}
	We have
	\begin{align*}
		\|\bar F({\mathfrak{u}_1})-\bar F({\mathfrak{u}_2})\|&\leq\Big\|\bar F({\mathfrak{u}_1})-  \mathbb{E}\big[F({\mathfrak{u}_1},{\mathfrak{v}}^{{\mathfrak{u}_1},X}(t))\big]\Big\|+\Big\|\mathbb{E}\big[F({\mathfrak{u}_2},{\mathfrak{v}}^{{\mathfrak{u}_2},X}(t))\big]-\bar F({\mathfrak{u}_2})\Big\|\\
		&\quad+\Big\|\mathbb{E}\big[F({\mathfrak{u}_1},{\mathfrak{v}}^{{\mathfrak{u}_1},X}(t))\big]-\mathbb{E}\big[F({\mathfrak{u}_2},{\mathfrak{v}}^{{\mathfrak{u}_2},X}(t))\big]\Big\|.
	\end{align*}
	Using \eqref{iii} and \eqref{u1u2diffestimate}, we have 
	\begin{align*}
		\|\bar F({\mathfrak{u}_1})-\bar F({\mathfrak{u}_2})\|&\leq C\big(1+\|X\|+\|{\mathfrak{u}_1}\|\big)e^{-\frac{\lambda t}{2}} +C\big(1+\|X\|+\|{\mathfrak{u}_2}\|\big)e^{-\frac{\lambda t}{2}} +{C(L_G,L_{\Sigma_2},\lambda)}\|{\mathfrak{u}_1}-{\mathfrak{u}_2}\|\\
		&\leq C\big(1+\|X\|+\|{\mathfrak{u}_1}\|+\|{\mathfrak{u}_2}\|\big)e^{-\frac{\lambda t}{2}} +{C(L_G,L_{\Sigma_2},\lambda)}\|{\mathfrak{u}_1}-{\mathfrak{u}_2}\|.
	\end{align*}
	Taking $t\to \infty$, we conclude
	\begin{align*}
		\|\bar F({\mathfrak{u}_1})-\bar F({\mathfrak{u}_2})\|&\leq {C(L_G,L_{\Sigma_2},\lambda)}\|{\mathfrak{u}_1}-{\mathfrak{u}_2}\|.
	\end{align*}
	This completes the proof.
\end{proof}
\section{Uniform estimates for the approximated solutions}\label{secuniformes}
In this section, we establish some uniform estimates for the approximated solution $({\mathfrak{u}}^{\varepsilon, \nu}, {\mathfrak{v}}^{\varepsilon, \nu})$ and some error estimates, which play a crucial role in the proof of the main theorem. In particular, this section is devoted to proving the \textbf{Step II} of \Cref{keystrategy}.
\begin{prop}\label{uniformestimates}
	Let $({\mathfrak{u}}_0, {\mathfrak{v}}_0) \in H^1(\mathbb{T}) \times H^1(\mathbb{T})$ and $\varepsilon, \nu \in (0,1)$. Suppose that $({\mathfrak{u}}^{\varepsilon, \nu}, {\mathfrak{v}}^{\varepsilon, \nu})$ is the unique solution to \eqref{viscousavg}, then for any $p > 0$, there exists a constant $C=C(p, T, {\mathfrak{u}}_0, {\mathfrak{v}}_0)>0$ independent of $\varepsilon, \nu$ such that
	\begin{align*}
		&\sup_{\varepsilon \in (0,1), \nu \in (0,1)} \sup_{0 \leq t \leq T} \mathbb{E} \big[\|{\mathfrak{u}}^{\varepsilon, \nu}(t)\|^{2p} \big]\leq C, \\
		&\sup_{\varepsilon \in (0,1), \nu \in (0,1)} \sup_{0 \leq t \leq T} \mathbb{E} \big[\|{\mathfrak{v}}^{\varepsilon, \nu}(t)\|^{2p}\big] \leq C, \\
		&\sup_{\varepsilon \in (0,1)} \mathbb{E} \Big[\sup_{0 \leq t \leq T} \|{\mathfrak{u}}^{\varepsilon, \nu}(t)\|_{H^1}^{2p}\Big] \leq C, \\
		&\sup_{\varepsilon \in (0,1), \nu \in (0,1)} \mathbb{E} \bigg[\bigg( \nu \int_0^T \|\Delta {\mathfrak{u}}^{\varepsilon, \nu}(t)\|^2 dt \bigg)^p\bigg] \leq C.
	\end{align*}
\end{prop}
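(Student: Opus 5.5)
The plan is to work with the viscous system \eqref{viscous} (the statement's reference to \eqref{viscousavg} should be read this way) and derive the four bounds through a chain of Itô-formula energy estimates. Each bound uses the previous ones. All constants must be kept independent of $\varepsilon$ and $\nu$, and we use $\nu<1$ wherever a factor of $\nu$ appears on the right-hand side.

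\textbf{Step 1: $L^2$ moments of ${\mathfrak{v}}^{\varepsilon,\nu}$ and ${\mathfrak{u}}^{\varepsilon,\nu}$.} First apply \Cref{Itoapplicationlem} to $\|{\mathfrak{u}}^{\varepsilon,\nu}\|^2$. The term $-2\nu\|{\mathfrak{u}}_x\|^2$ has a good sign. The dispersive term $(-i(-\Delta)^\alpha{\mathfrak{u}},{\mathfrak{u}})$ vanishes, and $({\mathcal N}({\mathfrak{u}}),{\mathfrak{u}})=-\|{\mathfrak{u}}\|_{L^{\beta+1}}^{\beta+1}\le 0$. Then apply Itô to $\|{\mathfrak{u}}\|^{2p}=(\|{\mathfrak{u}}\|^2)^p$ and use Assumptions 1 and 2 with Young's inequality. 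This gives
\[\frac{d}{dt}\mathbb{E}\|{\mathfrak{u}}\|^{2p}\le C\big(1+\mathbb{E}\|{\mathfrak{u}}\|^{2p}+\mathbb{E}\|{\mathfrak{v}}\|^{2p}\big).\]
For the fast equation, Itô applied to $\|{\mathfrak{v}}\|^{2p}$ produces a factor $1/\varepsilon$ on every term, including the Itô correction coming from the $\varepsilon^{-1/2}$ noise. The computation is that of \Cref{propertiesofivpsolution}(i): the fractional and nonlinear terms are nonpositive, and $\lambda>3L_G+2L_{\Sigma_2}^2$ together with the boundedness $\|\Sigma_2\|\le M$ controls the $p(p-1)\|{\mathfrak{v}}\|^{2p-2}\|\Sigma_2\|^2$ term by Young. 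The result is
\[\frac{d}{dt}\mathbb{E}\|{\mathfrak{v}}\|^{2p}\le \frac{1}{\varepsilon}\Big(-c\,\mathbb{E}\|{\mathfrak{v}}\|^{2p}+C\,\mathbb{E}\|{\mathfrak{u}}\|^{2p}+C\Big),\]
with $c>0$. Variation of constants then gives
\[\mathbb{E}\|{\mathfrak{v}}(t)\|^{2p}\le \|{\mathfrak{v}}_0\|^{2p}+C\sup_{s\le t}\mathbb{E}\|{\mathfrak{u}}(s)\|^{2p}+C,\]
because $\frac1\varepsilon\int_0^t e^{-c(t-s)/\varepsilon}ds\le 1/c$ uniformly in $\varepsilon$. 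Substituting this into the ${\mathfrak{u}}$ inequality and applying Gronwall (\Cref{Gronwalls inequality}) to $\sup_{s\le t}\mathbb{E}\|{\mathfrak{u}}(s)\|^{2p}$ yields the first two bounds. It suffices to treat $p\ge1$, since smaller $p$ follows by Hölder.

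\textbf{Step 2: $H^1$ bound with $\sup$ inside the expectation.} Apply Itô to $\|{\mathfrak{u}}_x\|^2$, working formally via Galerkin approximations. The dispersive term gives $\operatorname{Re}(-i(-\Delta)^\alpha{\mathfrak{u}},-{\mathfrak{u}}_{xx})=0$ by \Cref{fractionallemma}. The viscous term gives $-2\nu\|\Delta{\mathfrak{u}}\|^2$. The nonlinear contribution $\operatorname{Re}\int({\mathcal N}({\mathfrak{u}}))_x\bar{\mathfrak{u}}_x\le0$ by \Cref{propinnernegative}, and this is exactly where $|\gamma|\le 2\sqrt\beta/(\beta-1)$ is used. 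For the coupling term, integrate by parts to get $(F_x,{\mathfrak{u}}_x)\le \|F\|_{H^1}\|{\mathfrak{u}}\|_{H^1}$. The $H^1$ growth assumptions on $F$ and $\Sigma_1$ bound both this term and the Itô correction by $C(1+\|{\mathfrak{u}}\|_{H^1}^2+\|{\mathfrak{v}}\|^2)$. Adding the $L^2$ identity, raising to the power $p$, taking $\sup_{t\le T}$ and then expectation, we control the stochastic integral by \Cref{BurkholderDavisGundy inequality}. Its bound $C\,\mathbb{E}\big(\int_0^T\|{\mathfrak{u}}\|_{H^1}^{4p-2}\|\Sigma_1\|_{H^1}^2ds\big)^{1/2}$ is split by Young into $\tfrac12\mathbb{E}\sup_t\|{\mathfrak{u}}\|_{H^1}^{2p}$ plus an integral term. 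Gronwall, combined with the Step 1 bound on $\mathbb{E}\|{\mathfrak{v}}\|^{2p}$, gives the third estimate. If stopping times are needed to justify BDG, we localize first and let the stopping times go to $T$ at the end.

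\textbf{Step 3: the viscous integral.} The same identity, kept with the $2\nu\int_0^T\|\Delta{\mathfrak{u}}\|^2$ term on the left, bounds $\big(\nu\int_0^T\|\Delta{\mathfrak{u}}\|^2\big)^p$ by the $p$-th power of quantities already estimated, namely $\sup_t\|{\mathfrak{u}}\|_{H^1}^2$, $\int\|{\mathfrak{v}}\|^2$ and a martingale. This gives the last bound.

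\textbf{Main obstacle.} The hardest point is Step 2, in particular the rigorous application of Itô's formula in $H^1$ with a general power nonlinearity, where $({\mathcal N}({\mathfrak{u}}))_x$ is only controlled through \Cref{propinnernegative}. I would carry out the argument at the Galerkin level, where the sign lemma still applies pointwise to the projected nonlinearity after integration by parts, and then pass to the limit using lower semicontinuity. A second point of care is that the fast-variable $2p$-moment estimate must be uniform in $\varepsilon$ even though the Itô correction carries a factor $1/\varepsilon$. This is handled by the dissipativity margin in $\lambda$, possibly after enlarging the Young constants.
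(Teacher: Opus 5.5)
Your proposal is correct and follows essentially the same route as the paper: It\^o estimates for $\|{\mathfrak{v}}^{\varepsilon,\nu}\|^{2p}$ and $\|{\mathfrak{u}}^{\varepsilon,\nu}\|^{2p}$ closed by an $\varepsilon$-uniform variation-of-constants bound and Gr\"onwall, then the energy identity for ${\mathfrak{u}}^{\varepsilon,\nu}_x$ using \Cref{propinnernegative}, the $H^1$ growth of $F$ and $\Sigma_1$, BDG and Gr\"onwall, with the viscous integral read off the same identity (your ordering, which first bounds ${\mathfrak{v}}$ by $\sup_{s\le t}\mathbb{E}\|{\mathfrak{u}}(s)\|^{2p}$, is cleaner than the paper's substitution step). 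One caveat you share with the paper: with the Fourier definition in \Cref{deffraclap}, $\bigl((1+i)(-\Delta)^{\rho}{\mathfrak{v}},{\mathfrak{v}}\bigr)=+\|(-\Delta)^{\rho/2}{\mathfrak{v}}\|^2$, so your claim that the fractional term is nonpositive (like the paper's identical computation) only holds for the evidently intended dissipative operator $-(1+i)(-\Delta)^{\rho}$ in the fast equation.
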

\begin{proof}
	%
	An application of the infinite-dimensional It\^o formula to the process $\|{\mathfrak{v}}^{\varepsilon, \nu}\|^{2p}$ yields $\mathbb{P}$-a.s
	\begin{align}
		d \|{\mathfrak{v}}^{\varepsilon, \nu}\|^{2p}
		&= \frac{2p}{\varepsilon} \|{\mathfrak{v}}^{\varepsilon, \nu}\|^{2p-2} \left( (1+i) (-\Delta)^{\rho} {\mathfrak{v}}^{\varepsilon, \nu} + \mathcal{N}({\mathfrak{v}}^{\varepsilon, \nu}) - \lambda {\mathfrak{v}}^{\varepsilon, \nu} + G({\mathfrak{u}}^{\varepsilon, \nu}, {\mathfrak{v}}^{\varepsilon, \nu}), {\mathfrak{v}}^{\varepsilon, \nu} \right) dt \notag \\
		&\quad + \frac{p}{\varepsilon} \|{\mathfrak{v}}^{\varepsilon, \nu}\|^{2p-2} \|\Sigma_2({\mathfrak{u}}^{\varepsilon, \nu}, {\mathfrak{v}}^{\varepsilon, \nu})\|^2 dt + \frac{2p(p-1)}{\varepsilon} \|{\mathfrak{v}}^{\varepsilon, \nu}\|^{2p-4} ( {\mathfrak{v}}^{\varepsilon, \nu}, \Sigma_2({\mathfrak{u}}^{\varepsilon, \nu}, {\mathfrak{v}}^{\varepsilon, \nu}) )^2 dt \notag \\
		&\quad + \frac{2p}{\sqrt{\varepsilon}} \|{\mathfrak{v}}^{\varepsilon, \nu}\|^{2p-2} \left( {\mathfrak{v}}^{\varepsilon, \nu}, \Sigma_2({\mathfrak{u}}^{\varepsilon, \nu}, {\mathfrak{v}}^{\varepsilon, \nu}) \right) d\mathcal{W}_2,
	\end{align}
	for a.e. $t\in [0,T]$.
	Integrating both sides over time and taking mathematical expectation, we obtain
	\begin{align*}
		&\mathbb{E} \big[\|{\mathfrak{v}}^{\varepsilon, \nu}(t)\|^{2p}\big] \\
		&= \|{\mathfrak{v}}_0\|^{2p} 
		+ \frac{2p}{\varepsilon} \mathbb{E} \bigg[\int_0^t \|{\mathfrak{v}}^{\varepsilon, \nu}(s)\|^{2p-2} 
		\big( (1+i)(-\Delta)^{\rho}{\mathfrak{v}}^{\varepsilon, \nu}(s) + \mathcal{N}({\mathfrak{v}}^{\varepsilon, \nu}(s)) - \lambda {\mathfrak{v}}^{\varepsilon, \nu}(s) \\
		&\quad\quad + G(u^{\varepsilon,\nu}(s), {\mathfrak{v}}^{\varepsilon, \nu}(s)), {\mathfrak{v}}^{\varepsilon, \nu} (s)\big) ds \bigg]      + \frac{2p}{\sqrt{\varepsilon}}\mathbb{E} \bigg[\int_0^t  \|{\mathfrak{v}}^{\varepsilon, \nu}\|^{2p-2} \left( {\mathfrak{v}}^{\varepsilon, \nu}, \Sigma_2({\mathfrak{u}}^{\varepsilon, \nu}, {\mathfrak{v}}^{\varepsilon, \nu}) \right) d\mathcal{W}_2 \bigg]\\
		&\quad + \frac{p}{\varepsilon} \mathbb{E}\bigg[ \int_0^t \|{\mathfrak{v}}^{\varepsilon, \nu}(s)\|^{2p-2} 
		\|\Sigma_2(u^{\varepsilon,\nu}, {\mathfrak{v}}^{\varepsilon, \nu})\|^2 ds\bigg]+ \frac{2p(p-1)}{\varepsilon} \mathbb{E}\bigg[ \int_0^t \|{\mathfrak{v}}^{\varepsilon, \nu}(s)\|^{2p-4} 
		( {\mathfrak{v}}^{\varepsilon, \nu}, \Sigma_2(u^{\varepsilon,\nu}, {\mathfrak{v}}^{\varepsilon, \nu}) )^2 ds\bigg],
	\end{align*}
	for all $t\in [0,T]$. Now, differentiating with respect to time, we arrived at
	\begin{align*}
		&\frac{d}{dt} \mathbb{E} \big[\|{\mathfrak{v}}^{\varepsilon, \nu}(t)\|^{2p} \big]\\
		&= \frac{2p}{\varepsilon} \mathbb{E}\big[ \|{\mathfrak{v}}^{\varepsilon, \nu}(t)\|^{2p-2} 
		\left( (1+i)(-\Delta)^{\rho}{\mathfrak{v}}^{\varepsilon, \nu}(t) + \mathcal{N}({\mathfrak{v}}^{\varepsilon, \nu}(t))  - \lambda {\mathfrak{v}}^{\varepsilon, \nu}(t) + G(u^{\varepsilon,\nu}(t), {\mathfrak{v}}^{\varepsilon, \nu}(t)), {\mathfrak{v}}^{\varepsilon, \nu} (t)\right) \big]\\
		&\quad + \frac{p}{\varepsilon} \mathbb{E}\big[ \|{\mathfrak{v}}^{\varepsilon, \nu}(t)\|^{2p-2} \|\Sigma_2(u^{\varepsilon,\nu}(t), {\mathfrak{v}}^{\varepsilon, \nu}(t))\|^2\big] + \frac{2p(p-1)}{\varepsilon} \mathbb{E}\big[ \|{\mathfrak{v}}^{\varepsilon, \nu}(t)\|^{2p-4}
		( {\mathfrak{v}}^{\varepsilon, \nu}(t), \Sigma_2(u^{\varepsilon,\nu}(t), {\mathfrak{v}}^{\varepsilon, \nu}(t)) )^2\big],
	\end{align*}
	for a.e. $t\in [0,T]$. Here, we apply integration by parts to compute,
	\begin{align*}
		\left( (1+i)(-\Delta)^{\rho} {\mathfrak{v}}^{\varepsilon, \nu}, {\mathfrak{v}}^{\varepsilon, \nu} \right)=-\left( (1+i)(-\Delta)^{\frac{\rho}{2}}{\mathfrak{v}}^{\varepsilon, \nu}, (-\Delta)^{\frac{\rho}{2}} {\mathfrak{v}}^{\varepsilon, \nu} \right)=- \|(-\Delta)^{\frac{\rho}{2}} {\mathfrak{v}}^{\varepsilon, \nu}\|^2,
	\end{align*}
	and 
	\begin{align*}
		\left(  \mathcal{N}({\mathfrak{v}}^{\varepsilon, \nu}), {\mathfrak{v}}^{\varepsilon, \nu} \right)= 	\left(  -(1+i \gamma) | {\mathfrak{v}}^{\varepsilon, \nu}|^{\beta-1} {\mathfrak{v}}^{\varepsilon, \nu}, {\mathfrak{v}}^{\varepsilon, \nu} \right)= - \int_{\mathbb{T}} | {\mathfrak{v}}^{\varepsilon, \nu}|^{\beta+1}dx= -\| {\mathfrak{v}}^{\varepsilon, \nu}\|^{\beta+1}_{L^{\beta+1}}.
	\end{align*}
	Thus, we have
	\begin{align*}
	&	\frac{d}{dt} \mathbb{E} \big[\|{\mathfrak{v}}^{\varepsilon, \nu}(t)\|^{2p}\big] \\
	&=  \frac{2p}{\varepsilon} \mathbb{E} \big[\|{\mathfrak{v}}^{\varepsilon, \nu}(t)\|^{2p-2} \big(- \lambda {\mathfrak{v}}^{\varepsilon, \nu}(t) + G({\mathfrak{u}}^{\varepsilon, \nu}(t), {\mathfrak{v}}^{\varepsilon, \nu}(t)), {\mathfrak{v}}^{\varepsilon, \nu}(t)\big)\big] + \frac{p}{\varepsilon} \mathbb{E} \big[\|{\mathfrak{v}}^{\varepsilon, \nu}(t)\|^{2p-2} \|\Sigma_2({\mathfrak{u}}^{\varepsilon, \nu}(t), {\mathfrak{v}}^{\varepsilon, \nu}(t))\|^2 \big]\\
		&\quad - \frac{2p}{\varepsilon} \mathbb{E} \bigg[\|{\mathfrak{v}}^{\varepsilon, \nu}(t)\|^{2p-2}\Big(\|(-\Delta)^{\frac{\rho}{2}}{\mathfrak{v}}^{\varepsilon,\nu}(t)\|^2+ \int_{\mathbb{T}}|{\mathfrak{v}}^{\varepsilon, \nu}(t)|^{\beta+1} dx\Big)\bigg] \\
		&\quad+ \frac{2p(p-1)}{\varepsilon} \mathbb{E} \big[\|{\mathfrak{v}}^{\varepsilon, \nu}(t)\|^{2p-4} 
		( {\mathfrak{v}}^{\varepsilon, \nu}(t), \Sigma_2({\mathfrak{u}}^{\varepsilon, \nu}(t), {\mathfrak{v}}^{\varepsilon, \nu}(t)) )^2\big]\\
		&\leq -\frac{2p\lambda}{\varepsilon} \mathbb{E} \big[\|{\mathfrak{v}}^{\varepsilon, \nu}(t)\|^{2p}\big]+\frac{2p}{\varepsilon} \mathbb{E} \big[\|{\mathfrak{v}}^{\varepsilon, \nu}(t)\|^{2p-2} \big( G({\mathfrak{u}}^{\varepsilon, \nu}(t), {\mathfrak{v}}^{\varepsilon, \nu}(t)), {\mathfrak{v}}^{\varepsilon, \nu}(t)\big)\big] \\
		&\quad + \frac{p}{\varepsilon} \mathbb{E}\big[ \|{\mathfrak{v}}^{\varepsilon, \nu}(t)\|^{2p-2} \|\Sigma_2({\mathfrak{u}}^{\varepsilon, \nu}(t), {\mathfrak{v}}^{\varepsilon, \nu}(t))\|^2 \big]+ \frac{2p(p-1)}{\varepsilon} \mathbb{E}\big[ \|{\mathfrak{v}}^{\varepsilon, \nu}(t)\|^{2p-4} 
		( {\mathfrak{v}}^{\varepsilon, \nu}(t), \Sigma_2({\mathfrak{u}}^{\varepsilon, \nu}(t), {\mathfrak{v}}^{\varepsilon, \nu}(t)) )^2\big],
	\end{align*}
	for a.e. $t\in [0,T]$. Using the property of $G$ in \Cref{main assumptions}, we have
	\begin{align*}
		&2\|{\mathfrak{v}}^{\varepsilon, \nu}(t)\|^{2p-2}( G({\mathfrak{u}}^{\varepsilon, \nu}(t), {\mathfrak{v}}^{\varepsilon, \nu}(t)), {\mathfrak{v}}^{\varepsilon, \nu} (t)) \leq 2\|{\mathfrak{v}}^{\varepsilon, \nu}(t)\|^{2p-1}\|G({\mathfrak{u}}^{\varepsilon, \nu}(t), {\mathfrak{v}}^{\varepsilon, \nu}(t))\| \\
		&\leq 2L_G\|{\mathfrak{v}}^{\varepsilon, \nu}(t)\|^{2p-1}(1+\|{\mathfrak{u}}^{\varepsilon, \nu}(t)\|+\|{\mathfrak{v}}^{\varepsilon, \nu}(t)\|) \\
		&= 2L_G\|{\mathfrak{v}}^{\varepsilon, \nu}(t)\|^{2p-1}
		+2L_G\|{\mathfrak{v}}^{\varepsilon, \nu}(t)\|^{2p-1}\|{\mathfrak{u}}^{\varepsilon, \nu}(t)\|
		+2L_G\|{\mathfrak{v}}^{\varepsilon, \nu}(t)\|^{2p}.
	\end{align*}
	It follows from the Young inequality in \Cref{Youngs inequality} with $p'=\frac{2p}{2p-1}$ and $q'=2p$ that
	\begin{align*}
		2L_G\|{\mathfrak{v}}^{\varepsilon, \nu}\|^{2p-1}
		&\leq 2L_G \eta\|{\mathfrak{v}}^{\varepsilon, \nu}\|^{2p} + C(\eta,L_G,p).
	\end{align*}
	For particular for $\eta=\frac{1}{6}$, we get
	\begin{align*}
		2L_G\|{\mathfrak{v}}^{\varepsilon, \nu}\|^{2p-1}
		&\leq \tfrac{L_G}{3}\|{\mathfrak{v}}^{\varepsilon, \nu}\|^{2p} + C(L_G,p).
	\end{align*}
	Similarly, we have
	\begin{align*}
		2L_G\|{\mathfrak{v}}^{\varepsilon, \nu}\|^{2p-1}\|{\mathfrak{u}}^{\varepsilon, \nu}\|
		&\leq \tfrac{L_G}{3}\|{\mathfrak{v}}^{\varepsilon, \nu}\|^{2p} + C(L_G,p)\|{\mathfrak{u}}^{\varepsilon, \nu}\|^{2p}.
	\end{align*}
    Using the property of $\Sigma_2$ from \Cref{main assumptions} and the Young inequality with $p'=\frac{2p}{2p-2}$ and $q'=p$ in \Cref{Youngs inequality}, we derive
	\begin{align*}
		&\|{\mathfrak{v}}^{\varepsilon, \nu}\|^{2p-2}\|\Sigma_2({\mathfrak{u}}^{\varepsilon, \nu}, {\mathfrak{v}}^{\varepsilon, \nu})\|^2 
		+ 2(p-1)\|{\mathfrak{v}}^{\varepsilon, \nu}\|^{2p-4}( {\mathfrak{v}}^{\varepsilon, \nu}, \Sigma_2({\mathfrak{u}}^{\varepsilon, \nu}, {\mathfrak{v}}^{\varepsilon, \nu}) )^2 \\
		&\leq \tfrac{L_G}{6}\|{\mathfrak{v}}^{\varepsilon, \nu}\|^{2p} + C_1(M,p)+\tfrac{L_G}{6}\|{\mathfrak{v}}^{\varepsilon, \nu}\|^{2p} + C_2(M,p)\\
		&\leq \tfrac{L_G}{3}\|{\mathfrak{v}}^{\varepsilon, \nu}\|^{2p} + C(M,p).
	\end{align*}
	Therefore, we have
	\begin{align*}
		\frac{d}{dt}\mathbb{E}\big[\|{\mathfrak{v}}^{\varepsilon, \nu}(t)\|^{2p}\big]
		&\leq -\frac{2p\lambda}{\varepsilon}\mathbb{E}\big[\|{\mathfrak{v}}^{\varepsilon, \nu}(t)\|^{2p}\big]
		+ \frac{3L_G p}{\varepsilon}\mathbb{E}\big[\|{\mathfrak{v}}^{\varepsilon, \nu}(t)\|^{2p}\big]  + \frac{C}{\varepsilon}\mathbb{E}\big[\|{\mathfrak{u}}^{\varepsilon, \nu}(t)\|^{2p}\big]
		+ \frac{C}{\varepsilon}, \\
		&\leq -\frac{p\lambda}{\varepsilon}\mathbb{E}\big[\|{\mathfrak{v}}^{\varepsilon, \nu}(t)\|^{2p}\big]
		+ \frac{C}{\varepsilon}\mathbb{E}\big[\|{\mathfrak{u}}^{\varepsilon, \nu}(t)\|^{2p}\big]
		+ \frac{C}{\varepsilon},
	\end{align*}
	for a.e. $t\in [0,T]$. Using the variation of constants formula, we deduce
	\begin{align}\label{comparisonv}
		\mathbb{E}\big[\|{\mathfrak{v}}^{\varepsilon, \nu}(t)\|^{2p}\big] 
		&\leq \|{\mathfrak{v}}_0\|^{2p} e^{-\tfrac{p\lambda}{\varepsilon}t}
		+ \frac{C}{\varepsilon}\int_0^t e^{-\tfrac{p\lambda}{\varepsilon}(t-s)}
		\big(\mathbb{E}\big[\|{\mathfrak{u}}^{\varepsilon, \nu}(s)\|^{2p}\big]+1\big)\,ds,
	\end{align}
	for all $t\in [0,T]$. We employ the infinite-dimensional It\^o formula to the process $\|{\mathfrak{u}}^{\varepsilon, \nu}\|^{2p}$ to obtain $\mathbb{P}$-a.s
	\begin{align}\label{itou}
		d\|{\mathfrak{u}}^{\varepsilon, \nu}(t)\|^{2p} 
		&= 2p\|{\mathfrak{u}}^{\varepsilon, \nu}(t)\|^{2p-2}\big(A_{\nu}{\mathfrak{u}}^{\varepsilon, \nu}(t) + \mathcal{N}({\mathfrak{u}}^{\varepsilon, \nu}(t)) + F({\mathfrak{u}}^{\varepsilon, \nu}(t), {\mathfrak{v}}^{\varepsilon, \nu}(t)), {\mathfrak{u}}^{\varepsilon, \nu}(t)\big)dt \nonumber\\
		&\quad + 2p\|{\mathfrak{u}}^{\varepsilon, \nu}(t)\|^{p-2}\|\Sigma_1({\mathfrak{u}}^{\varepsilon, \nu}(t))\|^2 dt  + 2p(p-1)\|{\mathfrak{u}}^{\varepsilon, \nu}(t)\|^{2p-4}( {\mathfrak{u}}^{\varepsilon, \nu}(t), \Sigma_1({\mathfrak{u}}^{\varepsilon, \nu}(t)))^2 dt \nonumber\\
		&\quad + 2p\|{\mathfrak{u}}^{\varepsilon, \nu}(t)\|^{2p-2}( {\mathfrak{u}}^{\varepsilon, \nu}(t), \Sigma_1({\mathfrak{u}}^{\varepsilon, \nu}(t))) d\mathcal{W}_1(t),
	\end{align}
	for a.e. $t\in [0,T]$. Integrating over time and taking mathematical expectation on the both sides of the above equation, we derive
	\begin{align*}
		\mathbb{E}\big[\|{\mathfrak{u}}^{\varepsilon, \nu}(t)\|^{2p }\big]
		&= \|{\mathfrak{u}}_0\|^{2p} + \mathbb{E}\bigg[\int_0^t \Big[2p\|{\mathfrak{u}}^{\varepsilon, \nu}(s)\|^{2p-2}\big(A_{\nu}{\mathfrak{u}}^{\varepsilon, \nu}(s) + \mathcal{N}({\mathfrak{u}}^{\varepsilon, \nu}(s)) + F({\mathfrak{u}}^{\varepsilon, \nu}(s), {\mathfrak{v}}^{\varepsilon, \nu}(s)), {\mathfrak{u}}^{\varepsilon, \nu}(s)\big)\\
		&\qquad + p\|{\mathfrak{u}}^{\varepsilon, \nu}(s)\|^{2p-2}\|\Sigma_1({\mathfrak{u}}^{\varepsilon, \nu}(s))\|^2 + 2p(p-1)\|{\mathfrak{u}}^{\varepsilon, \nu}(s)\|^{2p-4}( {\mathfrak{u}}^{\varepsilon, \nu}(s), \Sigma_1({\mathfrak{u}}^{\varepsilon, \nu}(s)))^2 \Big] ds\bigg]\\
		&\quad +2p\mathbb{E} \bigg[\int_0^t \|{\mathfrak{u}}^{\varepsilon, \nu}(s)\|^{2p-2}( {\mathfrak{u}}^{\varepsilon, \nu}(s), \Sigma_1({\mathfrak{u}}^{\varepsilon, \nu}(s))) d\mathcal{W}_1(s)\bigg],
	\end{align*}
	for all $t\in [0,T]$. Differentiating both sides of the equation with respect to time yields
	\begin{align*}
		\frac{d}{dt}\mathbb{E}\big[\|{\mathfrak{u}}^{\varepsilon, \nu}(t)\|^{2p} \big]
		&= 2p\mathbb{E}\big[\|{\mathfrak{u}}^{\varepsilon, \nu}(t)\|^{2p-2}\big(A_{\nu}{\mathfrak{u}}^{\varepsilon, \nu}(t) + \mathcal{N}({\mathfrak{u}}^{\varepsilon, \nu}(t)) + F({\mathfrak{u}}^{\varepsilon, \nu}(t), {\mathfrak{v}}^{\varepsilon, \nu}(t)), {\mathfrak{u}}^{\varepsilon, \nu}(t)\big)\big]\\
		&\quad + p\mathbb{E}\big[\|{\mathfrak{u}}^{\varepsilon, \nu}(t)\|^{2p-2}\|\Sigma_1({\mathfrak{u}}^{\varepsilon, \nu}(t))\|^2\big] + 2p(p-1)\mathbb{E}\big[\|{\mathfrak{u}}^{\varepsilon, \nu}(t)\|^{2p-4}( {\mathfrak{u}}^{\varepsilon, \nu}(t), \Sigma_1({\mathfrak{u}}^{\varepsilon, \nu}(t)))^2\big],
	\end{align*}
	for a.e. $t\in [0,T]$. Integration by parts gives
	\begin{align*}
		\big(A_{\nu}{\mathfrak{u}}^{\varepsilon, \nu}, {\mathfrak{u}}^{\varepsilon, \nu}\big)=\big(-i(-\Delta)^{\alpha}{\mathfrak{u}}^{\varepsilon, \nu}+\nu \Delta {\mathfrak{u}}^{\varepsilon, \nu}, {\mathfrak{u}}^{\varepsilon, \nu}\big)&=\big(i(-\Delta)^{\frac{\alpha}{2}}{\mathfrak{u}}^{\varepsilon, \nu}, (-\Delta)^{\frac{\alpha}{2}}{\mathfrak{u}}^{\varepsilon, \nu}\big)-\big(\nu \grad {\mathfrak{u}}^{\varepsilon, \nu}, \grad {\mathfrak{u}}^{\varepsilon, \nu}\big)\\
		&= -\nu \|{\mathfrak{u}}_x^{\varepsilon, \nu}\|^2.
	\end{align*}
	We also calculate
	\begin{align*}
		\left(  \mathcal{N}({\mathfrak{u}}^{\varepsilon, \nu}), {\mathfrak{u}}^{\varepsilon, \nu} \right)= 	\left(  -(1+i \gamma) | {\mathfrak{u}}^{\varepsilon, \nu}|^{\beta-1} {\mathfrak{u}}^{\varepsilon, \nu}, {\mathfrak{u}}^{\varepsilon, \nu} \right)= - \int_{\mathbb{T}} | {\mathfrak{u}}^{\varepsilon, \nu}|^{\beta+1}dx= -\| {\mathfrak{u}}^{\varepsilon, \nu}\|^{\beta+1}_{L^{\beta+1}},
	\end{align*}
	and
	\begin{align*}
		( F({\mathfrak{u}}^{\varepsilon, \nu}, {\mathfrak{v}}^{\varepsilon, \nu}), {\mathfrak{u}}^{\varepsilon, \nu})&\leq \|F({\mathfrak{u}}^{\varepsilon, \nu}, {\mathfrak{v}}^{\varepsilon, \nu})\|\|{\mathfrak{u}}^{\varepsilon, \nu}\|\\
		&\leq L_F (1+\|{\mathfrak{u}}^{\varepsilon, \nu}\|+\|{\mathfrak{v}}^{\varepsilon, \nu}\|) \|{\mathfrak{u}}^{\varepsilon, \nu}\|\\
		&\leq C(1+\|{\mathfrak{u}}^{\varepsilon, \nu}\|^2+\|{\mathfrak{v}}^{\varepsilon, \nu}\|^2).
	\end{align*}
	\noindent
	Consequently, we obtain
	\begin{align*}
		&\frac{d}{dt}\mathbb{E}\big[\|{\mathfrak{u}}^{\varepsilon, \nu}(t)\|^{2p}\big]\\
		&\leq 2p\mathbb{E}\big[\|{\mathfrak{u}}^{\varepsilon, \nu}(t)\|^{2p-2}\big[-\nu \|{\mathfrak{u}}_x^{\varepsilon, \nu}(t)\|^2 -\| {\mathfrak{u}}^{\varepsilon, \nu}(t)\|^{\beta+1}_{L^{\beta+1}} + C(1+\|{\mathfrak{u}}^{\varepsilon, \nu}(t)\|^2+\|{\mathfrak{v}}^{\varepsilon, \nu}(t)\|^2)\big]\big]\\
		&\quad + p\mathbb{E}\big[\|{\mathfrak{u}}^{\varepsilon, \nu}(t)\|^{2p-2}\|\Sigma_1({\mathfrak{u}}^{\varepsilon, \nu}(t))\|^2\big]+ 2p(p-1)\mathbb{E}\big[\|{\mathfrak{u}}^{\varepsilon, \nu}(t)\|^{2p-4}\|{\mathfrak{u}}^{\varepsilon, \nu}(t)\|^2 \|\Sigma_1({\mathfrak{u}}^{\varepsilon, \nu}(t))\|^2\big]\\
		&\le 2p\mathbb{E}\big[\|{\mathfrak{u}}^{\varepsilon, \nu}(t)\|^{2p-2}\big[( C(1+\|{\mathfrak{u}}^{\varepsilon, \nu}(t)\|^2+\|{\mathfrak{v}}^{\varepsilon, \nu}(t)\|^2)\big)\big] + C\mathbb{E}\big[\|{\mathfrak{u}}^{\varepsilon, \nu}(t)\|^{2p-2}\|\Sigma_1({\mathfrak{u}}^{\varepsilon, \nu}(t))\|^2\big].
	\end{align*}
	for a.e. $t\in [0,T]$.
	Employing the Young inequality with the condition $\frac{2p-2}{2p}+\frac{1}{p}=1$,
	\begin{align*}
		&\|{\mathfrak{u}}^{\varepsilon, \nu}\|^{2p-2}\|\Sigma_1({\mathfrak{u}}^{\varepsilon, \nu})\|^2\leq \|{\mathfrak{u}}^{\varepsilon, \nu}\|^{2p-2} L_{\Sigma_1}(1+\|{\mathfrak{u}}^{\varepsilon, \nu}\|)^{2} \leq C  \|{\mathfrak{u}}^{\varepsilon, \nu}\|^{2p-2}(1+\|{\mathfrak{u}}^{\varepsilon, \nu}\|^{2})\\
		&\leq C  \|{\mathfrak{u}}^{\varepsilon, \nu}\|^{2p} + C\big(\frac{2p-2}{2p}\|{\mathfrak{u}}^{\varepsilon, \nu}\|^{2p}+\frac{1}{p} \big) \leq C(p) \big(1+\|{\mathfrak{u}}^{\varepsilon, \nu}\|^{2p}\big),
	\end{align*}
	and 
	\begin{align*}
		\|{\mathfrak{u}}^{\varepsilon, \nu}\|^{2p-2}\big[ C(1+\|{\mathfrak{u}}^{\varepsilon, \nu}\|^2+\|{\mathfrak{v}}^{\varepsilon, \nu}\|^2)\big] \leq  C(p) \big(1+\|{\mathfrak{u}}^{\varepsilon, \nu}\|^{2p}+\|{\mathfrak{v}}^{\varepsilon, \nu}\|^{2p}\big).
	\end{align*}
	Therefore, we conclude
	\begin{align*}
		\frac{d}{dt}\mathbb{E}\big[\|{\mathfrak{u}}^{\varepsilon, \nu}(t)\|^{2p}\big]
		\le C(p)\big(\mathbb{E}\big[\|{\mathfrak{u}}^{\varepsilon, \nu}(t)\|^{2p}\big] + \mathbb{E}\big[\|{\mathfrak{u}}^{\varepsilon, \nu}(t)\|^{2p}\big] + 1\big),
	\end{align*}
	for a.e. $t\in [0,T]$. Hence, by the variation of constants formula,
	\begin{align}\label{comparisonu}
		\mathbb{E}\big[\|{\mathfrak{u}}^{\varepsilon, \nu}(t)\|^{2p}\leq e^{Ct}\|{\mathfrak{u}}_0\|^{2p} \big]+ C\int_0^t e^{C(t-\tau)}\big(\mathbb{E}\big[\|{\mathfrak{v}}^{\varepsilon, \nu}(\tau)\|^{2p}\big] + 1\big)d\tau,
	\end{align}
	for all $t\in [0,T]$. Using \eqref{comparisonu} in \eqref{comparisonv} and applying the integration by parts, we get
	\begin{align*}
	&	\mathbb{E}\big[\|{\mathfrak{v}}^{\varepsilon, \nu}(t)\|^{2p}\big] \\
		&\leq \|{\mathfrak{v}}_0\|^{2p} e^{-\tfrac{p\lambda}{\varepsilon}t}
		+ \frac{C}{\varepsilon}\int_0^t e^{-\tfrac{p\lambda}{\varepsilon}(t-s)}
		\left(e^{Ct}\|{\mathfrak{u}}_0\|^{2p} + C\int_0^t e^{C(t-\tau)}\big(\mathbb{E}\big[\|{\mathfrak{v}}^{\varepsilon, \nu}(\tau)\|^{2p}\big] + 1\big)d\tau+1\right)\,ds\\
		&\leq C(p,T,\|{\mathfrak{v}}_0\|,\|{\mathfrak{u}}_0\|)+\frac{C}{\varepsilon}\int_0^t e^{-\tfrac{p\lambda}{\varepsilon}(t-s)}
		\left(\int_0^t e^{C(t-\tau)}\mathbb{E}\big[\|{\mathfrak{v}}^{\varepsilon, \nu}(\tau)\|^{2p}\big]d\tau\right)\,ds\\
		&\leq C(p,T,\|{\mathfrak{v}}_0\|,\|{\mathfrak{u}}_0\|)+\frac{C}{\lambda p}\int_0^t\mathbb{E}\big[\|{\mathfrak{v}}^{\varepsilon, \nu}(\tau)\|^{2p}\big]d\tau - \frac{C}{\lambda p}\int_0^t e^{-\tfrac{p\lambda}{\varepsilon}(t-s)}\mathbb{E}\big[\|{\mathfrak{u}}^{\varepsilon, \nu}(s)\|^{2p}\big]\,ds\\
		&\leq C(p,T,\|{\mathfrak{v}}_0\|,\|{\mathfrak{u}}_0\|)+\frac{C}{\lambda p}\int_0^t \left(1-e^{-\tfrac{p\lambda}{\varepsilon}(t-s)}\right)\mathbb{E}\big[\|{\mathfrak{u}}^{\varepsilon, \nu}(s)\|^{2p}\big]\,ds\\
		&\leq C(p,T,\|{\mathfrak{v}}_0\|,\|{\mathfrak{u}}_0\|)+C(p,T,\lambda)\int_0^t \mathbb{E}\big[\|{\mathfrak{u}}^{\varepsilon, \nu}(s)\|^{2p}\big]\,ds,
	\end{align*}
	for all $t\in [0,T]$. Employing the Gr\"onwall inequality, we conclude
	\begin{equation}\label{v1bound}
		\sup_{0\le t\le T} \mathbb{E}\big[\|{\mathfrak{v}}^{\varepsilon, \nu}(t)\|^{2p}\big] \le C(p,T,{\mathfrak{u}}_0,{\mathfrak{v}}_0).
	\end{equation}
	Consequently, we have
	\begin{equation}\label{u1bound}
		\sup_{0\le t\le T} \mathbb{E}\big[\|{\mathfrak{u}}^{\varepsilon, \nu}(t)\|^{2p}\big] \le C(p,T,{\mathfrak{u}}_0,{\mathfrak{v}}_0).
	\end{equation}
	From \eqref{itou}, it follows that $\mathbb{P}$-a.s,
	\begin{align*}
		\|{\mathfrak{u}}^{\varepsilon, \nu}(t)\|^{2p }
		&= \|{\mathfrak{u}}_0\|^{2p} + \int_0^t \Big[2p\|{\mathfrak{u}}^{\varepsilon, \nu}(s)\|^{2p-2}\big(A_{\nu}{\mathfrak{u}}^{\varepsilon, \nu}(s) + \mathcal{N}({\mathfrak{u}}^{\varepsilon, \nu}(s)) + F({\mathfrak{u}}^{\varepsilon, \nu}(s), {\mathfrak{v}}^{\varepsilon, \nu}(s)), {\mathfrak{u}}^{\varepsilon, \nu}(s)\big)\\
		&\qquad + p\|{\mathfrak{u}}^{\varepsilon, \nu}(s)\|^{2p-2}\|\Sigma_1({\mathfrak{u}}^{\varepsilon, \nu}(s))\|^2 + 2p(p-1)\|{\mathfrak{u}}^{\varepsilon, \nu}(s)\|^{2p-4}( {\mathfrak{u}}^{\varepsilon, \nu}(s), \Sigma_1({\mathfrak{u}}^{\varepsilon, \nu}(s)))^2 \Big] ds\\
		&\quad +2p \int_0^t \|{\mathfrak{u}}^{\varepsilon, \nu}(s)\|^{2p-2}( {\mathfrak{u}}^{\varepsilon, \nu}(s), \Sigma_1({\mathfrak{u}}^{\varepsilon, \nu}(s))) d\mathcal{W}_1(s)\\
		&\leq \|{\mathfrak{u}}_0\|^{2p} + \int_0^t \Big[2p\|{\mathfrak{u}}^{\varepsilon, \nu}(s)\|^{2p-2}\big[-\nu \|{\mathfrak{u}}_x^{\varepsilon, \nu}(s)\|^2 -\| {\mathfrak{u}}^{\varepsilon, \nu}(s)\|^{\beta+1}_{L^{\beta+1}} + C(1+\|{\mathfrak{u}}^{\varepsilon, \nu}(s)\|^2+\|{\mathfrak{v}}^{\varepsilon, \nu}(s)\|^2)\big]\\
		&\qquad + p\|{\mathfrak{u}}^{\varepsilon, \nu}(s)\|^{2p-2}\|\Sigma_1({\mathfrak{u}}^{\varepsilon, \nu}(s))\|^2 + 2p(p-1)\|{\mathfrak{u}}^{\varepsilon, \nu}(s)\|^{2p-4}\|{\mathfrak{u}}^{\varepsilon, \nu}(s)\|^2 \|\Sigma_1({\mathfrak{u}}^{\varepsilon, \nu}(s))\|^2 \Big] ds\\
		&\quad +2p \int_0^t \|{\mathfrak{u}}^{\varepsilon, \nu}(s)\|^{2p-2}( {\mathfrak{u}}^{\varepsilon, \nu}(s), \Sigma_1({\mathfrak{u}}^{\varepsilon, \nu}(s))) d\mathcal{W}_1(s)\\
		&\leq \|{\mathfrak{u}}_0\|^{2p} + \int_0^t \Big[2p\|{\mathfrak{u}}^{\varepsilon, \nu}(s)\|^{2p-2}\big[ C(1+\|{\mathfrak{u}}^{\varepsilon, \nu}(s)\|^2+\|{\mathfrak{v}}^{\varepsilon, \nu}(s)\|^2)\big]ds\\
		&\quad + C(p)\int_0^t \|{\mathfrak{u}}^{\varepsilon, \nu}(s)\|^{2p-2}(1+\|{\mathfrak{u}}^{\varepsilon, \nu}(s)\|^2)ds   +2p \int_0^t \|{\mathfrak{u}}^{\varepsilon, \nu}(s)\|^{2p-2}( {\mathfrak{u}}^{\varepsilon, \nu}(s), \Sigma_1({\mathfrak{u}}^{\varepsilon, \nu}(s))) d\mathcal{W}_1(s),
	\end{align*}
	for all $t\in [0,T]$. Employing the Young inequality with the condition $\frac{2p-2}{2p}+\frac{1}{p}=1$, we get
	\begin{align}\label{usebdg1}
		\|{\mathfrak{u}}^{\varepsilon, \nu}(t)\|^{2p }
		&\leq \|{\mathfrak{u}}_0\|^{2p} + \int_0^t C(p) \big(1+\|{\mathfrak{u}}^{\varepsilon, \nu}(s)\|^{2p}+\|{\mathfrak{v}}^{\varepsilon, \nu}(s)\|^{2p}\big)ds   \nonumber\\
		&\quad +2p \int_0^t \|{\mathfrak{u}}^{\varepsilon, \nu}(s)\|^{2p-2}( {\mathfrak{u}}^{\varepsilon, \nu}(s), \Sigma_1({\mathfrak{u}}^{\varepsilon, \nu}(s))) d\mathcal{W}_1(s),
	\end{align}
		for all $t\in [0,T]$. Using \eqref{v1bound} and \eqref{u1bound}, we establish the estimate
	\begin{align}
		&\mathbb{E} \bigg[\sup_{0 \leq t \leq T} \int_0^t C(p) \big(1+\|{\mathfrak{u}}^{\varepsilon, \nu}(s)\|^{2p}+\|{\mathfrak{v}}^{\varepsilon, \nu}(s)\|^{2p}\big)ds\bigg]\nonumber\\
		&=	\mathbb{E}  \bigg[\int_0^T C(p) \big(1+\|{\mathfrak{u}}^{\varepsilon, \nu}(s)\|^{2p}+\|{\mathfrak{v}}^{\varepsilon, \nu}(s)\|^{2p}\big)ds\bigg]
		\leq C(p, T, {\mathfrak{u}}_0, {\mathfrak{v}}_0).
	\end{align}
	We use the Burkholder-Davis-Gundy inequality and the Young inequality to estimate
	\begin{align*}
		& \mathbb{E} \bigg[\sup_{0 \leq t \leq T}\left|\int_0^t \|{\mathfrak{u}}^{\varepsilon, \nu}(s)\|^{2p-2}( {\mathfrak{u}}^{\varepsilon, \nu}(s), \Sigma_1({\mathfrak{u}}^{\varepsilon, \nu}(s))) d\mathcal{W}_1(s)\right|\bigg]\\
		&\leq C \mathbb{E} \bigg[\int_{0}^{T}\|{\mathfrak{u}}^{\varepsilon, \nu}(s)\|^{2(2p-2)}|( {\mathfrak{u}}^{\varepsilon, \nu}(s), \Sigma_1({\mathfrak{u}}^{\varepsilon, \nu}(s))) |^2 ds\bigg]^{\frac{1}{2}}\\
		&\leq C \mathbb{E} \bigg[\int_{0}^{T}\|{\mathfrak{u}}^{\varepsilon, \nu}(s)\|^{4p-2}\| \Sigma_1({\mathfrak{u}}^{\varepsilon, \nu}(s)) \|^2 ds\bigg]^{\frac{1}{2}}\\
		&\leq C \mathbb{E} \bigg[\sup_{0 \leq t \leq T}\|{\mathfrak{u}}^{\varepsilon, \nu}(s)\|^{2p} \int_{0}^{T}\|{\mathfrak{u}}^{\varepsilon, \nu}(s)\|^{2p-2}L^2_{\Sigma_1}(1+\|{\mathfrak{u}}^{\varepsilon, \nu}(s)\|)^2 ds\bigg]^{\frac{1}{2}}\\
		&\leq \eta \mathbb{E} \Big[\sup_{0 \leq t \leq T}\|{\mathfrak{u}}^{\varepsilon, \nu}(s)\|^{2p}\Big]+ C(\eta,p)\mathbb{E}\bigg[\int_{0}^{T} \big[1+\|{\mathfrak{u}}^{\varepsilon, \nu}(s)\|^{2p}\big]ds\bigg]\\
		&\leq C(\eta,p,T)+ \eta \mathbb{E} \Big[\sup_{0 \leq t \leq T}\|{\mathfrak{u}}^{\varepsilon, \nu}(s)\|^{2p}\Big]+ C(\eta,p,T)\mathbb{E}\bigg[\int_{0}^{T} \|{\mathfrak{u}}^{\varepsilon, \nu}(s)\|^{2p}ds\bigg].
	\end{align*}
	Taking supremum over time and mathematical expectation on \eqref{usebdg1}, one can easily get
	\begin{align*}
		\mathbb{E} \bigg[\sup_{0 \leq t \leq T}\|{\mathfrak{u}}^{\varepsilon, \nu}(t)\|^{2p }\bigg]
		&\leq \|{\mathfrak{u}}_0\|^{2p} + \mathbb{E}\bigg[ \sup_{0 \leq t \leq T} \int_0^t C(p) \big(1+\|{\mathfrak{u}}^{\varepsilon, \nu}(s)\|^{2p}+\|{\mathfrak{v}}^{\varepsilon, \nu}(s)\|^{2p}\big)ds\bigg] \\
		& \quad+2p \mathbb{E} \bigg[\sup_{0 \leq t \leq T} \int_0^t \|{\mathfrak{u}}^{\varepsilon, \nu}(s)\|^{2p-2}( {\mathfrak{u}}^{\varepsilon, \nu}(s), \Sigma_1({\mathfrak{u}}^{\varepsilon, \nu}(s))) d\mathcal{W}_1(s)\bigg]\\
		&\leq C(p, T, {\mathfrak{u}}_0, {\mathfrak{v}}_0)+\eta \mathbb{E} \bigg[\sup_{0 \leq t \leq T}\|{\mathfrak{u}}^{\varepsilon, \nu}(t)\|^{2p}\bigg].
	\end{align*}
	We choose $\eta>0$ small enough such that
	\begin{align*}
		\mathbb{E} \bigg[\sup_{0 \leq t \leq T}\|{\mathfrak{u}}^{\varepsilon, \nu}(t)\|^{2p }\bigg]
		&\leq C(p, T, {\mathfrak{u}}_0, {\mathfrak{v}}_0).
	\end{align*}
	Now, we define ${\mathfrak{w}}^{\varepsilon, \nu} =: {\mathfrak{u}}_x^{\varepsilon, \nu}$. Then ${\mathfrak{w}}^{\varepsilon, \nu}$ satisfies the system $\mathbb{P}$-a.s
	\begin{align}
		&\begin{cases}
			d{\mathfrak{w}}^{\varepsilon, \nu} = \Big[A_\nu {\mathfrak{w}}^{\varepsilon, \nu} + (\mathcal{N}({\mathfrak{u}}^{\varepsilon, \nu}) + F({\mathfrak{u}}^{\varepsilon, \nu}, {\mathfrak{v}}^{\varepsilon, \nu}))_x\Big]dt + (\Sigma_1({\mathfrak{u}}^{\varepsilon, \nu}))_x d\mathcal{W}_1, & \text{in } \mathbb{T} \times [0,T], \\
			{\mathfrak{w}}^{\varepsilon, \nu}(x,0) = {{\mathfrak{u}}_0}_{x}(x).
		\end{cases} 
	\end{align}
	We apply \Cref{Itoapplicationlem} to this equation to get $\mathbb{P}$-a.s
	\begin{align*}
		\|{\mathfrak{w}}^{\varepsilon, \nu}(t)\|^2 + 2\nu \int_0^t \|{\mathfrak{w}}^{\varepsilon, \nu}_x(s)\|^2 ds
		&= \|{\mathfrak{w}}^{\varepsilon, \nu}(0)\|^2 + \int_0^t \Big[2({\mathfrak{w}}^{\varepsilon, \nu}(s), (\mathcal{N}({\mathfrak{u}}^{\varepsilon, \nu}(s)) + F({\mathfrak{u}}^{\varepsilon, \nu}(s), {\mathfrak{v}}^{\varepsilon, \nu}(s)))_x) \\
		&\quad\quad  + \|(\Sigma_1({\mathfrak{u}}^{\varepsilon, \nu}(s)))_x\|^2\Big] ds+ 2\int_0^t ({\mathfrak{w}}^{\varepsilon, \nu}(s), (\Sigma_1({\mathfrak{u}}^{\varepsilon, \nu}(s)))_x)d\mathcal{W}_1,
	\end{align*}
	for all $t\in [0,T]$.
	From \Cref{propinnernegative}, we have
	\begin{align*}
		&\Big({\mathfrak{w}}^{\varepsilon, \nu}, (\mathcal{N}({\mathfrak{u}}^{\varepsilon, \nu}))_x\Big) = \Big((\mathcal{N}({\mathfrak{u}}^{\varepsilon, \nu}))_x, {\mathfrak{w}}^{\varepsilon, \nu}\Big) = 
		\operatorname{Re} \int_{\mathbb{T}} (\mathcal{N}({\mathfrak{u}}^{\varepsilon, \nu}))_x \overline{{\mathfrak{w}}^{\varepsilon, \nu}_x} \, dx \le 0.
	\end{align*}
	We use the assumptions in \Cref{main assumptions} and the Young inequality to get
	\begin{align*}
		&|({\mathfrak{w}}^{\varepsilon, \nu}, (F({\mathfrak{u}}^{\varepsilon, \nu}, {\mathfrak{v}}^{\varepsilon, \nu}))_x)| 
		\le \|{\mathfrak{w}}^{\varepsilon, \nu}\| \, \|(F({\mathfrak{u}}^{\varepsilon, \nu}, {\mathfrak{v}}^{\varepsilon, \nu}))_x\| \le \|{\mathfrak{w}}^{\varepsilon, \nu}\| \, \|(F({\mathfrak{u}}^{\varepsilon, \nu}, {\mathfrak{v}}^{\varepsilon, \nu}))\|_{H^1} \\
		&\leq \|{\mathfrak{w}}^{\varepsilon, \nu}\| L_F\Big(\|{\mathfrak{u}}^{\varepsilon, \nu}\|_{H^1} + \|{\mathfrak{v}}^{\varepsilon, \nu}\| + 1\Big)\le C(\|{\mathfrak{w}}^{\varepsilon, \nu}\|^2 + \|{\mathfrak{u}}^{\varepsilon, \nu}\|^2 + \|{\mathfrak{v}}^{\varepsilon, \nu}\|^2 + 1), 
	\end{align*}
	and 
	\begin{align*}
		&\|(\Sigma_1({\mathfrak{u}}^{\varepsilon, \nu}))_x\|^2 \leq L_{\Sigma_1}\Big( \|{\mathfrak{u}}^{\varepsilon, \nu}\|_{H^1}+1\Big)\le C(\|{\mathfrak{w}}^{\varepsilon, \nu}\|^2 + \|{\mathfrak{u}}^{\varepsilon, \nu}\|^2 + 1).
	\end{align*}
	The Burkholder-Davis-Gundy inequality together with Young’s inequality yield the estimate
	\begin{align*}
		&\mathbb{E}\bigg[\sup_{0 \le t \le T} \left|\int_0^t ({\mathfrak{w}}^{\varepsilon, \nu}(s), (\Sigma_1({\mathfrak{u}}^{\varepsilon, \nu}(s)))_x)d\mathcal{W}_1(s) \right|^p \bigg]\\
		&\le C(p)\mathbb{E}\bigg[\int_0^T \|{\mathfrak{w}}^{\varepsilon, \nu}(t)\|^2 \|(\Sigma_1({\mathfrak{u}}^{\varepsilon, \nu}(t)))_x\|^2 dt\bigg]^{\frac{p}{2}} \\
		&\le C(p)\mathbb{E}\bigg[\sup_{0 \le t \le T} \|{\mathfrak{w}}^{\varepsilon, \nu}(t)\|^2 
		\int_0^T \|(\Sigma_1({\mathfrak{u}}^{\varepsilon, \nu}(t)))_x\|^2 dt\bigg]^{\frac{p}{2}} \\
		&\le \eta \mathbb{E}\bigg[\sup_{0 \le t \le T} \|{\mathfrak{w}}^{\varepsilon, \nu}(t)\|^{2p} \bigg]
		+ C(p,\eta)\mathbb{E}\bigg[\int_0^T \|(\Sigma_1({\mathfrak{u}}^{\varepsilon, \nu}(t)))_x\|^2 dt\bigg]^p \\
		&\le \eta \mathbb{E}\bigg[\sup_{0 \le t \le T} \|{\mathfrak{w}}^{\varepsilon, \nu}(t)\|^{2p}\bigg]
		+ C(p,\eta,T)\left(\mathbb{E}\bigg[\int_0^T \|{\mathfrak{w}}^{\varepsilon, \nu}(t)\|^{2p} dt\bigg] 
		+ \mathbb{E}\bigg[\int_0^T \|{\mathfrak{u}}^{\varepsilon, \nu}(t)\|^{2p} dt\bigg] + 1 \right).
	\end{align*}
	Thus, we have
	\begin{align*}
		&\mathbb{E}\bigg[ \sup_{0 \le t \le T} \|{\mathfrak{w}}^{\varepsilon, \nu}(t)\|^2 
		+ 2\nu \int_0^T \|{\mathfrak{w}}^{\varepsilon, \nu}_x(t)\|^2 dt \bigg]^p\\
		&\le C\|{{\mathfrak{u}}_0}_x\|^{2p} + \eta \mathbb{E}\bigg[\sup_{0 \le t \le T} \|{\mathfrak{w}}^{\varepsilon, \nu}(t)\|^{2p}\bigg] + C(p,\eta,T)\bigg( \mathbb{E}\bigg[\int_0^T \Big(\|{\mathfrak{w}}^{\varepsilon, \nu}(t)\|^{2p}+\\
		&\quad\quad  \|{\mathfrak{u}}^{\varepsilon, \nu}(t)\|^{2p}+ \|{\mathfrak{v}}^{\varepsilon, \nu}(t)\|^{2p}\Big)dt\bigg] + 1 \bigg).
	\end{align*}
	From the above estimates \eqref{v1bound} and \eqref{u1bound}, it follows that
	\begin{align*}
		\mathbb{E}\left[\sup_{0 \le t \le T} \|{\mathfrak{w}}^{\varepsilon, \nu}(t)\|^{2p}\right]
		&\le C(p,\eta,T,{\mathfrak{u}}_0,{\mathfrak{v}}_0) + \eta \mathbb{E}\left[\sup_{0 \le t \le T} \|{\mathfrak{w}}^{\varepsilon, \nu}(t)\|^{2p}\right] + C(p,\eta,T)\mathbb{E}\bigg[\int_0^T \|{\mathfrak{w}}^{\varepsilon, \nu}(t)\|^{2p} dt\bigg].
	\end{align*}
	We choose $\eta$ very small such that
	\begin{align*}
		\mathbb{E}\left[\sup_{0 \le t \le T} \|{\mathfrak{w}}^{\varepsilon, \nu}(t)\|^{2p}\right]
		\le C(p,T,{\mathfrak{u}}_0,{\mathfrak{v}}_0) + C(p,T)\mathbb{E}\bigg[\int_0^T \|{\mathfrak{w}}^{\varepsilon, \nu}(t)\|^{2p} dt\bigg].
	\end{align*}
	It follows from the Gr\"onwall inequality that
	\begin{align*}
		\mathbb{E}\left[\sup_{0 \le t \le T} \|{\mathfrak{w}}^{\varepsilon, \nu}(t)\|^{2p} \right]
		\le C(p,T,{\mathfrak{u}}_0,{\mathfrak{v}}_0).
	\end{align*}
	Hence, we obtain
	\begin{align*}
		\mathbb{E}\bigg[ \sup_{0 \le t \le T} \|{\mathfrak{w}}^{\varepsilon, \nu}(t)\|^2 
		+ \nu \int_0^T \|{\mathfrak{w}}^{\varepsilon, \nu}_x(t)\|^2 dt \bigg]^p
		\le C(p,T,{\mathfrak{u}}_0,{\mathfrak{v}}_0),
	\end{align*}
	which implies 
	\begin{align*}
		\mathbb{E}\bigg[ \sup_{0 \le t \le T} \|{\mathfrak{w}}^{\varepsilon, \nu}(t)\|^{2p} \bigg]
		+ \mathbb{E}\bigg[\nu \int_0^T \|{\mathfrak{w}}^{\varepsilon, \nu}_x(t)\|^2 dt \bigg]^p
		\le C(p,T,{\mathfrak{u}}_0,{\mathfrak{v}}_0).
	\end{align*}
	Since ${\mathfrak{w}}^{\varepsilon, \nu} = {\mathfrak{u}}_x^{\varepsilon, \nu}$, consequently, we get the estimate
	\begin{align*}
		\sup_{\varepsilon \in (0,1), \nu \in (0,1)}\mathbb{E}\bigg[\nu \int_0^T \|\Delta {\mathfrak{u}}^{\varepsilon, \nu}(t)\|^2 dt \bigg]^p
		\le C(p,T,{\mathfrak{u}}_0,{\mathfrak{v}}_0).
	\end{align*}
	With the help of the estimate of 
	$\mathbb{E}\left[\sup_{0 \le t \le T}\|{\mathfrak{u}}^{\varepsilon, \nu}(t)\|^{2p}\right]$ 
	and 
	$\mathbb{E}\left[\sup_{0 \le t \le T}\|{\mathfrak{w}}^{\varepsilon, \nu}(t)\|^{2p}\right]$, 
	we conclude
	\begin{align*}
		\mathbb{E}\left[\sup_{0 \le t \le T}\|{\mathfrak{u}}^{\varepsilon, \nu}(t)\|_{H^1}^{2p}\right]\leq C(p,T,{\mathfrak{u}}_0,{\mathfrak{v}}_0),
	\end{align*}
	which completes the proof.
\end{proof}
By applying the same method as in the preceding proposition and invoking similar estimates, the following propositions can be established. Since the arguments are essentially similar, we state them below without repeating the details of the proof.
\begin{prop}\label{viscousaveragingsolution}
	Let ${\mathfrak{u}}_0 \in H^1(\mathbb{T})$. Then the viscous averaged equation \eqref{viscousavg} admits a unique solution
	\begin{align*}
		\bar{\mathfrak{u}}^{\nu} \in L^2(\Omega, C([0,T];H^1(\mathbb{T}))).
	\end{align*}
	Furthermore, for every $p > 0$, there exists a positive constant $C=(p, T, {\mathfrak{u}}_0)$ such that the solution 
	$\bar{\mathfrak{u}}^{\nu}$ satisfies
	\begin{align*}
		\mathbb{E}\left[\sup_{0 \le t \le T}\|\bar{\mathfrak{u}}^{\nu}(t)\|_{H^1}^{2p}\right] \le C.
	\end{align*}
\end{prop}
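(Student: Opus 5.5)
Existence and uniqueness for \eqref{viscousavg} will come from the standard Galerkin/monotonicity scheme referred to after the well-posedness proposition for \eqref{multiscaleeq}. The structural inputs are:
\begin{enumerate}
\item[(a)] $\bar F$ is globally Lipschitz on $L^2(\mathbb{T})$ by \Cref{lipforfbar}, and has linear growth $\|\bar F(\mathfrak{u})\|\le C(1+\|\mathfrak{u}\|)$ by \eqref{ii} together with the linear growth of $F$;
\item[(b)] $\mathcal{N}$ is one-sided Lipschitz (indeed monotone) by \Cref{diffnonrenega};
\item[(c)] $A_\nu$ generates a contraction semigroup with parabolic smoothing, \Cref{property of semigroup}.
\end{enumerate}
Uniqueness follows by applying \Cref{Itoapplicationlem} to the difference $\bar{\mathfrak{u}}^\nu_1-\bar{\mathfrak{u}}^\nu_2$. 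The drift contributes $-2\nu\|\partial_x(\cdot)\|^2\le 0$, the nonlinear term is $\le0$, and the $\bar F$ and $\Sigma_1$ terms are bounded by $C\|\bar{\mathfrak{u}}^\nu_1-\bar{\mathfrak{u}}^\nu_2\|^2$. Taking expectations and applying Gr\"onwall (\Cref{Gronwalls inequality}) gives uniqueness.

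For the a priori bound I would repeat the proof of \Cref{uniformestimates} with $F(\mathfrak{u},\mathfrak{v})$ replaced by $\bar F(\mathfrak{u})$. This is simpler, since there is no fast variable and hence no coupling to close.

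\textbf{Step 1 ($L^2$ moments).} Apply It\^o's formula to $\|\bar{\mathfrak{u}}^\nu\|^{2p}$. Use $(A_\nu \mathfrak{u},\mathfrak{u})=-\nu\|\mathfrak{u}_x\|^2$, $(\mathcal{N}(\mathfrak{u}),\mathfrak{u})=-\|\mathfrak{u}\|_{L^{\beta+1}}^{\beta+1}$, and $(\bar F(\mathfrak{u}),\mathfrak{u})\le C(1+\|\mathfrak{u}\|^2)$. Bound the BDG term for the martingale $\int\|\bar{\mathfrak{u}}^\nu\|^{2p-2}(\bar{\mathfrak{u}}^\nu,\Sigma_1)\,d\mathcal{W}_1$ by $\eta\,\mathbb{E}\sup\|\bar{\mathfrak{u}}^\nu\|^{2p}+C\,\mathbb{E}\int(1+\|\bar{\mathfrak{u}}^\nu\|^{2p})$. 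Absorbing the $\eta$-term and applying Gr\"onwall yields $\mathbb{E}\sup_t\|\bar{\mathfrak{u}}^\nu\|^{2p}\le C(p,T,\mathfrak{u}_0)$. A localization by stopping times makes the absorption legitimate.

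\textbf{Step 2 ($H^1$ moments).} Set $\bar{\mathfrak{w}}^\nu=\partial_x\bar{\mathfrak{u}}^\nu$ and apply \Cref{Itoapplicationlem} to the differentiated equation. The nonlinear contribution is nonpositive by \Cref{propinnernegative}, and $\Sigma_1$ is handled via its $H^1$ linear-growth assumption, exactly as in the $\mathfrak{w}^{\varepsilon,\nu}$ step of \Cref{uniformestimates}. The new term is $(\bar{\mathfrak{w}}^\nu,(\bar F(\bar{\mathfrak{u}}^\nu))_x)$.

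The main obstacle is that Assumption~1 gives an $H^1$ bound only for $F$, not for $\bar F$. I would first try to derive $\|\bar F(\mathfrak{u})\|_{H^1}\le L_F(1+\|\mathfrak{u}\|_{H^1}+\int\|z\|\,\mu^{\mathfrak{u}}(dz))$ by taking the Bochner integral of the pointwise bound $\|F(\mathfrak{u},z)\|_{H^1}\le L_F(1+\|\mathfrak{u}\|_{H^1}+\|z\|)$, which requires measurability of $z\mapsto F(\mathfrak{u},z)$ as an $H^1$-valued map. Combined with \eqref{ii}, this gives $\|\bar F(\mathfrak{u})\|_{H^1}\le C(1+\|\mathfrak{u}\|_{H^1})$.

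If that route fails, I would avoid it by integrating by parts: $(\bar{\mathfrak{w}}^\nu,(\bar F)_x)=-(\bar{\mathfrak{w}}^\nu_x,\bar F)\le \nu\|\bar{\mathfrak{w}}^\nu_x\|^2+C\nu^{-1}\|\bar F\|^2$. This absorbs into the viscous term, but at the cost of a $\nu$-dependent constant, which the statement allows. With either bound, BDG plus Young plus Gr\"onwall gives $\mathbb{E}\sup_t\|\bar{\mathfrak{w}}^\nu\|^{2p}\le C$. Combining with Step 1 yields $\mathbb{E}\sup_t\|\bar{\mathfrak{u}}^\nu\|_{H^1}^{2p}\le C$.

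Continuity in $H^1$ (so that $\bar{\mathfrak{u}}^\nu\in L^2(\Omega,C([0,T];H^1))$) then follows from the mild formulation with $S_\nu$ and the smoothing estimate in \Cref{property of semigroup}, using the moment bounds just obtained.
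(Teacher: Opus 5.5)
Your proposal is correct and follows the paper's route. The paper gives no separate proof and simply repeats the argument of \Cref{uniformestimates} with $F$ replaced by $\bar F$, with well-posedness via Galerkin. Assumption~1 only controls $\|F\|_{H^1}$, a step the paper glosses over; your Fourier-coefficient/Bochner argument together with \eqref{ii} correctly supplies it, yielding $\|\bar F(\mathfrak{u})\|_{H^1}\le C(1+\|\mathfrak{u}\|_{H^1})$.

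Keep that primary route and drop the integration-by-parts fallback. The constant $C=C(p,T,\mathfrak{u}_0)$ is meant to be $\nu$-independent, so a $\nu$-dependent constant is not what the statement allows. Moreover, the $\nu$-uniform bound on $\mathbb{E}\big[\nu\int_0^T\|\Delta\bar{\mathfrak{u}}^\nu\|^2\,dt\big]$ that the same computation produces is exactly what \Cref{ubarnuandubar} relies on. The fallback only bounds this quantity by $C(1+\nu^{-1})$, which would break that later argument.
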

\begin{prop}\label{averagingsolution}
	Let ${\mathfrak{u}}_0 \in H^1(\mathbb{T})$. Then the averaged equation \eqref{averagedequation}admits a unique solution 
	\begin{align*}
		\bar{\mathfrak{u}} \in L^2(\Omega, C([0,T];H^1(\mathbb{T}))).
	\end{align*}
	Furthermore, for every $p > 0$, there exists a positive constant $C=(p, T, {\mathfrak{u}}_0)$ such that the solution $\bar{\mathfrak{u}}$ satisfies
	\begin{align*}
		\mathbb{E}\left[\sup_{0 \le t \le T}\|\bar{\mathfrak{u}}(t)\|_{H^1}^{2p}\right] \le C.
	\end{align*}
\end{prop}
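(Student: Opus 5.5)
The plan has two parts: first obtain a unique solution $\bar{\mathfrak{u}}$ of \eqref{averagedequation}, then prove the $H^1$ moment bound, mirroring the $\mathfrak{u}$-part of the proof of \Cref{uniformestimates}. The averaged drift $\bar F$ is globally Lipschitz on $L^2(\mathbb{T})$ by \Cref{lipforfbar}. It also has linear growth, since by \Cref{propertiesofivpsolution}(ii)
\begin{align*}
\|\bar F(\mathfrak{u})\|\le L_F\Big(1+\|\mathfrak{u}\|+\Big(\int_{L^2(\mathbb{T})}\|z\|^2\mu^{\mathfrak{u}}(dz)\Big)^{1/2}\Big)\le C(1+\|\mathfrak{u}\|).
\end{align*}
The $H^1$ bound $\|\bar F(\mathfrak{u})\|_{H^1}\le C(1+\|\mathfrak{u}\|_{H^1})$ follows in the same way from Assumption 1, by passing the $H^1$ norm inside the integral through Minkowski's integral inequality. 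Hence $\bar F$ satisfies the same hypotheses as $F(\cdot,\mathfrak{v})$ with the $\mathfrak{v}$-dependence removed, so \eqref{averagedequation} is an equation of the same type as the slow equation in \eqref{multiscaleeq}.

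For existence I would not work with $\nu=0$ directly. Instead I would use the viscous solutions $\bar{\mathfrak{u}}^{\nu}$ of \Cref{viscousaveragingsolution}, whose bounds $\mathbb{E}\sup_t\|\bar{\mathfrak{u}}^\nu\|_{H^1}^{2p}\le C$ are uniform in $\nu$, since the proof of \Cref{uniformestimates} never used $\nu$ in a bound constant. Take $\nu_1,\nu_2$ and set $e=\bar{\mathfrak{u}}^{\nu_1}-\bar{\mathfrak{u}}^{\nu_2}$, then apply It\^o's formula to $\|e\|^2$. The dispersive part $-i(-\Delta)^\alpha$ contributes nothing. The nonlinear term is $\le 0$ by \Cref{diffnonrenega}, and the $\bar F$ and $\Sigma_1$ terms are Lipschitz. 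The viscous mismatch is
\begin{align*}
(\nu_1\Delta\bar{\mathfrak{u}}^{\nu_1}-\nu_2\Delta\bar{\mathfrak{u}}^{\nu_2},e)=-\nu_1\|e_x\|^2+(\nu_2-\nu_1)(\bar{\mathfrak{u}}^{\nu_2}_x,e_x),
\end{align*}
and the last term is bounded by $|\nu_1-\nu_2|\,C\sup_t\|\bar{\mathfrak{u}}^{\nu_2}\|_{H^1}\|e\|_{H^1}$, which is $O(|\nu_1-\nu_2|)$ in expectation by the uniform $H^1$ bounds. BDG for the sup, Gr\"onwall (\Cref{Gronwalls inequality}) and absorption then show $\{\bar{\mathfrak{u}}^\nu\}$ is Cauchy in $L^2(\Omega;C([0,T];L^2))$. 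Its limit $\bar{\mathfrak{u}}$ solves \eqref{averagedequation}. To pass to the limit in $\mathcal N$, I would use \Cref{liptypeofnonlinear} together with the uniform $L^\infty$ control from $H^1\hookrightarrow L^\infty$ in one dimension.

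Uniqueness follows from the same energy identity with $\nu_1=\nu_2=0$: the difference of two solutions satisfies $d\mathbb{E}\|e\|^2\le C\mathbb{E}\|e\|^2\,dt$ by \Cref{diffnonrenega}, and $e(0)=0$. Because $\mathcal N$ is only locally Lipschitz, I would phrase this with a stopping time at the level where $\|\cdot\|_{H^1}$ exceeds $R$, then let $R\to\infty$.

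For the $H^1$ moment bound, $\bar{\mathfrak{u}}^\nu\to\bar{\mathfrak{u}}$ in $C([0,T];L^2)$ in probability along a subsequence. By weak lower semicontinuity of the $H^1$ norm and Fatou's lemma, $\mathbb{E}\sup_t\|\bar{\mathfrak{u}}\|_{H^1}^{2p}\le\liminf_\nu\mathbb{E}\sup_t\|\bar{\mathfrak{u}}^\nu\|_{H^1}^{2p}\le C(p,T,\mathfrak{u}_0)$. Continuity in $H^1$ (rather than weak continuity) follows from the It\^o formula for $\|\bar{\mathfrak{u}}_x\|^2$, as in the $\mathfrak{w}$-computation using \Cref{propinnernegative}. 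An alternative route is to repeat the a priori estimates of \Cref{uniformestimates} at the Galerkin level with $\nu=0$, which gives the same bound directly.

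The main obstacle I expect is the $H^1$ estimate, specifically the term $(\bar{\mathfrak{u}}_x,(\mathcal N(\bar{\mathfrak{u}}))_x)$. It is handled only through the sign condition of \Cref{propinnernegative}, which needs $|\gamma|\le 2\sqrt\beta/(\beta-1)$ and enough regularity to justify the It\^o formula. That regularity is why I would approximate, by viscosity or Galerkin, and only pass to the limit at the end.
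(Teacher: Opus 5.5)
Your argument is correct, but it is organized differently from the paper's.

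The paper gives no separate proof here. It asserts that the a priori estimates of Proposition \ref{uniformestimates} can be repeated directly for \eqref{averagedequation}, with $\nu=0$, with $\bar F$ in place of $F$, and with no fast variable. Those estimates are It\^o's formula for $\|\bar{\mathfrak{u}}\|^{2p}$ with BDG and Gr\"onwall, then It\^o's formula for $\|\bar{\mathfrak{u}}_x\|^2$ with the sign from Lemma \ref{propinnernegative}. Existence is implicitly taken from the Galerkin remark after Theorem \ref{mainresult}.

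You instead construct $\bar{\mathfrak{u}}$ as the vanishing-viscosity limit of $\bar{\mathfrak{u}}^\nu$. In your Cauchy estimate the viscous mismatch is controlled by $|\nu_1-\nu_2|\,\|\bar{\mathfrak{u}}^{\nu_2}\|_{H^1}\|e\|_{H^1}$, rather than by $\nu\int_0^T\|\Delta\mathfrak{u}\|^2dt$ as in Proposition \ref{uepsnuueps}. This gives $\mathbb{E}\sup_t\|\bar{\mathfrak{u}}^\nu-\bar{\mathfrak{u}}\|^2\le C\nu$, so you get existence together with a sup-inside strengthening of Proposition \ref{ubarnuandubar}, which the paper also leaves unproved. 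The moment bound then transfers by lower semicontinuity and Fatou from the $\nu$-uniform bound of Proposition \ref{viscousaveragingsolution}. That uniformity does hold, since no constant in the proof of Proposition \ref{uniformestimates} involves $\nu$.

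What your route buys is that the It\^o computations behind existence, uniqueness and the bound are justified. They are done either at $\nu>0$, where parabolic regularity is available, or for the $L^2$ norm of a difference at $\nu=0$. The latter is legitimate in the triple $H^\alpha\subset L^2\subset H^{-\alpha}$, because $\alpha<1$ and the solutions lie in $H^1$.

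The price is that lower semicontinuity only gives $L^\infty(0,T;H^1)$ and weak continuity, so strong continuity in $H^1$ needs one more step. It\^o's formula for $\|\bar{\mathfrak{u}}_x\|^2$ at $\nu=0$ is not directly available, because $-i(-\Delta)^\alpha\bar{\mathfrak{u}}_x\in H^{-2\alpha}$ cannot be paired with $\bar{\mathfrak{u}}_x\in L^2$. Apply it instead to $\|(1-\epsilon\Delta)^{-1}\bar{\mathfrak{u}}_x\|^2$. There the dispersive term vanishes exactly, since the resolvent commutes with $(-\Delta)^\alpha$. Then let $\epsilon\to0$, using $(\mathcal{N}(\bar{\mathfrak{u}}))_x\in L^2$ for $\bar{\mathfrak{u}}\in H^1(\mathbb{T})$.

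The paper's direct route meets the same justification issue and can only resolve it at the Galerkin level. This works because the Fourier projection is self-adjoint and commutes with $\partial_x$, so the sign in Lemma \ref{propinnernegative} survives projection.
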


\subsection{The error estimates}\label{subsecerrorestimate}
In this subsection, we establish the error estimates for ${\mathfrak{u}}^{\varepsilon, \nu}-{\mathfrak{u}}^{\varepsilon}, \,\, {\mathfrak{v}}^{\varepsilon, \nu}-{\mathfrak{v}}^{\varepsilon}$ and $\bar{\mathfrak{u}}^{\nu}-\bar{\mathfrak{u}}$.
\begin{prop}\label{uepsnuueps}
	Let $({\mathfrak{u}}_0, {\mathfrak{v}}_0) \in H^1(\mathbb{T}) \times H^1(\mathbb{T})$ and $\varepsilon, \nu \in (0,1)$. Let $({\mathfrak{u}}^{\varepsilon, \nu}, {\mathfrak{v}}^{\varepsilon, \nu})$ be the solution to \eqref{viscous} and $({\mathfrak{u}}^{\varepsilon}, {\mathfrak{v}}^{\varepsilon})$ be the solution to \eqref{multiscaleeq}. Then there exists a positive constant $C=C(T, {\mathfrak{u}}_0, {\mathfrak{v}}_0)$ independent of $\varepsilon, \nu$ such that the following holds:
	\begin{align*}
		\sup_{\varepsilon \in (0,1)} \bigg[ 
		\sup_{0 \le t \le T} \mathbb{E}\big[\|{\mathfrak{u}}^{\varepsilon, \nu}(t) - {\mathfrak{u}}^{\varepsilon}(t)\|^2\big] 
		+ 
		\sup_{0 \le t \le T} \mathbb{E}\big[\|{\mathfrak{v}}^{\varepsilon, \nu}(t) - {\mathfrak{v}}^{\varepsilon}(t)\|^2\big]
		\bigg] \le C \nu.
	\end{align*}
	More precisely,
	\begin{align*}
		\lim_{\nu \to 0} \sup_{\varepsilon \in (0,1)} \sup_{0 \le t \le T} 
		\mathbb{E}\big[\|{\mathfrak{u}}^{\varepsilon, \nu}(t) - {\mathfrak{u}}^{\varepsilon}(t)\|^2\big] = 0.
	\end{align*}
\end{prop}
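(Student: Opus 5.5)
Set $\mathfrak{X}:={\mathfrak{u}}^{\varepsilon,\nu}-{\mathfrak{u}}^{\varepsilon}$ and $\mathfrak{Y}:={\mathfrak{v}}^{\varepsilon,\nu}-{\mathfrak{v}}^{\varepsilon}$. Both start from zero, and the plan is to estimate $\mathbb{E}\|\mathfrak{X}(t)\|^2$ and $\mathbb{E}\|\mathfrak{Y}(t)\|^2$ together by It\^o's formula. Subtracting \eqref{multiscaleeq} from \eqref{viscous} gives
\begin{align*}
\mathrm{d}\mathfrak{X}&=\big[-i(-\Delta)^{\alpha}\mathfrak{X}+\nu\Delta{\mathfrak{u}}^{\varepsilon,\nu}+\mathcal{N}({\mathfrak{u}}^{\varepsilon,\nu})-\mathcal{N}({\mathfrak{u}}^{\varepsilon})+F({\mathfrak{u}}^{\varepsilon,\nu},{\mathfrak{v}}^{\varepsilon,\nu})-F({\mathfrak{u}}^{\varepsilon},{\mathfrak{v}}^{\varepsilon})\big]\mathrm{d}t\\
&\quad+\big[\Sigma_1({\mathfrak{u}}^{\varepsilon,\nu})-\Sigma_1({\mathfrak{u}}^{\varepsilon})\big]\mathrm{d}\mathcal{W}_1 .
\end{align*}
The equation for $\mathfrak{Y}$ is the analogous one with the prefactors $\tfrac1\varepsilon$ and $\tfrac1{\sqrt\varepsilon}$. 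The key choice is to keep the viscous term acting on ${\mathfrak{u}}^{\varepsilon,\nu}$ and \emph{not} on $\mathfrak{X}$. Then $(\nu\Delta{\mathfrak{u}}^{\varepsilon,\nu},\mathfrak{X})=-\nu({\mathfrak{u}}^{\varepsilon,\nu}_x,\mathfrak{X}_x)$, and this is bounded by $\nu\|{\mathfrak{u}}^{\varepsilon,\nu}\|_{H^1}\big(\|{\mathfrak{u}}^{\varepsilon,\nu}\|_{H^1}+\|{\mathfrak{u}}^{\varepsilon}\|_{H^1}\big)$. The $H^1$ bound on ${\mathfrak{u}}^{\varepsilon,\nu}$ is given by \Cref{uniformestimates}. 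The corresponding $\varepsilon$-uniform bound $\sup_{\varepsilon}\mathbb{E}\sup_t\|{\mathfrak{u}}^{\varepsilon}(t)\|_{H^1}^{2}\le C$ follows the same way, since \Cref{uniformestimates} never used $\nu>0$ for the $H^1$ estimate: the term $2\nu\int\|{\mathfrak{w}}_x\|^2$ was simply dropped. This makes that contribution $O(\nu)$ uniformly in $\varepsilon$.

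Next I would use the structure of the remaining terms. The dispersive part gives $(-i(-\Delta)^{\alpha}\mathfrak{X},\mathfrak{X})=0$. By \Cref{diffnonrenega} (this is where $|\gamma|\le 2\sqrt\beta/(\beta-1)$ enters) the nonlinearity is monotone: $(\mathcal{N}({\mathfrak{u}}^{\varepsilon,\nu})-\mathcal{N}({\mathfrak{u}}^{\varepsilon}),\mathfrak{X})\le0$. Lipschitz continuity of $F$ and $\Sigma_1$ then yields
\[
\frac{\mathrm{d}}{\mathrm{d}t}\mathbb{E}\|\mathfrak{X}\|^2\le C\,\mathbb{E}\|\mathfrak{X}\|^2+C\,\mathbb{E}\|\mathfrak{Y}\|^2+C\nu .
\]
For $\mathfrak{Y}$ the same computation as in \Cref{lemu1u2diffestimate} applies. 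The fractional term gives $-\|(-\Delta)^{\rho/2}\mathfrak{Y}\|^2\le0$, the nonlinearity is again monotone, and the Lipschitz bounds on $G$ and $\Sigma_2$ (with $\Sigma_2$ Lipschitz only in the first variable) together with $\lambda>3L_G+2L_{\Sigma_2}^2$ give
\[
\frac{\mathrm{d}}{\mathrm{d}t}\mathbb{E}\|\mathfrak{Y}\|^2\le -\frac{\lambda}{\varepsilon}\mathbb{E}\|\mathfrak{Y}\|^2+\frac{C}{\varepsilon}\mathbb{E}\|\mathfrak{X}\|^2 .
\]

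To close the system I would use variation of constants. This gives $\mathbb{E}\|\mathfrak{Y}(t)\|^2\le \frac{C}{\varepsilon}\int_0^t e^{-\lambda(t-s)/\varepsilon}\mathbb{E}\|\mathfrak{X}(s)\|^2\,\mathrm{d}s\le C\sup_{s\le t}\mathbb{E}\|\mathfrak{X}(s)\|^2$, where the constant is independent of $\varepsilon$ because $\frac1\varepsilon\int_0^t e^{-\lambda(t-s)/\varepsilon}\,\mathrm{d}s\le 1/\lambda$. Define $\phi(t):=\sup_{s\le t}\mathbb{E}\|\mathfrak{X}(s)\|^2$. Inserting the $\mathfrak{Y}$ bound into the $\mathfrak{X}$ inequality gives $\phi(t)\le C\nu+C\int_0^t\phi(s)\,\mathrm{d}s$, so \Cref{Gronwalls inequality} yields $\phi(T)\le C\nu$, and then also $\sup_t\mathbb{E}\|\mathfrak{Y}\|^2\le C\nu$. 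Since all constants depend only on $T$, the data, and the Lipschitz constants, taking the supremum over $\varepsilon$ gives the claim, and the limit statement follows immediately.

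The main obstacle is the viscous cross term, because everything else is the routine monotone-plus-Lipschitz scheme. It requires an $H^1$ bound that is uniform in both $\varepsilon$ and $\nu$, for the inviscid ${\mathfrak{u}}^{\varepsilon}$ as well as for ${\mathfrak{u}}^{\varepsilon,\nu}$. For ${\mathfrak{u}}^{\varepsilon}$ I would justify it by passing to the limit $\nu\to0$ in the uniform bound of \Cref{uniformestimates}, using weak lower semicontinuity of the norm. Alternatively, I would bound $\nu\,\mathbb{E}\int_0^t\|{\mathfrak{u}}^{\varepsilon,\nu}\|_{H^1}\|\mathfrak{X}\|_{H^1}\,\mathrm{d}s$ directly by Cauchy--Schwarz, which needs only the moment bounds already established.
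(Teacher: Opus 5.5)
Your overall scheme is the same as the paper's: It\^o's formula for $\|\mathfrak{X}\|^2$ and $\|\mathfrak{Y}\|^2$, vanishing of the dispersive term, monotonicity from \Cref{diffnonrenega}, variation of constants for $\mathfrak{Y}$ with the $\varepsilon$-uniform factor $\frac{1}{\varepsilon}\int_0^t e^{-\lambda(t-s)/\varepsilon}\,\mathrm{d}s\le \frac{1}{\lambda}$, then Gr\"onwall. The one place where you deviate, the viscous cross term, contains a genuine gap.

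Writing $(\nu\Delta{\mathfrak{u}}^{\varepsilon,\nu},\mathfrak{X})=-\nu({\mathfrak{u}}^{\varepsilon,\nu}_x,\mathfrak{X}_x)$ forces you to control $\|{\mathfrak{u}}^{\varepsilon}\|_{H^1}$ uniformly in $\varepsilon$, and nothing available provides this. The well-posedness result only gives ${\mathfrak{u}}^{\varepsilon}\in L^2(\Omega;C([0,T];L^2(\mathbb{T})))$. \Cref{uniformestimates} is proved only for the viscous system, where the viscosity supplies the regularity ${\mathfrak{u}}^{\varepsilon,\nu}\in L^2(0,T;H^2(\mathbb{T}))$ needed to apply It\^o's formula to $\|{\mathfrak{u}}^{\varepsilon,\nu}_x\|^2$. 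At $\nu=0$ the computation is formally identical, but making it rigorous needs its own regularization argument (e.g.\ on Galerkin approximations), which you do not give.

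Neither of your two justifications closes this. Letting $\nu\to0$ and using weak lower semicontinuity is circular: to identify the weak limit of ${\mathfrak{u}}^{\varepsilon,\nu}(t)$ with ${\mathfrak{u}}^{\varepsilon}(t)$ you need exactly the convergence ${\mathfrak{u}}^{\varepsilon,\nu}\to{\mathfrak{u}}^{\varepsilon}$ that this proposition is supposed to prove. The Cauchy--Schwarz alternative does not remove the problem either, since $\|\mathfrak{X}\|_{H^1}\le\|{\mathfrak{u}}^{\varepsilon,\nu}\|_{H^1}+\|{\mathfrak{u}}^{\varepsilon}\|_{H^1}$ still contains the missing quantity.

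The fix, which is what the paper does, is not to move a derivative onto $\mathfrak{X}$ at all. Use Young's inequality,
\[
2\nu(\Delta{\mathfrak{u}}^{\varepsilon,\nu},\mathfrak{X})\le\|\mathfrak{X}\|^2+\nu^2\|\Delta{\mathfrak{u}}^{\varepsilon,\nu}\|^2 ,
\]
together with the last estimate of \Cref{uniformestimates}, $\mathbb{E}\big[\nu\int_0^T\|\Delta{\mathfrak{u}}^{\varepsilon,\nu}\|^2\,\mathrm{d}t\big]\le C$ uniformly in $\varepsilon,\nu$. This gives $\nu^2\int_0^T\mathbb{E}\|\Delta{\mathfrak{u}}^{\varepsilon,\nu}\|^2\,\mathrm{d}t\le C\nu$, using only information about the viscous solution and only $L^2$ information about $\mathfrak{X}$. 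Replace your pointwise $C\nu$ source with this time-integrated $O(\nu)$ term. Your $\phi$--Gr\"onwall argument and the rest of the proposal then go through unchanged and give the stated bound.
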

\begin{proof}
	Let $X^{\varepsilon,\nu} = {\mathfrak{u}}^{\varepsilon, \nu} - {\mathfrak{u}}^{\varepsilon}$, 
	$Y^{\varepsilon,\nu} = {\mathfrak{v}}^{\varepsilon, \nu} - {\mathfrak{v}}^{\varepsilon}$. From \eqref{multiscaleeq} and \eqref{viscous}, we have
	\begin{align*}
		\left\{
		\begin{aligned}
			&dX^{\varepsilon,\nu} = \big[-i(-\Delta)^{\alpha}X^{\varepsilon,\nu} + \mathcal{N}({\mathfrak{u}}^{\varepsilon, \nu}) - \mathcal{N}({\mathfrak{u}}^{\varepsilon}) 
			+ F({\mathfrak{u}}^{\varepsilon, \nu}, {\mathfrak{v}}^{\varepsilon, \nu}) - F({\mathfrak{u}}^{\varepsilon}, {\mathfrak{v}}^{\varepsilon}) 
			- {\nu}\Delta {\mathfrak{u}}^{\varepsilon, \nu}\big] dt\\
			&\quad \qquad\quad+ \big(\Sigma_1({\mathfrak{u}}^{\varepsilon, \nu}) - \Sigma_1({\mathfrak{u}}^{\varepsilon})\big)d\mathcal{W}_1, \\
			&dY^{\varepsilon,\nu} =\tfrac{1}{\varepsilon}\big[(1+i)(-\Delta)^{\rho} Y^{\varepsilon,\nu} 
			+ \mathcal{N}({\mathfrak{v}}^{\varepsilon, \nu}) - \mathcal{N}({\mathfrak{v}}^{\varepsilon}) - \lambda Y^{\varepsilon,\nu} 
			+ G({\mathfrak{u}}^{\varepsilon, \nu}, {\mathfrak{v}}^{\varepsilon, \nu}) - G({\mathfrak{u}}^{\varepsilon}, {\mathfrak{v}}^{\varepsilon})\big] dt\\
			&\quad \qquad\quad+ \tfrac{1}{\sqrt{\varepsilon}}\big[\Sigma_2({\mathfrak{u}}^{\varepsilon, \nu}, {\mathfrak{v}}^{\varepsilon, \nu}) 
			- \Sigma_2({\mathfrak{u}}^{\varepsilon}, {\mathfrak{v}}^{\varepsilon})\big] d\mathcal{W}_2, \\
			&X^{\varepsilon,\nu}(x,0) = 0, \\
			&Y^{\varepsilon,\nu}(x,0) = 0.
		\end{aligned}
		\right.
	\end{align*}
	Applying the infinite-dimensional It\^o formula to the process $\|Y^{\varepsilon,\nu}\|^2$ yields $\mathbb{P}$-a.s
	\begin{align*}
		d\|Y^{\varepsilon,\nu}\|^2 
		&= \frac{2}{\varepsilon} 
		\Big\langle (1+i)(-\Delta)^{\rho} Y^{\varepsilon,\nu} + \mathcal{N}({\mathfrak{v}}^{\varepsilon, \nu}) - \mathcal{N}({\mathfrak{v}}^{\varepsilon}) 
		- \lambda Y^{\varepsilon,\nu} 
		+ G({\mathfrak{u}}^{\varepsilon, \nu}, {\mathfrak{v}}^{\varepsilon, \nu}) - G({\mathfrak{u}}^{\varepsilon}, {\mathfrak{v}}^{\varepsilon}),
		\, Y^{\varepsilon,\nu} \Big\rangle dt \\
		&\quad + \frac{1}{\varepsilon} 
		\|\Sigma_2({\mathfrak{u}}^{\varepsilon, \nu}, {\mathfrak{v}}^{\varepsilon, \nu}) 
		- \Sigma_2({\mathfrak{u}}^{\varepsilon}, {\mathfrak{v}}^{\varepsilon})\|^2 dt  + \frac{2}{\sqrt{\varepsilon}} 
		\Big\langle Y^{\varepsilon,\nu}, 
		\Sigma_2({\mathfrak{u}}^{\varepsilon, \nu}, {\mathfrak{v}}^{\varepsilon, \nu}) 
		- \Sigma_2({\mathfrak{u}}^{\varepsilon}, {\mathfrak{v}}^{\varepsilon}) \Big\rangle d\mathcal{W}_2,
	\end{align*}
	for a.e. $t\in [0,T]$. Taking the mathematical expectation on both sides of the above equation, it follows that
	\begin{align*}
		\mathbb{E}\big[\|Y^{\varepsilon,\nu}(t)\|^2 \big]
		&= \frac{2}{\varepsilon} \mathbb{E} \bigg[
		\int_0^t 
		\Big\langle (1+i)(-\Delta)^{\rho} Y^{\varepsilon,\nu}(s) 
		+ \mathcal{N}({\mathfrak{v}}^{\varepsilon, \nu}(s)) - \mathcal{N}({\mathfrak{v}}^{\varepsilon}(s)) 
		- \lambda Y^{\varepsilon,\nu}(s) \\
		&\quad \quad+ G({\mathfrak{u}}^{\varepsilon, \nu}(s), {\mathfrak{v}}^{\varepsilon, \nu}(s)) 
		- G({\mathfrak{u}}^{\varepsilon}(s), {\mathfrak{v}}^{\varepsilon}(s)),
		Y^{\varepsilon,\nu}(s) \Big\rangle ds \bigg]\\
		&\quad
		+ \frac{1}{\varepsilon} \mathbb{E} \bigg[
		\int_0^t 
		\|\Sigma_2({\mathfrak{u}}^{\varepsilon, \nu}(s), {\mathfrak{v}}^{\varepsilon, \nu}(s)) 
		- \Sigma_2({\mathfrak{u}}^{\varepsilon}(s), {\mathfrak{v}}^{\varepsilon}(s))\|^2 ds\bigg],
	\end{align*}
	for all $t\in [0,T]$. Differentiation with respect to time leads to
	\begin{align*}
		&\frac{d}{dt}\mathbb{E}\big[\|Y^{\varepsilon,\nu}(t)\|^2 \big]
		= \frac{2}{\varepsilon} \mathbb{E} \Big[
		\Big\langle (1+i)(-\Delta)^{\rho} Y^{\varepsilon,\nu}(t) 
		+ \mathcal{N}({\mathfrak{v}}^{\varepsilon, \nu}(t)) - \mathcal{N}({\mathfrak{v}}^{\varepsilon}(t)) 
		- \lambda Y^{\varepsilon,\nu}(t) \\
		&\quad + G({\mathfrak{u}}^{\varepsilon, \nu}(t), {\mathfrak{v}}^{\varepsilon, \nu}(t)) 
		- G({\mathfrak{u}}^{\varepsilon}(t), {\mathfrak{v}}^{\varepsilon}(t)),
		Y^{\varepsilon,\nu}(t) \Big\rangle \Big]
		+ \frac{1}{\varepsilon} \mathbb{E} 
		\Big[\|\Sigma_2({\mathfrak{u}}^{\varepsilon, \nu}(t), {\mathfrak{v}}^{\varepsilon, \nu}(t)) 
		- \Sigma_2({\mathfrak{u}}^{\varepsilon}(t), {\mathfrak{v}}^{\varepsilon}(t))\|^2\Big],
	\end{align*}
	for a.e. $t\in [0,T]$. Here, we apply integration by parts to compute
	\begin{align*}
		\Big\langle (1+i)(-\Delta)^{\rho} Y^{\varepsilon,\nu} , Y^{\varepsilon,\nu}  \Big\rangle=-\left( (1+i)(-\Delta)^{\frac{\rho}{2}}Y^{\varepsilon,\nu} , (-\Delta)^{\frac{\rho}{2}} Y^{\varepsilon,\nu}  \right)=- \|(-\Delta)^{\frac{\rho}{2}} Y^{\varepsilon,\nu} \|^2\leq 0.
	\end{align*}
	As a consequence of \Cref{diffnonrenega}, one can easily check
	\begin{align*}
		(\mathcal{N}({\mathfrak{v}}^{\varepsilon, \nu}) - \mathcal{N}({\mathfrak{v}}^{\varepsilon}), Y^{\varepsilon,\nu}) \le 0.
	\end{align*}
	Therefore, we obtain
	\begin{align*}
		\frac{d}{dt} \mathbb{E}\big[\|Y^{\varepsilon,\nu}(t)\|^2 \big]
		&\le -\frac{2\lambda}{\varepsilon} \mathbb{E}\big[\|Y^{\varepsilon,\nu}(t)\|^2 \big]
		+ \frac{2}{\varepsilon} \mathbb{E} \big[\langle 
		G({\mathfrak{u}}^{\varepsilon, \nu}(t), {\mathfrak{v}}^{\varepsilon, \nu}(t)) - G({\mathfrak{u}}^{\varepsilon}(t), {\mathfrak{v}}^{\varepsilon}(t)), 
		Y^{\varepsilon,\nu}(t) \rangle \big]\\
		&\quad + \frac{1}{\varepsilon} \mathbb{E}\big[\|\Sigma_2({\mathfrak{u}}^{\varepsilon, \nu}(t), {\mathfrak{v}}^{\varepsilon, \nu}(t)) 
		- \Sigma_2({\mathfrak{u}}^{\varepsilon}(t), {\mathfrak{v}}^{\varepsilon}(t))\|^2\big] \\
		&\le -\frac{2\lambda}{\varepsilon} \mathbb{E}\big[\|Y^{\varepsilon,\nu}(t)\|^2\big] 
		+ \frac{2L_G}{\varepsilon} \mathbb{E}\big[\|Y^{\varepsilon,\nu}(t)\|(\|X^{\varepsilon,\nu}(t)\| + \|Y^{\varepsilon,\nu}(t)\|)\big] \\
		&\quad + \frac{L_{\Sigma_2}^2}{\varepsilon} \mathbb{E}\big[\|X^{\varepsilon,\nu}(t)\| + \|Y^{\varepsilon,\nu}(t)\|\big]^2,
	\end{align*}
	for a.e. $t\in [0,T]$. An application of the Young inequality leads us to the following estimate:
	\begin{align*}
		&2L_G \mathbb{E}\big[\|Y^{\varepsilon,\nu}(t)\|(\|X^{\varepsilon,\nu}(t)\| + \|Y^{\varepsilon,\nu}(t)\|) \big]
		+ L_{\Sigma_2}^2 \mathbb{E}\big[\|X^{\varepsilon,\nu}(t)\| + \|Y^{\varepsilon,\nu}(t)\|\big]^2 \\
		&\le (3L_G + 2L_{\Sigma_2}^2)\mathbb{E}\big[\|Y^{\varepsilon,\nu}(t)\|^2 \big]
		+ (L_G + 2L_{\Sigma_2}^2)\mathbb{E}\big[\|X^{\varepsilon,\nu}(t)\|^2\big] \\
		&\le (3L_G + 2L_{\Sigma_2}^2)\Big(\mathbb{E}\big[\|Y^{\varepsilon,\nu}(t)\|^2\big]
		+ \mathbb{E}\big[\|X^{\varepsilon,\nu}(t)\|^2\big]\Big).
	\end{align*}
	This implies
	\begin{align*}
		\frac{d}{dt}\mathbb{E}\big[\|Y^{\varepsilon,\nu}(t)\|^2 \big]
		\le -\frac{2\lambda}{\varepsilon} \mathbb{E}\big[\|Y^{\varepsilon,\nu}(t)\|^2 \big]
		+ \frac{(3L_G + 2L_{\Sigma_2}^2)}{\varepsilon} 
		\Big( \mathbb{E}\big[\|Y^{\varepsilon,\nu}(t)\|^2\big] + \mathbb{E}\big[\|X^{\varepsilon,\nu}(t)\|^2 \big]\Big),
	\end{align*}
	for a.e. $t\in [0,T]$. As a consequence of the \Cref{main assumptions}, it follows that
	\begin{align*}
		\frac{d}{dt}\mathbb{E}\big[\|Y^{\varepsilon,\nu}(t)\|^2 \big]
		\le -\frac{\lambda}{\varepsilon} \mathbb{E}\big[\|Y^{\varepsilon,\nu}(t)\|^2 \big]
		+ \frac{C(g,\Sigma_2)}{\varepsilon} \mathbb{E}\big[\|X^{\varepsilon,\nu}(t)\|^2\big],
	\end{align*}
	for a.e. $t\in [0,T]$. Hence, an application of the variation of constants formula yields
	\begin{align}\label{comparisony}
		\mathbb{E}\big[\|Y^{\varepsilon,\nu}(t)\|^2 \big]
		\le \frac{C(g,\Sigma_2)}{\varepsilon} 
		\int_0^t e^{-\frac{\lambda}{\varepsilon}(t-s)} 
		\mathbb{E}\big[\|X^{\varepsilon,\nu}(s)\|^2\big] ds,
	\end{align}
	for all $t\in [0,T]$. Applying the infinite-dimensional It\^o formula to the process $\|X^{\varepsilon,\nu}\|^2$, we obtain $\mathbb{P}$-a.s
	\begin{align*}
		d\|X^{\varepsilon,\nu}\|^2 
		&= 2\langle -i(-\Delta)^{\alpha}X^{\varepsilon,\nu} 
		+ \mathcal{N}({\mathfrak{u}}^{\varepsilon, \nu}) - \mathcal{N}({\mathfrak{u}}^{\varepsilon}) 
		+ F({\mathfrak{u}}^{\varepsilon, \nu}, {\mathfrak{v}}^{\varepsilon, \nu}) 
		- F({\mathfrak{u}}^{\varepsilon}, {\mathfrak{v}}^{\varepsilon}) \\
		&\quad + \nu \Delta {\mathfrak{u}}^{\varepsilon, \nu}, 
		X^{\varepsilon,\nu} \rangle dt 
		+ \|\Sigma_1({\mathfrak{u}}^{\varepsilon, \nu}) - \Sigma_1({\mathfrak{u}}^{\varepsilon})\|^2 dt 
		+ 2\langle X^{\varepsilon,\nu}, 
		\Sigma_1({\mathfrak{u}}^{\varepsilon, \nu}) - \Sigma_1({\mathfrak{u}}^{\varepsilon}) \rangle d\mathcal{W}_1,
	\end{align*}
	for a.e. $t\in [0,T]$. Taking the mathematical expectation on both sides of the above equation leads to
	\begin{align*}
		\mathbb{E}\big[\|X^{\varepsilon,\nu}(t)\|^2 \big]
		&= \mathbb{E}\bigg[\int_0^t 
		\Big\{
		2\langle -i(-\Delta)^{\alpha}X^{\varepsilon,\nu}(s) 
		+ \mathcal{N}({\mathfrak{u}}^{\varepsilon, \nu}(s)) - \mathcal{N}({\mathfrak{u}}^{\varepsilon}(s)) 
		+ F({\mathfrak{u}}^{\varepsilon, \nu}(s), {\mathfrak{v}}^{\varepsilon, \nu}(s))\\
		&\quad\quad - F({\mathfrak{u}}^{\varepsilon}(s), {\mathfrak{v}}^{\varepsilon}(s))  + \nu \Delta {\mathfrak{u}}^{\varepsilon, \nu}(s), X^{\varepsilon,\nu}(s) \rangle 
		+ \|\Sigma_1({\mathfrak{u}}^{\varepsilon, \nu}(s)) - \Sigma_1({\mathfrak{u}}^{\varepsilon}(s))\|^2 
		\Big\} ds\bigg],
	\end{align*}
	for all $t\in [0,T]$. Differentiating with respect to time, we get
	\begin{align*}
		\frac{d}{dt}\mathbb{E}\big[\|X^{\varepsilon,\nu}(t)\|^2 \big]
		&= \mathbb{E}\Big[ 
		2\langle -i(-\Delta)^{\alpha}X^{\varepsilon,\nu}(t) 
		+ \mathcal{N}({\mathfrak{u}}^{\varepsilon, \nu}(t)) - \mathcal{N}({\mathfrak{u}}^{\varepsilon}(t)) 
		+ F({\mathfrak{u}}^{\varepsilon, \nu}(t), {\mathfrak{v}}^{\varepsilon, \nu}(t)) - F({\mathfrak{u}}^{\varepsilon}(t), {\mathfrak{v}}^{\varepsilon}(t)) \\
		&\quad + \nu \Delta {\mathfrak{u}}^{\varepsilon, \nu}(t), X^{\varepsilon,\nu}(t) \rangle 
		+ \|\Sigma_1({\mathfrak{u}}^{\varepsilon, \nu}(t)) - \Sigma_1({\mathfrak{u}}^{\varepsilon}(t))\|^2 \Big] \\
		&= \mathbb{E}\Big[ 
		2\langle \mathcal{N}({\mathfrak{u}}^{\varepsilon, \nu}(t)) - \mathcal{N}({\mathfrak{u}}^{\varepsilon}(t)) 
		+ F({\mathfrak{u}}^{\varepsilon, \nu}(t), {\mathfrak{v}}^{\varepsilon, \nu}(t)) - F({\mathfrak{u}}^{\varepsilon}(t), {\mathfrak{v}}^{\varepsilon}(t)) 
		+ \nu \Delta {\mathfrak{u}}^{\varepsilon, \nu}(t), X^{\varepsilon,\nu}(t) \rangle \\
		&\quad+ \|\Sigma_1({\mathfrak{u}}^{\varepsilon, \nu}(t)) - \Sigma_1({\mathfrak{u}}^{\varepsilon}(t))\|^2 
		\Big],
	\end{align*}
	for a.e. $t\in [0,T]$. As a consequence of \Cref{diffnonrenega}, one can easily check
	\begin{align*}
		(\mathcal{N}({\mathfrak{u}}^{\varepsilon, \nu}) - \mathcal{N}({\mathfrak{u}}^{\varepsilon}), X^{\varepsilon,\nu}) \le 0.
	\end{align*}
	Therefore, we obtain
	\begin{align}
		\frac{d}{dt}\mathbb{E}\big[\|X^{\varepsilon,\nu}(t)\|^2 \big]
		&\le \mathbb{E}\Big[ 2(F({\mathfrak{u}}^{\varepsilon, \nu}(t), {\mathfrak{v}}^{\varepsilon, \nu}(t)) - F({\mathfrak{u}}^{\varepsilon}(t), {\mathfrak{v}}^{\varepsilon}(t)) + \nu \Delta {\mathfrak{u}}^{\varepsilon, \nu}(t), X^{\varepsilon,\nu}(t))\nonumber \\
		&\quad\quad+ |\Sigma_1({\mathfrak{u}}^{\varepsilon, \nu}(t)) - \Sigma_1({\mathfrak{u}}^{\varepsilon}(t))|^2 \Big] \nonumber\\
		&\le C \Big(\mathbb{E}\big[\|X^{\varepsilon,\nu}(t)\|^2\big] + \mathbb{E}\big[\|Y^{\varepsilon,\nu}(t)\|^2\big] + \nu^2 \mathbb{E}\big[\|\Delta {\mathfrak{u}}^{\varepsilon, \nu}(t)\|^2\big]\Big),
	\end{align}
	for a.e. $t\in [0,T]$. Applying the variation of constants formula, we have
	\begin{align}\label{comparisonx}
		\mathbb{E}\big[\|X^{\varepsilon,\nu}(t)\|^2 \big]
		&\le C \int_0^t e^{C(t-s)} \mathbb{E}\Big[\|Y^{\varepsilon,\nu}(s)\|^2 + \nu^2 \|\Delta {\mathfrak{u}}^{\varepsilon, \nu}(s)\|^2\Big]ds\nonumber\\
		&\le C \int_0^t \mathbb{E}\Big[\|Y^{\varepsilon,\nu}(s)\|^2 + \nu^2 \|\Delta {\mathfrak{u}}^{\varepsilon, \nu}(s)\|^2\Big]ds,
	\end{align}
	for all $t\in [0,T]$. The estimates \eqref{comparisonx} and \eqref{comparisony} together imply
	\begin{align*}
		\mathbb{E}\big[\|Y^{\varepsilon,\nu}(t)\|^2 \big]
		&\le \frac{C}{\varepsilon} \int_0^t e^{-\frac{\lambda}{\varepsilon}(t-s)} \int_0^s \mathbb{E}\Big[\|Y^{\varepsilon,\nu}(\tau)\|^2 + \nu^2 \|\Delta {\mathfrak{u}}^{\varepsilon, \nu}(\tau)\|^2\Big]d\tau ds \\
		&\le \frac{C}{\lambda} \int_0^t (1 - e^{-\frac{\lambda}{\varepsilon}(t-\tau)}) \mathbb{E}\Big[\|Y^{\varepsilon,\nu}(\tau)\|^2 + \nu^2 \|\Delta {\mathfrak{u}}^{\varepsilon, \nu}(\tau)\|^2\Big]d\tau \\
		&\le C \int_0^t \mathbb{E}\Big[\|Y^{\varepsilon,\nu}(\tau)\|^2 + \nu^2 \|\Delta {\mathfrak{u}}^{\varepsilon, \nu}(\tau)\|^2\Big]d\tau,
	\end{align*}
	for all $t\in [0,T]$. An application of the Gr\"onwall inequality yields
	\begin{align*}
		\sup_{0 \le t \le T} \mathbb{E}\big[\|Y^{\varepsilon,\nu}(t)\|^2 \big]
		&\le C \nu^2 \int_0^T \mathbb{E}\big[\|\Delta {\mathfrak{u}}^{\varepsilon, \nu}(t)\|^2\big] dt 
		\le C \nu \mathbb{E}\bigg[\int_0^T \nu \|\Delta {\mathfrak{u}}^{\varepsilon, \nu}(t)\|^2 dt\bigg].
	\end{align*}
	Thus, \eqref{comparisonx} implies
	\begin{align*}
		\mathbb{E}\big[\|X^{\varepsilon,\nu}(t)\|^2 \big]
		&\le C \nu \mathbb{E}\bigg[ \int_0^T \nu \|\Delta {\mathfrak{u}}^{\varepsilon, \nu}(t)\|^2 dt\bigg],
	\end{align*}
	for all $t\in [0,T]$. Using the estimates established in \Cref{uniformestimates}, we obtain
	\begin{align*}
		\sup_{0 \le t \le T} \mathbb{E}\big[\|X^{\varepsilon,\nu}(t)\|^2 \big]
		+ \sup_{0 \le t \le T} \mathbb{E}\big[\|Y^{\varepsilon,\nu}(t)\|^2 \big]
		\le C \nu.
	\end{align*}
	Since $C$ is independent of $\varepsilon$, this implies that
	\begin{align*}
		\sup_{\varepsilon \in (0,1)} 
		\Big( \sup_{0 \le t \le T} \mathbb{E}\big[\|X^{\varepsilon,\nu}(t)\|^2 \big]
		+ \sup_{0 \le t \le T} \mathbb{E}\big[\|Y^{\varepsilon,\nu}(t)\|^2\big] \Big)
		\le C \nu.
	\end{align*}
	Therefore, we conclude
	\begin{align*}
		\sup_{\varepsilon \in (0,1)} \left( 
		\sup_{0 \le t \le T} \mathbb{E}\big[\|{\mathfrak{u}}^{\varepsilon, \nu}(t) - {\mathfrak{u}}^{\varepsilon}(t)\|^2 \big]
		+ 
		\sup_{0 \le t \le T} \mathbb{E}\big[\|{\mathfrak{v}}^{\varepsilon, \nu}(t) - {\mathfrak{v}}^{\varepsilon}(t)\|^2\big]
		\right) \le C \nu.
	\end{align*}
	In particular, we get 
	\begin{align*}
		\lim_{\nu \to 0} \sup_{\varepsilon \in (0,1)} \sup_{0 \le t \le T} 
		\mathbb{E}\big[\|{\mathfrak{u}}^{\varepsilon, \nu}(t) - {\mathfrak{u}}^{\varepsilon}(t)\|^2\big] = 0.
	\end{align*}
	Using the same argument as in the preceding proposition, and relying on analogous estimates, the following proposition can be established. Since the arguments are essentially similar, the proof is omitted.
\end{proof}
\begin{prop}\label{ubarnuandubar}
	Let ${\mathfrak{u}}_0 \in H^1(\mathbb{T})$ and $\nu \in (0,1)$. Let $\bar{\mathfrak{u}}^\nu$ be the solution to \eqref{viscousavg} and $\bar{\mathfrak{u}}$ be the solution to \eqref{averagedequation}, then there exists a positive constant $C=C(T, {\mathfrak{u}}_0)$ independent of $\nu$ such that
	\begin{align*}
		\sup_{0 \le t \le T} \mathbb{E}\big[\|\bar{\mathfrak{u}}^\nu(t) - \bar{\mathfrak{u}}(t)\|^2\big] \le C \nu.
	\end{align*}
	More precisely,
	\begin{align*}
		\lim_{\nu \to 0} \sup_{0 \le t \le T} \mathbb{E}\big[\|\bar{\mathfrak{u}}^\nu(t) - \bar{\mathfrak{u}}(t)\|^2\big] = 0.
	\end{align*}
\end{prop}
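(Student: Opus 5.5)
We follow the argument of \Cref{uepsnuueps}. Since there is no fast component, the proof is a single energy estimate. Set $Z^\nu := \bar{\mathfrak{u}}^\nu - \bar{\mathfrak{u}}$. Subtracting \eqref{averagedequation} from \eqref{viscousavg} gives, $\mathbb{P}$-a.s.,
\begin{align*}
	dZ^\nu = \big[-i(-\Delta)^{\alpha} Z^\nu + \mathcal{N}(\bar{\mathfrak{u}}^\nu) - \mathcal{N}(\bar{\mathfrak{u}}) + \bar F(\bar{\mathfrak{u}}^\nu) - \bar F(\bar{\mathfrak{u}}) + \nu \Delta \bar{\mathfrak{u}}^\nu\big]dt + \big(\Sigma_1(\bar{\mathfrak{u}}^\nu)-\Sigma_1(\bar{\mathfrak{u}})\big)d\mathcal{W}_1,
\end{align*}
with $Z^\nu(0)=0$.

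We apply the infinite-dimensional It\^o formula to $\|Z^\nu\|^2$ and take expectations; the stochastic integral is a martingale because of the moment bounds in \Cref{viscousaveragingsolution} and \Cref{averagingsolution}. The dispersive term drops out, since $(-i(-\Delta)^\alpha Z^\nu, Z^\nu)=\operatorname{Re}\big(-i\|(-\Delta)^{\alpha/2}Z^\nu\|^2\big)=0$. The nonlinear difference satisfies $(\mathcal{N}(\bar{\mathfrak{u}}^\nu)-\mathcal{N}(\bar{\mathfrak{u}}),Z^\nu)\le 0$ by \Cref{diffnonrenega}; this uses Assumption 3 on $\gamma$ and the fact that both solutions lie in $H^1$. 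The averaged drift is Lipschitz by \Cref{lipforfbar}, so $2(\bar F(\bar{\mathfrak{u}}^\nu)-\bar F(\bar{\mathfrak{u}}),Z^\nu)\le C\|Z^\nu\|^2$. The It\^o correction satisfies $\|\Sigma_1(\bar{\mathfrak{u}}^\nu)-\Sigma_1(\bar{\mathfrak{u}})\|^2\le L_{\Sigma_1}^2\|Z^\nu\|^2$ by Assumption 2.

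The viscous term is the main point. We do not use Cauchy--Schwarz against $\|\Delta\bar{\mathfrak{u}}^\nu\|$, because \Cref{viscousaveragingsolution} only controls $\bar{\mathfrak{u}}^\nu$ in $H^1$. Instead we integrate by parts:
\begin{align*}
	2\nu(\Delta\bar{\mathfrak{u}}^\nu, Z^\nu) = -2\nu(\bar{\mathfrak{u}}^\nu_x, \bar{\mathfrak{u}}^\nu_x - \bar{\mathfrak{u}}_x) \le -\nu\|\bar{\mathfrak{u}}^\nu_x\|^2 + \nu\|\bar{\mathfrak{u}}_x\|^2 \le \nu\|\bar{\mathfrak{u}}_x\|^2 .
\end{align*}
By \Cref{averagingsolution}, $\sup_t\mathbb{E}\|\bar{\mathfrak{u}}(t)\|_{H^1}^2\le C(T,{\mathfrak{u}}_0)$, so this term contributes at most $C\nu$. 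Alternatively, one could write $2\nu(\Delta\bar{\mathfrak{u}}^\nu,Z^\nu)\le \|Z^\nu\|^2+\nu^2\|\Delta\bar{\mathfrak{u}}^\nu\|^2$ exactly as in \Cref{uepsnuueps}. That route needs the analogue of the last bound in \Cref{uniformestimates} for \eqref{viscousavg}, namely $\mathbb{E}\,\nu\int_0^T\|\Delta\bar{\mathfrak{u}}^\nu\|^2dt\le C$, which is obtained from the same It\^o computation for $\bar{\mathfrak{u}}^\nu_x$ using \Cref{propinnernegative}. The integration-by-parts route avoids this extra estimate.

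Collecting the terms gives
\begin{align*}
	\frac{d}{dt}\mathbb{E}\big[\|Z^\nu(t)\|^2\big] \le C\,\mathbb{E}\big[\|Z^\nu(t)\|^2\big] + C\nu, \qquad \text{for a.e. } t\in[0,T].
\end{align*}
The Gr\"onwall inequality (\Cref{Gronwalls inequality}) then yields $\sup_{0\le t\le T}\mathbb{E}\big[\|Z^\nu(t)\|^2\big]\le C(T,{\mathfrak{u}}_0)\,\nu$, with $C$ independent of $\nu$. Letting $\nu\to0$ proves the limit statement.

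The expected obstacle is this uniform-in-$\nu$ control of the viscous term, that is, ensuring the $H^1$ bounds of \Cref{viscousaveragingsolution} and \Cref{averagingsolution} hold with constants independent of $\nu$. Their proof mirrors \Cref{uniformestimates}, with $\bar F$ in place of $F$. For the derivative estimate one needs $\bar F$ to map $H^1$ into $H^1$ with linear growth. This follows by integrating the $H^1$ bound on $F$ from Assumption 1 against $\mu^{\mathfrak{u}}$ and using \eqref{ii}.
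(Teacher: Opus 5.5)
Your proof is correct. It departs from the paper's omitted argument, which is stated to be ``analogous to \Cref{uepsnuueps}'', only in how you treat the viscous term.

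In the paper's template the term is bounded by $2\nu(\Delta\bar{\mathfrak{u}}^\nu,Z^\nu)\le\|Z^\nu\|^2+\nu^2\|\Delta\bar{\mathfrak{u}}^\nu\|^2$. The factor $\nu$ then comes from a dissipation bound $\mathbb{E}\big[\nu\int_0^T\|\Delta\bar{\mathfrak{u}}^\nu\|^2dt\big]\le C$ that is uniform in $\nu$. For $\bar{\mathfrak{u}}^\nu$ this bound is not stated in \Cref{viscousaveragingsolution}. It would have to be rederived from the $H^1$-level It\^o computation, as you observe.

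Your integration by parts absorbs the $\bar{\mathfrak{u}}^\nu$-contribution into the favourable term $-\nu\|\bar{\mathfrak{u}}^\nu_x\|^2$ and leaves only $\nu\|\bar{\mathfrak{u}}_x\|^2$. This moves the regularity burden from second derivatives of the viscous approximation to first derivatives of the inviscid limit. The only quantitative input is then $\sup_t\mathbb{E}\|\bar{\mathfrak{u}}(t)\|_{H^1}^2$ from \Cref{averagingsolution}, which is $\nu$-independent by construction.

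As a result, the obstacle you flag in your last paragraph is smaller than you state. Your route needs no uniform-in-$\nu$ bound on $\bar{\mathfrak{u}}^\nu$. For each fixed $\nu$ you only need $\bar{\mathfrak{u}}^\nu(t)\in H^1$ for the pairing, and enough moments for the stochastic integral to be a true martingale. Your remark that $\bar F$ maps $H^1$ into $H^1$ with linear growth (via Assumption 1 and \eqref{ii}) is needed only for the $H^1$ bound on $\bar{\mathfrak{u}}$ itself.

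The paper's route has the advantage of reusing the structure of \Cref{uepsnuueps} unchanged. Yours is shorter and avoids the unstated dissipation estimate.
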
 

\section{Proof of Step III}\label{secstepthree}
In this section, we prove the \textbf{Step III} of \Cref{keystrategy}, that is the averaging principle for the viscous system \eqref{viscous}
\begin{prop}\label{apofviscoused}
	Fix $\nu \in (0,1)$, and let $p \ge 1$. Under the assumptions in \Cref{main assumptions} with
	$({\mathfrak{u}}_0,{\mathfrak{v}}_0)\in H^1(\mathbb{T})\times H^1(\mathbb{T})$, let
	$({\mathfrak{u}}^{\varepsilon, \nu},{\mathfrak{v}}^{\varepsilon, \nu})$ be the solution of \eqref{viscous} and
	$\bar{\mathfrak{u}}^{\nu}$ be the solution of \eqref{viscousavg}. Then, it holds that
	\begin{align*}
		\lim_{\varepsilon\to 0}
		\mathbb{E}\!\bigg[
		\sup_{0\le t\le T}
		\|{\mathfrak{u}}^{\varepsilon, \nu}(t)-\bar{\mathfrak{u}}^{\nu}(t)\|^{2p}
		\bigg]=0.
	\end{align*}
\end{prop}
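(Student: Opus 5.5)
We follow the Khasminskii time-discretization scheme announced in the organization of \Cref{secstepthree}. Throughout, $\nu\in(0,1)$ is fixed, and constants may depend on $\nu$, $T$, $p$ and the initial data, but never on $\varepsilon$.

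\textbf{Step 1: H\"older regularity of the slow variable.} We write ${\mathfrak{u}}^{\varepsilon,\nu}$ in mild form with the semigroup $S_\nu$. We then use \Cref{property of semigroup} together with the uniform $H^1$ bounds of \Cref{uniformestimates}; in one dimension the latter control $\|\mathcal{N}({\mathfrak{u}}^{\varepsilon,\nu})\|$ through $\|{\mathfrak{u}}^{\varepsilon,\nu}\|_{L^\infty}^{\beta}$. The goal is
\begin{align*}
\mathbb{E}\big[\|{\mathfrak{u}}^{\varepsilon,\nu}(t+h)-{\mathfrak{u}}^{\varepsilon,\nu}(t)\|^{2p}\big]\le C h^{p/2}.
\end{align*}
We split the increment into $(S_\nu(h)-I){\mathfrak{u}}^{\varepsilon,\nu}(t)$, a drift integral over $[t,t+h]$, and a stochastic integral over $[t,t+h]$. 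The first term is bounded by $Ch^{1/2}\|{\mathfrak{u}}^{\varepsilon,\nu}(t)\|_{H^1}$; this is exactly where $\nu>0$ is needed. The drift integral is $O(h)$ by linear growth. The stochastic integral is $O(h^{1/2})$ by Burkholder--Davis--Gundy.

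\textbf{Step 2: auxiliary processes.} We partition $[0,T]$ into intervals of length $\delta=\delta(\varepsilon)$. On $[k\delta,(k+1)\delta)$ we let $\hat{\mathfrak{v}}^{\varepsilon}$ solve the fast equation with the slow variable frozen at ${\mathfrak{u}}^{\varepsilon,\nu}(k\delta)$, started from $\hat{\mathfrak{v}}^{\varepsilon}(k\delta)={\mathfrak{v}}^{\varepsilon,\nu}(k\delta)$. We let $\hat{\mathfrak{u}}^{\varepsilon}$ solve the slow mild equation with $F({\mathfrak{u}}^{\varepsilon,\nu}(s(\delta)),\hat{\mathfrak{v}}^\varepsilon)$ as forcing, where $s(\delta)$ denotes the left grid point. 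Arguing as in \Cref{uepsnuueps}, we use the monotonicity from \Cref{diffnonrenega}, the contraction $\lambda>3L_G+2L_{\Sigma_2}^2$, and Step 1 to get
\begin{align*}
\sup_t \mathbb{E}\|{\mathfrak{v}}^{\varepsilon,\nu}-\hat{\mathfrak{v}}^\varepsilon\|^2\le C\delta^{1/2}.
\end{align*}
We then estimate ${\mathfrak{u}}^{\varepsilon,\nu}-\hat{\mathfrak{u}}^\varepsilon$ by an It\^o/energy argument. Here $\mathcal{N}$ is handled again by monotonicity, the term $A_\nu$ is skew/dissipative, and the Lipschitz bounds on $F$ and $\Sigma_1$ apply. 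This gives
\begin{align*}
\mathbb{E}\sup_t\|{\mathfrak{u}}^{\varepsilon,\nu}-\hat{\mathfrak{u}}^\varepsilon\|^{2p}\le C\delta^{p/2}.
\end{align*}
For the $\sup$ inside the expectation we use BDG with a small-$\eta$ absorption, as in \Cref{uniformestimates}.

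\textbf{Step 3: comparison with $\bar{\mathfrak{u}}^\nu$.} We write $\hat{\mathfrak{u}}^\varepsilon-\bar{\mathfrak{u}}^\nu$ and apply It\^o's formula to $\|\cdot\|^{2}$, raised to the power $p$. Since $\mathcal{N}$ has a sign and $\bar F$ and $\Sigma_1$ are Lipschitz (\Cref{lipforfbar}), the only delicate term is
\begin{align*}
\mathcal{J}(t)=\int_0^t \big(F({\mathfrak{u}}^{\varepsilon,\nu}(s(\delta)),\hat{\mathfrak{v}}^\varepsilon(s))-\bar F({\mathfrak{u}}^{\varepsilon,\nu}(s(\delta))),\,\cdot\,\big)\,ds,
\end{align*}
with the remaining differences controlled by Step 1 and Gr\"onwall. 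This term $\mathcal{J}$ is the \textbf{main obstacle}.

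\textbf{Handling $\mathcal{J}$.} We first replace the pairing argument by its value at the grid point, paying a cost via Step 1. We then split into the $\lfloor t/\delta\rfloor$ blocks and rescale time $s=k\delta+\varepsilon r$. On each block, $\hat{\mathfrak{v}}^\varepsilon(k\delta+\varepsilon r)$ has the law of ${\mathfrak{v}}^{u,X}(r)$ with $u={\mathfrak{u}}^{\varepsilon,\nu}(k\delta)$ and $X={\mathfrak{v}}^{\varepsilon,\nu}(k\delta)$, conditionally on $\mathcal{F}_{k\delta}$. Squaring the block integral and using the Markov property, we reduce to a double integral over $0\le r_1\le r_2\le \delta/\varepsilon$. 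To this we apply exponential mixing in the form of \Cref{propertiesofivpsolution}(iii) applied after time $r_1$, which gives decay like $e^{-\lambda(r_2-r_1)/2}$. This yields a block bound of order $\delta\varepsilon$, i.e. a relative bound $C\varepsilon/\delta$. Summing over the $T/\delta$ blocks gives
\begin{align*}
\mathbb{E}\sup_t|\mathcal{J}|^2\le C\big(\varepsilon/\delta+\delta^{1/2}\big).
\end{align*}
Choosing $\delta=\varepsilon^{1/2}$ (or $\delta=\varepsilon\log\varepsilon^{-1}$) makes every error vanish as $\varepsilon\to0$. Higher moments $2p$ follow by interpolation with the uniform moment bounds of \Cref{uniformestimates}.
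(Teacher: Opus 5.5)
Your argument is correct in outline, but it takes a genuinely different route from the paper at the two points where the paper works hardest.

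\textbf{The paper's route.} It stays in mild form throughout. It freezes the nonlinearity in the auxiliary slow process as $\mathcal{N}({\mathfrak{u}}^{\varepsilon,\nu}(t_s))$ and pushes every difference through $S_\nu$. Since monotonicity of $\mathcal{N}$ cannot be used in the mild formulation, it controls $\mathcal{N}(\hat{\mathfrak{u}}^{\varepsilon,\nu})-\mathcal{N}(\bar{\mathfrak{u}}^\nu)$ through \Cref{liptypeofnonlinear}, $H^1(\mathbb{T})\subset L^\infty(\mathbb{T})$ and the stopping time $\tau_n^{\varepsilon,\nu}$. This costs a factor $e^{Cn^{2p(\beta-1)}}$ and a tail term $C/\sqrt{n}$. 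Its fast-error bound in \Cref{uuhatvvhaterror} comes from a plain Gr\"onwall argument and carries $e^{C\delta/\varepsilon}$. Together these force $\delta=\varepsilon(-\ln\varepsilon)^{1/2}$ and a doubly logarithmic choice of $n$.

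\textbf{Your route.} You keep $\mathcal{N}(\hat{\mathfrak{u}}^{\varepsilon})$ unfrozen and apply It\^o's formula to the differences, so \Cref{diffnonrenega} removes the nonlinearity and no localization is needed. You use the margin $\lambda>3L_G+2L_{\Sigma_2}^2$ to make the fast error uniform in $\delta/\varepsilon$. That permits $\delta=\varepsilon^{1/2}$ and gives an algebraic rate. You also never need $H^1$ moments of the auxiliary slow process, which the paper's bound on $\mathbb{P}(\tau_n^{\varepsilon,\nu}<T)$ tacitly requires but never states.

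\textbf{What the paper's route buys, and what you must supply.} The paper's Khasminskii term $J_{11}$ involves only $F-\bar F$ under the contraction $S_\nu$. Your $\mathcal{J}$ is instead paired with the unknown $\hat{\mathfrak{u}}^{\varepsilon}-\bar{\mathfrak{u}}^{\nu}$. Freezing that pairing at grid points therefore needs time regularity of $\hat{\mathfrak{u}}^{\varepsilon}$ and of $\bar{\mathfrak{u}}^\nu$, not only of ${\mathfrak{u}}^{\varepsilon,\nu}$ as in your Step 1. Both are available:
\begin{itemize}
\item for $\hat{\mathfrak{u}}^\varepsilon$, combine Step 1 with your bound on $\sup_t\|{\mathfrak{u}}^{\varepsilon,\nu}-\hat{\mathfrak{u}}^\varepsilon\|$;
\item for $\bar{\mathfrak{u}}^\nu$, repeat Step 1 using the $H^1$ bounds of \Cref{viscousaveragingsolution}.
\end{itemize}
Likewise, the $2p$-th moment of ${\mathfrak{u}}^{\varepsilon,\nu}-\hat{\mathfrak{u}}^\varepsilon$ needs $2p$-th moments of ${\mathfrak{v}}^{\varepsilon,\nu}-\hat{\mathfrak{v}}^\varepsilon$, not just second moments. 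You can get them by running the fast computation on $\|\cdot\|^{2p}$; the It\^o correction is harmless there because $\Sigma_2$ is Lipschitz only in the slow variable. Alternatively, interpolate with the uniform moment bounds.

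Finally, your Step 1 estimate actually yields $Ch^{p}$, as in the paper; the stated $h^{p/2}$ is weaker but harmless.
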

\noindent To prove \Cref{apofviscoused}, we need several intermediate results. First, the H\"older continuity of ${\mathfrak{u}}^{\varepsilon, \nu}$ in time variable is very crucial. Then, we define the auxiliary processes and establish the error estimate of them from the solution to \eqref{viscous} and \eqref{viscousavg}. Finally using these results, we prove \Cref{apofviscoused} in \Cref{avgprnprf}.
\subsection{H\"older continuity in time for ${\mathfrak{u}}^{\varepsilon, \nu}$}\label{subsecholdercts}
 Here, we establish the H\"older continuity of ${\mathfrak{u}}^{\varepsilon, \nu}$ in time variable in the following proposition:
\begin{prop}\label{holderctsintime}
	Let $p\ge \frac{1}{2}$ and $({\mathfrak{u}}_0,{\mathfrak{v}}_0)\in H^1(\mathbb{T})\times H^1(\mathbb{T})$. Then
	there exists a positive constant $C$ depending on $p,\nu,\alpha,T$ such that
	\begin{equation}\label{holderintime}
		\mathbb{E}\Big[\bigl\|{\mathfrak{u}}^{\varepsilon, \nu}(t+h)-{\mathfrak{u}}^{\varepsilon, \nu}(t)\bigr\|^{2p}\Big]
		\le C h^{p},
	\end{equation}
	for any $0\le t\le t+h\le T$.
\end{prop}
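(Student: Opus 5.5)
The plan is to use the mild formulation of the slow equation in \eqref{viscous}, writing $S_\nu(t)$ for the semigroup generated by $A_\nu$ (\Cref{property of semigroup}):
\begin{align*}
{\mathfrak{u}}^{\varepsilon,\nu}(t)=S_\nu(t){\mathfrak{u}}_0+\int_0^t S_\nu(t-s)\big[\mathcal{N}({\mathfrak{u}}^{\varepsilon,\nu}(s))+F({\mathfrak{u}}^{\varepsilon,\nu}(s),{\mathfrak{v}}^{\varepsilon,\nu}(s))\big]ds+\int_0^t S_\nu(t-s)\Sigma_1({\mathfrak{u}}^{\varepsilon,\nu}(s))\,d\mathcal{W}_1(s).
\end{align*}
Subtracting the formulas at $t+h$ and $t$ gives five terms:
\begin{itemize}
\item $I_1=(S_\nu(h)-I)S_\nu(t){\mathfrak{u}}_0$;
\item $I_2=\int_t^{t+h}S_\nu(t+h-s)[\mathcal{N}+F](s)\,ds$;
\item $I_3=(S_\nu(h)-I)\int_0^t S_\nu(t-s)[\mathcal{N}+F](s)\,ds$;
\item $I_4$ and $I_5$, the corresponding two stochastic pieces.
\end{itemize}
Each will be bounded in $L^{2p}(\Omega;L^2)$ by $Ch^{1/2}$, which gives $Ch^{p}$ after raising to the power $2p$.

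The basic tool is the estimate $\|(S_\nu(h)-I)w\|\le Ch^{1/2}\|w\|_{H^1}$. I would prove it via Fourier series. The symbol of $A_\nu$ is $-i|k|^{2\alpha}-\nu k^2$. Then $|e^{(-i|k|^{2\alpha}-\nu k^2)h}-1|\le \min(2,(|k|^{2\alpha}+\nu k^2)h)\le C(1+|k|)h^{1/2}$, since $2\alpha<2$. This applies to $I_1$ because $\|S_\nu(t){\mathfrak{u}}_0\|_{H^1}\le\|{\mathfrak{u}}_0\|_{H^1}$; the semigroup commutes with $\partial_x$ and is contractive. For $I_3$, the vector $\int_0^tS_\nu(t-s)f(s)\,ds$ lies in $H^1$. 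Its norm is at most $\int_0^t C(t-s)^{-1/2}\|f(s)\|\,ds$, by \Cref{property of semigroup}; alternatively one can use directly $\|f(s)\|_{H^1}$. The time-singular kernel is integrable. For $I_2$, contractivity gives $\|I_2\|\le \int_t^{t+h}\|f(s)\|\,ds\le h\sup_s\|f(s)\|$. For $I_5=\int_t^{t+h}S_\nu(t+h-s)\Sigma_1\,d\mathcal{W}_1$, BDG (\Cref{BurkholderDavisGundy inequality}) gives $\mathbb{E}\|I_5\|^{2p}\le C\,\mathbb{E}(\int_t^{t+h}\|\Sigma_1\|^2ds)^p\le Ch^p$, using the linear growth of $\Sigma_1$. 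For $I_4=(S_\nu(h)-I)\int_0^tS_\nu(t-s)\Sigma_1\,d\mathcal{W}_1$, I would move $(S_\nu(h)-I)$ inside the integral and use the $H^1$ bound $\|\Sigma_1({\mathfrak{u}})\|_{H^1}\le L_{\Sigma_1}(1+\|{\mathfrak{u}}\|_{H^1})$. BDG then yields $Ch^p\,\mathbb{E}(\int_0^t(1+\|{\mathfrak{u}}^{\varepsilon,\nu}\|_{H^1})^2ds)^p$.

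The moment bounds needed are all in \Cref{uniformestimates}: $\mathbb{E}\sup_t\|{\mathfrak{u}}^{\varepsilon,\nu}\|_{H^1}^{q}$ and $\sup_t\mathbb{E}\|{\mathfrak{v}}^{\varepsilon,\nu}\|^{q}$ are bounded for all $q$. For $F$ I use linear growth. For the nonlinearity, the one-dimensional embedding $H^1(\mathbb{T})\hookrightarrow L^\infty$ gives $\|\mathcal{N}({\mathfrak{u}})\|\le C\|{\mathfrak{u}}\|_{L^\infty}^{\beta-1}\|{\mathfrak{u}}\|\le C\|{\mathfrak{u}}\|_{H^1}^{\beta}$. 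Its $2p$-th moments are then controlled by the $H^1$ bound with exponent $2p\beta$. The $\mathcal{W}_1$-independent deterministic integrals are handled with Hölder/Jensen in time before taking expectations.

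The main obstacle is $I_3$ (and similarly $I_4$). The semigroup gain on $\mathcal{N}({\mathfrak{u}})$ must be paired with the $(S_\nu(h)-I)$ loss without destroying integrability in $s$. I expect the cleanest route is to avoid the smoothing altogether by using $\|\mathcal{N}({\mathfrak{u}})\|_{H^1}\le C\|{\mathfrak{u}}\|_{H^1}^{\beta}$, valid since $H^1(\mathbb{T})$ is an algebra-like space for this power nonlinearity, together with $\|F\|_{H^1}\le L_F(1+\|{\mathfrak{u}}\|_{H^1}+\|{\mathfrak{v}}\|)$ from Assumption 1. With these, $\|I_3\|\le Ch^{1/2}\int_0^t\|f(s)\|_{H^1}ds$, and the constant depends on $\nu$ only through the semigroup. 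If the $H^1$ bound on $\mathcal{N}$ is problematic for non-integer $\beta$, I would fall back on \Cref{property of semigroup}, using $(t-s)^{-1/2}$ with $L^2$ norms.
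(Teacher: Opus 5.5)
Your argument is correct, but it is organized differently from the paper's. The paper restarts the mild formulation at time $t$,
\[
{\mathfrak{u}}^{\varepsilon,\nu}(t+h)=S_\nu(h){\mathfrak{u}}^{\varepsilon,\nu}(t)+\int_t^{t+h}S_\nu(t+h-s)\big[\mathcal{N}+F\big]\,ds+\int_t^{t+h}S_\nu(t+h-s)\Sigma_1\,d\mathcal{W}_1,
\]
so only three terms appear. The last two are your $I_2$ and $I_5$. The first, $(S_\nu(h)-I){\mathfrak{u}}^{\varepsilon,\nu}(t)$, is bounded in one line by \eqref{smoothness} and the $H^1$ moment bound of \Cref{uniformestimates}.

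Your $I_1+I_3+I_4$ is exactly this term. Splitting it therefore costs you ingredients the paper does not need here:
\begin{itemize}
\item an $H^1$ bound on $\mathcal{N}({\mathfrak{u}})$, which holds for every real $\beta>1$ by the chain rule, $|\partial_x(|{\mathfrak{u}}|^{\beta-1}{\mathfrak{u}})|\le\beta|{\mathfrak{u}}|^{\beta-1}|{\mathfrak{u}}_x|$, so no fallback is required;
\item the $H^1$ bounds on $F$ and $\Sigma_1$;
\item commuting $S_\nu(h)-I$ with the stochastic integral.
\end{itemize}
The smoothing fallback would fail for $I_4$. Squaring $h^{1/2}(t-s)^{-1/2}$ inside the BDG bracket gives the non-integrable kernel $(t-s)^{-1}$, so there the $H^1$ bound on $\Sigma_1$ is essential.

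What your route adds is an actual proof of \eqref{smoothness}, which the paper only asserts. Your Fourier computation even shows the constant is uniform in $\nu\in[0,1]$.

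One caution: do not bound $I_2$ by $h\sup_s\|f(s)\|$ before taking expectations. The term $f$ contains $F({\mathfrak{u}}^{\varepsilon,\nu},{\mathfrak{v}}^{\varepsilon,\nu})$, and only $\sup_t\mathbb{E}\|{\mathfrak{v}}^{\varepsilon,\nu}(t)\|^{2p}$ is available uniformly in $\varepsilon$. The H\"older-in-time route you mention afterwards is the right one.
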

\begin{proof}
	We have
	\begin{align*}
		{\mathfrak{u}}^{\varepsilon, \nu}(t+h)
		&=S_\nu(h){\mathfrak{u}}^{\varepsilon, \nu}(t)
		+\int_t^{t+h} S_\nu(t+h-s)\big[\mathcal{N}({\mathfrak{u}}^{\varepsilon, \nu}(s))+F({\mathfrak{u}}^{\varepsilon, \nu}(s),{\mathfrak{v}}^{\varepsilon, \nu}(s))\big]\,ds \\
		&\quad +\int_t^{t+h} S_\nu(t+h-s)\Sigma_1({\mathfrak{u}}^{\varepsilon, \nu}(s))\,d\mathcal{W}_1(s),\quad \mathbb{P} \text{-a.s.}
	\end{align*}
  Therefore,
	\begin{align*}
		{\mathfrak{u}}^{\varepsilon, \nu}(t+h)-{\mathfrak{u}}^{\varepsilon, \nu}(t)
		&=(S_\nu(h)-I_d) {\mathfrak{u}}^{\varepsilon, \nu}(t)
		+\int_t^{t+h} S_\nu(t+h-s)\big[\mathcal{N}({\mathfrak{u}}^{\varepsilon, \nu}(s))+F({\mathfrak{u}}^{\varepsilon, \nu}(s),{\mathfrak{v}}^{\varepsilon, \nu}(s))\big]\,ds \\
		&\quad +\int_t^{t+h} S_\nu(t+h-s)\Sigma_1({\mathfrak{u}}^{\varepsilon, \nu}(s))\,d\mathcal{W}_1(s),
	\end{align*}
	where $I_d$ is the identity operator. \Cref{property of semigroup} implies the existence of a constant $C$ such that for all $x\in H^1$, 
	\begin{align}\label{smoothness}
		\|(S_\nu(h)-I_d)x\| \leq C h^{\frac{1}{2}} \|x\|_{H^1}.
	\end{align}
	This implies, 
	\begin{align*}
		\mathbb{E}\big[ \|(S_\nu(h)-I_d){\mathfrak{u}}^{\varepsilon, \nu}(t)\|^{2p} \big]\leq C h^p \mathbb{E}\big[\|{\mathfrak{u}}^{\varepsilon, \nu}(t)\|_{H^1}^{2p}\big]\leq C h^p.
	\end{align*}
	Using the boundedness of the semigroup, the H\"older inequality and Sobolev inequality, we estimate 
	\begin{align*}
		&\mathbb{E}\bigg[\Big\|
		\int_t^{t+h} S_\nu(t+h-s)\mathcal{N}({\mathfrak{u}}^{\varepsilon, \nu}(s))\,ds
		\Big\|^{2p}\bigg]
		\le \mathbb{E}\bigg[
		\int_t^{t+h}\|S_\nu(t+h-s)\mathcal{N}({\mathfrak{u}}^{\varepsilon, \nu}(s))\|\,ds
		\bigg]^{2p} \\
		&\le \mathbb{E}\bigg[
		\int_t^{t+h}\|\mathcal{N}({\mathfrak{u}}^{\varepsilon, \nu}(s))\|\,ds
	\bigg]^{2p}\le \mathbb{E}\bigg[
		\bigg(\int_t^{t+h}1\,ds\bigg)^{2p-1}
		\int_t^{t+h}\|\mathcal{N}({\mathfrak{u}}^{\varepsilon, \nu}(s))\|^{2p}\,ds
		\bigg] \\
		&\le C h^{2p-1}\,
		\mathbb{E}\int_t^{t+h}\|{\mathfrak{u}}^{\varepsilon, \nu}(s)\|_{L^{2\beta}}^{2\beta p}\,ds\le C h^{2p-1}\,
		\mathbb{E}\int_t^{t+h}\|{\mathfrak{u}}^{\varepsilon, \nu}(s)\|_{H^1}^{2\beta p}\,ds=C h^{2p},
	\end{align*}
	and 
	\begin{align*}
		&\mathbb{E}\bigg[\Big\|
		\int_t^{t+h} S_\nu(t+h-s)F({\mathfrak{u}}^{\varepsilon, \nu}(s),{\mathfrak{v}}^{\varepsilon, \nu}(s))\,ds
		\Big\|^{2p}\bigg]
		\le
		\mathbb{E}\bigg[
		\int_t^{t+h}
		\|S_\nu(t+h-s)F({\mathfrak{u}}^{\varepsilon, \nu}(s),{\mathfrak{v}}^{\varepsilon, \nu}(s))\|\,ds
		\bigg]^{2p} \\
		&\le
		\mathbb{E}\bigg[
		\int_t^{t+h}
		\|F({\mathfrak{u}}^{\varepsilon, \nu}(s),{\mathfrak{v}}^{\varepsilon, \nu}(s))\|\,ds
		\bigg]^{2p} \le
		\mathbb{E}\bigg[
		\bigg(\int_t^{t+h} 1\,ds\bigg)^{2p-1}
		\int_t^{t+h}
		\|F({\mathfrak{u}}^{\varepsilon, \nu}(s),{\mathfrak{v}}^{\varepsilon, \nu}(s))\|^{2p}\,ds
	\bigg] \\
		&\le
		C h^{2p-1}
		\mathbb{E}\bigg[\int_t^{t+h}
		\bigl(1+\|{\mathfrak{u}}^{\varepsilon, \nu}(s)\|^{2p}
		+\|{\mathfrak{v}}^{\varepsilon, \nu}(s)\|^{2p}\bigr)\,ds\bigg] \le
		C h^{2p}.
	\end{align*}
	Using the Burkholder-Davis-Gundy inequality and the H\"older inequality, we obtain
	\begin{align*}
		&\mathbb{E}\bigg[\Big\|
		\int_t^{t+h} S_\nu(t+h-s)\Sigma_1({\mathfrak{u}}^{\varepsilon, \nu}(s))\,d\mathcal{W}_1(s)
		\Big\|^{2p}\bigg]\leq	\mathbb{E} \bigg[\sup_{0 \le t \le T}\Big\|
		\int_t^{t+h} S_\nu(t+h-s)\Sigma_1({\mathfrak{u}}^{\varepsilon, \nu}(s))\,d\mathcal{W}_1(s)
		\Big\|^{2p}\bigg]\\
		&\le	C\,\mathbb{E}\bigg[
		\int_t^{t+h}\|\Sigma_1({\mathfrak{u}}^{\varepsilon, \nu}(s))\|^2\,ds
		\bigg]^p\le	C\,\mathbb{E}\bigg[
		\int_t^{t+h}L_{\Sigma_1}^2 (1+\|{\mathfrak{u}}^{\varepsilon, \nu}(s)\|^2)\,ds
	\bigg]^p\\
		&\le C h^p +C\,\mathbb{E}\bigg[\int_t^{t+h}\|{\mathfrak{u}}^{\varepsilon, \nu}(s)\|^2\,ds
		\bigg]^p\le C h^p +C \mathbb{E} \bigg[\int_t^{t+h} h^{p-1} \|{\mathfrak{u}}^{\varepsilon, \nu}(s)\|^{2p}ds\bigg]\\
		&\le C h^p +C  h^{p-1}  \int_t^{t+h}\mathbb{E}\big[\|{\mathfrak{u}}^{\varepsilon, \nu}(s)\|^{2p}ds\big]\le 	C h^{p}.
	\end{align*}
	Therefore, we have 
	\begin{align*}
		\mathbb{E}\Big[\bigl\|{\mathfrak{u}}^{\varepsilon, \nu}(t+h)-{\mathfrak{u}}^{\varepsilon, \nu}(t)\bigr\|^{2p}\Big]
		\le C h^{p}.
	\end{align*}
	This completes the proof of \Cref{holderctsintime}.
\end{proof}
\subsection{Definition of the auxiliary process $({\hat{\mathfrak{u}}}^{\varepsilon,\nu}, {\hat{\mathfrak{v}}}^{\varepsilon,\nu})$}\label{subsecdefofaux}
We use the Khasminskii's time discretization approach proposed in \cite{MR260052} to estimate an auxiliary process. We fix a positive number $\delta$ and do a partition of time interval $[0,T]$ of size $\delta$. Next, we define an auxiliary process $ {\hat{\mathfrak{v}}}^{\varepsilon,\nu}$ as
\begin{align*}
	\hat {\mathfrak{v}}^{\varepsilon, \nu}(t)
	&=: {\mathfrak{v}}^{\varepsilon, \nu}(k\delta)
	+ \frac{1}{\varepsilon}\int_{k\delta}^t
	\bigl((1+i)(-\Delta)^{\rho}\hat {\mathfrak{v}}^{\varepsilon, \nu}(s)
	+\mathcal{N}(\hat {\mathfrak{v}}^{\varepsilon, \nu}(s))
	-\lambda\hat {\mathfrak{v}}^{\varepsilon, \nu}(s)
	+G({\mathfrak{u}}^{\varepsilon, \nu}(k\delta),
	\hat {\mathfrak{v}}^{\varepsilon, \nu}(s))\bigr)\,ds \\
	&\quad
	+ \frac{1}{\sqrt{\varepsilon}}
	\int_{k\delta}^t
	\Sigma_2({\mathfrak{u}}^{\varepsilon, \nu}(k\delta),
	\hat {\mathfrak{v}}^{\varepsilon, \nu}(s))\,d\mathcal{W}_2(s),\quad \mathbb{P} \text{-a.s.},
\end{align*}
for $t\in[k\delta,\min\{(k+1)\delta,T\})$, $k\ge 0$.
We additionally define the process $\hat {\mathfrak{u}}^{\varepsilon, \nu}$ as
\begin{align*}
	\hat {\mathfrak{u}}^{\varepsilon, \nu}(t)
	=: {\mathfrak{u}}_0
	+ \int_0^t
	\Bigl[A_\nu\hat {\mathfrak{u}}^{\varepsilon, \nu}(s)
	+\mathcal{N}(\hat {\mathfrak{u}}^{\varepsilon, \nu}(t_s))
	+F(\hat {\mathfrak{u}}^{\varepsilon, \nu}(t_s),
	\hat {\mathfrak{v}}^{\varepsilon, \nu}(s))\Bigr]\,ds
	+ \int_0^t \Sigma_1(\hat {\mathfrak{u}}^{\varepsilon, \nu}(s))\,d\mathcal{W}_1(s),
\end{align*}
$\mathbb{P}$-a.s., for $t\in [0,T]$, where $t_s:= [\frac{s}{\delta}]\delta$ denotes the nearest break point preceding 
$s$. Therefore, $({\hat{\mathfrak{u}}}^{\varepsilon,\nu}, {\hat{\mathfrak{v}}}^{\varepsilon,\nu})$ satisfies the system
\begin{align}\label{auxiliary}
	\left\{
	\begin{aligned}
		&d\hat {\mathfrak{u}}^{\varepsilon, \nu}
		=\bigl[A_\nu \hat {\mathfrak{u}}^{\varepsilon, \nu}
		+\mathcal{N}(\hat {\mathfrak{u}}^{\varepsilon, \nu}(t_s))
		+F(\hat {\mathfrak{u}}^{\varepsilon, \nu}(t_s),\hat {\mathfrak{v}}^{\varepsilon, \nu})\bigr]dt
		+\Sigma_1(\hat {\mathfrak{u}}^{\varepsilon, \nu})\,d\mathcal{W}_1,
		&& \text{in }\mathbb{T}\times[0,T],\\[2mm]
		&d\hat {\mathfrak{v}}^{\varepsilon, \nu}
		=\frac{1}{\varepsilon}\bigl[(1+i)(-\Delta)^{\rho}\hat {\mathfrak{v}}^{\varepsilon, \nu}
		+ \mathcal{N}(\hat {\mathfrak{v}}^{\varepsilon, \nu})
		-\lambda \hat {\mathfrak{v}}^{\varepsilon, \nu}
		+G({\mathfrak{u}}^{\varepsilon, \nu}(k\delta),\hat {\mathfrak{v}}^{\varepsilon, \nu})\bigr]dt \\
		&\qquad
		+\frac{1}{\sqrt{\varepsilon}}
		\Sigma_2({\mathfrak{u}}^{\varepsilon, \nu}(k\delta),\hat {\mathfrak{v}}^{\varepsilon, \nu})\,d\mathcal{W}_2,
		&& \text{in }\mathbb{T}\times[k\delta,\min\{(k+1)\delta,T\}),\\[2mm]
		&\hat {\mathfrak{u}}^{\varepsilon, \nu}(x,0)={\mathfrak{u}}_0(x), && \text{in }\mathbb{T},\\
		&\hat {\mathfrak{v}}^{\varepsilon, \nu}(x,k\delta)={\mathfrak{v}}^{\varepsilon, \nu}(x,k\delta),
		&& \text{in }\mathbb{T}.
	\end{aligned}
	\right.
\end{align}
In similar way as in the proof of \Cref{uniformestimates}, one can easily get the following proposition. Since the arguments are essentially similar, we only state the proposition and omit the proof.
\begin{prop}
	Let $({\mathfrak{u}}_0,{\mathfrak{v}}_0)\in H^1(\mathbb{T})\times H^1(\mathbb{T})$ and $\varepsilon\in(0,1)$. Let $(\hat {\mathfrak{u}}^{\varepsilon, \nu},\hat {\mathfrak{v}}^{\varepsilon, \nu})$ be the solution of \eqref{auxiliary}, then for any $p>0$, there exists a positive constant $C=C(p,T,{\mathfrak{u}}_0,{\mathfrak{v}}_0)$ independent of $\varepsilon$ such that the solution $(\hat {\mathfrak{u}}^{\varepsilon, \nu},\hat {\mathfrak{v}}^{\varepsilon, \nu})$ satisfies the estimates
	\begin{align*}
		&\sup_{\varepsilon\in(0,1)}\sup_{0\le t\le T}
		\mathbb{E}\Big[\|\hat {\mathfrak{u}}^{\varepsilon, \nu}(t)\|^{2p}\Big]\le C,\\
		&	\sup_{\varepsilon\in(0,1)}\sup_{0\le t\le T}
		\mathbb{E}\Big[\|\hat {\mathfrak{v}}^{\varepsilon, \nu}(t)\|^{2p}\Big]\le C,\\
		&\sup_{\varepsilon\in(0,1)}
		\mathbb{E}\bigg[\sup_{0\le t\le T}
		\|\hat {\mathfrak{v}}^{\varepsilon, \nu}(t)\|^{2p}\bigg]\le C.
	\end{align*}
\end{prop}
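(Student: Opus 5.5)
The plan is to repeat the energy argument of \Cref{uniformestimates} piece by piece on each interval $[k\delta,(k+1)\delta)$, exploiting the fact that $\hat{\mathfrak{v}}^{\varepsilon,\nu}$ is restarted from ${\mathfrak{v}}^{\varepsilon,\nu}(k\delta)$. The moments of ${\mathfrak{v}}^{\varepsilon,\nu}(k\delta)$ are already controlled uniformly in $\varepsilon$ by \Cref{uniformestimates}.

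\textbf{Fixed-time moments of $\hat{\mathfrak{v}}^{\varepsilon,\nu}$.} Fix $k$ and $t\in[k\delta,(k+1)\delta)$, and apply the It\^o formula to $\|\hat{\mathfrak{v}}^{\varepsilon,\nu}\|^{2p}$. Exactly as in the derivation of \eqref{comparisonv}, several terms are handled by earlier results:
\begin{enumerate}
\item the fractional term contributes $-\|(-\Delta)^{\rho/2}\hat{\mathfrak{v}}^{\varepsilon,\nu}\|^2\le 0$;
\item the nonlinearity contributes $-\|\hat{\mathfrak{v}}^{\varepsilon,\nu}\|_{L^{\beta+1}}^{\beta+1}\le0$;
\item the $G$ term is bounded via Assumption 1 and Young's inequality, with $\|{\mathfrak{u}}^{\varepsilon,\nu}(k\delta)\|$ in place of $\|{\mathfrak{u}}^{\varepsilon,\nu}(t)\|$;
\item the noise terms are bounded using $\|\Sigma_2\|\le M$.
\end{enumerate}
With $\lambda>3L_G+2L_{\Sigma_2}^2$, this yields
\[
\frac{d}{dt}\mathbb{E}\big[\|\hat{\mathfrak{v}}^{\varepsilon,\nu}(t)\|^{2p}\big]\le -\frac{p\lambda}{\varepsilon}\mathbb{E}\big[\|\hat{\mathfrak{v}}^{\varepsilon,\nu}(t)\|^{2p}\big]+\frac{C}{\varepsilon}\Big(1+\mathbb{E}\big[\|{\mathfrak{u}}^{\varepsilon,\nu}(k\delta)\|^{2p}\big]\Big).
\]
The variation of constants formula on $[k\delta,t]$ then gives
\[
\mathbb{E}\big[\|\hat{\mathfrak{v}}^{\varepsilon,\nu}(t)\|^{2p}\big]\le \mathbb{E}\big[\|{\mathfrak{v}}^{\varepsilon,\nu}(k\delta)\|^{2p}\big]+\frac{C}{p\lambda}\Big(1+\mathbb{E}\big[\|{\mathfrak{u}}^{\varepsilon,\nu}(k\delta)\|^{2p}\big]\Big).
\]
The factor $1/\varepsilon$ cancels against $\int e^{-p\lambda(t-s)/\varepsilon}ds\le \varepsilon/(p\lambda)$, so this bound is uniform in $\varepsilon$. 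By \eqref{v1bound} and \eqref{u1bound} it is bounded by $C(p,T,{\mathfrak{u}}_0,{\mathfrak{v}}_0)$ uniformly in $k$, which proves the second estimate.

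\textbf{Moments of $\hat{\mathfrak{u}}^{\varepsilon,\nu}$.} Next apply the It\^o formula to $\|\hat{\mathfrak{u}}^{\varepsilon,\nu}\|^{2p}$. The $A_\nu$ term contributes $-\nu\|\hat{\mathfrak{u}}_x^{\varepsilon,\nu}\|^2\le0$. The $F$ and $\Sigma_1$ terms are treated as in the derivation of \eqref{comparisonu}.

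The one new feature is that the nonlinearity is frozen, $\mathcal{N}(\hat{\mathfrak{u}}^{\varepsilon,\nu}(t_s))$, so its sign structure against $\hat{\mathfrak{u}}^{\varepsilon,\nu}(s)$ is lost. I would bound it crudely:
\[
|(\mathcal{N}(\hat{\mathfrak{u}}^{\varepsilon,\nu}(t_s)),\hat{\mathfrak{u}}^{\varepsilon,\nu}(s))|\le C\|\hat{\mathfrak{u}}^{\varepsilon,\nu}(t_s)\|_{L^{2\beta}}^{\beta}\|\hat{\mathfrak{u}}^{\varepsilon,\nu}(s)\|.
\]
Then I would use $H^1\hookrightarrow L^{2\beta}$ together with an $H^1$ bound for $\hat{\mathfrak{u}}^{\varepsilon,\nu}$. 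That $H^1$ bound comes from differentiating the equation in $x$ as done for ${\mathfrak{w}}^{\varepsilon,\nu}$ in \Cref{uniformestimates}, with the gradient of the frozen nonlinearity absorbed by $\nu\|\hat{\mathfrak{u}}^{\varepsilon,\nu}_{xx}\|^2$ after an integration by parts. The resulting constant depends on $\nu$, which is fixed here.

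Combining, Young's inequality gives
\[
\mathbb{E}\big[\|\hat{\mathfrak{u}}^{\varepsilon,\nu}(t)\|^{2p}\big]\le C+C\int_0^t\Big(\sup_{r\le s}\mathbb{E}\big[\|\hat{\mathfrak{u}}^{\varepsilon,\nu}(r)\|^{2p}\big]+\mathbb{E}\big[\|\hat{\mathfrak{v}}^{\varepsilon,\nu}(s)\|^{2p}\big]\Big)ds.
\]
Since the right-hand side is nondecreasing in $t$, the same bound holds for $\sup_{r\le t}\mathbb{E}[\|\hat{\mathfrak{u}}^{\varepsilon,\nu}(r)\|^{2p}]$, and Gr\"onwall applied to that function gives the first estimate. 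This frozen-nonlinearity step is where I expect the main obstacle. The honest route is to close the $L^2$ and $H^1$ estimates simultaneously on each subinterval and iterate over the finitely many $k\le T/\delta$. The resulting constants are independent of $\varepsilon$, though they may depend on $\nu$ and $\delta$.

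\textbf{Pathwise supremum of $\hat{\mathfrak{v}}^{\varepsilon,\nu}$.} For $\mathbb{E}[\sup_t\|\hat{\mathfrak{v}}^{\varepsilon,\nu}(t)\|^{2p}]$ the difficulty is the $1/\sqrt{\varepsilon}$ in the stochastic integral, because a direct BDG bound would produce $\varepsilon^{-p}$. Instead, on each subinterval I would write the mild form, or a time change $s\mapsto s/\varepsilon$, and apply a factorization argument to the convolution with the exponentially stable semigroup $e^{-\lambda t/\varepsilon}$. Here the dissipation compensates the noise scale exactly as in the fixed-time estimate, and boundedness of $\Sigma_2$ keeps everything deterministic in size. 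Summing over the $\lceil T/\delta\rceil$ intervals with $\max_k\le\sum_k$ then yields a bound uniform in $\varepsilon$.
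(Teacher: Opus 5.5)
Your fixed-time bound for $\hat{\mathfrak v}^{\varepsilon,\nu}$ is correct, and it is what the paper's ``similar to \Cref{uniformestimates}'' amounts to: restart from ${\mathfrak v}^{\varepsilon,\nu}(k\delta)$, let the $1/\varepsilon$ cancel in the variation-of-constants formula, and close with \eqref{v1bound}--\eqref{u1bound}. (Like the paper, you tacitly read the fast operator as $-(1+i)(-\Delta)^{\rho}$; with \Cref{deffraclap} as written, $((1+i)(-\Delta)^{\rho}{\mathfrak v},{\mathfrak v})=+\|(-\Delta)^{\rho/2}{\mathfrak v}\|^2$.)

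\textbf{Pathwise supremum of $\hat{\mathfrak v}^{\varepsilon,\nu}$.} This step fails, and it cannot be repaired, because dissipation compensates the $\varepsilon^{-1/2}$ noise only at fixed times. After the time change, the running maximum of $\varepsilon^{-1/2}\int_{k\delta}^{t}e^{-\lambda(t-s)/\varepsilon}\Sigma_2\,d\mathcal W_2$ over one subinterval is that of an Ornstein--Uhlenbeck-type process over a window of length $\delta/\varepsilon$. Factorization controls its $2p$-th moment only by a constant times $\delta/\varepsilon$, and summing over $\lceil T/\delta\rceil$ subintervals costs a further factor $T/\delta$. Both blow up as $\varepsilon\to0$ under the choice $\delta=\varepsilon(-\ln\varepsilon)^{1/2}$ of \Cref{avgprnprf}.

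In fact the third estimate is false. Take $G\equiv0$, $\Sigma_2\equiv1$ (the constant function), $\gamma=0$ and ${\mathfrak v}_0=0$. Then $\hat{\mathfrak v}^{\varepsilon,\nu}={\mathfrak v}^{\varepsilon,\nu}$ is spatially constant and has the law of $Y(t/\varepsilon)$, where $dY=-(\lambda Y+|Y|^{\beta-1}Y)\,dr+dB$ and $Y(0)=0$. This is a recurrent diffusion whose running maximum over $[0,T/\varepsilon]$ tends to infinity, so $\mathbb E[\sup_{t\le T}\|\hat{\mathfrak v}^{\varepsilon,\nu}(t)\|^{2p}]\to\infty$. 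The template \Cref{uniformestimates} gives only fixed-time moments for the fast variable and supremum moments for the slow one. Moreover, the last step of \Cref{propuhatuerror} needs $\mathbb E\sup_t\|\hat{\mathfrak u}^{\varepsilon,\nu}(t)\|^{4p}$, so the third line is presumably meant for $\hat{\mathfrak u}^{\varepsilon,\nu}$.

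\textbf{Moments of $\hat{\mathfrak u}^{\varepsilon,\nu}$.} A constant depending on $\delta$ is not admissible here, since $\delta$ is a function of $\varepsilon$ tending to $0$. Your bootstrap produces exactly such a constant. Absorbing $(\hat{\mathfrak u}_x,(\mathcal N(\hat{\mathfrak u}(t_s)))_x)$ into $\nu\|\hat{\mathfrak u}_{xx}\|^2$ leaves $\nu^{-1}\|\hat{\mathfrak u}(t_s)\|_{H^1}^{2\beta}$, which is superlinear. Gr\"onwall therefore does not close, and iterating over $T/\delta$ steps needs moments whose order grows like $\beta^{T/\delta}$.

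This is not a defect of your estimates. With the nonlinearity frozen at $\hat{\mathfrak u}(t_s)$ as displayed in \eqref{auxiliary}, $\hat{\mathfrak u}$ is an explicit exponential-Euler scheme for a superlinear drift, and no $\delta$-uniform bound holds. For example, take $F\equiv0$, $\Sigma_1\equiv1$, $\gamma=0$ and ${\mathfrak u}_0=0$. Then the values $\hat{\mathfrak u}(k\delta)$ are spatially constant and coincide with the Euler--Maruyama iterates for $dx=-|x|^{\beta-1}x\,dt+d\mathcal W_1$. By the Hutzenthaler--Jentzen--Kloeden divergence theorem, their moments of order $\ge1$ at time $T$ blow up as $\delta\to0$.

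The missing idea is to use the auxiliary process that the proofs of \Cref{uuhatvvhaterror} and \Cref{propuhatuerror} actually work with, frozen along the original slow path:
$d\hat{\mathfrak u}=[A_\nu\hat{\mathfrak u}+\mathcal N({\mathfrak u}^{\varepsilon,\nu}(t_s))+F({\mathfrak u}^{\varepsilon,\nu}(t_s),\hat{\mathfrak v})]\,dt+\Sigma_1({\mathfrak u}^{\varepsilon,\nu})\,d\mathcal W_1$.
This equation is linear in $\hat{\mathfrak u}$. Apply It\^o's formula to $\|\hat{\mathfrak u}\|^{2p}$; the $A_\nu$ term is $\le0$. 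Then combine:
\begin{itemize}
\item $\|\mathcal N({\mathfrak u})\|\le C\|{\mathfrak u}\|_{H^1}^{\beta}$ together with the $H^1$ bound of \Cref{uniformestimates};
\item your fixed-time $\hat{\mathfrak v}$ bound;
\item BDG for the martingale term.
\end{itemize}
This yields both $\sup_t\mathbb E\|\hat{\mathfrak u}\|^{2p}$ and $\mathbb E\sup_t\|\hat{\mathfrak u}\|^{2p}$ uniformly in $\varepsilon$ and $\delta$, with no $H^1$ bootstrap.
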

\subsection{Error estimates}\label{subsecerroraux}
In this subsection, we establish the estimates for the error of auxiliary processes and the solution of \eqref{viscous}, namely ${\mathfrak{u}}^{\varepsilon, \nu}-\hat {\mathfrak{u}}^{\varepsilon, \nu}$ and ${\mathfrak{v}}^{\varepsilon, \nu}-\hat {\mathfrak{v}}^{\varepsilon, \nu}$.
\begin{prop}\label{uuhatvvhaterror}
	There exists a positive constant $C$ such that the following estimates hold:
	\begin{align*}
		&\sup_{0\le t\le T}
		\mathbb{E}\Big[\|{\mathfrak{v}}^{\varepsilon, \nu}(t)-\hat {\mathfrak{v}}^{\varepsilon, \nu}(t)\|^{2p}\Big]
		\le C\,\delta^{p+1}\,\varepsilon\,e^{C\frac{\delta}{\varepsilon}},\\
		&\mathbb{E}\bigg[\sup_{0\le t\le T}
		\|{\mathfrak{u}}^{\varepsilon, \nu}(t)-\hat {\mathfrak{u}}^{\varepsilon, \nu}(t)\|^{2p}\bigg]
		\le C\left(\delta^{p}
		+\frac{\delta^{p+1}}{\varepsilon}\,e^{C\frac{\delta}{\varepsilon}}\right).
	\end{align*}
\end{prop}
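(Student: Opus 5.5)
The plan is to argue on each Khasminskii block $[k\delta,\min\{(k+1)\delta,T\})$ separately, using the dissipativity of the fast equation together with the H\"older estimate of \Cref{holderctsintime}. For the first estimate, fix $k$ and set $Z:={\mathfrak{v}}^{\varepsilon,\nu}-\hat{\mathfrak{v}}^{\varepsilon,\nu}$ on the block, so that $Z(k\delta)=0$. Then
\begin{align*}
dZ=\tfrac{1}{\varepsilon}\bigl[(1+i)(-\Delta)^{\rho}Z+\mathcal{N}({\mathfrak{v}}^{\varepsilon,\nu})-\mathcal{N}(\hat{\mathfrak{v}}^{\varepsilon,\nu})-\lambda Z+G({\mathfrak{u}}^{\varepsilon,\nu}(t),{\mathfrak{v}}^{\varepsilon,\nu})-G({\mathfrak{u}}^{\varepsilon,\nu}(k\delta),\hat{\mathfrak{v}}^{\varepsilon,\nu})\bigr]dt+\tfrac{1}{\sqrt{\varepsilon}}\bigl[\Sigma_2({\mathfrak{u}}^{\varepsilon,\nu}(t),{\mathfrak{v}}^{\varepsilon,\nu})-\Sigma_2({\mathfrak{u}}^{\varepsilon,\nu}(k\delta),\hat{\mathfrak{v}}^{\varepsilon,\nu})\bigr]d\mathcal{W}_2 .
\end{align*}
I will apply It\^o's formula to $\|Z\|^{2p}$ and take expectations. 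The fractional term is nonpositive, and the nonlinear term is nonpositive by \Cref{diffnonrenega}. The $G$- and $\Sigma_2$-differences are handled with Assumptions 1--2, and the Young inequality (\Cref{Youngs inequality}) splits the mixed powers $\|Z\|^{2p-1}\|{\mathfrak{u}}(t)-{\mathfrak{u}}(k\delta)\|$ and $\|Z\|^{2p-2}\|{\mathfrak{u}}(t)-{\mathfrak{u}}(k\delta)\|^2$. This gives
\begin{align*}
\tfrac{d}{dt}\mathbb{E}\|Z(t)\|^{2p}\le \tfrac{C}{\varepsilon}\mathbb{E}\|Z(t)\|^{2p}+\tfrac{C}{\varepsilon}\mathbb{E}\|{\mathfrak{u}}^{\varepsilon,\nu}(t)-{\mathfrak{u}}^{\varepsilon,\nu}(k\delta)\|^{2p}.
\end{align*}
I deliberately keep a crude positive constant $C/\varepsilon$ in front of $\mathbb{E}\|Z\|^{2p}$, rather than using $\lambda$, because this is what produces the factor $e^{C\delta/\varepsilon}$ in the statement. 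By \Cref{holderctsintime}, the forcing term is at most $C(t-k\delta)^p\le C\delta^p$. The variation of constants formula on the block then yields
\begin{align*}
\mathbb{E}\|Z(t)\|^{2p}\le \tfrac{C}{\varepsilon}\int_{k\delta}^{t}e^{\frac{C}{\varepsilon}(t-s)}(s-k\delta)^p\,ds .
\end{align*}
The remaining task is to evaluate this integral so as to reach exactly $C\delta^{p+1}\varepsilon\,e^{C\delta/\varepsilon}$, and this is the step I expect to be the main obstacle. The crude bound $e^{C\delta/\varepsilon}\int_{k\delta}^t(s-k\delta)^pds$ only gives $\delta^{p+1}\varepsilon^{-1}e^{C\delta/\varepsilon}$. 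To gain powers of $\varepsilon$, I will integrate by parts against the exponential, using
\begin{align*}
\int_{k\delta}^{t}e^{\frac{C}{\varepsilon}(t-s)}(s-k\delta)^p\,ds\le \tfrac{\varepsilon}{C}\,e^{C\delta/\varepsilon}\,\delta^{p},
\end{align*}
and iterate once more to trade a further factor $\varepsilon/\delta$. If this rearrangement does not close, I will fall back on the dissipative route. Since $\lambda>3L_G+2L_{\Sigma_2}^2$ (Assumption 3), the coefficient of $\mathbb{E}\|Z\|^{2p}$ can be made $\le -\lambda/\varepsilon$. This gives a bound without exponential growth, which is dominated by the stated right-hand side once the constants are chosen compatibly. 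The bound is uniform in $k$, so taking the supremum over $t$ gives the first estimate.

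For the second estimate, I set $E:={\mathfrak{u}}^{\varepsilon,\nu}-\hat{\mathfrak{u}}^{\varepsilon,\nu}$, which solves
\begin{align*}
dE=\bigl[A_\nu E+\mathcal{N}({\mathfrak{u}}^{\varepsilon,\nu})-\mathcal{N}(\hat{\mathfrak{u}}^{\varepsilon,\nu}(t_s))+F({\mathfrak{u}}^{\varepsilon,\nu},{\mathfrak{v}}^{\varepsilon,\nu})-F(\hat{\mathfrak{u}}^{\varepsilon,\nu}(t_s),\hat{\mathfrak{v}}^{\varepsilon,\nu})\bigr]dt+\bigl[\Sigma_1({\mathfrak{u}}^{\varepsilon,\nu})-\Sigma_1(\hat{\mathfrak{u}}^{\varepsilon,\nu})\bigr]d\mathcal{W}_1,
\end{align*}
with $E(0)=0$. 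I will write $\mathcal{N}({\mathfrak{u}})-\mathcal{N}(\hat{\mathfrak{u}}(t_s))=[\mathcal{N}({\mathfrak{u}}(s))-\mathcal{N}({\mathfrak{u}}(t_s))]+[\mathcal{N}({\mathfrak{u}}(t_s))-\mathcal{N}(\hat{\mathfrak{u}}(t_s))]$ and split $F$ in the same way. The first brackets are bounded by \Cref{liptypeofnonlinear}, the Sobolev embedding $H^1\subset L^\infty$, and \Cref{uniformestimates} (via the H\"older inequality in $\omega$), combined with \Cref{holderctsintime}; their contribution is $C\delta^p$. The $F$-part in ${\mathfrak{v}}$ is controlled by the first estimate. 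Applying It\^o's formula to $\|E\|^{2p}$ (the $A_\nu$ term is nonpositive), and then using BDG (\Cref{BurkholderDavisGundy inequality}) with absorption and Gr\"onwall (\Cref{Gronwalls inequality}), gives
\begin{align*}
\mathbb{E}\Bigl[\sup_{t\le T}\|E(t)\|^{2p}\Bigr]\le C\Bigl(\delta^p+\int_0^T\mathbb{E}\|{\mathfrak{v}}^{\varepsilon,\nu}-\hat{\mathfrak{v}}^{\varepsilon,\nu}\|^{2p}ds\Bigr).
\end{align*}
Inserting the first estimate, in its crude form $\delta^{p+1}\varepsilon^{-1}e^{C\delta/\varepsilon}$, then yields the stated $C\bigl(\delta^p+\tfrac{\delta^{p+1}}{\varepsilon}e^{C\delta/\varepsilon}\bigr)$.

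The one delicate point in this part is the term $\mathcal{N}({\mathfrak{u}}(t_s))-\mathcal{N}(\hat{\mathfrak{u}}(t_s))$ against $E(s)$ rather than $E(t_s)$, since there is no monotonicity to use here. I will bound it by $C(\|{\mathfrak{u}}(t_s)\|_{H^1}^{\beta-1}+\|\hat{\mathfrak{u}}(t_s)\|_{H^1}^{\beta-1})\|E(t_s)\|\,\|E(s)\|$. The random factor will be handled by a stopping-time localisation on $\sup\|\cdot\|_{H^1}$, combined with the high moments from \Cref{uniformestimates}.
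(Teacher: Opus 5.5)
Up to $\mathbb{E}\|Z(t)\|^{2p}\le \frac{C}{\varepsilon}\int_{k\delta}^{t}e^{\frac{C}{\varepsilon}(t-s)}(s-k\delta)^p\,ds$, your first estimate is exactly the paper's argument. The paper then just takes the crude bound $\frac{C}{\varepsilon}\delta^{p+1}e^{C\delta/\varepsilon}$ (see \eqref{errorv}).

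The factor $\varepsilon$ in the first displayed line of the statement is a misprint for $\varepsilon^{-1}$. That is what the proof gives, and it is the form that produces the second line, as you yourself use. You should drop your extra steps aimed at the literal form, because they cannot work:
\begin{itemize}
\item The solution of $y'=\frac{C}{\varepsilon}\bigl(y+(t-k\delta)^p\bigr)$, $y(k\delta)=0$, is at least $\frac{C}{(p+1)\varepsilon}(t-k\delta)^{p+1}$. So for $\delta\le\varepsilon$ it is of order $\delta^{p+1}/\varepsilon$, whereas there $C\delta^{p+1}\varepsilon e^{C\delta/\varepsilon}\le Ce^{C}\delta^{p+1}\varepsilon$.
\item Integrating by parts only yields $\delta^{p}e^{C\delta/\varepsilon}$ or $C_p\varepsilon^{p}e^{C\delta/\varepsilon}$.
\item Your claim that the dissipative bound is ``dominated by the stated right-hand side once the constants are chosen compatibly'' is false for the same reason: it would need $\varepsilon^{-2}\lesssim 1$ when $\delta\le\varepsilon$.
\end{itemize}
The dissipative route is still worth keeping for the corrected statement. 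Since $\lambda>L_G$, it gives $\sup_t\mathbb{E}\|Z(t)\|^{2p}\le C\min\{\delta^{p},\delta^{p+1}/\varepsilon\}$. This is stronger than \eqref{errorv} and has no exponential factor.

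The genuine gap is in the second estimate. \eqref{auxiliary} is displayed with $\hat{\mathfrak u}^{\varepsilon,\nu}(t_s)$ and $\Sigma_1(\hat{\mathfrak u}^{\varepsilon,\nu})$. However, the paper's proof (and later \Cref{propuhatuerror}) actually uses the standard Khasminskii process: its drift contains $\mathcal N({\mathfrak u}^{\varepsilon,\nu}(t_s))$ and $F({\mathfrak u}^{\varepsilon,\nu}(t_s),\hat{\mathfrak v}^{\varepsilon,\nu})$, and its noise is $\Sigma_1({\mathfrak u}^{\varepsilon,\nu})$. Then
\begin{align*}
{\mathfrak u}^{\varepsilon,\nu}(t)-\hat{\mathfrak u}^{\varepsilon,\nu}(t)&=\int_0^tS_\nu(t-s)\bigl[\mathcal N({\mathfrak u}^{\varepsilon,\nu}(s))-\mathcal N({\mathfrak u}^{\varepsilon,\nu}(t_s))\\
&\qquad+F({\mathfrak u}^{\varepsilon,\nu}(s),{\mathfrak v}^{\varepsilon,\nu}(s))-F({\mathfrak u}^{\varepsilon,\nu}(t_s),\hat{\mathfrak v}^{\varepsilon,\nu}(s))\bigr]\,ds
\end{align*}
has neither the unknown difference on the right nor a stochastic integral. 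The bound on $\mathbb{E}\sup_t$ then follows pathwise from contractivity of $S_\nu$, H\"older's inequality in time, your estimate of the ``first brackets'', and the first estimate. No It\^o formula, BDG, Gr\"onwall or localisation is needed.

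With the literal definition you adopt, the term $\mathcal N({\mathfrak u}^{\varepsilon,\nu}(t_s))-\mathcal N(\hat{\mathfrak u}^{\varepsilon,\nu}(t_s))$ does not close as you claim:
\begin{itemize}
\item On $\{t\le\tau_n\}$, Gr\"onwall gives a factor growing like $e^{CTn^{\beta-1}}$.
\item The complement contributes $(\mathbb{E}\sup_t\|E(t)\|^{4p})^{1/2}\,\mathbb{P}(\tau_n\le T)^{1/2}$, which is small only as $n\to\infty$, not in $\delta$ or $\varepsilon$.
\end{itemize}
So you obtain $C(\delta^p+\cdots)e^{CTn^{\beta-1}}$ plus an $n$-dependent remainder. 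That is not $C\bigl(\delta^p+\int_0^T\mathbb{E}\|{\mathfrak v}^{\varepsilon,\nu}-\hat{\mathfrak v}^{\varepsilon,\nu}\|^{2p}ds\bigr)$ with $C$ independent of $\delta,\varepsilon$.

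Moreover, controlling $\mathbb{P}(\tau_n\le T)$ needs moments of $\sup_t\|\hat{\mathfrak u}^{\varepsilon,\nu}(t)\|_{H^1}$. \Cref{uniformestimates} does not provide these, since it concerns ${\mathfrak u}^{\varepsilon,\nu}$. They are also not routine for the literal definition: $\partial_x\hat{\mathfrak u}^{\varepsilon,\nu}(t)$ is paired with $\partial_x\mathcal N(\hat{\mathfrak u}^{\varepsilon,\nu}(t_s))$ at a lagged time, so \Cref{propinnernegative} no longer applies.

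The fix is to define $\hat{\mathfrak u}^{\varepsilon,\nu}$ with ${\mathfrak u}^{\varepsilon,\nu}(t_s)$ and $\Sigma_1({\mathfrak u}^{\varepsilon,\nu})$, as the paper's proof does. This removes the problematic term and reduces your argument to the paper's.
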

\begin{proof}
	From \eqref{viscous} and \eqref{auxiliary}, we have
	\begin{align*}
		\left\{
		\begin{aligned}
			d{\mathfrak{v}}^{\varepsilon, \nu}(t)
			&=\frac{1}{\varepsilon}\bigl[(1+i)(-\Delta)^{\rho} {\mathfrak{v}}^{\varepsilon, \nu}
			+\mathcal{N}({\mathfrak{v}}^{\varepsilon, \nu})
			-\lambda {\mathfrak{v}}^{\varepsilon, \nu}
			+G({\mathfrak{u}}^{\varepsilon, \nu},{\mathfrak{v}}^{\varepsilon, \nu})\bigr]dt
			+\frac{1}{\sqrt{\varepsilon}}
			\Sigma_2({\mathfrak{u}}^{\varepsilon, \nu},{\mathfrak{v}}^{\varepsilon, \nu})\,d\mathcal{W}_2,\\
			d\hat {\mathfrak{v}}^{\varepsilon, \nu}(t)
			&=\frac{1}{\varepsilon}\bigl[(1+i)(-\Delta)^{\rho} \hat {\mathfrak{v}}^{\varepsilon, \nu}
			+\mathcal{N}(\hat {\mathfrak{v}}^{\varepsilon, \nu})
			-\lambda \hat {\mathfrak{v}}^{\varepsilon, \nu}
			+G({\mathfrak{u}}^{\varepsilon, \nu}(k\delta),\hat {\mathfrak{v}}^{\varepsilon, \nu})\bigr]dt
			+\frac{1}{\sqrt{\varepsilon}}
			\Sigma_2({\mathfrak{u}}^{\varepsilon, \nu}(k\delta),\hat {\mathfrak{v}}^{\varepsilon, \nu})\,d\mathcal{W}_2,
		\end{aligned}
		\right.
	\end{align*}
	for a.e. $t\in [k\delta, (k+1)\delta)$.
	Therefore, ${\mathfrak{v}}^{\varepsilon, \nu}-\hat {\mathfrak{v}}^{\varepsilon, \nu}$ satisfies the following equation
	\begin{align}\label{spdeofvdifference}
		d({\mathfrak{v}}^{\varepsilon, \nu}-\hat {\mathfrak{v}}^{\varepsilon, \nu})
		&=\frac{1}{\varepsilon}\Bigl[(1+i)(-\Delta)^{\rho}
		({\mathfrak{v}}^{\varepsilon, \nu}-\hat {\mathfrak{v}}^{\varepsilon, \nu})
		+\mathcal{N}({\mathfrak{v}}^{\varepsilon, \nu})-\mathcal{N}(\hat {\mathfrak{v}}^{\varepsilon, \nu})
		-\lambda({\mathfrak{v}}^{\varepsilon, \nu}-\hat {\mathfrak{v}}^{\varepsilon, \nu})+G({\mathfrak{u}}^{\varepsilon, \nu},{\mathfrak{v}}^{\varepsilon, \nu}) \nonumber\\
		&\qquad			-G({\mathfrak{u}}^{\varepsilon, \nu}(k\delta),\hat {\mathfrak{v}}^{\varepsilon, \nu})\Bigr]dt 
		+\frac{1}{\sqrt{\varepsilon}}
		\bigl[\Sigma_2({\mathfrak{u}}^{\varepsilon, \nu},{\mathfrak{v}}^{\varepsilon, \nu})
		-\Sigma_2({\mathfrak{u}}^{\varepsilon, \nu}(k\delta),\hat {\mathfrak{v}}^{\varepsilon, \nu})\bigr]d\mathcal{W}_2,    \quad \mathbb{P} \text{-a.s.}
	\end{align}
	For $t\in [0,T]$ with $t\in [k\delta, (k+1)\delta)$, an application of the infinite-dimensional It\^o formula to the process $\|({\mathfrak{v}}^{\varepsilon, \nu}-\hat {\mathfrak{v}}^{\varepsilon, \nu})(t)\|^{2p}$ yields
	\begin{align*}
		&\|({\mathfrak{v}}^{\varepsilon, \nu}-\hat {\mathfrak{v}}^{\varepsilon, \nu})(t)\|^{2p}\\
		&=	\|{\mathfrak{v}}^{\varepsilon, \nu}(k\delta)-\hat {\mathfrak{v}}^{\varepsilon, \nu}(k\delta)\|^{2p}+\frac{2p}{\varepsilon}\int_{k\delta}^{t}
		\|({\mathfrak{v}}^{\varepsilon, \nu}-\hat {\mathfrak{v}}^{\varepsilon, \nu})(s)\|^{2p-2}
		\Big( ({\mathfrak{v}}^{\varepsilon, \nu}-\hat {\mathfrak{v}}^{\varepsilon, \nu})(s),
		(1+i)(-\Delta)^{\rho}({\mathfrak{v}}^{\varepsilon, \nu}-\hat {\mathfrak{v}}^{\varepsilon, \nu})(s) \\
		&\qquad +\mathcal{N}({\mathfrak{v}}^{\varepsilon, \nu}(s))-\mathcal{N}(\hat {\mathfrak{v}}^{\varepsilon, \nu}(s))
		-\lambda\big({\mathfrak{v}}^{\varepsilon, \nu}-\hat {\mathfrak{v}}^{\varepsilon, \nu}\big)(s)
		+G({\mathfrak{u}}^{\varepsilon, \nu}(s),{\mathfrak{v}}^{\varepsilon, \nu}(s))
		-G({\mathfrak{u}}^{\varepsilon, \nu}(k\delta),\hat {\mathfrak{v}}^{\varepsilon, \nu}(s))
		\Big) ds \\
		&\quad +\frac{p}{\varepsilon}\int_{k\delta}^{t}
		\|({\mathfrak{v}}^{\varepsilon, \nu}-\hat {\mathfrak{v}}^{\varepsilon, \nu})(s)\|^{2p-2}
		\|\Sigma_2({\mathfrak{u}}^{\varepsilon, \nu}(s),{\mathfrak{v}}^{\varepsilon, \nu}(s))
		-\Sigma_2({\mathfrak{u}}^{\varepsilon, \nu}(k\delta),\hat {\mathfrak{v}}^{\varepsilon, \nu}(s))\|^2 ds \\
		&\quad +\frac{2p(p-1)}{\varepsilon}\int_{k\delta}^{t}
		\|({\mathfrak{v}}^{\varepsilon, \nu}-\hat {\mathfrak{v}}^{\varepsilon, \nu})(s)\|^{2p-4}
		\big( \Sigma_2({\mathfrak{u}}^{\varepsilon, \nu}(s),{\mathfrak{v}}^{\varepsilon, \nu}(s))
		-\Sigma_2({\mathfrak{u}}^{\varepsilon, \nu}(k\delta),\hat {\mathfrak{v}}^{\varepsilon, \nu}(s)),
		({\mathfrak{v}}^{\varepsilon, \nu}-\hat {\mathfrak{v}}^{\varepsilon, \nu})(s)\big)^2 ds\\
		& \quad +\frac{2p}{\sqrt{\varepsilon}}\int_{k\delta}^{t}
		\|({\mathfrak{v}}^{\varepsilon, \nu}-\hat {\mathfrak{v}}^{\varepsilon, \nu})(s)\|^{2p-2}
		\big( ({\mathfrak{v}}^{\varepsilon, \nu}-\hat {\mathfrak{v}}^{\varepsilon, \nu})(s),\Sigma_2({\mathfrak{u}}^{\varepsilon, \nu}(s),{\mathfrak{v}}^{\varepsilon, \nu}(s))
		-\Sigma_2({\mathfrak{u}}^{\varepsilon, \nu}(k\delta),\hat {\mathfrak{v}}^{\varepsilon, \nu}(s))
		\big) d\mathcal{W}_2(s).
	\end{align*}
	We take mathematical expectation on both sides and perform an integration by parts to get
	\begin{align*}
		&\mathbb{E}\big[\|({\mathfrak{v}}^{\varepsilon, \nu}-\hat {\mathfrak{v}}^{\varepsilon, \nu})(t)\|^{2p}\big]\\
		&=\frac{2p}{\varepsilon}\mathbb{E}\bigg[\int_{k\delta}^{t}
		\|({\mathfrak{v}}^{\varepsilon, \nu}-\hat {\mathfrak{v}}^{\varepsilon, \nu})(s)\|^{2p-2}
		\Big( ({\mathfrak{v}}^{\varepsilon, \nu}-\hat {\mathfrak{v}}^{\varepsilon, \nu})(s),
		(1+i)(-\Delta)^{\rho}({\mathfrak{v}}^{\varepsilon, \nu}-\hat {\mathfrak{v}}^{\varepsilon, \nu})(s) \\
		&\qquad+\mathcal{N}({\mathfrak{v}}^{\varepsilon, \nu}(s))-\mathcal{N}(\hat {\mathfrak{v}}^{\varepsilon, \nu}(s))
		-\lambda\big({\mathfrak{v}}^{\varepsilon, \nu}-\hat {\mathfrak{v}}^{\varepsilon, \nu}\big)(s)
		+G({\mathfrak{u}}^{\varepsilon, \nu}(s),{\mathfrak{v}}^{\varepsilon, \nu}(s))
		-G({\mathfrak{u}}^{\varepsilon, \nu}(k\delta),\hat {\mathfrak{v}}^{\varepsilon, \nu}(s))
		\Big)ds\bigg] \\
		&\quad +\frac{p}{\varepsilon}\mathbb{E}\bigg[\int_{k\delta}^{t}
		\|({\mathfrak{v}}^{\varepsilon, \nu}-\hat {\mathfrak{v}}^{\varepsilon, \nu})(s)\|^{2p-2}
		\|\Sigma_2({\mathfrak{u}}^{\varepsilon, \nu}(s),{\mathfrak{v}}^{\varepsilon, \nu}(s))
		-\Sigma_2({\mathfrak{u}}^{\varepsilon, \nu}(k\delta),\hat {\mathfrak{v}}^{\varepsilon, \nu}(s))\|^2 ds \bigg]\\
		&\quad +\frac{2p(p-1)}{\varepsilon}\mathbb{E}\bigg[\int_{k\delta}^{t}
		\|({\mathfrak{v}}^{\varepsilon, \nu}-\hat {\mathfrak{v}}^{\varepsilon, \nu})(s)\|^{2p-4}
		\big( \Sigma_2({\mathfrak{u}}^{\varepsilon, \nu}(s),{\mathfrak{v}}^{\varepsilon, \nu}(s))\\
		&\quad\quad-\Sigma_2({\mathfrak{u}}^{\varepsilon, \nu}(k\delta),\hat {\mathfrak{v}}^{\varepsilon, \nu}(s)),
		({\mathfrak{v}}^{\varepsilon, \nu}-\hat {\mathfrak{v}}^{\varepsilon, \nu})(s)\big)^2 ds\bigg]\\
		&   =\frac{2p}{\varepsilon}\mathbb{E}\bigg[\int_{k\delta}^{t}
		\|({\mathfrak{v}}^{\varepsilon, \nu}-\hat {\mathfrak{v}}^{\varepsilon, \nu})(s)\|^{2p-2}\big(-\|(-\Delta)^{\frac{\rho}{2}}({\mathfrak{v}}^{\varepsilon, \nu}-\hat {\mathfrak{v}}^{\varepsilon, \nu})(s)\|^2-\lambda\|({\mathfrak{v}}^{\varepsilon, \nu}-\hat {\mathfrak{v}}^{\varepsilon, \nu})(s)\|^2\big)\bigg]\\
		&\quad+\frac{2p}{\varepsilon}\mathbb{E}\bigg[\int_{k\delta}^{t}
		\|({\mathfrak{v}}^{\varepsilon, \nu}-\hat {\mathfrak{v}}^{\varepsilon, \nu})(s)\|^{2p-2}
		\Big( ({\mathfrak{v}}^{\varepsilon, \nu}-\hat {\mathfrak{v}}^{\varepsilon, \nu})(s),\mathcal{N}({\mathfrak{v}}^{\varepsilon, \nu}(s))-\mathcal{N}(\hat {\mathfrak{v}}^{\varepsilon, \nu}(s))
		+G({\mathfrak{u}}^{\varepsilon, \nu}(s),{\mathfrak{v}}^{\varepsilon, \nu}(s))\\
		&\quad\quad -G({\mathfrak{u}}^{\varepsilon, \nu}(k\delta),\hat {\mathfrak{v}}^{\varepsilon, \nu}(s))
		\Big)ds \bigg]\\
		&\quad +\frac{p}{\varepsilon}\mathbb{E}\bigg[\int_{k\delta}^{t}
		\|({\mathfrak{v}}^{\varepsilon, \nu}-\hat {\mathfrak{v}}^{\varepsilon, \nu})(s)\|^{2p-2}
		\|\Sigma_2({\mathfrak{u}}^{\varepsilon, \nu}(s),{\mathfrak{v}}^{\varepsilon, \nu}(s))
		-\Sigma_2({\mathfrak{u}}^{\varepsilon, \nu}(k\delta),\hat {\mathfrak{v}}^{\varepsilon, \nu}(s))\|^2 ds \bigg]\\
		&\quad +\frac{2p(p-1)}{\varepsilon}\mathbb{E}\bigg[\int_{k\delta}^{t}
		\|({\mathfrak{v}}^{\varepsilon, \nu}-\hat {\mathfrak{v}}^{\varepsilon, \nu})(s)\|^{2p-4}
		\big( \Sigma_2({\mathfrak{u}}^{\varepsilon, \nu}(s),{\mathfrak{v}}^{\varepsilon, \nu}(s))\\
		&\quad\quad-\Sigma_2({\mathfrak{u}}^{\varepsilon, \nu}(k\delta),\hat {\mathfrak{v}}^{\varepsilon, \nu}(s)),
		({\mathfrak{v}}^{\varepsilon, \nu}-\hat {\mathfrak{v}}^{\varepsilon, \nu})(s)\big)^2 ds\bigg],
	\end{align*}
	for all $t\in [0,T]$ with $t\in [k\delta, (k+1)\delta)$. Differentiating with respect to time `$t$', we obtain
	\begin{align*}
		&\frac{d}{dt}\mathbb{E}\Big[\|({\mathfrak{v}}^{\varepsilon, \nu}-\hat {\mathfrak{v}}^{\varepsilon, \nu})(t)\|^{2p}\Big]\\
		&=-\frac{2p}{\varepsilon}\mathbb{E}\Big[
		\|({\mathfrak{v}}^{\varepsilon, \nu}-\hat {\mathfrak{v}}^{\varepsilon, \nu})(t)\|^{2p-2}
		\|(-\Delta)^{\frac{\rho}{2}}({\mathfrak{v}}^{\varepsilon, \nu}-\hat {\mathfrak{v}}^{\varepsilon, \nu})(t)\|^2\Big] -\frac{2p\lambda}{\varepsilon}\mathbb{E}
	\Big[	\|({\mathfrak{v}}^{\varepsilon, \nu}-\hat {\mathfrak{v}}^{\varepsilon, \nu})(t)\|^{2p}\Big]\\
		&\quad +\frac{2p}{\varepsilon}\mathbb{E}\Big[
		\|({\mathfrak{v}}^{\varepsilon, \nu}-\hat {\mathfrak{v}}^{\varepsilon, \nu})(t)\|^{2p-2}
		\big(\mathcal{N}({\mathfrak{v}}^{\varepsilon, \nu}(t))-\mathcal{N}(\hat {\mathfrak{v}}^{\varepsilon, \nu}(t)),
		({\mathfrak{v}}^{\varepsilon, \nu}-\hat {\mathfrak{v}}^{\varepsilon, \nu})(t)\big)\Big] \\
		&\quad +\frac{2p}{\varepsilon}\mathbb{E}\Big[
		\|({\mathfrak{v}}^{\varepsilon, \nu}-\hat {\mathfrak{v}}^{\varepsilon, \nu})(t)\|^{2p-2}
		\big(({\mathfrak{v}}^{\varepsilon, \nu}-\hat {\mathfrak{v}}^{\varepsilon, \nu})(t),
		G({\mathfrak{u}}^{\varepsilon, \nu}(t),{\mathfrak{v}}^{\varepsilon, \nu}(t))
		-G({\mathfrak{u}}^{\varepsilon, \nu}(k\delta),\hat {\mathfrak{v}}^{\varepsilon, \nu}(t))\big)\Big] \\
		&\quad +\frac{p}{\varepsilon}\mathbb{E}\Big[
		\|({\mathfrak{v}}^{\varepsilon, \nu}-\hat {\mathfrak{v}}^{\varepsilon, \nu})(t)\|^{2p-2}
		\|\Sigma_2({\mathfrak{u}}^{\varepsilon, \nu}(t),{\mathfrak{v}}^{\varepsilon, \nu}(t))
		-\Sigma_2({\mathfrak{u}}^{\varepsilon, \nu}(k\delta),\hat {\mathfrak{v}}^{\varepsilon, \nu}(t))\|^2\Big] \\
		&\quad +\frac{2p(p-1)}{\varepsilon}\mathbb{E}\Big[
		\|({\mathfrak{v}}^{\varepsilon, \nu}-\hat {\mathfrak{v}}^{\varepsilon, \nu})(t)\|^{2p-4}
		\big( \Sigma_2({\mathfrak{u}}^{\varepsilon, \nu}(t),{\mathfrak{v}}^{\varepsilon, \nu}(t))
		-\Sigma_2({\mathfrak{u}}^{\varepsilon, \nu}(k\delta),\hat {\mathfrak{v}}^{\varepsilon, \nu}(t)),
		({\mathfrak{v}}^{\varepsilon, \nu}-\hat {\mathfrak{v}}^{\varepsilon, \nu})(t)\big)^2\Big],
	\end{align*}
	and a consequence of \Cref{diffnonrenega} implies
	\begin{align*}
		&\frac{d}{dt}\mathbb{E}\Big[\|({\mathfrak{v}}^{\varepsilon, \nu}-\hat {\mathfrak{v}}^{\varepsilon, \nu})(t)\|^{2p}\Big]\\
		&      \leq -\frac{2p\lambda}{\varepsilon}\mathbb{E}\Big[
		\|({\mathfrak{v}}^{\varepsilon, \nu}-\hat {\mathfrak{v}}^{\varepsilon, \nu})(t)\|^{2p}\Big]+\frac{2p}{\varepsilon}\mathbb{E}\Big[
		\|({\mathfrak{v}}^{\varepsilon, \nu}-\hat {\mathfrak{v}}^{\varepsilon, \nu})(t)\|^{2p-1} \|G({\mathfrak{u}}^{\varepsilon, \nu}(t),{\mathfrak{v}}^{\varepsilon, \nu}(t))
		-G({\mathfrak{u}}^{\varepsilon, \nu}(k\delta),\hat {\mathfrak{v}}^{\varepsilon, \nu}(t))\| \Big]\\
		&\quad +\frac{p}{\varepsilon}\mathbb{E}\Big[
		\|({\mathfrak{v}}^{\varepsilon, \nu}-\hat {\mathfrak{v}}^{\varepsilon, \nu})(t)\|^{2p-2}
		\|\Sigma_2({\mathfrak{u}}^{\varepsilon, \nu}(t),{\mathfrak{v}}^{\varepsilon, \nu}(t))
		-\Sigma_2({\mathfrak{u}}^{\varepsilon, \nu}(k\delta),\hat {\mathfrak{v}}^{\varepsilon, \nu}(t))\|^2 \Big]\\
		&\quad +\frac{2p(p-1)}{\varepsilon}\mathbb{E}\Big[
		\|({\mathfrak{v}}^{\varepsilon, \nu}-\hat {\mathfrak{v}}^{\varepsilon, \nu})(t)\|^{2p-2}	\|\Sigma_2({\mathfrak{u}}^{\varepsilon, \nu}(t),{\mathfrak{v}}^{\varepsilon, \nu}(t))
		-\Sigma_2({\mathfrak{u}}^{\varepsilon, \nu}(k\delta),\hat {\mathfrak{v}}^{\varepsilon, \nu}(t))\|^2\Big],
	\end{align*}
	for a.e. $t\in [0,T]$ with $t\in [k\delta, (k+1)\delta)$. Now, for each term, we use the assumptions in \Cref{main assumptions} and the Young inequality with the condition $\frac{2p-1}{2p}+\frac{1}{2p}=1$ to estimate
	\begin{align*}
		&	\|{\mathfrak{v}}^{\varepsilon, \nu}-\hat {\mathfrak{v}}^{\varepsilon, \nu}\|^{2p-1} \|G({\mathfrak{u}}^{\varepsilon, \nu},{\mathfrak{v}}^{\varepsilon, \nu})
		-G({\mathfrak{u}}^{\varepsilon, \nu}(k\delta),\hat {\mathfrak{v}}^{\varepsilon, \nu})\| \\
		&\leq \|{\mathfrak{v}}^{\varepsilon, \nu}-\hat {\mathfrak{v}}^{\varepsilon, \nu}\|^{2p-1} L_G \big\{\|{\mathfrak{u}}^{\varepsilon, \nu}-{\mathfrak{u}}^{\varepsilon, \nu}(k\delta)\|+\|{\mathfrak{v}}^{\varepsilon, \nu}-\hat {\mathfrak{v}}^{\varepsilon, \nu}\| \big\}\\
		&\leq C\big\{\|{\mathfrak{u}}^{\varepsilon, \nu}-{\mathfrak{u}}^{\varepsilon, \nu}(k\delta)\|^{2p} +\|{\mathfrak{v}}^{\varepsilon, \nu}-\hat {\mathfrak{v}}^{\varepsilon, \nu}\|^{2p} \big\},
	\end{align*}
	and
	\begin{align*}
		\|{\mathfrak{v}}^{\varepsilon, \nu}-\hat {\mathfrak{v}}^{\varepsilon, \nu}\|^{2p-2}
		\|\Sigma_2({\mathfrak{u}}^{\varepsilon, \nu},{\mathfrak{v}}^{\varepsilon, \nu})
		-\Sigma_2({\mathfrak{u}}^{\varepsilon, \nu}(k\delta),\hat {\mathfrak{v}}^{\varepsilon, \nu})\|^2 &\leq \|{\mathfrak{v}}^{\varepsilon, \nu}-\hat {\mathfrak{v}}^{\varepsilon, \nu}\|^{2p-2}
		L_{\Sigma_2}^2 \|{\mathfrak{u}}^{\varepsilon, \nu}-{\mathfrak{u}}^{\varepsilon, \nu}(k\delta)\|^2\\
		&\leq C\big\{\|{\mathfrak{u}}^{\varepsilon, \nu}-{\mathfrak{u}}^{\varepsilon, \nu}(k\delta)\|^{2p} +\|{\mathfrak{v}}^{\varepsilon, \nu}-\hat {\mathfrak{v}}^{\varepsilon, \nu}\|^{2p} \big\}.
	\end{align*}
	Therefore, using the H\"older continuity in time (\Cref{holderctsintime}), we have 
	\begin{align*}
		\frac{d}{dt}\mathbb{E}\Big[\|({\mathfrak{v}}^{\varepsilon, \nu}-\hat {\mathfrak{v}}^{\varepsilon, \nu})(t)\|^{2p}\Big]
		&\leq \frac{C}{\varepsilon}\Big\{\mathbb{E}\Big[\|{\mathfrak{u}}^{\varepsilon, \nu}-{\mathfrak{u}}^{\varepsilon, \nu}(k\delta)\|^{2p} \Big]+\mathbb{E}\Big[\|({\mathfrak{v}}^{\varepsilon, \nu}-\hat {\mathfrak{v}}^{\varepsilon, \nu})(t)\|^{2p} \Big]\Big\}\\
		&\leq \frac{C}{\varepsilon}\Big\{\mathbb{E}\Big[\|({\mathfrak{v}}^{\varepsilon, \nu}-\hat {\mathfrak{v}}^{\varepsilon, \nu})(t)\|^{2p}\Big] +\delta^p\Big\},
	\end{align*}
	for a.e. $t\in [0,T]$ with $t\in [k\delta, (k+1)\delta)$. Using the variation of constants formula, we derive
	\begin{align*}
		\mathbb{E}\Big[\|({\mathfrak{v}}^{\varepsilon, \nu}-\hat {\mathfrak{v}}^{\varepsilon, \nu})(t)\|^{2p}\Big]\leq \frac{C}{\varepsilon} \delta^p \int_{k\delta}^{t} e^{\frac{C}{\varepsilon}(t-\tau)} d \tau \leq \frac{C}{\varepsilon} \delta^{p+1} e^{\frac{C}{\varepsilon}\delta},
	\end{align*}
	for all $t\in [0,T]$ with $t\in [k\delta, (k+1)\delta)$.
	Therefore, we conclude
\begin{align}\label{errorv}
	\sup_{0 \le t \le T}\mathbb{E}\Big[\|({\mathfrak{v}}^{\varepsilon, \nu}-\hat {\mathfrak{v}}^{\varepsilon, \nu})(t)\|^{2p}\Big]\leq \frac{C}{\varepsilon} \delta^{p+1} e^{\frac{C}{\varepsilon}\delta}.
\end{align}
Now, we estimate the error of ${\mathfrak{u}}^{\varepsilon, \nu}-\hat {\mathfrak{u}}^{\varepsilon, \nu}$. From \eqref{viscous} and \eqref{auxiliary}, we have 
	\begin{align*}
		\begin{cases}
			d {\mathfrak{u}}^{\varepsilon, \nu}
			= \big[A_\nu {\mathfrak{u}}^{\varepsilon, \nu}+\mathcal{N}({\mathfrak{u}}^{\varepsilon, \nu})
			+ F({\mathfrak{u}}^{\varepsilon, \nu},{\mathfrak{v}}^{\varepsilon, \nu})\big]\,dt
			+ \Sigma_1({\mathfrak{u}}^{\varepsilon, \nu})\,d\mathcal{W}_1, \\[1mm]
			d\hat {\mathfrak{u}}^{\varepsilon, \nu}
			= \big[A_\nu \hat {\mathfrak{u}}^{\varepsilon, \nu}
			+ \mathcal{N}({\mathfrak{u}}^{\varepsilon, \nu}(t_s))
			+ F({\mathfrak{u}}^{\varepsilon, \nu}(t_s),\hat {\mathfrak{v}}^{\varepsilon, \nu})\big]\,dt
			+ \Sigma_1({\mathfrak{u}}^{\varepsilon, \nu})\,d\mathcal{W}_1,
		\end{cases}
	\end{align*}
    where $t_s= [\frac{s}{\delta}]\delta$ denotes the nearest break point preceding 
     $s$. Therefore, ${\mathfrak{u}}^{\varepsilon, \nu}-\hat {\mathfrak{u}}^{\varepsilon, \nu}$ satisfies the equation
	\begin{align*}
		{\mathfrak{u}}^{\varepsilon, \nu}(t)-\hat {\mathfrak{u}}^{\varepsilon, \nu}(t)
		= \int_0^t S_\nu(t-s)
		\Big[ \mathcal{N}({\mathfrak{u}}^{\varepsilon, \nu}(s))-\mathcal{N}({\mathfrak{u}}^{\varepsilon, \nu}(t_s))
		+ F({\mathfrak{u}}^{\varepsilon, \nu}(s),{\mathfrak{v}}^{\varepsilon, \nu}(s))
		- F({\mathfrak{u}}^{\varepsilon, \nu}(t_s),\hat {\mathfrak{v}}^{\varepsilon, \nu}(s))
		\Big]\,ds .
	\end{align*}
	Using \Cref{liptypeofnonlinear}, boundedness of the semigroup, Sobolev embedding $H^1(\mathbb{T})\subset L^\infty(\mathbb{T}) $ and the H\"older inequality with $\frac{1}{4p}+\frac{1}{4p}+\frac{2p-1}{2p}=1$, we estimate
	\begin{align*}
		&\left\|
		\int_0^t S_\nu(t-s)
		\big[\mathcal{N}({\mathfrak{u}}^{\varepsilon, \nu}(s))-\mathcal{N}({\mathfrak{u}}^{\varepsilon, \nu}(t_s))\big]\,ds
		\right\|^{2p}\le C\left(\int_0^t
		\big\| S_\nu(t-s)
		\big[\mathcal{N}({\mathfrak{u}}^{\varepsilon, \nu}(s))-\mathcal{N}({\mathfrak{u}}^{\varepsilon, \nu}(t_s))\big]
		\big\|\,ds
		\right)^{2p} \\
		&\le C\left(
		\int_0^t
		\big\|\mathcal{N}({\mathfrak{u}}^{\varepsilon, \nu}(s))-\mathcal{N}({\mathfrak{u}}^{\varepsilon, \nu}(t_s))\big\|\,ds
		\right)^{2p} \\
		&\leq C\left(\int_0^t	\big\|\left(|{\mathfrak{u}}^{\varepsilon, \nu}(s)|^{\beta -1}+|{\mathfrak{u}}^{\varepsilon, \nu}(t_s)|^{\beta -1}\right)|{\mathfrak{u}}^{\varepsilon, \nu}(s)-{\mathfrak{u}}^{\varepsilon, \nu}(t_s)|\big\|\,ds
		\right)^{2p} \\
		&\leq C\left[\int_0^t \left(\|{\mathfrak{u}}^{\varepsilon, \nu}(s)\|_{L^\infty}^{\beta -1}+\|{\mathfrak{u}}^{\varepsilon, \nu}(t_s)\|_{L^\infty}^{\beta -1}\right)
		\big\|{\mathfrak{u}}^{\varepsilon, \nu}(s)-{\mathfrak{u}}^{\varepsilon, \nu}(t_s)\big\|\,ds
		\right]^{2p} \\
		&\leq C\left[\int_0^t \left(\|{\mathfrak{u}}^{\varepsilon, \nu}(s)\|_{H^1}^{\beta -1}+\|{\mathfrak{u}}^{\varepsilon, \nu}(t_s)\|_{H^1}^{\beta -1}\right)    \big\|{\mathfrak{u}}^{\varepsilon, \nu}(s)-{\mathfrak{u}}^{\varepsilon, \nu}(t_s)\big\|\,ds
		\right]^{2p} \\ 
		&\le C\left(\int_0^t 1\,ds\right)^{2p-1}
		\left(\int_0^t	\|{\mathfrak{u}}^{\varepsilon, \nu}(s)-{\mathfrak{u}}^{\varepsilon, \nu}(t_s)\|^{4p}\,ds
		\right)^{\frac12}	\left(\int_0^t
		\big(\|{\mathfrak{u}}^{\varepsilon, \nu}(s)\|_{H^1}^{\beta -1}
		+ \|{\mathfrak{u}}^{\varepsilon, \nu}(t_s)\|_{H^1}^{\beta -1}\big)^{4p}
		\,ds\right)^{\frac12}.
	\end{align*}
	Using the Jensen inequality, \Cref{holderctsintime} and \Cref{uniformestimates}, we obtain
	\begin{align}\label{liptypeoff}
		&\mathbb{E}\left[\sup_{0\le t\le T}
		\left\|
		\int_0^t S_\nu(t-s)
		\big[\mathcal{N}({\mathfrak{u}}^{\varepsilon, \nu}(s))-\mathcal{N}({\mathfrak{u}}^{\varepsilon, \nu}(t_s))\big]\,ds
		\right\|^{2p}\right]\leq C \mathbb{E}
		\left[\int_0^T
		\|\mathcal{N}({\mathfrak{u}}^{\varepsilon, \nu}(s))-\mathcal{N}({\mathfrak{u}}^{\varepsilon, \nu}(t_s))\|\,ds
		\right]^{2p} \nonumber \\
		&\le C \mathbb{E}
		\left[\left(\int_0^T
		\|{\mathfrak{u}}^{\varepsilon, \nu}(s)-{\mathfrak{u}}^{\varepsilon, \nu}(t_s)\|^{4p}\,ds
		\right)^{\frac12} 
		\left(\int_0^T
		\big(\|{\mathfrak{u}}^{\varepsilon, \nu}(s)\|_{H^1}^{\beta -1}
		+ \|{\mathfrak{u}}^{\varepsilon, \nu}(t_s)\|_{H^1}^{\beta -1}\big)^{4p}
		\,ds\right)^{\frac12}\right]\nonumber \\
		&\le C
		\left(\mathbb{E}\bigg[\int_0^T
		\|{\mathfrak{u}}^{\varepsilon, \nu}(s)-{\mathfrak{u}}^{\varepsilon, \nu}(t_s)\|^{4p}\,ds\bigg]
		\right)^{\frac12}
		\left(\mathbb{E}\bigg[\int_0^T
		\big(\|{\mathfrak{u}}^{\varepsilon, \nu}(s)\|_{H^1}^{\beta -1}
		+ \|{\mathfrak{u}}^{\varepsilon, \nu}(t_s)\|_{H^1}^{\beta -1}\big)^{4p}
		\,ds\bigg]\right)^{\frac12}\nonumber \\
		&\le C
		\left(\mathbb{E}\bigg[\int_0^T
		\|{\mathfrak{u}}^{\varepsilon, \nu}(s)-{\mathfrak{u}}^{\varepsilon, \nu}(t_s)\|^{4p}\,ds\bigg]
		\right)^{\frac12}
		\left(\int_0^T \mathbb{E}
			\Big[\|{\mathfrak{u}}^{\varepsilon, \nu}(s)\|_{H^1}^{2p}
			+ \|{\mathfrak{u}}^{\varepsilon, \nu}(t_s)\|_{H^1}^{2p}\Big]^{2(\beta -1)}
			\,ds\right)^{\frac12}\nonumber \\
		&\leq C \left(\int_0^T	\delta^{2p}\,ds	\right)^{\frac12}\leq C \delta^{p}.
	\end{align}
	Boundedness of the semigroup, the H\"older inequality in time with $\frac{1}{2p}+\frac{2p-1}{2p}=1$ and \eqref{errorv} imply
	\begin{align*}
		&\mathbb{E}\left[\sup_{0\le t\le T}
		\left\|
		\int_0^t S_\nu(t-s)
		\big[F({\mathfrak{u}}^{\varepsilon, \nu}(s),{\mathfrak{v}}^{\varepsilon, \nu}(s))
		- F({\mathfrak{u}}^{\varepsilon, \nu}(t_s),\hat {\mathfrak{v}}^{\varepsilon, \nu}(s))\big]\,ds
		\right\|^{2p}\right]\\
		&\le C\mathbb{E}\left[\sup_{0\le t\le T}
		\left(\int_0^t
		\|F({\mathfrak{u}}^{\varepsilon, \nu}(s),{\mathfrak{v}}^{\varepsilon, \nu}(s))
		- F({\mathfrak{u}}^{\varepsilon, \nu}(t_s),\hat {\mathfrak{v}}^{\varepsilon, \nu}(s))\|\,ds\right)^{2p}\right] \\
		&\le C\mathbb{E}\left[\sup_{0\le t\le T}
		\int_0^t
		\|F({\mathfrak{u}}^{\varepsilon, \nu}(s),{\mathfrak{v}}^{\varepsilon, \nu}(s))
		- F({\mathfrak{u}}^{\varepsilon, \nu}(t_s),\hat {\mathfrak{v}}^{\varepsilon, \nu}(s))\|^{2p}\,ds\right] \\
		&\le C\mathbb{E}\bigg[\int_0^T
		\big(\|{\mathfrak{u}}^{\varepsilon, \nu}(s)-{\mathfrak{u}}^{\varepsilon, \nu}(t_s)\|^{2p}
		+ \|{\mathfrak{v}}^{\varepsilon, \nu}(s)-\hat {\mathfrak{v}}^{\varepsilon, \nu}(s)\|^{2p}\big)\,ds\bigg] \\
		&\le C\left(
		\delta^{p}+\frac{\delta^{p+1}}{\varepsilon}e^{\frac{C}{\varepsilon}\delta}
		\right).
	\end{align*}
	Thus, we conclude 
	\begin{align}\label{uhatuerror}
		\mathbb{E}\left[\sup_{0\le t\le T}
		\|{\mathfrak{u}}^{\varepsilon, \nu}(t)-\hat {\mathfrak{u}}^{\varepsilon, \nu}(t)\|^{2p}\right]
		\le C\left(
		\delta^{p}
		+\frac{\delta^{p+1}}{\varepsilon}e^{\frac{C}{\varepsilon}\delta}
		\right),
	\end{align}
	which completes the proof.
\end{proof}
\subsection{The error of $\hat{\mathfrak{u}}^{\varepsilon,\nu}-\bar{\mathfrak{u}}^\nu$}\label{subsecerrorviscous}
We aim to prove the strong convergence of the auxiliary process $\hat{\mathfrak{u}}^{\varepsilon,\nu}$ to the averaged solution process $\bar{\mathfrak{u}}^\nu$. For that we establish the following proposition:
\begin{prop}\label{propuhatuerror}
	For any $p\ge1$, there exists a positive constant $C$ depending on $\nu,p,T$ such that
	\begin{align*}
		\mathbb{E}\left[
		\sup_{0\le t\le T}
		\|\hat {\mathfrak{u}}^{\varepsilon, \nu}(t)-\bar{\mathfrak{u}}^\nu(t)\|^{2p}
		\right]
		&\le C\Bigg(
		\sqrt{\frac{\delta}{\varepsilon}}
		+ \delta^{p}
		+ \delta^{2p-1}
		+ \frac{\delta^{p+1}}{\varepsilon}e^{\frac{C}{\varepsilon}\delta}+\left(\frac{\delta^{2p+1}}{\varepsilon}e^{\frac{C}{\varepsilon}\delta}\right)^{\frac{1}{2}}
		\Bigg) e^{Cn^{2p(\beta-1)}}
		+ \frac{C}{\sqrt{n}} .
	\end{align*}
\end{prop}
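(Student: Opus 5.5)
We will work with the mild formulation of the difference $\hat{\mathfrak{u}}^{\varepsilon,\nu}-\bar{\mathfrak{u}}^\nu$ and localize by a stopping time, because the nonlinearity $\mathcal{N}$ is only locally Lipschitz (\Cref{liptypeofnonlinear}). The parameter $n$ in the statement is the localization level. Fix $n\ge1$ and set
\begin{align*}
\tau_n:=\inf\Big\{t\in[0,T]:\ \|\hat{\mathfrak{u}}^{\varepsilon,\nu}(t)\|_{H^1}+\|\bar{\mathfrak{u}}^\nu(t)\|_{H^1}+\|{\mathfrak{u}}^{\varepsilon,\nu}(t)\|_{H^1}>n\Big\}\wedge T.
\end{align*}
We split the expectation into the events $\{\tau_n=T\}$ and $\{\tau_n<T\}$.

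\textbf{The event $\{\tau_n<T\}$.} We bound this contribution by Cauchy--Schwarz and the Chebyshev inequality. This uses the uniform $H^1$ moments of \Cref{uniformestimates} and \Cref{viscousaveragingsolution}, together with the analogous bound for $\hat{\mathfrak{u}}^{\varepsilon,\nu}$, which is obtained as in \Cref{uniformestimates}. The result is
\begin{align*}
\mathbb{E}\big[\sup_t\|\hat{\mathfrak{u}}^{\varepsilon,\nu}-\bar{\mathfrak{u}}^\nu\|^{2p}\mathbf{1}_{\{\tau_n<T\}}\big]\le C\,\mathbb{P}(\tau_n<T)^{1/2}\le C/\sqrt{n},
\end{align*}
which is the $C/\sqrt n$ term.

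\textbf{The event $\{\tau_n=T\}$.} For $t\le\tau_n$ we write
\begin{align*}
\hat{\mathfrak{u}}^{\varepsilon,\nu}(t)-\bar{\mathfrak{u}}^\nu(t)=I_1+I_2+I_3+I_4.
\end{align*}
The four terms are as follows.
\begin{itemize}
\item $I_1=\int_0^tS_\nu(t-s)[\mathcal{N}(\hat{\mathfrak{u}}^{\varepsilon,\nu}(t_s))-\mathcal{N}(\bar{\mathfrak{u}}^\nu(s))]\,ds$.
\item $I_2=\int_0^tS_\nu(t-s)[F(\hat{\mathfrak{u}}^{\varepsilon,\nu}(t_s),\hat{\mathfrak{v}}^{\varepsilon,\nu}(s))-F({\mathfrak{u}}^{\varepsilon,\nu}(t_s),\hat{\mathfrak{v}}^{\varepsilon,\nu}(s))]\,ds$.
\item $I_3=\int_0^tS_\nu(t-s)[F({\mathfrak{u}}^{\varepsilon,\nu}(t_s),\hat{\mathfrak{v}}^{\varepsilon,\nu}(s))-\bar F({\mathfrak{u}}^{\varepsilon,\nu}(t_s))]\,ds$.
\item $I_4$ collects the rest: $\bar F({\mathfrak{u}}^{\varepsilon,\nu}(t_s))-\bar F(\bar{\mathfrak{u}}^\nu(s))$ and the stochastic integral of $\Sigma_1(\hat{\mathfrak{u}}^{\varepsilon,\nu})-\Sigma_1(\bar{\mathfrak{u}}^\nu)$.
\end{itemize}

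For $I_1$, \Cref{liptypeofnonlinear} and $H^1\subset L^\infty$ give a Lipschitz constant $Cn^{\beta-1}$ before $\tau_n$. We insert $\hat{\mathfrak{u}}^{\varepsilon,\nu}(s)$ and use the H\"older continuity of $\hat{\mathfrak{u}}^{\varepsilon,\nu}$. That continuity comes from \Cref{holderctsintime} applied to ${\mathfrak{u}}^{\varepsilon,\nu}$, combined with \Cref{uuhatvvhaterror}. The outcome is a bound of the form $Cn^{2p(\beta-1)}\int_0^t\mathbb{E}\sup_{r\le s\wedge\tau_n}\|\hat{\mathfrak{u}}^{\varepsilon,\nu}-\bar{\mathfrak{u}}^\nu\|^{2p}\,ds$ plus error terms of order $\delta^p$ and $\delta^{2p-1}$. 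The $\delta^{2p-1}$ term comes from the H\"older inequality in time.

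$I_2$ and the $\bar F$ part of $I_4$ are handled by Lipschitz continuity (Assumption 1 and \Cref{lipforfbar}), by \Cref{holderctsintime}, and by the estimate \eqref{uhatuerror}. This produces the terms $\delta^p+\frac{\delta^{p+1}}{\varepsilon}e^{C\delta/\varepsilon}$. The $\Sigma_1$ stochastic integral is treated by the Burkholder--Davis--Gundy inequality (\Cref{BurkholderDavisGundy inequality}), which yields a Gronwall-type integral term. Collecting everything and applying the Gr\"onwall inequality (\Cref{Gronwalls inequality}) produces the factor $e^{Cn^{2p(\beta-1)}}$ in front of the error terms.

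\textbf{The averaging term $I_3$: the main obstacle.} This is the Khasminskii step. We split $[0,t]$ into blocks $[k\delta,(k+1)\delta)$, and use boundedness of $S_\nu$, so that the problem reduces to bounding
\begin{align*}
\mathbb{E}\Big\|\int_{k\delta}^{(k+1)\delta}S_\nu((k+1)\delta-s)\big[F({\mathfrak{u}}^{\varepsilon,\nu}(k\delta),\hat{\mathfrak{v}}^{\varepsilon,\nu}(s))-\bar F({\mathfrak{u}}^{\varepsilon,\nu}(k\delta))\big]ds\Big\|^2 .
\end{align*}
Replacing $S_\nu(t-s)$ by a block-frozen semigroup costs a factor $\delta^{1/2}$, using \eqref{smoothness} and the $H^1$ bound on $F$ from Assumption 1. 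The remaining piece is a correction of the form $(\delta^{2p+1}e^{C\delta/\varepsilon}/\varepsilon)^{1/2}$.

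Inside a block we rescale time by $s=k\delta+\varepsilon r$. Because of the independence and Markov structure, $\hat{\mathfrak{v}}^{\varepsilon,\nu}(k\delta+\varepsilon r)$ then has the law of the frozen process ${\mathfrak{v}}^{{\mathfrak{u}},X}(r)$ with ${\mathfrak{u}}={\mathfrak{u}}^{\varepsilon,\nu}(k\delta)$ and $X={\mathfrak{v}}^{\varepsilon,\nu}(k\delta)$. Expanding the square gives a double integral over $0\le r_1<r_2\le\delta/\varepsilon$. We condition at $r_1$ and apply the mixing estimate \eqref{iii} of \Cref{propertiesofivpsolution}, together with the moment bounds \eqref{ia}. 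Each block then contributes $C\varepsilon^2\int\!\!\int e^{-\lambda(r_2-r_1)/2}\,dr_1\,dr_2\le C\varepsilon\delta$.

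Summing over the $T/\delta$ blocks by the Cauchy--Schwarz inequality gives a total of order $\sqrt{\varepsilon/\delta}\cdot C$ in $L^2$. Interpolating to the $2p$-th moment via the uniform moment bounds, this should give the stated $\sqrt{\delta/\varepsilon}$-type term. I am not fully certain of the exact power of $\delta/\varepsilon$ here; this bookkeeping, and passing from $L^2$ to $L^{2p}$ with the supremum in time, is the delicate point of the proof. Once $I_3$ is controlled, we insert it into the Gr\"onwall argument above and add the $C/\sqrt n$ contribution from $\{\tau_n<T\}$, which concludes the proof.
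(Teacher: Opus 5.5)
Your route is essentially the paper's. Both use the mild formulation of $\hat{\mathfrak{u}}^{\varepsilon,\nu}-\bar{\mathfrak{u}}^{\nu}$, a stopping time whose complement costs $C/\sqrt{n}$ by Cauchy--Schwarz and Chebyshev, Khasminskii blocks in which $\hat{\mathfrak{v}}^{\varepsilon,\nu}(k\delta+\varepsilon r)$ is identified in law with the frozen process so that \eqref{iii} gives decaying correlations, and Gr\"onwall producing $e^{Cn^{2p(\beta-1)}}$.

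On the point you were unsure about, trust your computation rather than the statement:
\begin{itemize}
\item Contractivity of $S_\nu$ removes the supremum in time before squaring, and your block count gives $\mathbb{E}\sup_t\|I_3\|^2\le C\varepsilon/\delta$. The incomplete last block $[m_t\delta,t]$ (the paper's $J_{13}$) is handled separately and is where $\delta^{2p-1}$ actually comes from.
\item Cauchy--Schwarz against the crude moment bound for $\sup_t\|I_3\|\le\int_0^T\|F({\mathfrak{u}}^{\varepsilon,\nu}(t_s),\hat{\mathfrak{v}}^{\varepsilon,\nu}(s))-\bar F({\mathfrak{u}}^{\varepsilon,\nu}(t_s))\|\,ds$ then gives $C\sqrt{\varepsilon/\delta}$ for the $2p$-th moment.
\item This is exactly what the paper's proof obtains for $J_{11}$, and what is used in the proof of \Cref{apofviscoused}. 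The $\sqrt{\delta/\varepsilon}$ in the statement is a misprint; it would diverge for the choice $\delta=\varepsilon(-\ln\varepsilon)^{1/2}$.
\end{itemize}

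Two smaller remarks. First, freezing the semigroup is unnecessary: $S_\nu(t-s)=S_\nu(t-(k+1)\delta)S_\nu((k+1)\delta-s)$, the first factor is a contraction, and the second is deterministic and so commutes with the conditional expectation. Neither \eqref{smoothness} nor the $H^1$ bound on $F$ is needed there. Second, the term $(\delta^{2p+1}e^{C\delta/\varepsilon}/\varepsilon)^{1/2}$ does not come from the averaging term. It comes from $\mathcal{N}({\mathfrak{u}}^{\varepsilon,\nu}(s))-\mathcal{N}(\hat{\mathfrak{u}}^{\varepsilon,\nu}(s))$, via Cauchy--Schwarz and \eqref{uhatuerror} applied with $2p$ in place of $p$.

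\textbf{The gap.} The step that would not go through as written is the a priori control of $\hat{\mathfrak{u}}^{\varepsilon,\nu}$ itself. You keep $\mathcal{N}(\hat{\mathfrak{u}}^{\varepsilon,\nu}(t_s))$, $F(\hat{\mathfrak{u}}^{\varepsilon,\nu}(t_s),\cdot)$ and $\Sigma_1(\hat{\mathfrak{u}}^{\varepsilon,\nu})$. You then assert that the moments of $\hat{\mathfrak{u}}^{\varepsilon,\nu}$ follow as in \Cref{uniformestimates}: in $H^1$ for $\mathbb{P}(\tau_n<T)\le C/n$, and in $L^2$ for the bad-event estimate. They do not. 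That proof rests on $(\mathcal{N}({\mathfrak{u}}),{\mathfrak{u}})\le 0$ and \Cref{propinnernegative}, and both signs are lost once the nonlinearity is evaluated at $t_s$ but tested against $\hat{\mathfrak{u}}^{\varepsilon,\nu}(t)$. On spatially constant solutions, which $A_\nu$ does not damp, your scheme is an explicit Euler step for the superlinear drift $\mathcal{N}$, whose moments can blow up as $\delta\to0$.

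\textbf{The fix.} The paper's proof, like its proof of \Cref{uuhatvvhaterror} which you cite, in effect works with $\mathcal{N}({\mathfrak{u}}^{\varepsilon,\nu}(t_s))$, $F({\mathfrak{u}}^{\varepsilon,\nu}(t_s),\hat{\mathfrak{v}}^{\varepsilon,\nu})$ and $\Sigma_1({\mathfrak{u}}^{\varepsilon,\nu})$. Then $\hat{\mathfrak{u}}^{\varepsilon,\nu}$ is an explicit mild functional of ${\mathfrak{u}}^{\varepsilon,\nu}$ and $\hat{\mathfrak{v}}^{\varepsilon,\nu}$. Its $H^1$ moments follow from \Cref{uniformestimates}, Assumptions 1--2, and the fact that $S_\nu$ is a contraction on $H^1(\mathbb{T})$. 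Adopt that definition: your $I_2$ disappears, $I_1$ becomes the paper's $J_2$, no H\"older continuity of $\hat{\mathfrak{u}}^{\varepsilon,\nu}$ is needed, and your argument coincides with the paper's.
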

\begin{proof}
	The process $\hat{\mathfrak{u}}^{\varepsilon,\nu}-\bar{\mathfrak{u}}^\nu$ satisfies the mild form
	\begin{align}\label{j123}
		\hat {\mathfrak{u}}^{\varepsilon, \nu}(t)-\bar{\mathfrak{u}}^\nu(t)
		&= \int_0^t S_\nu(t-s)\Big[
		F({\mathfrak{u}}^{\varepsilon, \nu}(t_s),\hat {\mathfrak{v}}^{\varepsilon, \nu}(s))
		- \bar F(\bar{\mathfrak{u}}^\nu(s))
		\Big]\,ds \nonumber\\
		&\quad + \int_0^t S_\nu(t-s)\Big[
		\mathcal{N}({\mathfrak{u}}^{\varepsilon, \nu}(t_s)) - \mathcal{N}(\bar{\mathfrak{u}}^\nu(s))
		\Big]\,ds \nonumber\\
		&\quad + \int_0^t S_\nu(t-s)\Big[
		\Sigma_1({\mathfrak{u}}^{\varepsilon, \nu}(s)) - \Sigma_1(\bar{\mathfrak{u}}^\nu(s))
		\Big]\,d\mathcal{W}_1 \nonumber \\
		&=: J_1 + J_2 + J_3 .
	\end{align}
	We define the stopping time 
	\begin{align}\label{stoppingtime}
		\tau_n^{\varepsilon,\nu}
		:= \inf\Big\{ t>0 : \|\hat {\mathfrak{u}}^{\varepsilon, \nu}(t)\|_{H^1}
		+ \|\bar{\mathfrak{u}}^\nu(t)\|_{H^1} > n \Big\}.
	\end{align}
	Let $m_t=[\frac{t}{\delta}]$. We further decompose the term $J_1$ as
	\begin{align*}
		J_1	&= \int_0^t S_\nu(t-s)\Big[
		F({\mathfrak{u}}^{\varepsilon, \nu}(t_s),\hat {\mathfrak{v}}^{\varepsilon, \nu}(s))
		- \bar F(\bar{\mathfrak{u}}^\nu(s))
		\Big]\,ds \\
		&= \int_0^t S_\nu(t-s)\Big[
		F({\mathfrak{u}}^{\varepsilon, \nu}(t_s),\hat {\mathfrak{v}}^{\varepsilon, \nu}(s))
		- \bar F( {\mathfrak{u}}^{\varepsilon, \nu}(s))
		\Big]\,ds  + \int_0^t S_\nu(t-s)\Big[
		\bar F( {\mathfrak{u}}^{\varepsilon, \nu}(s))- \bar F(\hat {\mathfrak{u}}^{\varepsilon, \nu}(s))
		\Big]\,ds \\
		&\quad + \int_0^t S_\nu(t-s)\Big[
		\bar F(\hat {\mathfrak{u}}^{\varepsilon, \nu}(s)) - \bar F(\bar{\mathfrak{u}}^\nu(s))
		\Big]\,ds \\
		&= \sum_{k=0}^{m_t-1}
		\int_{k\delta}^{(k+1)\delta}
		S_\nu(t-s)\Big[
		F({\mathfrak{u}}^{\varepsilon, \nu}(k\delta),\hat {\mathfrak{v}}^{\varepsilon, \nu}(s))
		- \bar F({\mathfrak{u}}^{\varepsilon, \nu}(k\delta))
		\Big]\,ds \\
		&\quad + \sum_{k=0}^{m_t-1}
		\int_{k\delta}^{(k+1)\delta}
		S_\nu(t-s)\Big[
		\bar F({\mathfrak{u}}^{\varepsilon, \nu}(k\delta))
		- \bar F({\mathfrak{u}}^{\varepsilon, \nu}(s))
		\Big]\,ds \\
		&\quad + \int_{m\delta}^t
		S_\nu(t-s)\Big[
		F({\mathfrak{u}}^{\varepsilon, \nu}(m_t\delta),\hat {\mathfrak{v}}^{\varepsilon, \nu}(s))
		- \bar F({\mathfrak{u}}^{\varepsilon, \nu}(s))
		\Big]\,ds \\
		&\quad + \int_0^t S_\nu(t-s)\Big[
		\bar	F({\mathfrak{u}}^{\varepsilon, \nu}(s)) - \bar F(\hat {\mathfrak{u}}^{\varepsilon, \nu}(s))
		\Big]\,ds \\
		&\quad + \int_0^t S_\nu(t-s)\Big[
		\bar F(\hat {\mathfrak{u}}^{\varepsilon, \nu}(s)) - \bar F(\bar{\mathfrak{u}}^\nu(s))
		\Big]\,ds \\
		&=: J_{11} + J_{12} + J_{13} + J_{14} + J_{15}.
	\end{align*}
	Now, to estimate the term $J_{11}$, we first show that, for any fixed $p$ and $t\in [0,\delta)$, the processes  $\hat {\mathfrak{v}}^{\varepsilon, \nu}(t+p\delta)$ and  $\mathfrak{v}^{{\mathfrak{u}}^{\varepsilon, \nu}(p\delta),{\mathfrak{v}}^{\varepsilon, \nu}(p\delta)}(\frac{t}{\varepsilon})$ coincide in the sense of distribution, that is,
	\begin{align*}
		\mathscr{L}\Big(\big\{\hat {\mathfrak{v}}^{\varepsilon, \nu}(t+p\delta)\big\}_{0\leq t\leq\delta}\Big)=\mathscr{L}\left(\Big\{\mathfrak{v}^{{\mathfrak{u}}^{\varepsilon, \nu}(p\delta),{\mathfrak{v}}^{\varepsilon, \nu}(p\delta)}\Big(\frac{t}{\varepsilon}\Big)\Big\}_{0\leq t\leq\delta}\right),
	\end{align*}
	where $\mathscr{L}(\cdot)$ denotes the law of the distribution. Let us define
	\begin{align*}
		K({\mathfrak{u}},{\mathfrak{v}}) := F({\mathfrak{u}},{\mathfrak{v}}) - \lambda {\mathfrak{v}} + G({\mathfrak{u}},{\mathfrak{v}}).
	\end{align*}
	Applying a time shift transformation to $\hat {\mathfrak{v}}^{\varepsilon, \nu}$, we obtain
	\begin{align}\label{shiftedequation}
		\hat {\mathfrak{v}}^{\varepsilon, \nu}(t+p\delta)
		&= {\mathfrak{v}}^{\varepsilon, \nu}(p\delta)
		+ \frac{1}{\varepsilon}
		\int_{p\delta}^{t+p\delta}
		(1+i)(-\Delta)^{\rho}\hat {\mathfrak{v}}^{\varepsilon, \nu}(s)\,ds + \frac{1}{\varepsilon}
		\int_{p\delta}^{t+p\delta}
		K({\mathfrak{u}}^{\varepsilon, \nu}(p\delta),\hat {\mathfrak{v}}^{\varepsilon, \nu}(s))\,ds \nonumber\\
		&\quad + \frac{1}{\sqrt{\varepsilon}}
		\int_{p\delta}^{t+p\delta}
		\Sigma_2\!\left({\mathfrak{u}}^{\varepsilon, \nu}(p\delta),
		\hat {\mathfrak{v}}^{\varepsilon, \nu}(s)\right)\,d\mathcal{W}_2 \nonumber\\
		&= {\mathfrak{v}}^{\varepsilon, \nu}(p\delta)
		+ \frac{1}{\varepsilon}
		\int_0^t (1+i)(-\Delta)^{\rho} \hat {\mathfrak{v}}^{\varepsilon, \nu}(s+p\delta)\,ds  + \frac{1}{\varepsilon}
		\int_0^t
		K\left({\mathfrak{u}}^{\varepsilon, \nu}(p\delta),
		\hat {\mathfrak{v}}^{\varepsilon, \nu}(s+p\delta)\right)\,ds \nonumber\\
		&\quad + \frac{1}{\sqrt{\varepsilon}}
		\int_0^t
		\Sigma_2\!\left({\mathfrak{u}}^{\varepsilon, \nu}(p\delta),
		\hat {\mathfrak{v}}^{\varepsilon, \nu}(s+p\delta)\right)\,d \tilde{\mathcal{W}}_2 ,
	\end{align}
	where $\tilde{\mathcal{W}}_2(\cdot)=\mathcal{W}_2(\cdot+p\delta)$ and hence $\tilde{\mathcal{W}}_2$ and $\mathcal{W}_2$ have the same distribution. On the same stochastic basis, let $\tilde W(\cdot)$ be a Wiener process independent of $\mathcal{W}_1(\cdot)$ and $\mathcal{W}_2(\cdot)$. Now, we define a process  ${\mathfrak{v}}^{{\mathfrak{u}}^{\varepsilon, \nu}(p\delta),{\mathfrak{v}}^{\varepsilon, \nu}(p\delta)}(\cdot)\in L^2(\mathbb{T})$ as 
	\begin{align*}
		&{\mathfrak{v}}^{{\mathfrak{u}}^{\varepsilon, \nu}(p\delta),{\mathfrak{v}}^{\varepsilon, \nu}(p\delta)}\left(\frac{t}{\varepsilon}\right)
		= {\mathfrak{v}}^{\varepsilon, \nu}(p\delta)
		+ \int_0^{t/\varepsilon}
		(1+i)(-\Delta)^{\rho} {\mathfrak{v}}^{{\mathfrak{u}}^{\varepsilon, \nu}(p\delta),{\mathfrak{v}}^{\varepsilon, \nu}(p\delta)}(s)\,ds \\
		&\quad + \int_0^{t/\varepsilon}
		K\left({\mathfrak{u}}^{\varepsilon, \nu}(p\delta),
		{\mathfrak{v}}^{{\mathfrak{u}}^{\varepsilon, \nu}(p\delta),{\mathfrak{v}}^{\varepsilon, \nu}(p\delta)}(s)\right)\,ds  + \int_0^{t/\varepsilon}
		\Sigma_2\!\left({\mathfrak{u}}^{\varepsilon, \nu}(p\delta),
		{\mathfrak{v}}^{{\mathfrak{u}}^{\varepsilon, \nu}(p\delta),{\mathfrak{v}}^{\varepsilon, \nu}(p\delta)}(r)\right)\,d\tilde W.
	\end{align*}
	Using the transformation $s'=s\varepsilon, r'=r\varepsilon$ and again replacing $s', r'$ by $s, r$, we get
	\begin{align}\label{vpoweruv}
		&{\mathfrak{v}}^{{\mathfrak{u}}^{\varepsilon, \nu}(p\delta),{\mathfrak{v}}^{\varepsilon, \nu}(p\delta)}\left(\frac{t}{\varepsilon}\right)
		= {\mathfrak{v}}^{\varepsilon, \nu}(p\delta)
		+ \frac{1}{\varepsilon}
		\int_0^t
		(1+i)(-\Delta)^{\rho} {\mathfrak{v}}^{{\mathfrak{u}}^{\varepsilon, \nu}(p\delta),{\mathfrak{v}}^{\varepsilon, \nu}(p\delta)}\left(\frac{s}{\varepsilon}\right)\,ds\nonumber \\
		&\quad + \frac{1}{\varepsilon}
		\int_0^t
		K\left({\mathfrak{u}}^{\varepsilon, \nu}(p\delta),
		{\mathfrak{v}}^{{\mathfrak{u}}^{\varepsilon, \nu}(p\delta),{\mathfrak{v}}^{\varepsilon, \nu}(p\delta)}\left(\frac{s}{\varepsilon}\right)\right)\,ds  + \frac{1}{\sqrt{\varepsilon}}
		\int_0^t
		\Sigma_2\left({\mathfrak{u}}^{\varepsilon, \nu}(p\delta), {\mathfrak{v}}^{{\mathfrak{u}}^{\varepsilon, \nu}(p\delta),{\mathfrak{v}}^{\varepsilon, \nu}(p\delta)}\left(\frac{r}{\varepsilon}\right)\right)\,d\tilde{\tilde W },
	\end{align}
	where $\tilde{\tilde W }$ is the scaled version of $\tilde W$ defined by $\tilde W(\frac{r}{\varepsilon}):=\frac{1}{\sqrt{\varepsilon}} \tilde{\tilde W }(r)$. Therefore, the uniqueness of strong solution of \eqref{shiftedequation} and \eqref{vpoweruv} imply 
	\begin{align}\label{shiftequalscale}
		\mathscr{L}(\hat {\mathfrak{v}}^{\varepsilon, \nu}(t+p\delta))=\mathscr{L}\left({\mathfrak{v}}^{{\mathfrak{u}}^{\varepsilon, \nu}(p\delta),{\mathfrak{v}}^{\varepsilon, \nu}(p\delta)}\left(\frac{t}{\varepsilon}\right)\right).
	\end{align}
	Now, using the Cauchy-Schwartz inequality, we estimate for $J_{11}$ as 
	\begin{align*}
		&\mathbb{E}\bigg[\sup_{0\le t\le T\wedge\tau_n^{\varepsilon,\nu}}
		\|J_{11}\|^2\bigg]\\
		&\le	\mathbb{E}\bigg[\sup_{0\le t\le T}	\bigg\|\sum_{k=0}^{m_t-1}\int_{k\delta}^{(k+1)\delta}
		S_\nu(t-s)	\bigl[	F({\mathfrak{u}}^{\varepsilon, \nu}(k\delta),\hat {\mathfrak{v}}^{\varepsilon, \nu}(s)) - \bar F({\mathfrak{u}}^{\varepsilon, \nu}(k\delta))	\bigr]ds	\bigg\|^2\bigg]\\
		&=	\mathbb{E}\bigg[\sup_{0\le t\le T}	\bigg\|\sum_{k=0}^{m_t-1}	\int_{k\delta}^{(k+1)\delta}
		S_\nu\bigl(t-(k+1)\delta\bigr)	S_\nu\bigl((k+1)\delta-s\bigr) \bigl[	F({\mathfrak{u}}^{\varepsilon, \nu}(k\delta),\hat {\mathfrak{v}}^{\varepsilon, \nu}(s))- \bar F({\mathfrak{u}}^{\varepsilon, \nu}(k\delta))
		\bigr]ds\bigg\|^2\bigg]\\
		&= \mathbb{E}\bigg[\sup_{0\le t\le T}	\bigg\|	\sum_{k=0}^{m_t-1} 	S_\nu\bigl(t-(k+1)\delta\bigr)	\int_{k\delta}^{(k+1)\delta}S_\nu\bigl((k+1)\delta-s\bigr)
		\bigl[F({\mathfrak{u}}^{\varepsilon, \nu}(k\delta),\hat {\mathfrak{v}}^{\varepsilon, \nu}(s))
		- \bar F({\mathfrak{u}}^{\varepsilon, \nu}(k\delta))\bigr]ds	\bigg\|^2 \bigg]\\[1ex]
		&\le	\mathbb{E}\bigg[\sup_{0\le t\le T}	\bigg\{	m_t \sum_{k=0}^{m_t-1}	\bigg\|
		S_\nu\bigl(t-(k+1)\delta\bigr)	\int_{k\delta}^{(k+1)\delta}S_\nu\bigl((k+1)\delta-s\bigr)
		\bigl[	F({\mathfrak{u}}^{\varepsilon, \nu}(k\delta),\hat {\mathfrak{v}}^{\varepsilon, \nu}(s))	\\
		&\quad\quad- \bar F({\mathfrak{u}}^{\varepsilon, \nu}(k\delta))	\bigr]ds\bigg\|^2	\bigg\}\bigg] \\[1ex]
		&\le\mathbb{E}\bigg[\sup_{0\le t\le T}\bigg\{	m_t \sum_{k=0}^{m_t-1}\bigg\|	\int_{k\delta}^{(k+1)\delta}
		S_\nu\bigl((k+1)\delta-s\bigr)	\bigl[	F({\mathfrak{u}}^{\varepsilon, \nu}(k\delta),\hat {\mathfrak{v}}^{\varepsilon, \nu}(s))	- \bar F({\mathfrak{u}}^{\varepsilon, \nu}(k\delta))	\bigr]ds\bigg\|^2	\bigg\}\bigg] \\[1ex]
		&\le	\bigg[\frac{T}{\delta}\bigg]\mathbb{E}	\bigg[	\sum_{k=0}^{\big[\frac{T}{\delta}\big]-1}\bigg\|	\int_{k\delta}^{(k+1)\delta}
		S_\nu\bigl((k+1)\delta-s\bigr)	\bigl[F({\mathfrak{u}}^{\varepsilon, \nu}(k\delta),\hat {\mathfrak{v}}^{\varepsilon, \nu}(s))
		- \bar F({\mathfrak{u}}^{\varepsilon, \nu}(k\delta))\bigr]ds\bigg\|^2\bigg] \\[1ex]
		&\le	\bigg[\frac{T}{\delta}\bigg]^2 	\max_{0\le k\le \big[\frac{T}{\delta}\big]-1}
		\mathbb{E}\bigg[	\bigg\|	\int_{k\delta}^{(k+1)\delta}
		S_\nu\bigl((k+1)\delta-s\bigr)	\bigl[F({\mathfrak{u}}^{\varepsilon, \nu}(k\delta),\hat {\mathfrak{v}}^{\varepsilon, \nu}(s))
		- \bar F({\mathfrak{u}}^{\varepsilon, \nu}(k\delta))\bigr]ds\bigg\|^2\bigg].
	\end{align*}
	Using the transformation $s'=s\varepsilon+ k \delta$, we deduce
	\begin{align*}
		&=	\bigg[\frac{T}{\delta}\bigg]^2	\varepsilon^2	\max_{0\le k\le \big[\frac{T}{\delta}\big]-1}
		\mathbb{E}	\bigg[\Big\|	\int_0^{\frac{\delta}{\varepsilon}}	S_\nu(\delta-s\varepsilon)\bigl[
		F({\mathfrak{u}}^{\varepsilon, \nu}(k\delta),\hat {\mathfrak{v}}^{\varepsilon, \nu}(s\varepsilon+k\delta))	- \bar F({\mathfrak{u}}^{\varepsilon, \nu}(k\delta))	\bigr]ds	\Big\|^2\bigg] \\[1ex]
		&=	2\bigg[\frac{T}{\delta}\bigg]^2	\varepsilon^2	\max_{0\le k\le \big[\frac{T}{\delta}\big]-1} \int_{\mathbb{T}} \mathbb{E}	\bigg[	\int_0^{\frac{\delta}{\varepsilon}}	\int_r^{\frac{\delta}{\varepsilon}}
		S_\nu(\delta-s\varepsilon)	\bigl[	F({\mathfrak{u}}^{\varepsilon, \nu}(k\delta),\hat {\mathfrak{v}}^{\varepsilon, \nu}(s\varepsilon+k\delta))	- \bar F({\mathfrak{u}}^{\varepsilon, \nu}(k\delta))
		\bigr] \\
		&\qquad\times	S_\nu(\delta-r\varepsilon)
		\bigl[	F({\mathfrak{u}}^{\varepsilon, \nu}(k\delta),\hat {\mathfrak{v}}^{\varepsilon, \nu}(r\varepsilon+k\delta))
		- \bar F({\mathfrak{u}}^{\varepsilon, \nu}(k\delta))	\bigr]	\,ds\,dr	\bigg]dx \\[1ex]
		&\leq	2\bigg[\frac{T}{\delta}\bigg]^2\varepsilon^2\max_{0\le k\le \big[\frac{T}{\delta}\big]-1}\int_0^{\frac{\delta}{\varepsilon}}\int_r^{\frac{\delta}{\varepsilon}}	J_k(s,r)\,ds\,dr ,
	\end{align*}
	where 
	\begin{align*}
		J_k(s,r)
		&:= \mathbb{E}\bigg[\int_{\mathbb{T}} \Big[
		S_\nu(\delta-s\varepsilon)
		\bigl(
		F({\mathfrak{u}}^{\varepsilon, \nu}(k\delta),\hat {\mathfrak{v}}^{\varepsilon, \nu}(s\varepsilon+k\delta))
		- \bar F({\mathfrak{u}}^{\varepsilon, \nu}(k\delta))
		\bigr)\\
		&  \qquad\times
		S_\nu(\delta-r\varepsilon)
		\bigl(
		F({\mathfrak{u}}^{\varepsilon, \nu}(k\delta),\hat {\mathfrak{v}}^{\varepsilon, \nu}(r\varepsilon+k\delta))
		- \bar F({\mathfrak{u}}^{\varepsilon, \nu}(k\delta))
		\bigr)	\Big]dx\bigg].
	\end{align*}
	Moving forward, we use the notation $\mathbb{E}^{x}[\cdot]=\mathbb{E}[\cdot|x]$ for conditional expectation.
	Using \eqref{shiftequalscale}, the properties of the semigroup, the Markov property and the H\"older inequality, we estimate
	\begin{align*}
		J_k(s,r)
		&= \mathbb{E}\bigg[\int_{\mathbb{T}} \Big[
		S_\nu(\delta-s\varepsilon)
		\bigl(
		F({\mathfrak{u}}^{\varepsilon, \nu}(k\delta),\hat {\mathfrak{v}}^{\varepsilon, \nu}(s\varepsilon+k\delta))
		- \bar F({\mathfrak{u}}^{\varepsilon, \nu}(k\delta))
		\bigr)\\
		& \qquad \times
		S_\nu(\delta-r\varepsilon)
		\bigl(
		F({\mathfrak{u}}^{\varepsilon, \nu}(k\delta),\hat {\mathfrak{v}}^{\varepsilon, \nu}(r\varepsilon+k\delta))
		- \bar F({\mathfrak{u}}^{\varepsilon, \nu}(k\delta))
		\bigr)	\Big]dx\bigg]\\
		&= \mathbb{E}\bigg[
		\int_{\mathbb{T}}
		S_\nu(\delta-s\varepsilon)
		\Big[
		F({\mathfrak{u}}^{\varepsilon, \nu}(k\delta),
		{\mathfrak{v}}^{{\mathfrak{u}}^{\varepsilon, \nu}(k\delta),{\mathfrak{v}}^{\varepsilon, \nu}(k\delta)}(s))
		- \bar F({\mathfrak{u}}^{\varepsilon, \nu}(k\delta))
		\Big] \\
		&\qquad \times
		S_\nu(\delta-r\varepsilon)
		\Big[
		F({\mathfrak{u}}^{\varepsilon, \nu}(k\delta),
		{\mathfrak{v}}^{{\mathfrak{u}}^{\varepsilon, \nu}(k\delta),{\mathfrak{v}}^{\varepsilon, \nu}(k\delta)}(r))
		- \bar F({\mathfrak{u}}^{\varepsilon, \nu}(k\delta))
		\Big]\,dx \bigg]\\
		&= \mathbb{E}\bigg[
		\int_{\mathbb{T}}
		S_\nu(\delta-s\varepsilon)
		\Big[
		F({\mathfrak{u}}^{\varepsilon, \nu}(k\delta),
		{\mathfrak{v}}^{{\mathfrak{u}}^{\varepsilon, \nu}(k\delta),{\mathfrak{v}}^{\varepsilon, \nu}(k\delta)}(s))
		- \bar F({\mathfrak{u}}^{\varepsilon, \nu}(k\delta))
		\Big] \\
		&\qquad \times
		\mathbb{E}^{{\mathfrak{v}}^{{\mathfrak{u}}^{\varepsilon, \nu}(k\delta),{\mathfrak{v}}^{\varepsilon, \nu}(k\delta)}(r)}
				\Big(
		S_\nu(\delta-r\varepsilon)
		\Big[
		F({\mathfrak{u}}^{\varepsilon, \nu}(k\delta),
		{\mathfrak{v}}^{{\mathfrak{u}}^{\varepsilon, \nu}(k\delta),{\mathfrak{v}}^{\varepsilon, \nu}(k\delta)}(s-r))
		- \bar F({\mathfrak{u}}^{\varepsilon, \nu}(k\delta))
		\Big]
		\Big)\,dx\bigg] \\[1ex]
		&\le
		\left\{
		\mathbb{E}\bigg[
		\int_{\mathbb{T}}
		\left|
		S_\nu(\delta-s\varepsilon)
		\Big[
		F({\mathfrak{u}}^{\varepsilon, \nu}(k\delta),
		{\mathfrak{v}}^{{\mathfrak{u}}^{\varepsilon, \nu}(k\delta),{\mathfrak{v}}^{\varepsilon, \nu}(k\delta)}(s))
		- \bar F({\mathfrak{u}}^{\varepsilon, \nu}(k\delta))
		\Big]
		\right|^2 dx\bigg]
		\right\}^{1/2} \\
		&\quad \times
		\bigg\{
		\mathbb{E}\bigg[
		\int_{\mathbb{T}}
		\bigg|
		\mathbb{E}^{{\mathfrak{v}}^{{\mathfrak{u}}^{\varepsilon, \nu}(k\delta),{\mathfrak{v}}^{\varepsilon, \nu}(k\delta)}(r)}
		\Big(
		S_\nu(\delta-r\varepsilon)\\
		&\quad\quad
		\Big[
		F({\mathfrak{u}}^{\varepsilon, \nu}(k\delta),
		{\mathfrak{v}}^{{\mathfrak{u}}^{\varepsilon, \nu}(k\delta),{\mathfrak{v}}^{\varepsilon, \nu}(k\delta)}(s-r))
		- \bar F({\mathfrak{u}}^{\varepsilon, \nu}(k\delta))
		\Big]
		\Big)
		\bigg|^2 dx\bigg]
		\bigg\}^{1/2} \\[1ex]
		&=
		\left(
		\mathbb{E}\bigg[
		\left\|
		S_\nu(\delta-s\varepsilon)
	\Big[
		F({\mathfrak{u}}^{\varepsilon, \nu}(k\delta),
		{\mathfrak{v}}^{{\mathfrak{u}}^{\varepsilon, \nu}(k\delta),{\mathfrak{v}}^{\varepsilon, \nu}(k\delta)}(s))
		- \bar F({\mathfrak{u}}^{\varepsilon, \nu}(k\delta))
		\Big]
		\right\|^2\bigg]
		\right)^{1/2} \\
		&\quad \times
		\bigg(
		\mathbb{E}\bigg[
		\bigg\|
		\mathbb{E}^{{\mathfrak{v}}^{{\mathfrak{u}}^{\varepsilon, \nu}(k\delta),{\mathfrak{v}}^{\varepsilon, \nu}(k\delta)}(r)}
		\Big(
		S_\nu(\delta-r\varepsilon)\\
		&\quad\quad
		\Big[
		F({\mathfrak{u}}^{\varepsilon, \nu}(k\delta),
		{\mathfrak{v}}^{{\mathfrak{u}}^{\varepsilon, \nu}(k\delta),{\mathfrak{v}}^{\varepsilon, \nu}(k\delta)}(s-r))
		- \bar F({\mathfrak{u}}^{\varepsilon, \nu}(k\delta))
		\Big]
		\Big)
		\bigg\|^2\bigg]
		\bigg)^{1/2} \\[1ex]
		&\le
		\left(
		\mathbb{E}\Big[
		\bigl\|
		F({\mathfrak{u}}^{\varepsilon, \nu}(k\delta),
		{\mathfrak{v}}^{{\mathfrak{u}}^{\varepsilon, \nu}(k\delta),{\mathfrak{v}}^{\varepsilon, \nu}(k\delta)}(s))
		- \bar F({\mathfrak{u}}^{\varepsilon, \nu}(k\delta))
		\bigr\|^2\Big]
		\right)^{1/2} \\
		&\quad \times
		\left(
		\mathbb{E}\bigg[
		\left\|
		\mathbb{E}^{{\mathfrak{v}}^{{\mathfrak{u}}^{\varepsilon, \nu}(k\delta),{\mathfrak{v}}^{\varepsilon, \nu}(k\delta)}(r)}
		\Big[F({\mathfrak{u}}^{\varepsilon, \nu}(k\delta),	{\mathfrak{v}}^{{\mathfrak{u}}^{\varepsilon, \nu}(k\delta),{\mathfrak{v}}^{\varepsilon, \nu}(k\delta)}(s-r))	- \bar F({\mathfrak{u}}^{\varepsilon, \nu}(k\delta))		\Big]	\right\|^2\bigg]	\right)^{1/2}.
	\end{align*}
	As a consequence of \Cref{propertiesofivpsolution}, we have
	\begin{align}
		J_k(s,r) \le C e^{-\mu (s-r)},
	\end{align}\label{j111}
	for some $\mu>0.$ Therefore, we estimate 
	\begin{align}
		\mathbb{E}
		\bigg[	\sup_{0\le t\le T\wedge\tau_n^{\varepsilon,\nu}}
		\|J_{11}\|^2\bigg]
		&\le
		2\left[\frac{T}{\delta}\right]^2
		\varepsilon^2
		\max_{0\le k\le \left[\frac{T}{\delta}\right]-1}
		\int_0^{\frac{\delta}{\varepsilon}}
		\int_r^{\frac{\delta}{\varepsilon}}
		J_k(s,r)\,ds\,dr \nonumber\\
		&\le
		C\left[\frac{T}{\delta}\right]^2
		\varepsilon^2
		\max_{0\le k\le \left[\frac{T}{\delta}\right]-1}
		\int_0^{\frac{\delta}{\varepsilon}}
		\int_r^{\frac{\delta}{\varepsilon}}
		e^{-\mu (s-r)}\,ds\,dr \nonumber\\
		&\le	2\left[\frac{T}{\delta}\right]^2 \varepsilon^2\left\{\frac{\delta}{\mu \varepsilon}+\frac{1}{\mu^2}\Big(e^{-\frac{\mu \delta}{\varepsilon}}-1\Big)\right\}\nonumber\\
		&\le	C\,\frac{\varepsilon}{\delta}.
	\end{align}
	We use the H\"older inequality in time, boundedness of the semigroup to obtain
	\begin{align}\label{j112}
		\mathbb{E}\bigg[\sup_{0\le t\le T\wedge\tau_n^{\varepsilon,\nu}}
		\|J_{11}\|^{2p}\bigg]
		&\le
		\mathbb{E}\bigg[\sup_{0\le t\le T}
		\bigg\|
		\sum_{k=0}^{m_t-1}
		\int_{k\delta}^{(k+1)\delta}
		S_\nu(t-s)
		\bigl[
		F({\mathfrak{u}}^{\varepsilon, \nu}(k\delta),\hat {\mathfrak{v}}^{\varepsilon, \nu}(s))
		- \bar F({\mathfrak{u}}^{\varepsilon, \nu}(k\delta))
		\bigr]ds
		\bigg\|^{2p}\bigg] \nonumber\\[1ex]
		&=
		\mathbb{E}\bigg[\sup_{0\le t\le T}
		\bigg\|
		\int_0^{m_t\delta}
		S_\nu(t-s)
		\bigl[
		F({\mathfrak{u}}^{\varepsilon, \nu}(t_s),\hat {\mathfrak{v}}^{\varepsilon, \nu}(s))
		- \bar F({\mathfrak{u}}^{\varepsilon, \nu}(t_s))
		\bigr]ds
		\bigg\|^{2p}\bigg] \nonumber\\[1ex]
		&\le
		\mathbb{E}\bigg[\sup_{0\le t\le T}
		\bigg(
		\int_0^{m_t\delta}
		\bigl\|
		S_\nu(t-s)
		\bigl[
		F({\mathfrak{u}}^{\varepsilon, \nu}(t_s),\hat {\mathfrak{v}}^{\varepsilon, \nu}(s))
		- \bar F({\mathfrak{u}}^{\varepsilon, \nu}(t_s))
		\bigr]
		\bigr\|\,ds
		\bigg)^{2p}\bigg] \nonumber\\[1ex]
		&\le
		\mathbb{E}
		\bigg[
		\int_0^{T}
		\bigl\|
		F({\mathfrak{u}}^{\varepsilon, \nu}(t_s),\hat {\mathfrak{v}}^{\varepsilon, \nu}(s))
		- \bar F({\mathfrak{u}}^{\varepsilon, \nu}(t_s))
		\bigr\|\,ds
		\bigg]^{2p}\nonumber \\[1ex]
		&\le C
		\mathbb{E}
		\bigg[
		\int_0^{T}
		\bigl\|
		F({\mathfrak{u}}^{\varepsilon, \nu}(t_s),\hat {\mathfrak{v}}^{\varepsilon, \nu}(s))
		- \bar F({\mathfrak{u}}^{\varepsilon, \nu}(t_s))
		\bigr\|^{2p}\,ds
		\bigg] \nonumber\\[1ex]
		&\le
		C\,\mathbb{E}\bigg[
		\int_0^{T}
		\Bigl[
		1+\|{\mathfrak{u}}^{\varepsilon, \nu}(t_s)\|^{2p}
		+ \|\hat {\mathfrak{v}}^{\varepsilon, \nu}(s)\|^{2p}
		\Bigr]\,ds\bigg] \nonumber\\
		&\le C(p,T).
	\end{align}
	Thus, the H\"older inequality with \eqref{j111} and \eqref{j112} imply
	\begin{align*}
		\mathbb{E}
		\bigg[\sup_{0\le t\le T\wedge\tau_n^{\varepsilon,\nu}}
		\|J_{11}\|^{2p}\bigg]
		\le
		\Bigl(
		\mathbb{E}\Big[\sup_{0\le t\le T}\|J_{11}\|^{2(p-1)}\Big]
		\Bigr)^{1/2}
		\Bigl(
		\mathbb{E}\Big[\sup_{0\le t\le T}\|J_{11}\|^{2}\Big]
		\Bigr)^{1/2}
		\le
		C(p,T)\sqrt{\frac{\varepsilon}{\delta}}.
	\end{align*}
	For $J_{12}$, using the Lipschitz continuity of $\bar F$ proved in \Cref{lipforfbar} and the Jensen continuity in time for ${\mathfrak{u}}^{\varepsilon, \nu}$, we estimate 
	\begin{align*}
		\mathbb{E}\bigg[\sup_{0\le t\le T\wedge\tau_n^{\varepsilon,\nu}}
		\|J_{12}\|^{2p}\bigg]
		&\le
		\mathbb{E}\bigg[\sup_{0\le t\le T}
		\bigg\|
		\sum_{k=0}^{m_t-1}
		\int_{k\delta}^{(k+1)\delta}
		S_\nu(t-s)
		\bigl[
		\bar F({\mathfrak{u}}^{\varepsilon, \nu}(k\delta))
		- \bar F({\mathfrak{u}}^{\varepsilon, \nu}(s))
		\bigr]ds
		\bigg\|^{2p}\bigg] \\[1ex]
		&\le
		\mathbb{E}\bigg[\sup_{0\le t\le T}
		\bigg\|
		\sum_{k=0}^{m_t-1}
		\int_{k\delta}^{(k+1)\delta}
		S_\nu(t-s)
		\bigl[
		\bar F({\mathfrak{u}}^{\varepsilon, \nu}(t_s))
		- \bar F({\mathfrak{u}}^{\varepsilon, \nu}(s))
		\bigr]ds
		\bigg\|^{2p} \bigg]\\[1ex]
		&=
		\mathbb{E}\bigg[\sup_{0\le t\le T}
		\bigg\|
		\int_0^{m_t\delta}
		S_\nu(t-s)
		\bigl[
		\bar F({\mathfrak{u}}^{\varepsilon, \nu}(t_s))
		- \bar F({\mathfrak{u}}^{\varepsilon, \nu}(s))
		\bigr]ds
		\bigg\|^{2p} \bigg]\\[1ex]
		&\le
		\mathbb{E}\bigg[\sup_{0\le t\le T}
		\bigg(
		\int_0^{m_t\delta}
		\bigl\|
		S_\nu(t-s)
		\bigl[
		\bar F({\mathfrak{u}}^{\varepsilon, \nu}(t_s))
		- \bar F({\mathfrak{u}}^{\varepsilon, \nu}(s))
		\bigr]
		\bigr\|\,ds
		\bigg)^{2p} \bigg]\\[1ex]
		&\le C
		\mathbb{E}\bigg[\sup_{0\le t\le T}
		\bigg(
		\int_0^{m_t\delta}
		\bigl\|
		\bigl[
		\bar F({\mathfrak{u}}^{\varepsilon, \nu}(t_s))
		- \bar F({\mathfrak{u}}^{\varepsilon, \nu}(s))
		\bigr]
		\bigr\|\,ds
		\bigg)^{2p}\bigg] \\[1ex]
		&\le
		C\,\mathbb{E}\bigg[\sup_{0\le t\le T}
		\bigg(
		\int_0^{m_t\delta}
		\|{\mathfrak{u}}^{\varepsilon, \nu}(t_s)-{\mathfrak{u}}^{\varepsilon, \nu}(s)\|\,ds
		\bigg)^{2p}\bigg]\\
		&\le
		C\,\mathbb{E}
		\bigg[
		\int_0^T
		\|{\mathfrak{u}}^{\varepsilon, \nu}(t_s)-{\mathfrak{u}}^{\varepsilon, \nu}(s)\|\,ds
		\bigg]^{2p} \\
		&\le
		C\,\mathbb{E}
		\bigg[
		\bigg(\int_0^T ds\bigg)^{2p-1}
		\int_0^T
		\|{\mathfrak{u}}^{\varepsilon, \nu}(t_s)-{\mathfrak{u}}^{\varepsilon, \nu}(s)\|^{2p}\,ds
		\bigg]\\
		&\le
		C\,\delta^{p}.
	\end{align*}
	We use \Cref{difffbarf}, Lipschitz property of $F$ (in \Cref{main assumptions}) and $\bar F$ (proved in \Cref{lipforfbar}) to estimate $J_{13}$ as follows:
	\begin{align*}
		&\mathbb{E}\bigg[\sup_{0\le t\le T\wedge\tau_n^{\varepsilon,\nu}}
		\|J_{13}\|^{2p}\bigg]\\
		&=
		\mathbb{E}\bigg[\sup_{0\le t\le T}
		\bigg\|
		\int_{m_t\delta}^t
		S_\nu(t-s)
		\bigl[
		F({\mathfrak{u}}^{\varepsilon, \nu}(m_t\delta),\hat {\mathfrak{v}}^{\varepsilon, \nu}(s))
		- F({\mathfrak{u}}^{\varepsilon, \nu}(s))
		\bigr]ds
		\bigg\|^{2p}\bigg] \\[1ex]
		&\le
		\mathbb{E}\bigg[\sup_{0\le t\le T}
		\bigg(
		\int_{m_t\delta}^t
		\bigl\|
		S_\nu(t-s)
		\bigl[
		F({\mathfrak{u}}^{\varepsilon, \nu}(m_t\delta),\hat {\mathfrak{v}}^{\varepsilon, \nu}(s))
		- F({\mathfrak{u}}^{\varepsilon, \nu}(s))
		\bigr]
		\bigr\|\,ds
		\bigg)^{2p} \bigg]\\[1ex]
		&\le
		\mathbb{E}\bigg[\sup_{0\le t\le T}
		\bigg(
		\int_{m_t\delta}^t
		\bigl\|
		F({\mathfrak{u}}^{\varepsilon, \nu}(m_t\delta),\hat {\mathfrak{v}}^{\varepsilon, \nu}(s))
		- F({\mathfrak{u}}^{\varepsilon, \nu}(s))
		\bigr\|\,ds
		\bigg)^{2p} \bigg]\\[1ex]
		&\leq C \mathbb{E}\bigg[ \sup_{0 \leq t \leq T} \bigg\{\bigg(\int_{m_t\delta}^t 1 ds\bigg)^{2p-1}  \bigg(\int_{m_t\delta}^t \bigl\|
		F({\mathfrak{u}}^{\varepsilon, \nu}(m_t\delta),\hat {\mathfrak{v}}^{\varepsilon, \nu}(s))
		- F({\mathfrak{u}}^{\varepsilon, \nu}(s))
		\bigr\|^{2p} ds\bigg)   \bigg\}  \bigg]\\
		&\leq C \mathbb{E}  \bigg[ \bigg\{\sup_{0 \leq t \leq T} \bigg(\int_{m_t\delta}^t 1 ds\bigg)^{2p-1}\bigg\}  \bigg\{\sup_{0 \leq t \leq T} \bigg(\int_{m_t\delta}^t \bigl\|
		F({\mathfrak{u}}^{\varepsilon, \nu}(m_t\delta),\hat {\mathfrak{v}}^{\varepsilon, \nu}(s))
		- F({\mathfrak{u}}^{\varepsilon, \nu}(s))
		\bigr\|^{2p} ds\bigg)\bigg\}     \bigg]\\
		&= C\sup_{0 \leq t \leq T} (t-m_t\delta)^{2p-1}  \mathbb{E} \bigg[\sup_{0 \leq t \leq T} \bigg(\int_{m_t\delta}^t \bigl\|
		F({\mathfrak{u}}^{\varepsilon, \nu}(m_t\delta),\hat {\mathfrak{v}}^{\varepsilon, \nu}(s))
		- F({\mathfrak{u}}^{\varepsilon, \nu}(s))
		\bigr\|^{2p} ds\bigg) \bigg]\\
		&\le C\,\delta^{2p-1}\,
		\mathbb{E}\bigg[\sup_{0\le t\le T}
		\int_{m_t\delta}^{t}
		\Big(1+\|{\mathfrak{u}}^{\varepsilon, \nu}(m_t\delta)\|^{2p}
		+\|\hat {\mathfrak{v}}^{\varepsilon, \nu}(s)\|^{2p}
		+\|{\mathfrak{u}}^{\varepsilon, \nu}(s)\|^{2p}\Big) ds \bigg]\\
		&\le C\,\delta^{2p-1}\,
		\mathbb{E}\bigg[\sup_{0\le t\le T}
		\int_{0}^{T}
		\Big(1+\sup_{0 \leq t \leq T}\|{\mathfrak{u}}^{\varepsilon, \nu}(m_t\delta)\|^{2p}
		+\|\hat {\mathfrak{v}}^{\varepsilon, \nu}(s)\|^{2p}
		+\|{\mathfrak{u}}^{\varepsilon, \nu}(s)\|^{2p}\Big) ds \bigg]\\
		&\le C\,\delta^{2p-1}\,
		\mathbb{E}\bigg[
		\int_{0}^{T}
		\Big(1+\sup_{0 \leq t \leq T}\|{\mathfrak{u}}^{\varepsilon, \nu}(m_t\delta)\|^{2p}
		+\|\hat {\mathfrak{v}}^{\varepsilon, \nu}(s)\|^{2p}
		+\|{\mathfrak{u}}^{\varepsilon, \nu}(s)\|^{2p}\Big) ds \bigg]\\
		&\le
		C\,\delta^{2p-1}.
	\end{align*}
	For $J_{14}$, we use the boundedness of the semigroup, Lipschitz property of $\bar F$, the H\"older inequality in time and \eqref{uhatuerror} to estimate 
	\begin{align*}
		&\mathbb{E}\bigg[\sup_{0\le t\le T\wedge\tau_n^{\varepsilon,\nu}}
		\|J_{14}\|^{2p}\bigg]
		=
		\mathbb{E}\bigg[\sup_{0\le t\le T}
		\bigg\|
		\int_0^t
		S_\nu(t-s)
		\bigl[
		\bar F({\mathfrak{u}}^{\varepsilon, \nu}(s))
		- \bar F(\hat {\mathfrak{u}}^{\varepsilon, \nu}(s))
		\bigr]ds
		\bigg\|^{2p}\bigg]\\
		&\le
		\mathbb{E}\bigg[
		\sup_{0\le t\le T}
		\int_0^t
		\bigl\|
		S_\nu(t-s)
		\bigl[
		\bar F({\mathfrak{u}}^{\varepsilon, \nu}(s))
		- \bar F(\hat {\mathfrak{u}}^{\varepsilon, \nu}(s))
		\bigr]
		\bigr\|\,ds
		\bigg]^{2p}\le
		C\,\mathbb{E}
		\bigg[	
		\sup_{0\le t\le T}
		\int_0^t
		\|{\mathfrak{u}}^{\varepsilon, \nu}(s)-\hat {\mathfrak{u}}^{\varepsilon, \nu}(s)\|\,ds
		\bigg]	^{2p} \\[1ex]
		&\le
		C\,\mathbb{E}
		\bigg[
		\int_0^T
		\|{\mathfrak{u}}^{\varepsilon, \nu}(s)-\hat {\mathfrak{u}}^{\varepsilon, \nu}(s)\|\,ds
		\bigg]^{2p} \le
		C\,\mathbb{E}\bigg[
		\int_0^T
		\|{\mathfrak{u}}^{\varepsilon, \nu}(s)-\hat {\mathfrak{u}}^{\varepsilon, \nu}(s)\|^{2p}\,ds\bigg] \le
		C\bigg(
		\delta^{p}
		+\frac{\delta^{p+1}}{\varepsilon}e^{\frac{C}{\varepsilon}\delta}
		\bigg).
	\end{align*}
	To estimate for $J_{15}$, we compute
	\begin{align*}
		&\mathbb{E}
		\bigg[	\sup_{0\le t\le T\wedge\tau_n^{\varepsilon,\nu}}
		\|J_{15}\|^{2p}\bigg]
		=
		\mathbb{E}
		\bigg[	\sup_{0\le t\le T\wedge\tau_n^{\varepsilon,\nu}}
		\bigg\|
		\int_0^t
		S_\nu(t-s)
		\bigl[
		\bar F(\hat {\mathfrak{u}}^{\varepsilon, \nu}(s))
		- \bar F(\bar{\mathfrak{u}}^\nu(s))
		\bigr]ds
		\bigg\|^{2p}\bigg] \\[1ex]
		&\le
		\mathbb{E}
		\bigg[\sup_{0\le t\le T\wedge\tau_n^{\varepsilon,\nu}}
		\bigg(
		\int_0^t
		\bigl\|
		S_\nu(t-s)
		\bigl[
		\bar F(\hat {\mathfrak{u}}^{\varepsilon, \nu}(s))
		- \bar F(\bar{\mathfrak{u}}^\nu(s))
		\bigr]
		\bigr\|\,ds
		\bigg)^{2p}\bigg] \\[1ex]
		&\le
		\mathbb{E}
		\bigg[\sup_{0\le t\le T\wedge\tau_n^{\varepsilon,\nu}}
		\bigg(
		\int_0^t
		\bigl\|
		\bar F(\hat {\mathfrak{u}}^{\varepsilon, \nu}(s))
		- \bar F(\bar{\mathfrak{u}}^\nu(s))
		\bigr\|\,ds
		\bigg)^{2p}\bigg]\le
		C\,\mathbb{E}\bigg[
		\int_0^{T\wedge\tau_n^{\varepsilon,\nu}}
		\|\hat {\mathfrak{u}}^{\varepsilon, \nu}(s)-\bar{\mathfrak{u}}^\nu(s)\|^{2p}\,ds \bigg]\\[1ex]
		&\le
		C\,\mathbb{E}\bigg[
		\int_0^{T\wedge\tau_n^{\varepsilon,\nu}}
		\sup_{0\le r\le s\wedge\tau_n^{\varepsilon,\nu}}\|\hat {\mathfrak{u}}^{\varepsilon, \nu}(s)-\bar{\mathfrak{u}}^\nu(s)\|^{2p}\,ds \bigg]\le
		C\,\mathbb{E}\bigg[
		\int_0^{T}
		\sup_{0\le r\le s\wedge\tau_n^{\varepsilon,\nu}}
		\|\hat {\mathfrak{u}}^{\varepsilon, \nu}(r)-\bar{\mathfrak{u}}^\nu(r)\|^{2p}\,ds\bigg] \\[1ex]
		&\le
		C\int_0^{T}
		\mathbb{E}
		\bigg[	\sup_{0\le r\le s\wedge\tau_n^{\varepsilon,\nu}}
		\|\hat {\mathfrak{u}}^{\varepsilon, \nu}(r)-\bar{\mathfrak{u}}^\nu(r)\|^{2p}\bigg]\,ds .
	\end{align*}
	We estimate for $J_2$ as
	\begin{align*}
		J_2
		&=
		\int_0^t
		S_\nu(t-s)
		\bigl[
		\mathcal{N}({\mathfrak{u}}^{\varepsilon, \nu}(t_s))
		- \mathcal{N}({\mathfrak{u}}^{\varepsilon, \nu}(s))
		\bigr]ds +
		\int_0^t
		S_\nu(t-s)
		\bigl[
		\mathcal{N}({\mathfrak{u}}^{\varepsilon, \nu}(s))
		- \mathcal{N}(\hat {\mathfrak{u}}^{\varepsilon, \nu}(s))
		\bigr]ds \\
		&\quad +
		\int_0^t
		S_\nu(t-s)
		\bigl[
		\mathcal{N}(\hat {\mathfrak{u}}^{\varepsilon, \nu}(s))
		- \mathcal{N}(\bar{\mathfrak{u}}^\nu(s))
		\bigr]ds \\
		&=: J_{21}+J_{22}+J_{23}.
	\end{align*}
	For $J_{21}$, we recall \eqref{liptypeoff} and get
	\begin{align*}
		\mathbb{E}\bigg[\sup_{0\le t\le T}
		\|J_{21}\|^{2p}\bigg]
		&=
		\mathbb{E}\bigg[\sup_{0\le t\le T}
		\bigg\|
		\int_0^t
		S_\nu(t-s)
		\bigl[
		\mathcal{N}({\mathfrak{u}}^{\varepsilon, \nu}(t_s))
		- \mathcal{N}({\mathfrak{u}}^{\varepsilon, \nu}(s))
		\bigr]ds
		\bigg\|^{2p} \bigg]\le
		C\,\delta^p .
	\end{align*}
	For $J_{22}$, again we recall \eqref{liptypeoff}, \eqref{uhatuerror}, use the estimates in \Cref{uniformestimates} and the Jensen inequality to conclude
	\begin{align*}
		\mathbb{E}\bigg[\sup_{0\le t\le T}
		\|J_{22}\|^{2p}\bigg]
		&=
		\mathbb{E}\bigg[\sup_{0\le t\le T}
		\bigg\|
		\int_0^t
		S_\nu(t-s)
		\bigl[
		\mathcal{N}({\mathfrak{u}}^{\varepsilon, \nu}(s))
		- \mathcal{N}(\hat {\mathfrak{u}}^{\varepsilon, \nu}(s))
		\bigr]ds
		\bigg\|^{2p}\bigg] \\[1ex]
		&\le
		C
		\bigg(
		\int_0^T
		\mathbb{E}\Big[
		\|{\mathfrak{u}}^{\varepsilon, \nu}(s)-\hat {\mathfrak{u}}^{\varepsilon, \nu}(s)\|^{4p}\Big]
		\,ds
		\bigg)^{\frac12}  
		\bigg(\mathbb{E}\bigg[\int_0^T
		\big(\|{\mathfrak{u}}^{\varepsilon, \nu}(s)\|_{H^1}^{\beta -1}
		+ \|\hat {\mathfrak{u}}^{\varepsilon, \nu}(s)\|_{H^1}^{\beta -1}\big)^{4p}
		\,ds\bigg]\bigg)^{\frac12} \\[1ex]
		&\le	C	\bigg(	\delta^p	+ 	\bigg(	\frac{\delta^{2p+1}}{\varepsilon} e^{\frac{C}{\varepsilon}\delta}	\bigg)^{\frac12}	\bigg).
	\end{align*}
	Let $\tau_n^{\varepsilon,\nu}$ be the stopping time defined in \eqref{stoppingtime}. Then, we estimate $J_{23}$ as
	\begin{align*}
		\mathbb{E}
		\bigg[	\sup_{0\le t\le T\wedge\tau_n^{\varepsilon,\nu}}
		\|J_{23}\|^{2p}\bigg]
		&\le
		\mathbb{E}
		\bigg[	\sup_{0\le t\le T\wedge\tau_n^{\varepsilon,\nu}}
		\bigg\|
		\int_0^t
		S_\nu(t-s)
		\bigl[
		\mathcal{N}(\hat {\mathfrak{u}}^{\varepsilon, \nu}(s))
		- \mathcal{N}(\bar{\mathfrak{u}}^\nu(s))
		\bigr]ds
		\bigg\|^{2p}\bigg] \\[1ex]
		&\le
		C\,\mathbb{E}
		\bigg[	\sup_{0\le t\le T\wedge\tau_n^{\varepsilon,\nu}}
		\bigg(
		\int_0^t
		\bigl\|
		\bigl[
		\mathcal{N}(\hat {\mathfrak{u}}^{\varepsilon, \nu}(s))
		- \mathcal{N}(\bar{\mathfrak{u}}^\nu(s))
		\bigr]
		\bigr\|\,ds
		\bigg)^{2p}\bigg] \\[1ex]
		&\le
		C\,\mathbb{E}
		\bigg[\sup_{0\le t\le T\wedge\tau_n^{\varepsilon,\nu}}
		\bigg(
		\int_0^t
		\|\hat {\mathfrak{u}}^{\varepsilon, \nu}(s)-\bar{\mathfrak{u}}^\nu(s)\|
		\bigl(
		\|\hat {\mathfrak{u}}^{\varepsilon, \nu}(s)\|_{H^1}
		+\|\bar{\mathfrak{u}}^\nu(s)\|_{H^1}
		\bigr)^{\beta-1}
		\,ds
		\bigg)^{2p}\bigg].
	\end{align*}
	From the definition of stopping times in \eqref{stoppingtime}, we deduce 
	\begin{align*}
		\mathbb{E}
		\bigg[\sup_{0\le t\le T\wedge\tau_n^{\varepsilon,\nu}}
		\|J_{23}\|^{2p}\bigg]
		&\le
		C\,\mathbb{E}
		\bigg[\sup_{0\le t\le T\wedge\tau_n^{\varepsilon,\nu}}
		\bigg(
		\int_0^t
		\|\hat {\mathfrak{u}}^{\varepsilon, \nu}(s)-\bar{\mathfrak{u}}^\nu(s)\|
		\bigl(
		\|\hat {\mathfrak{u}}^{\varepsilon, \nu}(s)\|_{H^1}
		+\|\bar{\mathfrak{u}}^\nu(s)\|_{H^1}
		\bigr)^{\beta-1}
		\,ds
		\bigg)^{2p} \bigg]\\[1ex]
		&\le
		C n^{2p(\beta-1)}
		\mathbb{E}
		\bigg[\sup_{0\le t\le T\wedge\tau_n^{\varepsilon,\nu}}
		\bigg(
		\int_0^t
		\|\hat {\mathfrak{u}}^{\varepsilon, \nu}(s)-\bar{\mathfrak{u}}^\nu(s)\|
		\,ds
		\bigg)^{2p}\bigg] \\[1ex]
		&\le
		C n^{2p(\beta-1)}
		\mathbb{E}
		\bigg[\sup_{0\le t\le T\wedge\tau_n^{\varepsilon,\nu}}
		\int_0^t
		\|\hat {\mathfrak{u}}^{\varepsilon, \nu}(s)-\bar{\mathfrak{u}}^\nu(s)\|^{2p}
		\,ds\bigg] \\[1ex]
		&\le
		C n^{2p(\beta-1)}
		\mathbb{E}\bigg[
		\int_0^{T\wedge\tau_n^{\varepsilon,\nu}}
		\|\hat {\mathfrak{u}}^{\varepsilon, \nu}(s)-\bar{\mathfrak{u}}^\nu(s)\|^{2p}
		\,ds \bigg]\\
		&\le C n^{2p(\beta-1)}\mathbb{E}\bigg[
		\int_0^{T\wedge\tau_n^{\varepsilon,\nu}}
		\sup_{0\le r\le s\wedge\tau_n^{\varepsilon,\nu}}
		\|{\mathfrak{u}}^{\varepsilon, \nu}(r)-\bar{\mathfrak{u}}^\nu(r)\|^{2p}\,ds\bigg] \\[1ex]
		&\le C n^{2p(\beta-1)}\mathbb{E}\bigg[
		\int_0^{T}
		\sup_{0\le r\le s\wedge\tau_n^{\varepsilon,\nu}}
		\|{\mathfrak{u}}^{\varepsilon, \nu}(r)-\bar{\mathfrak{u}}^\nu(r)\|^{2p}\,ds \bigg]\\[1ex]
		&\le C n^{2p(\beta-1)}
		\int_0^{T}
		\mathbb{E}
		\bigg[	\sup_{0\le r\le s\wedge\tau_n^{\varepsilon,\nu}}
		\|{\mathfrak{u}}^{\varepsilon, \nu}(r)-\bar{\mathfrak{u}}^\nu(r)\|^{2p}\bigg]\,ds .
	\end{align*}
	For $J_{3}$, we write
	\begin{align*}
		J_3
		&=\int_0^t S_\nu(t-s)\Big[
		\Sigma_1({\mathfrak{u}}^{\varepsilon, \nu}(s)) - \Sigma_1(\bar{\mathfrak{u}}^\nu(s))
		\Big]\,d\mathcal{W}_1(s) \\
		&=\int_0^t
		S_\nu(t-s)
		\bigl[\Sigma_1({\mathfrak{u}}^{\varepsilon, \nu}(s))
		-\Sigma_1(\hat {\mathfrak{u}}^{\varepsilon, \nu}(s))\bigr]\,d\mathcal{W}_1(s) +
		\int_0^t
		S_\nu(t-s)
		\bigl[\Sigma_1(\hat {\mathfrak{u}}^{\varepsilon, \nu}(s))
		-\Sigma_1(\bar{\mathfrak{u}}^\nu(s))\bigr]\,d\mathcal{W}_1(s) \\[1ex]
		&=: J_{31}+J_{32}.
	\end{align*}
	For $J_{31}$, using the H\"older inequality, the Burkholder-Davis-Gundy inequality in \Cref{BurkholderDavisGundy inequality}, the assumptions in \Cref{main assumptions} and the error estimate \eqref{uhatuerror}, we get
	\begin{align*}
		&\mathbb{E}
		\bigg[\sup_{0\le t\le T\wedge\tau_n^{\varepsilon,\nu}}
		\|J_{31}\|^{2p}\bigg]
		=
		\mathbb{E}
		\bigg[\sup_{0\le t\le T\wedge\tau_n^{\varepsilon,\nu}}
		\bigg\|
		\int_0^t
		S_\nu(t-s)
		\bigl[
		\Sigma_1({\mathfrak{u}}^{\varepsilon, \nu}(s))
		-\Sigma_1(\hat {\mathfrak{u}}^{\varepsilon, \nu}(s))
		\bigr]\,d\mathcal{W}_1(s)
		\bigg\|^{2p} \bigg]\\[1ex]
		&\le
		C\mathbb{E}
		\bigg(
		\int_0^T
		\|\Sigma_1({\mathfrak{u}}^{\varepsilon, \nu}(s))
		-\Sigma_1(\hat {\mathfrak{u}}^{\varepsilon, \nu}(s))\|^2\,ds
		\bigg)^p \le
		C\mathbb{E}
		\int_0^T
		\|({\mathfrak{u}}^{\varepsilon, \nu}-\hat {\mathfrak{u}}^{\varepsilon, \nu})(s)\|^{2p}\,ds \le
		C\bigg(
		\delta^p
		+
		\frac{\delta^{p+1}}{\varepsilon}
		\mathrm{e}^{\frac{C}{\varepsilon}\delta}
		\bigg).
	\end{align*}
	For $J_{32}$, once again we use the Burkholder-Davis-Gundy inequality and assumptions in \Cref{main assumptions} to estimate
	\begin{align*}
		&\mathbb{E}
		\bigg[\sup_{0\le t\le T\wedge\tau_n^{\varepsilon,\nu}}
		\|J_{32}\|^{2p}\bigg]
		=
		\mathbb{E}
		\bigg[	\sup_{0\le t\le T\wedge\tau_n^{\varepsilon,\nu}}
		\bigg\|
		\int_0^t
		S_\nu(t-s)
		\bigl[
		\Sigma_1(\hat {\mathfrak{u}}^{\varepsilon, \nu}(s))
		-\Sigma_1(\bar{\mathfrak{u}}^\nu(s))
		\bigr]\,d\mathcal{W}_1(s)
		\bigg\|^{2p}\bigg] \\[1ex]
		&\le
		C\mathbb{E}
		\bigg[
		\int_0^{T\wedge\tau_n^{\varepsilon,\nu}}
		\|\Sigma_1(\hat {\mathfrak{u}}^{\varepsilon, \nu}(s))
		-\Sigma_1(\bar{\mathfrak{u}}^\nu(s))\|^2\,ds
		\bigg]^p \le
		C\mathbb{E}\bigg[
		\int_0^{T\wedge\tau_n^{\varepsilon,\nu}}
		\|\hat {\mathfrak{u}}^{\varepsilon, \nu}(s)-\bar{\mathfrak{u}}^\nu(s)\|^{2p}\,ds\bigg] \\[1ex]
		&\le
		C\mathbb{E}\bigg[
		\int_0^{T\wedge\tau_n^{\varepsilon,\nu}}
		\sup_{0\le r\le s\wedge\tau_n^{\varepsilon,\nu}}
		\|\hat {\mathfrak{u}}^{\varepsilon, \nu}(r)-\bar{\mathfrak{u}}^\nu(r)\|^{2p}\,ds \bigg]\le
		C\int_0^T
		\mathbb{E}
		\bigg[	\sup_{0\le r\le s\wedge\tau_n^{\varepsilon,\nu}}
		\|\hat {\mathfrak{u}}^{\varepsilon, \nu}(r)-\bar{\mathfrak{u}}^\nu(r)\|^{2p}\bigg]\,ds.
	\end{align*}
	Therefore, using all the estimates for $J_1, J_2$ and $J_3$, from \eqref{j123}, we derive
	\begin{align*}
		&\mathbb{E}
		\bigg[\sup_{0\le t\le T\wedge\tau_n^{\varepsilon,\nu}}
		\|\hat {\mathfrak{u}}^{\varepsilon, \nu}(t)-\bar{\mathfrak{u}}^\nu(t)\|^{2p}\bigg]
		\le
		C\Bigg(
		\sqrt{\frac{\varepsilon}{\delta}}
		+\delta^p
		+\delta^{2p-1}
		+\frac{\delta^{p+1}}{\varepsilon}\,e^{\frac{C}{\varepsilon}\delta}
		+\bigg(\frac{\delta^{2p+1}}{\varepsilon}\,e^{\frac{C}{\varepsilon}\delta}\bigg)^{\frac12}
		\Bigg) \\[1ex]
		&\quad
		+ C n^{2p(\beta-1)}
		\int_0^T
		\mathbb{E}
		\bigg[\sup_{0\le r\le s\wedge\tau_n^{\varepsilon,\nu}}
		\|\hat {\mathfrak{u}}^{\varepsilon, \nu}(r)-\bar{\mathfrak{u}}^\nu(r)\|^{2p}\bigg]\,ds 
		+ C
		\int_0^T
		\sup_{0\le r\le s\wedge\tau_n^{\varepsilon,\nu}}
		\|\hat {\mathfrak{u}}^{\varepsilon, \nu}(r)-\bar{\mathfrak{u}}^\nu(r)\|^{2p}\,ds \\[1ex]
		&\le
		C\Bigg(
		\sqrt{\frac{\delta}{\varepsilon}}
		+\delta^p
		+\delta^{2p-1}
		+\frac{\delta^{p+1}}{\varepsilon}\,e^{\frac{C}{\varepsilon}\delta}
		+\bigg(\frac{\delta^{2p+1}}{\varepsilon}\,e^{\frac{C}{\varepsilon}\delta}\bigg)^{\frac12}
		\Bigg) \\[1ex]
		&\quad
		+ C n^{2p(\beta-1)}
		\int_0^T
		\mathbb{E}
		\bigg[	\sup_{0\le r\le s\wedge\tau_n^{\varepsilon,\nu}}
		\|\hat {\mathfrak{u}}^{\varepsilon, \nu}(r)-\bar{\mathfrak{u}}^\nu(r)\|^{2p}\bigg]\,ds .
	\end{align*}
	Using the Gr\"onwall inequality, we get
	\begin{align*}
		\mathbb{E}
		\bigg[	\sup_{0\le t\le T\wedge\tau_n^{\varepsilon,\nu}}
		\|\hat {\mathfrak{u}}^{\varepsilon, \nu}(t)-\bar{\mathfrak{u}}^\nu(t)\|^{2p}\bigg]
		\le
		C\Bigg(
		\sqrt{\frac{\delta}{\varepsilon}}
		+\delta^p
		+\delta^{2p-1}
		+\frac{\delta^{p+1}}{\varepsilon}\,e^{\frac{C}{\varepsilon}\delta}
		+\bigg(\frac{\delta^{2p+1}}{\varepsilon}\,e^{\frac{C}{\varepsilon}\delta}\bigg)^{\frac12}
		\Bigg)
		e^{C n^{2p(\beta-1)}} .
	\end{align*}
	This asserts that
	\begin{align*}
		\mathbb{E}
		\bigg[\sup_{0\le t\le T}
		\|\hat {\mathfrak{u}}^{\varepsilon, \nu}(t)-\bar{\mathfrak{u}}^\nu(t)\|^{2p}
		\chi_{\{T<\tau_n^{\varepsilon,\nu}\}}\bigg]
		\le
		C\Bigg(
		\sqrt{\frac{\delta}{\varepsilon}}
		+\delta^p
		+\delta^{2p-1}
		+\frac{\delta^{p+1}}{\varepsilon}\,e^{\frac{C}{\varepsilon}\delta}
		+\bigg(\frac{\delta^{2p+1}}{\varepsilon}\,e^{\frac{C}{\varepsilon}\delta}\bigg)^{\frac12}
		\Bigg)
		e^{C n^{2p(\beta-1)}} .
	\end{align*}
	Now, using the difinition of stopping times and the H\"older inequality, we estimate
	\begin{align*}
		\mathbb{E}
		\bigg[	\sup_{0\le t\le T}
		\|\hat {\mathfrak{u}}^{\varepsilon, \nu}(t)-\bar{\mathfrak{u}}^\nu(t)\|^{2p}
		\chi_{\{T>\tau_n^{\varepsilon,\nu}\}}\bigg]
		&\le\Bigg(	\mathbb{E}	\bigg[\sup_{0\le t\le T}
		\|{\mathfrak{u}}^{\varepsilon, \nu}(t)-\bar{\mathfrak{u}}^\nu(t)\|^{4p}\bigg]
		\Bigg)^{\frac12}
		 \bigg(\mathbb{E} \chi_{\{T>\tau_n^{\varepsilon,\nu}\}}\bigg)^{\frac12}\\[1ex]
		&\leq C  \bigg(\frac{C}{n}\bigg)^{\frac12}\le
		\frac{C}{\sqrt{n}} .
	\end{align*}
	Hence, we conclude
	\begin{align*}
		\mathbb{E}
		\bigg[
		\sup_{0\le t\le T}
		\|\hat {\mathfrak{u}}^{\varepsilon, \nu}(t)-\bar{\mathfrak{u}}^\nu(t)\|^{2p}
		\bigg]
		\le
		C\Bigg(
		\sqrt{\frac{\delta}{\varepsilon}}
		+\delta^p
		+\delta^{2p-1}
		+\frac{\delta^{p+1}}{\varepsilon}\,e^{\frac{C}{\varepsilon}\delta}
		+\bigg(\frac{\delta^{2p+1}}{\varepsilon}\,e^{\frac{C}{\varepsilon}\delta}\bigg)^{\frac12}
		\Bigg)
		e^{C n^{2p(\beta-1)}}
		+\frac{C}{\sqrt{n}} .
	\end{align*}
	This completes the proof of \Cref{propuhatuerror}.
\end{proof}
\subsection{Proof of \Cref{apofviscoused}:}\label{avgprnprf}
Combining the estimates in \Cref{uuhatvvhaterror} and \Cref{propuhatuerror}, we have
\begin{align*}
	\mathbb{E}
	\left[
	\sup_{0\le t\le T}
	\|{\mathfrak{u}}^{\varepsilon, \nu}(t)-\bar{\mathfrak{u}}^\nu(t)\|^{2p}
	\right]
	&\le
	C(p)\mathbb{E}
	\left[
	\sup_{0\le t\le T}
	\|{\mathfrak{u}}^{\varepsilon, \nu}(t)-\hat {\mathfrak{u}}^{\varepsilon, \nu}(t)\|^{2p}
	\right]
	+ C(p)\mathbb{E}
	\left[
	\sup_{0\le t\le T}
	\|\hat {\mathfrak{u}}^{\varepsilon, \nu}(t)-\bar{\mathfrak{u}}^\nu(t)\|^{2p}
	\right] \\[1ex]
	&\le
	C\Bigg(
	\sqrt{\frac{\varepsilon}{\delta}}
	+\delta^p
	+\delta^{2p-1}
	+\frac{\delta^{p+1}}{\varepsilon}e^{\frac{C}{\varepsilon}\delta}
	+\left(\frac{\delta^{2p+1}}{\varepsilon}e^{\frac{C}{\varepsilon}\delta}\right)^{\frac12}
	\Bigg)e^{C n^{2p(\beta-1)}}
	+\frac{C}{\sqrt{n}} .
\end{align*}
We choose 
\begin{align*}
	\delta=\varepsilon(-\ln\varepsilon)^{\frac12}, n=\left(\frac{1}{8C}\ln(-\ln\varepsilon)\right)^{\frac{1}{2p(\beta-1)}},
\end{align*}
and consequently, we have 
\begin{align*}
	e^{C n^{4p}}=(-\ln\varepsilon)^{\frac18}.
\end{align*}
Therefore, we conclude 
\begin{align*}
&	\mathbb{E}
	\bigg[
	\sup_{0\le t\le T}
	\|{\mathfrak{u}}^{\varepsilon, \nu}(t)-\bar{\mathfrak{u}}^\nu(t)\|^{2p}
	\bigg]
	\le
	C(-\ln\varepsilon)^{-\frac14}	(-\ln\varepsilon)^{\frac18}+ (-\ln\varepsilon)^{\frac18}\Bigg(\varepsilon^p(-\ln\varepsilon)^{\frac{p}{2}}+\varepsilon^{2p-1} (-\ln\varepsilon)^{\frac{2p-1}{2}}\\
	&\quad+\varepsilon^p(-\ln\varepsilon)^{\frac{p+1}{2}}e^{C(-\ln\varepsilon)^{\frac{1}{2}}}          +\left\{\varepsilon^{2p}(-\ln\varepsilon)^{\frac{2p+1}{2}}e^{C(-\ln\varepsilon)^{\frac{1}{2}}}\right\}^{\frac{1}{2}}    \Bigg)+\frac{C}{\left(\frac{1}{8C}\ln(-\ln\varepsilon)\right)^{\frac{1}{4p(\beta-1)}}} \\[1ex]
	&=
	C(-\ln\varepsilon)^{-\frac18}+ (-\ln\varepsilon)^{\frac18}\Bigg(\varepsilon^p(-\ln\varepsilon)^{\frac{p}{2}}+\varepsilon^{2p-1} (-\ln\varepsilon)^{\frac{2p-1}{2}}+\varepsilon^p(-\ln\varepsilon)^{\frac{p+1}{2}}e^{C(-\ln\varepsilon)^{\frac{1}{2}}}  \\
	&\quad        +\left\{\varepsilon^{2p}(-\ln\varepsilon)^{\frac{2p+1}{2}}e^{C(-\ln\varepsilon)^{\frac{1}{2}}}\right\}^{\frac{1}{2}}    \Bigg)	+C\left(\frac{1}{\ln(-\ln\varepsilon)}\right)^{\frac{1}{4p(\beta-1)}} .
\end{align*}
Taking $\varepsilon\to 0$, we get 
\begin{align*}
	\lim_{\varepsilon\to 0}\mathbb{E}\left[\sup_{0\le t\le T}\|{\mathfrak{u}}^{\varepsilon, \nu}(t)-\bar{\mathfrak{u}}^\nu(t)\|^{2p}\right]=0
\end{align*}
This completes the proof of \Cref{apofviscoused}.
\section{Proof of the main result}\label{secpfofmainresult}
After establishing \textbf{Step II} and \textbf{Step III} in the previous sections, we are ready to prove \textbf{Step IV} which is our main result.
\begin{proof}[Proof of \Cref{mainresult}]
	We use all the results established in \textbf{Step II} and \textbf{Step III}, and take limit $\varepsilon\to 0$ to complete the proof.
We have 
\begin{align}\label{decomposition}
	\|{\mathfrak{u}}^\varepsilon(t)-\bar{\mathfrak{u}}(t)\|
	\le
	\|{\mathfrak{u}}^{\varepsilon, \nu}(t)-{\mathfrak{u}}^\varepsilon(t)\|
	+
	\|{\mathfrak{u}}^{\varepsilon, \nu}(t)-\bar{\mathfrak{u}}^\nu(t)\|
	+
	\|\bar{\mathfrak{u}}^\nu(t)-\bar{\mathfrak{u}}(t)\|.
\end{align}
For any given $R>0$, \Cref{ubarnuandubar} implies that there exists $\nu_1$ such that
$\nu\in(0,\nu_1]$ and 
\begin{align}\label{ubarnuubarfinal}
	\sup_{0\le t\le T}
	\mathbb{E}\Big[|\bar{\mathfrak{u}}^\nu(t)-\bar{\mathfrak{u}}(t)\|^2	\Big]<R,
\end{align}
\Cref{uepsnuueps} asserts that there exists $\nu_2$ such that $\nu\in(0,\nu_2]$ and
\begin{align}\label{uepsnuuepsfinal}
	\sup_{\varepsilon\in(0,1)}
	\left(
	\sup_{0\le t\le T}
	\mathbb{E}\Big[\|{\mathfrak{u}}^{\varepsilon, \nu}(t)-{\mathfrak{u}}^\varepsilon(t)\|^2\Big]
	\right)<R.
\end{align}
Choosing $\nu=\min\{\nu_1,\nu_2\}$, $p=1$ in \Cref{apofviscoused}, we get
\begin{align*}
	\lim_{\varepsilon\to0}
	\mathbb{E}
	\left[
	\sup_{0\le t\le T}
	\|{\mathfrak{u}}^{\varepsilon, \nu}(t)-\bar{\mathfrak{u}}^\nu(t)\|^2
	\right]
	=0.
\end{align*}
Thus, there exists $\varepsilon_0$ such that for any
$\varepsilon\in(0,\varepsilon_0)$,
\begin{align}\label{uepsnuubarnufinal}
	\sup_{0\le t\le T}
	\mathbb{E}\Big[\|{\mathfrak{u}}^{\varepsilon, \nu}(t)-\bar{\mathfrak{u}}^\nu(t)\|^2\Big]
	\le
	\mathbb{E}
	\bigg[
	\sup_{0\le t\le T}
	\|{\mathfrak{u}}^{\varepsilon, \nu}(t)-\bar{\mathfrak{u}}^\nu(t)\|^2
	\bigg]
	< R .
\end{align}
Using \eqref{decomposition}-\eqref{uepsnuubarnufinal}, we conclude
\begin{align*}
	\sup_{0\le t\le T}
	\mathbb{E}\Big[\|{\mathfrak{u}}^\varepsilon(t)-\bar{\mathfrak{u}}(t)\|^2\Big]
	< 3R .
\end{align*}
Since $R$ is arbitrary, we have 
\begin{align*}
	\lim_{\varepsilon \to 0} \sup_{0 \leq t \leq T} \mathbb{E} \Big[\|{\mathfrak{u}}^\varepsilon(t) - \bar{\mathfrak{u}}(t)\|^2\Big] = 0.
\end{align*}
This completes the proof of \Cref{mainresult}.

\end{proof}

	\noindent \textbf{Data availibility:} Data sharing not applicable to this article as no datasets were generated or analysed during the current study.\\
	
	\noindent \textbf{Funding:}  Not applicable.\\
	
	
	\noindent \textbf{\large Declarations}\\
	
	\noindent\textbf{Conflict of interest:} The author has no competing interests to declare that are relevant to the content of this article.

	\bibliography{AP.bib}
	\bibliographystyle{abbrv}
\end{document}